\documentclass[11pt,a4paper,reqno]{amsart}
\makeatletter
\def\l@subsection{\@tocline{2}{0pt}{2.5pc}{5pc}{}}
\def\l@subsubsection{\@tocline{3}{0pt}{4.5pc}{7pc}{}}
\makeatother
\usepackage[utf8]{inputenc}
\usepackage[T1]{fontenc}
\usepackage{lmodern}
\usepackage[american]{babel}
\usepackage{csquotes}
\usepackage{microtype}
\AtBeginDocument{\sbox0{$\left(\frac{1}{2}\right)$}}
\usepackage{bbm}

\usepackage{amsmath, amsfonts, amssymb, amsthm, mathtools}
\usepackage{polynom}
\usepackage{environ}
\usepackage{etoolbox}

\usepackage{geometry}
\usepackage{fancyhdr, afterpage, lastpage}
\headwidth=\textwidth
\usepackage{graphicx}
\usepackage[svgnames]{xcolor}
\usepackage{tikz}
\usepackage{tikz-cd}
\usepackage{rank-2-roots}
\usepackage{tabularray}
\usepackage{booktabs}
\usepackage{multicol}
\usepackage{float}
\usepackage{placeins}
\usepackage{epigraph}
\usepackage[shortlabels]{enumitem}

\usepackage{listings}
\definecolor{codegreen}{rgb}{0,0.6,0}
\definecolor{codegray}{rgb}{0.5,0.5,0.5}
\definecolor{codepurple}{rgb}{0.58,0,0.82}
\definecolor{backcolour}{rgb}{0.97,0.97,0.97}

\lstdefinestyle{pythonstyle}{
    backgroundcolor=\color{backcolour},   
    commentstyle=\color{codegreen},
    keywordstyle=\color{magenta},
    numberstyle=\tiny\color{codegray},
    stringstyle=\color{codepurple},
    basicstyle=\ttfamily\footnotesize,
    breakatwhitespace=false,         
    breaklines=true,                 
    captionpos=b,                    
    keepspaces=true,                 
    numbers=left,                    
    numbersep=5pt,                  
    showspaces=false,                
    showstringspaces=false,
    showtabs=false,                  
    tabsize=4,
    frame=none
}
\usepackage[colorlinks=true, urlcolor=blue, linkcolor=DarkBlue, citecolor=DarkGreen]{hyperref}

\theoremstyle{plain}
\newtheorem{theorem}{Theorem}[section]
\newtheorem{proposition}[theorem]{Proposition}
\newtheorem{corollary}[theorem]{Corollary}
\newtheorem{lemma}[theorem]{Lemma}

\theoremstyle{definition}
\newtheorem{definition}[theorem]{Definition}

\NewCommandCopy{\proofqedsymbol}{\qedsymbol}
\newcommand{\exampleqedsymbol}{$\triangle$}

\theoremstyle{remark}
\newtheorem{remark}{Remark}[section]
\AtBeginEnvironment{remark}{%
  \pushQED{\qed}\renewcommand{\qedsymbol}{\exampleqedsymbol}%
}
\AtEndEnvironment{remark}{\popQED\endexample}

\newtheorem{example}{Example}[section]
\AtBeginEnvironment{example}{%
  \pushQED{\qed}\renewcommand{\qedsymbol}{\exampleqedsymbol}%
}
\AtEndEnvironment{example}{\popQED\endexample}

\newcommand{\inv}[1]{{#1}^{-1}}
\newcommand{\sgn}{\mathrm{sgn}}

\providecommand{\ZZ}{\mathbb{Z}}

\providecommand{\RR}{\mathbb{R}}

\providecommand{\CC}{\mathbb{C}}

\newcommand{\abs}[1]{\left\vert #1 \right\vert}

\newcommand{\id}{\textup{id}}

\newcommand{\dif}{\mathrm{d}}
\newcommand{\eps}{\varepsilon}

\newcommand{\dw}{\dif_W}
\newcommand{\dce}{\dif_\mathrm{CE}}

\newcommand{\F}{\mathbf{F}_W}

\newcommand{\Th}{\boldsymbol{\theta}_W}

\newcommand{\g}{\mathfrak{g}}
\newcommand{\h}{\mathfrak{h}}
\newcommand{\kk}{\mathfrak{k}}
\newcommand{\m}{\mathfrak{m}}

\newcommand{\ttt}{\mathfrak{t}}

\newcommand{\orb}[1]{\mathcal{O}_{#1}}

\newcommand{\SU}[1]{\mathrm{SU}(#1)}
\newcommand{\SUk}{\SU{k}}

\newcommand{\sll}[1]{\mathfrak{sl}(#1,\CC)}
\newcommand{\slk}{\sll{k}}
\newcommand{\suu}[1]{\mathfrak{su}(#1)}
\newcommand{\suk}{\suu{k}}

\newcommand{\killing}[2]{\langle #1, #2 \rangle}

\DeclareMathOperator{\Ad}{Ad}

\DeclareMathOperator{\ad}{ad}

\DeclareMathOperator{\tr}{tr}
\DeclareMathOperator{\Hom}{Hom}

\newcommand{\cp}[1]{\CC\mathbb{P}^{#1}}

\title[]{Explicit Transgression Primitives on Flag Manifolds}

\author{Greg L. A. Weiler}
\thanks{While working on this manuscript, the author was supported as a member and associate member by the DFG RTG 2491 ``Fourier Analysis and Spectral Theory''.}
\address{Mathematisches Institut, Georg-August-Universit\"at G\"ottingen, Bunsenstra\ss e 3--5, 37073 G\"ottingen, Germany}
\email{greg.weiler@mathematik.uni-goettingen.de}

\date{\today}

\begin{document}

\begin{abstract}
For the canonical principal fibration $K \to K/T$ of a compact semisimple Lie group over its flag manifold, Borel's classical presentation of the cohomology ring as the coinvariant algebra $H^*(K/T; \CC) \cong S(\h^\vee)/I_W$ abstractly establishes the vanishing of the positive-degree characteristic forms $p_j(F_{\ttt}) = \tr(F_{\ttt}^j)$ in de Rham cohomology. However, the classical proof is non-constructive, and standard algebraic projection from the universal contractible Weil algebra $W(\kk)$ yields closed absolute primitives on $K$ that fail to descend to the base. 

In this paper, we explicitly construct odd-degree relative transgression primitives $\alpha_j \in (\Lambda^{2j-1}\m^\vee)^T \cong {}^K\Omega^{2j-1}(K/T)$ satisfying $\dif_\m \alpha_j = -p_j(F_{\ttt})$ directly on the flag manifold. By introducing a 1-parameter family of connections $\theta_s = \theta_{\ttt} + s\theta_\m$ interpolating between the canonical connection and the flat Maurer--Cartan form, the interpolating curvature factors as $F_s = (1-s)(F_{\ttt} - s\theta_\m^2)$. Expanding this power via complete non-commutative symmetric word sums and evaluating the parameter integrals via Euler's Beta integral yields a universal closed-form formula for all degrees $j \ge 1$. 

We demonstrate that the explicit primitives exhibit a structural transition: for degree $j = 2$, the second primitive $\alpha_2 = -\frac{1}{3}\tr(\theta_\m^3)$ is purely unperturbed by curvature; for $j = 3$, the third primitive $\alpha_3$ is strictly linear in $F_{\ttt}$, where cyclic trace permutations commute trivially. At degree $j = 4$, quadratic curvature $F_{\ttt}^2$ enters alongside the soldering form $\theta_\m^2$, where the non-vanishing commutator $[F_{\ttt}, \theta_\m^2] \neq 0$ shatters cyclic trace symmetry into independent necklaces, producing the irreducible invariant $\tr(\theta_\m \wedge F_{\ttt} \wedge \theta_\m^2 \wedge F_{\ttt})$.

For the rank-two root system $A_2$ ($\mathfrak{sl}_3(\CC)$), we ground these forms in concrete Euclidean geometry: $T$-invariant cochains correspond bijectively to closed directed loops on the planar hexagonal root lattice, the second primitive $\alpha_2 = \Omega_- - \Omega_+$ manifests as the oriented difference of triangular loops, and the relative differential $\dif_\m$ acts as an edge-fracturing operator, providing a visual witness to the vanishing of the second Chern class. Finally, while character theory encounters an additive multiplicity barrier at $k=4$ due to non-trivial Specht module multiplicities in the cochain complex, we resolve the coinvariant algebra globally via a  differential graded algebra Koszul resolution $K_\bullet(p_2, \dots, p_k)$, directly recovering the classical Poincaré polynomial $P_{K/T}(t) = \prod_{j=2}^k \frac{1-t^{2j}}{1-t^2}$ and Euler characteristic $\chi(K/T) = k!$ without matrix-rank ambiguities.
\end{abstract}

\maketitle

\tableofcontents

\section{Introduction and Main Results}

\subsection{The Transgression Problem on Flag Manifolds}

For a compact connected Lie group $K$ with maximal torus $T$, Borel's classical theorem \cite{Borel1953SurLC} establishes that the complex cohomology of the flag manifold $K/T$ is isomorphic to the coinvariant algebra of the Weyl group: $H^*(K/T; \CC) \cong S(\h^\vee)/I_W$. Under the Chern--Weil homomorphism, this presentation is realized by the canonical connection $\theta_{\ttt}$ on the principal bundle $K \to K/T$, whose curvature $F_{\ttt} \in \Omega^2(K/T, \ttt)$ maps the positive-degree Weyl-invariant polynomials to zero in de Rham cohomology: $[p_j(F_{\ttt})] = 0 \in H^{2j}_{\mathrm{dR}}(K/T; \CC)$.

Our goal in this paper is to explicitly link Borel's symmetric picture to the exterior picture. Unlike in the world of rings, where invariant generators can simply be quotiented out, the corresponding cochains in the differential complex must vanish as cohomology classes. As such, we seek coboundaries corresponding to the discarded generators.

The Weil algebra $W(\kk) = S^\bullet(\kk^\vee) \otimes \Lambda^\bullet(\kk^\vee)$ is equipped with the universal connection $\boldsymbol{\theta}_W \in \kk \otimes W^1\kk$ and curvature $\F \in \kk \otimes W^2\kk$. These satisfy the Weil structural equation and the second Bianchi identity:
\[ \dw \boldsymbol{\theta}_W = \F - \frac{1}{2}[\boldsymbol{\theta}_W, \boldsymbol{\theta}_W], \qquad \dw \F = [\F, \boldsymbol{\theta}_W] . \]
For each fundamental invariant $f_j(\F) = \tr(\F^j) \in (S^j\kk^\vee)^\kk$, the associated \textit{universal Chern--Simons cochain} is defined via the standard homotopy operator $h$ as 
\[Tf_j := h(f_j(\F)) \in W^{2j-1}\kk.\]
 Evaluating the homotopy integral isolates the term linear in $\dif t$, yielding:
\begin{equation}\label{csform2}
	Tf_j = j \int_0^1 \tr\left( \boldsymbol{\theta}_W \wedge \mathbf{F}_{tW}^{j-1} \right) \dif t ,
\end{equation}
where $\mathbf{F}_{tW} = t\F - \frac{1}{2}(t - t^2)[\boldsymbol{\theta}_W, \boldsymbol{\theta}_W]$ denotes the curvature of the intermediate connection $\boldsymbol{\theta}_{tW} = t\boldsymbol{\theta}_W$ (cf.\ Subsection~\ref{sec:cs}). This cochain satisfies the universal transgression equation:
\begin{equation}
	\dw(Tf_j) = f_j(\F) .
\end{equation}
Applying the projection $\pi \colon W(\kk) \to \Lambda^\bullet \kk_\CC^\vee$ (in our case $\kk_\CC=\slk$) maps $Tf_j$ to the \textit{absolute primitive} on the compact group $K$:
\[ \eta_j := \pi(Tf_j) \in (\Lambda^{2j-1}\kk_\CC^\vee)^\kk \cong {}^K\Omega^{2j-1}(K, \CC)^K , \]
which is $\Ad_K$-invariant by the cyclic invariance of the trace and closed ($\dif_\wedge \eta_j = 0$).

However, the central objective of this paper is to understand the differential geometry and topology directly on the flag manifold $K/T$. While topological arguments (such as the splitting of the trivial bundle $K \times_T \CC^k$) abstractly establish the vanishing of the characteristic classes $cw_K^\CC(f_j) = [f_j(F_{\ttt})] = 0 \in H^{2j}_\mathrm{dR}(K/T; \CC)$, they do not provide explicit primitives realizing the exactness on the base. The absolute primitives $\eta_j$ cannot serve this purpose: they live on the total space $K$ and are not horizontal with respect to the $T$-fibration. Indeed, contracting $\eta_j$ with a vertical generator $A \in \ttt$ yields $\iota_A \eta_j= j\tr(A \theta_\mathrm{MC}^{2j-2}) \not\equiv 0$ (for instance, taking $A = h_\alpha$, we have $(\iota_{h_\alpha} \eta_2)(e_\alpha, e_{-\alpha}) = 2\tr(h_\alpha^2) \neq 0$). In particular, $\eta_j$ is not basic, precluding it from descending to $K/T$.

To construct explicit, canonical odd-degree primitives directly on the flag manifold $K/T$ and to identify them with directed closed loops on the root lattice, we must adapt the transgression machinery to the reductive splitting $\kk = \ttt \oplus \m$.

\subsection{The Weil Algebra Obstruction}

Given the universal transgression $\dw(Tf_j) = f_j(\F)$ in the Weil algebra $W(\kk)$, one might ask whether the relative primitives on $K/T$ could be obtained purely algebraically. Evaluating the universal cochain $Tf_j$ on the reductive splitting $\kk = \ttt \oplus \m$ via a naive homomorphism $\psi \colon W(\kk) \to (\Lambda^\bullet \m^\vee)^T \otimes \kk$ setting $\boldsymbol{\theta}_W \mapsto \theta_\m$ and $\F \mapsto -F_{\ttt}$ fails because $\psi$ is not a chain map: the non-trivial action of $\ttt$ on $\m$ prevents $\psi$ from commuting with the differentials ($\dif_\m \circ \psi \neq \psi \circ \dw$, see Remark~\ref{rmk:psi_w_failure}).

Because such static homomorphisms fail to transport the differential across the reductive splitting $\kk = \ttt \oplus \m$ (detailed in Section~\ref{sec:geocan}), we use a dynamic geometric interpolation in the affine space of connections on $K \to K/T$ in order to capture the exact primitives.

\subsection{Statement of Principal Results}

The primary contributions of this paper resolve the transgression problem, its non-commutative structure, its visual root-lattice realization, and its homological resolution across four main theorems:

\paragraph{\textit{1. Universal Relative Transgression Primitives.}}
Overcoming the static evaluation defect requires interpolating dynamically between the canonical connection $\theta_0 = \theta_{\ttt}$ and the flat Maurer--Cartan form $\theta_1 = \theta_{\mathrm{MC}} = \theta_{\ttt} + \theta_\m$. On the principal bundle $K \to K/T$, the family $\theta_s = \theta_{\ttt} + s\theta_\m$ has horizontal curvature factoring as $F_s = (1-s)(F_{\ttt} - s\theta_\m^2)$. Evaluating the transgression integral yields universal closed-form primitives:%

\begin{theorem}[Universal Closed Formula for Relative Primitives]\label{thm:intro_universal_formula}
	For each $j \ge 1$, the differential form
	\[ \alpha_j := j \int_0^1 \tr(\theta_\m \wedge F_s^{j-1}) \, \dif s \in \Omega^{2j-1}_{\mathrm{bas}}(K) \cong \Omega^{2j-1}(K/T) \]
	descends to a left-$K$-invariant form on the flag manifold $K/T$ satisfying the transgression boundary equation:
	\[ \dif_\m \alpha_j = -p_j(F_{\ttt}) = -\tr(F_{\ttt}^j) \in (\Lambda^{2j}\m^\vee)^T . \]
	Moreover, $\alpha_j$ admits the exact closed-form expansion:
	\begin{equation}\label{eq:intro_closed_formula}
		\alpha_j = \sum_{k=1}^{j-1} (-1)^k\inv{\binom{j+k}{k}} \tr\big(\theta_\m \wedge S_{j-1, k}(F_{\ttt}, \theta_\m^2)\big) ,
	\end{equation}
	where $S_{n, k}(A, B)$ denotes the complete non-commutative word sum over all $\binom{n}{k}$ words of length $n$ with $k$ factors of $B$ and $n-k$ factors of $A$.
\end{theorem}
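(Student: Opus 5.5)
The plan is to run the Chern--Simons transgression argument along the affine path $\theta_s = \theta_{\ttt} + s\theta_\m$, $s \in [0,1]$, from the canonical connection to the flat Maurer--Cartan form, and then to evaluate the $s$-integral term by term. There are four steps: derive the curvature factorization, prove the transgression identity, check basicness and invariance, and expand to get the closed formula. Throughout I write $\theta^2 = \frac{1}{2}[\theta,\theta]$ and use matrix-valued forms in $\slk$.

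\textbf{Step 1: curvature factorization.} I start from the Maurer--Cartan equation $\dif\theta_{\mathrm{MC}} + \theta_{\mathrm{MC}}^2 = 0$, with $\theta_{\mathrm{MC}} = \theta_{\ttt} + \theta_\m$. Since $\ttt$ is abelian, $[\ttt,\m] \subset \m$ and $\theta_{\ttt}^2 = 0$. Projecting onto the $\ttt$- and $\m$-components gives
\[ F_{\ttt} = \dif\theta_{\ttt} = -(\theta_\m^2)_{\ttt}, \qquad \dif\theta_\m + [\theta_{\ttt},\theta_\m] = -(\theta_\m^2)_\m . \]
Substituting these into $F_s = \dif\theta_s + \theta_s^2$ yields
\[ F_s = F_{\ttt} - s(\theta_\m^2)_\m + s^2\theta_\m^2 . \]
Writing $(\theta_\m^2)_\m = \theta_\m^2 + F_{\ttt}$ then gives $F_s = (1-s)(F_{\ttt} - s\theta_\m^2)$. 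In particular $F_0 = F_{\ttt}$ and $F_1 = 0$. The sign conventions of the paper should be fixed here once and for all, since every later sign depends on them.

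\textbf{Step 2: transgression identity.} Each $\theta_s$ is a connection on $K \to K/T$, because $\theta_\m$ vanishes on vertical vectors. Its derivative $\dot\theta_s = \theta_\m$ is a tensorial, $\Ad_T$-equivariant $1$-form. The second Bianchi identity $D_sF_s = 0$ gives $\dot F_s = D_s\theta_\m$. By cyclicity of the trace and the graded Leibniz rule,
\[ \tfrac{\dif}{\dif s}\tr(F_s^j) = j\,\tr(D_s\theta_\m \wedge F_s^{j-1}) = j\,\dif\,\tr(\theta_\m \wedge F_s^{j-1}) . \]
Integrating over $s\in[0,1]$ gives $\tr(F_1^j) - \tr(F_0^j) = \dif\alpha_j$, that is, $\dif\alpha_j = -\tr(F_{\ttt}^j)$.

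\textbf{Step 3: basicness and invariance.} The integrand is an $\Ad$-invariant polynomial in tensorial forms, so it is horizontal and $T$-invariant, hence basic. Since it is built entirely from the left-invariant Maurer--Cartan form, it is also left $K$-invariant. This identifies $\alpha_j$ with an element of $(\Lambda^{2j-1}\m^\vee)^T$. On such elements the de Rham differential restricts to the relative differential $\dif_\m$, which gives the boundary equation as stated.

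\textbf{Step 4: closed formula.} I expand $(F_{\ttt} - s\theta_\m^2)^{j-1} = \sum_k (-s)^k S_{j-1,k}(F_{\ttt},\theta_\m^2)$, keeping the word order since $F_{\ttt}$ and $\theta_\m^2$ need not commute. The remaining integral is Euler's Beta integral:
\[ j\int_0^1 s^k(1-s)^{j-1}\,\dif s = j\,\frac{k!\,(j-1)!}{(j+k)!} = \binom{j+k}{k}^{-1} . \]
This leaves only the $k=0$ term to handle. It is $\tr(\theta_\m \wedge F_{\ttt}^{j-1})$, the trace of an $\m$-valued form times a $\ttt$-valued form. Because $\ttt \perp \m$ under the trace form (root vectors times diagonal matrices are traceless), this term vanishes, so the sum starts at $k=1$ as claimed.

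The main obstacle I expect is bookkeeping rather than conceptual. Two points need care. First, the signs in Step 1 must be handled consistently, in particular the split of $\theta_\m^2$ into its $\ttt$- and $\m$-parts. Second, the graded-commutative cyclic-trace manipulation in Step 2 must be justified, since $\theta_\m$ has odd degree while $F_s$ has even degree and is not scalar-valued.
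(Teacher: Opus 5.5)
Your proposal is correct and follows the paper's proof step for step. The paper likewise uses the factorization $F_s=(1-s)(F_{\ttt}-s\theta_\m^2)$, the Bianchi-identity argument $\frac{\dif}{\dif s}\tr(F_s^j)=j\,\dif\tr(\theta_\m\wedge F_s^{j-1})$ integrated from the canonical end to the flat end, the basicness check, and the word-sum expansion with Euler's Beta integral, with trace orthogonality removing the $k=0$ term.

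One small correction: $\theta_s$ is $\kk$-valued, so it is not a connection on the $T$-bundle $K\to K/T$ (the paper regards it on the extended $K$-bundle $K\times_T K$). Nothing is lost, because the variation formula $\dot F_s=D_s\theta_\m$ and the identity $D_sF_s=0$ hold for any $\kk$-valued $1$-form, and the tensoriality of $F_s$ you need in Step~3 follows directly from your Step~1 factorization.
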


%
\paragraph{\textit{2. Curvature Perturbations and Necklaces.}}
A non-commutative threshold occurs at degree $j=4$: for $j \le 3$, all constituent words in the curvature and soldering form commute under the trace, whereas at $j=4$, the non-vanishing commutator $[F_{\ttt}, \theta_\m^2] \neq 0$ appears. The explicit forms we derived for the absolute primitives $\eta_j\in\Lambda^{2j-1}\g_\CC^\vee$ and the relative transgression primitives $\alpha_j\in(\Lambda^{2j-1}\m^\vee)^T$ explain this in terms of a perturbation by the canonical curvature $F_{\ttt}$. From the absolute transgression on $K$, we have:
\[\eta_j=\frac{j}{2j-1}\tr(\theta_\mathrm{MC}^{2j-1}) ,\]
and from Theorem~\ref{thm:intro_universal_formula}, we have:
\[\alpha_j = \sum_{k=1}^{j-1} (-1)^k\inv{\binom{j+k}{k}} \tr\big(\theta_\m \wedge S_{j-1, k}(F_{\ttt}, \theta_\m^2)\big) .\]
Isolating the final summand, this becomes:
\[ \alpha_j = (-1)^{j-1}\inv{\binom{2j-1}{j-1}} \tr(\theta_\m^{2j-1}) + \sum_{k=1}^{j-2} 
(-1)^k\inv{\binom{j+k}{k}}
\tr\big(\theta_\m \wedge S_{j-1, k}(F_{\ttt}, \theta_\m^2)\big) . \]
The leading term is proportional to the bare projection $\eta_j|_\m$, while the lower-order summands ($k < j - 1$) provide the necessary perturbation for the transgression equation $\dif \alpha_j = -p_j(F_{\ttt})$ to be fulfilled. We therefore are able to obtain the desired Chern class, which the bare projection $\eta_j$ could not accomplish. In particular, $\dif_\m(\eta_j|_\m)$ contained no curvature factors, highlighting the necessity of the perturbation. For the flag manifold of $\mathrm{SU}(3)$ ($k=3$), the third primitive vanishes ($\alpha_3 \equiv 0$), so non-vanishing curvature perturbations first manifest in non-trivial primitives for $k \ge 4$. This threshold becomes manifest when trying to compute the ranks of the Chevalley--Eilenberg differentials on the reduced complex. Indeed, the decomposition of the complex into $S_k$-irreps block-diagonalizes the diferentials, but does not determine the diagonal blocks for $k\ge4$.

This resolves our initial goal of producing explicit primitives realizing Borel's isomorphism in terms of the relative Chevalley--Eilenberg complex of the flag manifold $K/T$. In a first attempt to bridge the exterior world of forms and the symmetric world of coinvariants, we used the Weil algebra. Here, we found canonical primitives, namely the unviversal Chern--Simons cochains $Tf_j = h(f_j(\F)) \in W^{2j-1}\g$ (cf.\ Proposition~\ref{cs}), which satisfy the Chern--Simons equation $\dw(Tf_j)=f_j(\F)=\tr\F^j$. The naive attempt to obtain the desired primitives by projecting these forms down to the candidates $\eta_j=\pi(Tf_j)$ was unsuccessful. Indeed, projection to $K$ yields the closed absolute cocycle $\dif_\wedge \eta_j = 0$.
It was by examining the geometry of the principal bundle $K\to K/T$ and the algebra of the reductive splitting $\kk=\ttt\oplus\m$ in great detail that we were able to construct and compute the explicit primitives by applying the Chern--Simons recipe to the principal bundle ingredients. A posteriori, the relative transgression primitives $\alpha_j$ are obtained by perturbing the candidates $\eta_j$ with the help of the curvature of the canonical connection $\theta_{\ttt}$.

At degree $j = 4$, we explicitly compute the non-commutative threshold of the perturbation:

\begin{theorem}[The Fourth Primitive and Necklace Splitting]\label{thm:intro_fourth_primitive}
	For $j = 4$, the geometric transgression primitive evaluates to the invariant $7$-form:
	\begin{equation}
		\alpha_4 = -\frac{2}{5}\tr(\theta_\m^3 \wedge F_{\ttt}^2) - \frac{1}{5}\tr(\theta_\m \wedge F_{\ttt} \wedge \theta_\m^2 \wedge F_{\ttt}) 
		+ \frac{1}{5}\tr(\theta_\m^5 \wedge F_{\ttt}) - \frac{1}{35}\tr(\theta_\m^7) ,
	\end{equation}
	satisfying $\dif_\m \alpha_4 = -\tr(F_{\ttt}^4)$. Degree $j = 4$ is the minimal degree where the non-vanishing commutator $[F_{\ttt}, \theta_\m^2] \neq 0$ breaks the cyclic trace symmetry of the word sums, causing words of identical letter content to split into distinct trace classes and producing the irreducible necklace $\tr(\theta_\m \wedge F_{\ttt} \wedge \theta_\m^2 \wedge F_{\ttt})$.
\end{theorem}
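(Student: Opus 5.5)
The plan is to specialize the closed formula \eqref{eq:intro_closed_formula} of Theorem~\ref{thm:intro_universal_formula} to $j=4$. The transgression equation $\dif_\m \alpha_4 = -\tr(F_{\ttt}^4)$ then comes for free from that theorem, so all remaining work is algebraic simplification of the word sums. For $j=4$ the sum runs over $k=1,2,3$, with coefficients
\[
(-1)^k\binom{4+k}{k}^{-1} = -\tfrac15,\ +\tfrac1{15},\ -\tfrac1{35} \qquad (k=1,2,3).
\]
Write $A=F_{\ttt}$ and $B=\theta_\m^2$; both are even-degree $\ttt$- or $\kk$-valued forms. The relevant word sums are
\[
S_{3,1}=AAB+ABA+BAA,\qquad S_{3,2}=ABB+BAB+BBA,\qquad S_{3,3}=BBB.
\]

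Next I reduce each trace $\tr(\theta_\m\wedge w)$ to a normal form using the graded cyclicity of the trace. I will only ever cyclically move \emph{even}-degree blocks ($A$ or $B$), so no Koszul signs arise; the odd factor $\theta_\m$ stays put, and its relative position is all that matters.
\begin{itemize}
\item For $k=3$ one gets directly $\tr(\theta_\m^7)$, contributing $-\frac1{35}\tr(\theta_\m^7)$.
\item For $k=2$, rotating the trailing $B$'s to the front gives
\[
\tr(\theta_\m A\theta_\m^4)=\tr(\theta_\m^5A),\qquad \tr(\theta_\m^3A\theta_\m^2)=\tr(\theta_\m^5A),
\]
so all three words coincide and contribute $3\cdot\frac1{15}=\frac15\tr(\theta_\m^5\wedge F_{\ttt})$.
\item For $k=1$, the words $AAB$ and $BAA$ both rotate to $\tr(\theta_\m^3A^2)$. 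The word $ABA$ gives $\tr(\theta_\m A\theta_\m^2A)$, and rotating only even blocks cannot bring it to the previous form. This contributes $-\frac15\bigl(2\tr(\theta_\m^3F_{\ttt}^2)+\tr(\theta_\m F_{\ttt}\theta_\m^2F_{\ttt})\bigr)$.
\end{itemize}
Summing the three contributions yields the stated expression for $\alpha_4$.

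For the necklace and minimality statement I argue as follows. For $j\le 3$, every word in $S_{j-1,k}$ contains at most one letter $A$ or is a pure power. Cyclically rotating the $B$-blocks therefore collapses each $S_{j-1,k}$ to $\binom{j-1}{k}$ copies of a single trace, so no splitting occurs. At $j=4$, $k=1$, the two necklaces differ by
\[
\tr(\theta_\m^3A^2)-\tr(\theta_\m A\theta_\m^2A)=\tr\bigl(\theta_\m[\theta_\m^2,A]A\bigr),
\]
which vanishes identically whenever $[F_{\ttt},\theta_\m^2]=0$.

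The main obstacle is to show that this difference is genuinely nonzero, i.e.\ that $\tr(\theta_\m F_{\ttt}\theta_\m^2F_{\ttt})$ is an independent invariant in $(\Lambda^7\m^\vee)^T$. For this I would evaluate both $7$-forms on an explicit $7$-tuple of root vectors $e_{\pm\beta}$. Concretely, I would choose roots in $\slk$ with $k\ge4$, arranged so the tuple has total weight zero (as required by $T$-invariance). I would use $F_{\ttt}(e_\beta,e_{-\beta})\propto h_\beta$ together with the non-commutation of $h_\beta$ with $e_\gamma e_{-\gamma}$-type products. The hardest part will be choosing the test tuple so that the antisymmetrization does not cancel the difference; I would look for it via a closed loop on the root lattice.
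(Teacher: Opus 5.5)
Your computation of $\alpha_4$ is exactly the paper's proof. You specialize Theorem~\ref{thm:general_primitive} to $j=4$, read off $-\frac15,\frac1{15},-\frac1{35}$, collapse the words of $S_{3,k}(F_{\ttt},\theta_\m^2)$ by cyclic moves, and inherit $\dif_\m\alpha_4=-\tr(F_{\ttt}^4)$ from Theorem~\ref{thm:geometric_transgression}.

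For the necklace claim, the paper offers only the assertion that $[F_{\ttt},\theta_\m^2]\neq0$ makes $\tr(\theta_\m\wedge F_{\ttt}\wedge\theta_\m^2\wedge F_{\ttt})$ an independent invariant. You are right that non-commutation alone does not force $\tr(\theta_\m[\theta_\m^2,F_{\ttt}]F_{\ttt})\neq0$: for $k=3$ the commutator is nonzero, yet every $7$-form vanishes. However, you leave this step open, and the mechanism you propose for closing it is wrong. The product $e_\gamma e_{-\gamma}=E_{aa}$ is diagonal and commutes with $\h$. Writing $F_{\ttt}=\mathrm{diag}(f_1,\dots,f_k)$, one has $[F_{\ttt},\theta_\m^2]_{ac}=(f_a-f_c)\wedge(\theta_\m^2)_{ac}$. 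This only sees the off-diagonal products $e_\gamma e_\delta$ with $\gamma+\delta\in\Phi$, so your test tuple must contain a triangle.

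Here is a short way to finish. Put $\theta_{ab}:=\omega^{L_a-L_b}$, so that $f_a=-\sum_{b\neq a}\theta_{ab}\wedge\theta_{ba}$, and set $t_{abc}:=\theta_{ab}\wedge\theta_{bc}\wedge\theta_{ca}$. Since the $2$-forms $f_a$ are central,
\[ \tr(\theta_\m^3\wedge F_{\ttt}^2)=\sum_{(a,b,c)}f_a^2\wedge t_{abc},\qquad \tr(\theta_\m\wedge F_{\ttt}\wedge\theta_\m^2\wedge F_{\ttt})=\sum_{(a,b,c)}f_a\wedge f_b\wedge t_{abc}, \]
with both sums over ordered triples of distinct indices.

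Evaluate both on the weight-zero tuple $(E_{14},E_{41},E_{24},E_{42},E_{12},E_{23},E_{31})$, for any $k\ge4$.
\begin{itemize}
\item The only opposite pairs in the tuple are $\{E_{14},E_{41}\}$ and $\{E_{24},E_{42}\}$, and each $f$ contributes such a pair. Hence $t_{abc}$ must be the unpaired triangle, so $(a,b,c)$ is a rotation of $(1,2,3)$.
\item The factor $f_a\wedge f_b$ supplies one pair through $a$ and one through $b$. This forces $(a,b)=(1,2)$, with a single term, so there is no cancellation.
\item The factor $f_a^2$ would need both pairs through $a$, which is impossible since they meet only at $4$.
\end{itemize}
So the necklace is nonzero on this tuple while $\tr(\theta_\m^3\wedge F_{\ttt}^2)$ vanishes there. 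For $k=4$ it even vanishes identically, since $f_a\wedge t_{abc}=-\theta_{ap}\wedge\theta_{pa}\wedge t_{abc}$, with $p$ the fourth index.

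Finally, this establishes what ``distinct trace classes'' means, namely that the two necklaces differ. It does not give linear independence of the necklace from the other terms of $\alpha_4$, as your ``i.e.'' suggests, and that stronger statement is not available in general. For $k=4$ the trivial isotypic part of $C^7$ is one-dimensional (Example~\ref{k4decomp}), and all four trace forms are $W$-invariant, so they lie on a single line.
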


\paragraph{\textit{3. Combinatorial Geometry on the $A_2$ Root Lattice.}}

For the rank-two system $A_2$ ($\mathfrak{sl}_3(\CC)$), the relative Chevalley--Eilenberg complex $(\Lambda^\bullet \m^\vee)^T$ admits an intriguing visual dictionary on the planar hexagonal root lattice:

\begin{theorem}[Root Lattice Loops and Edge-Fracturing]\label{thm:intro_root_lattice}
	On the $A_2$ root lattice, $T$-invariant cochains correspond bijectively to closed directed polygonal loops. In particular:
	\begin{enumerate}[(i)]
		\item The second primitive $\alpha_2 \in (\Lambda^3 \m^\vee)^T$ is given by the oriented difference of fundamental triangular loops:
		\[ \alpha_2 = \Omega_- - \Omega_+ = \omega^{-\alpha_1} \wedge \omega^{-\alpha_2} \wedge \omega^{\alpha_3} - \omega^{\alpha_1} \wedge \omega^{\alpha_2} \wedge \omega^{-\alpha_3} . \]
		\item The relative differential $\dif_\m$ acts as an edge-fracturing operator, shattering the triangular loops along their edges into the three parallelogram configurations of $\tr(F_{\ttt}^2)$, visually witnessing the transgression equation:
		\[ \dif_\m \alpha_2 = \dif_\m \Omega_- - \dif_\m \Omega_+ = -\frac{1}{2}\tr(F_{\ttt}^2) - \frac{1}{2}\tr(F_{\ttt}^2) = -\tr(F_{\ttt}^2) = -p_2(F_{\ttt}) . \]
		\item The third primitive vanishes ($\alpha_3 \equiv 0$) due to the absence of closed $5$-step loops ($C^5 = \{0\}$).
	\end{enumerate}
\end{theorem}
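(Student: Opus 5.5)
The plan is to work entirely in the relative Chevalley--Eilenberg model $(\Lambda^\bullet\m^\vee)^T\cong{}^K\Omega^\bullet(K/T)$ for $\kk_\CC=\sll{3}$. I fix the Chevalley basis $e_{\pm\alpha_1},e_{\pm\alpha_2},e_{\pm\alpha_3}$ of $\m_\CC$, where $\alpha_3=\alpha_1+\alpha_2$, and the dual basis $\omega^{\pm\alpha_i}$. The torus acts on $\omega^{\beta}$ with weight $-\beta$, so a monomial $\omega^{\beta_1}\wedge\cdots\wedge\omega^{\beta_p}$ is $T$-invariant iff $\beta_1+\cdots+\beta_p=0$. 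Concatenating the root vectors as directed edges on the hexagonal lattice gives a closed polygon. Since the $\beta_i$ must be distinct, the polygon is determined up to cyclic reordering by the set of roots used, which yields the bijection in the first sentence of the theorem.

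For (iii), I enumerate zero-sum subsets of the six roots of odd size. There are exactly two $3$-element ones, $\{\alpha_1,\alpha_2,-\alpha_3\}$ and $\{-\alpha_1,-\alpha_2,\alpha_3\}$. A $5$-element subset omits one root $\gamma$, and its sum is $-\gamma\neq 0$, so $C^5=(\Lambda^5\m^\vee)^T=0$ and $\alpha_3\in C^5$ vanishes.

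For (i), I would use the formula of Theorem~\ref{thm:intro_universal_formula} with $j=2$. The sum has the single term $k=1$, which gives $\alpha_2=-\tfrac13\tr(\theta_\m^3)$. Write $\theta_\m=\sum_\beta e_\beta\,\omega^\beta$, so that
\[\tr(\theta_\m^3)=\sum \tr(e_{\beta_1}e_{\beta_2}e_{\beta_3})\,\omega^{\beta_1}\wedge\omega^{\beta_2}\wedge\omega^{\beta_3}.\]
Only the two triangles contribute, by invariance. Antisymmetrizing over the $3!$ orderings replaces the product by $\tfrac12\tr(e_{\beta_1}[e_{\beta_2},e_{\beta_3}])$ summed over the six orderings. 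This collapses to a multiple of $\tr(e_{\alpha_1}[e_{\alpha_2},e_{-\alpha_3}])$, and likewise for the negative triangle. Computing these with elementary matrices $E_{ij}$ should give coefficients $\pm3$, hence $\alpha_2=\Omega_--\Omega_+$. Fixing the normalization of the $e_\beta$ (for instance $e_{-\beta}=E_{ji}$ when $e_\beta=E_{ij}$) is exactly what makes the constants come out as $\pm1$, so I would state that convention explicitly.

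For (ii), I would compute $\dif_\m$ on the generators. The relative CE differential is $\dif_\m\omega^\gamma=-\tfrac12\sum\omega^\gamma([e_\beta,e_{\beta'}]_\m)\,\omega^\beta\wedge\omega^{\beta'}$, and only brackets landing in $\m$ count. This is the "edge fracturing": an edge $\gamma$ splits into two root edges $\beta+\beta'=\gamma$. The $\ttt$-components of brackets are dropped and instead feed into $F_\ttt=-\tfrac12[\theta_\m,\theta_\m]_\ttt$, which consists of pairs $\omega^\beta\wedge\omega^{-\beta}$ times $h_\beta$. Applying the Leibniz rule to $\Omega_\pm$, each edge fractures into two, and the result is a $4$-form of weight zero. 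The invariant $4$-cochains are spanned by the three "parallelograms" $\omega^{\beta}\wedge\omega^{-\beta}\wedge\omega^{\gamma}\wedge\omega^{-\gamma}$. Terms of the form $\omega^{\beta}\wedge\omega^{\beta}$ vanish, and the remaining terms should assemble into those parallelograms. I would then compute $\tr(F_\ttt^2)=\sum\tr(h_\beta h_\gamma)\,\omega^\beta\wedge\omega^{-\beta}\wedge\omega^\gamma\wedge\omega^{-\gamma}$ in the same basis, using the Cartan matrix entries $\tr(h_{\alpha_i}h_{\alpha_j})$, and compare coefficients to verify $\dif_\m\Omega_\pm=\pm\tfrac12\tr(F_\ttt^2)$ with the signs stated.

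As a consistency check, $\dif_\m\alpha_2=-p_2(F_\ttt)$ is already guaranteed by Theorem~\ref{thm:intro_universal_formula}, so the real content is the split into halves. For that I would use the symmetry $\beta\mapsto-\beta$ (the Chevalley involution). It exchanges $\Omega_+$ and $\Omega_-$ up to sign and preserves $\tr(F_\ttt^2)$, which forces the two halves to be equal. This step is the main obstacle, because sign and normalization bookkeeping under wedge reordering is error-prone. The symmetry argument reduces the work to computing $\dif_\m\Omega_+$ alone.
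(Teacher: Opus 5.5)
Your proposal is correct and follows the paper's own route: $T$-invariance as the zero-weight (closed-loop) condition; expanding $\alpha_2=-\frac13\tr(\theta_\m^3)$ in the root basis to get $\tr(\theta_\m^3)=3(\Omega_+-\Omega_-)$; the explicit dual-bracket formulas for $\dif_\m\omega^{\pm\alpha_i}$ with the Leibniz rule, compared against the diagonal-matrix computation of $\tr(F_\ttt^2)$; and the omitted-root argument $\sum_{\beta\neq\gamma}\beta=-\gamma\neq0$ for $C^5=0$. Your Chevalley-involution step ($\omega^\beta\mapsto-\omega^{-\beta}$, which sends $\Omega_+\mapsto-\Omega_-$, commutes with $\dif_\m$ and fixes $\tr(F_\ttt^2)$) validly replaces the paper's ``analogous computation'' of $\dif_\m\Omega_-$; one small correction is that a zero-sum set of roots fixes the loop only up to arbitrary reordering of its edges, not just cyclic reordering (e.g.\ $(\alpha_1,\alpha_2,-\alpha_3)$ and $(\alpha_2,\alpha_1,-\alpha_3)$ trace different triangles but give the same monomial up to sign), so state the bijection between invariant monomials up to sign and zero-sum subsets of $\Phi$.
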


\paragraph{\textit{4.  Koszul DGA and Poincaré Polynomials.}}
While an additive character-theoretic approach via the Frobenius characteristic map block-diagonalizes the cochain complex into Specht modules, it encounters the aforementioned threshold at rank $k = 4$ ($\mathfrak{sl}_4$), where Specht module multiplicities exceed $1$ (yielding undetermined scalar matrix blocks like $\mathrm{Hom}^{S_4}(V_{\mathbf{3}}^{\oplus 3}, V_{\mathbf{3}}^{\oplus 5}) \cong \CC^{5 \times 3}$). We resolve this obstruction by modeling the multiplicative DGA structure directly:

\begin{theorem}[ Koszul Resolution and Poincaré Polynomials]\label{thm:intro_koszul}
	Let $K_\bullet(p_2, \dots, p_k) := S(\h^\vee) \otimes \Lambda(V_{\mathrm{odd}})$, where $V_{\mathrm{odd}}=\bigoplus_{j=2}^k\CC\eta_j$, be the  differential graded algebra generated by the restricted fundamental invariants $p_j = \tr(X^j)|_\h$ in Koszul degree $0$ and odd symbols $\eta_j$ in Koszul degree $1$ with differential $\dif_\kappa(1 \otimes \eta_j) = p_j \otimes 1$.
	\begin{enumerate}[(i)]
		\item The sequence $p_2, \dots, p_k$ forms a regular sequence in $S(\h^\vee)$, and $K_\bullet(p_2, \dots, p_k)$ is a finite free acyclic resolution of the coinvariant algebra $S(\h^\vee)/I_W \cong H^*(K/T; \CC)$.
		\item Applying the Euler--Poincaré principle degree-by-degree to the homogeneous components of this resolution recovers the classical Poincaré polynomial and Euler characteristic:
		\begin{align*}
			P_{K/T}(t) &= \prod_{j=2}^k \frac{1 - t^{2j}}{1 - t^2} = \prod_{j=2}^k \big(1 + t^2 + t^4 + \dots + t^{2(j-1)}\big),\\
			\chi(K/T) &= P_{K/T}(1) = k! ,
		\end{align*}
		completely bypassing matrix-rank ambiguities.
	\end{enumerate}
\end{theorem}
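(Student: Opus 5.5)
The plan is to treat (i) as a statement in commutative algebra about $S(\h^\vee)$ and then read (ii) off the graded Euler--Poincaré identity for the acyclic complex. I would identify $\h \subset \slk$ with the diagonal traceless matrices. Then $S(\h^\vee) \cong \CC[x_1,\dots,x_k]/(x_1+\dots+x_k)$, a polynomial ring in $k-1$ variables, and the restricted invariants become the power sums $p_j = \sum_i x_i^j$. The Weyl group $W = S_k$ acts by permuting the $x_i$. By the fundamental theorem of symmetric polynomials together with Newton's identities (valid over $\CC$), $S(\h^\vee)^W = \CC[p_2,\dots,p_k]$, where $p_1 = 0$ on $\h$. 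Hence the ideal $I_W$ generated by positive-degree invariants equals $(p_2,\dots,p_k)$.

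\textbf{Step 1 (regularity).} We have $k-1$ homogeneous elements in a graded polynomial ring of Krull dimension $k-1$. Since a polynomial ring is Cohen--Macaulay, such a sequence is regular if and only if its common zero locus in $\h$ is the origin. So suppose $x\in\h$ with $p_1(x)=p_2(x)=\dots=p_k(x)=0$. Newton's identities give $e_1(x)=\dots=e_k(x)=0$, so every $x_i$ is a root of $\prod_i(t-x_i)=t^k$, and hence $x=0$. This step is the main obstacle: it is the only genuinely non-formal input. I would state the Cohen--Macaulay criterion (equivalently, that $S$ is a finite module over $\CC[p_2,\dots,p_k]$ by the graded Nakayama lemma, hence free) explicitly rather than leave it implicit.

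\textbf{Step 2 (resolution).} For a regular sequence, the Koszul complex $K_\bullet(p_2,\dots,p_k)$ with $\dif_\kappa(1\otimes\eta_j)=p_j\otimes 1$, extended as a derivation, has homology concentrated in Koszul degree $0$. That homology is $S(\h^\vee)/(p_2,\dots,p_k)=S(\h^\vee)/I_W$. The complex is finite and free, of length $k-1$, and $\dif_\kappa^2=0$ holds because the $\eta_j$ are odd. Composing with Borel's isomorphism $S(\h^\vee)/I_W\cong H^*(K/T;\CC)$ (doubling degrees, so that $\h^\vee$ sits in degree $2$) gives (i).

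\textbf{Step 3 (Poincaré series).} To make $\dif_\kappa$ homogeneous of degree $0$, I grade $\h^\vee$ in degree $2$ and give $\eta_j$ internal weight $2j$, the weight of $p_j$. Each Koszul-degree piece is then a free module: in degree $r$ it is $\bigoplus_{|J|=r} S(\h^\vee)\eta_J$. In each fixed internal degree the complex is a finite exact sequence of finite-dimensional spaces, so the alternating sum of dimensions vanishes. This gives
\[ P_{K/T}(t)=\frac{\prod_{j=2}^k(1-t^{2j})}{(1-t^2)^{k-1}}=\prod_{j=2}^k\bigl(1+t^2+\dots+t^{2(j-1)}\bigr). \]
Finally, all cohomology sits in even degrees, so $\chi(K/T)=P_{K/T}(1)=\prod_{j=2}^k j=k!$. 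This is consistent with $|W|=k!$ and serves as a sanity check.
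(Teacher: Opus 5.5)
Your proposal is correct. From Step 2 onward it matches the paper: Koszul acyclicity for a regular sequence, then the Euler--Poincaré principle with $\eta_j$ weighted like $p_j$. Grading $\h^\vee$ in degree $2$ from the start, instead of substituting $q=t^2$ at the end, is only cosmetic.

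The genuine difference is the regularity step.

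\begin{itemize}
\item The paper takes Chevalley's theorems as input. Since $S(\h^\vee)^W=\CC[p_2,\dots,p_k]$ is a polynomial ring, the $p_j$ are trivially regular there. Since $S(\h^\vee)$ is a free $S(\h^\vee)^W$-module of rank $|W|$ (Chevalley's Theorem (B)), regularity transfers by flat base change.
\item You instead use that $S(\h^\vee)$ is Cohen--Macaulay. Then $\dim\h$ homogeneous elements form a regular sequence exactly when they are a homogeneous system of parameters, and you check the null-cone condition $V(p_2,\dots,p_k)=\{0\}$ directly via Newton's identities.
\end{itemize}

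The paper's route is a two-line reduction once Chevalley's structure theorem is granted. However, it essentially imports the conclusion: freeness of $S(\h^\vee)$ over $\CC[p_2,\dots,p_k]$ is equivalent to regularity of the $p_j$. Your route derives regularity from a concrete computation plus standard commutative algebra. As a result, Chevalley's freeness and $\dim S(\h^\vee)/I_W=k!$ become outputs (the latter via $P_{K/T}(1)$) rather than inputs.

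One small caution about your parenthetical. Finiteness of $S(\h^\vee)$ over $\CC[p_2,\dots,p_k]$ alone does not imply freeness. The words ``hence free'' are exactly where Cohen--Macaulayness (equivalently, Auslander--Buchsbaum) enters. So state the CM/system-of-parameters criterion as the lemma you invoke, as you intend.

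Your remark that $\chi(K/T)=P_{K/T}(-1)=P_{K/T}(1)$, because the cohomology is concentrated in even degrees, also supplies a justification the paper leaves implicit.
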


\subsection{Organization of the Paper}

The remainder of the paper is organized as follows:

In \textbf{Section~\ref{sec:geocan}}, we examine the differential geometry of the canonical fibration $K \to K/T$. We formalize the reductive splitting $\kk = \ttt \oplus \m$, decompose the Maurer--Cartan form into the vertical connection $\theta_{\ttt}$ and horizontal soldering form $\theta_\m$, prove the trace orthogonality of the splitting (Lemma~\ref{lem:trace_orthogonality}), and review the Chern--Weil derivation of Borel's coinvariant presentation $H^*(K/T; \CC) \cong S(\h^\vee)/I_W$ (Theorem~\ref{thm:borel_isomorphism}).

In \textbf{Section~\ref{sec:weil}}, we investigate the universal transgression mechanism within the contractible Weil algebra $W(\kk)$. We construct the universal Chern--Simons cochains $Tf_j \in W^{2j-1}(\kk)$ (Proposition~\ref{cs}) and evaluate their projection to absolute closed primitives $\eta_j = \pi(Tf_j)$ on the compact group $K$ (Proposition~\ref{primform}). We then demonstrate the static evaluation defect (Remark~\ref{rmk:psi_w_failure}), proving why naive algebraic homomorphic evaluations fail to produce basic primitives on $K/T$.

In \textbf{Section~\ref{sec:uni}}, we develop the dynamic transgression machinery on the flag manifold. We introduce the 1-parameter connection family $\theta_s = \theta_{\ttt} + s\theta_\m$, establish the linear curvature factorization $F_s = (1-s)(F_{\ttt} - s\theta_\m^2)$ (Remark~\ref{rmk:curvature_factorization}), and prove the Geometric Transgression Theorem (Theorem~\ref{thm:geometric_transgression}). We then prove the universal closed-form formula for $\alpha_j$ via Euler's Beta integral (Theorem~\ref{thm:general_primitive}) and evaluate the second primitive $\alpha_2$ (Corollary~\ref{cor:j2_primitive}) and third primitive $\alpha_3$ (Corollary~\ref{cor:j3_primitive}).

We subsequently investigate the non-commutative threshold at degree $j = 4$. We evaluate the fourth primitive $\alpha_4$ (Corollary~\ref{cor:j4_primitive}) and detail the algebraic mechanism of necklace splitting arising from $[F_{\ttt}, \theta_\m^2] \neq 0$. We then formalize the conceptual interpretation of transgression primitives as curvature-perturbed Maurer--Cartan forms.

In \textbf{Section~\ref{sec:root}}, we specialize to the rank-two root system $A_2$ ($\mathfrak{sl}_3(\CC)$). We construct the pictorial loop dictionary on the planar hexagonal lattice (Remark~\ref{loops0}), express the canonical curvature in coordinates, and realize the second primitive $\alpha_2 = \Omega_- - \Omega_+$ as oriented triangle loops. We prove that the relative differential $\dif_\m$ acts as an edge-fracturing operator that shatters triangles into parallelograms, verifying $\dif_\m \alpha_2 = -p_2(F_{\ttt})$ in coordinates and demonstrating the vanishing of $\alpha_3 \equiv 0$ (Remark~\ref{rmk:a2_cubic_triviality}).

In \textbf{Section~\ref{sec:homo}}, we resolve the homological problem. We establish the Weyl equivariance of the complex and construct the master generating function $\mathcal{F}(t,x)$ via the Frobenius characteristic map, verifying the $k=3$ Betti numbers and exposing the additive multiplicity barrier at $k=4$ (Example~\ref{k4decomp}). We then introduce the Koszul differential graded algebra $K_\bullet(p_2, \dots, p_k)$, prove that the restricted invariants form a regular sequence (Proposition~\ref{regseqH}), and derive the Poincaré polynomial and Euler characteristic of the flag manifold via the Euler--Poincaré principle (Proposition~\ref{prop:poincare_flag}), concluding with Kostant's harmonic highest-weight solution (Theorem~\ref{kolialco}).

\section{Geometry of the Canonical Fibration \texorpdfstring{$K \to K/T$}{K -> K/T}}\label{sec:geocan}

\subsection{Reductive Splitting and Invariant Differential Forms}{\color{blue}\label{sec:reductive_splitting}}

Let $K = \SUk$ and let $T \subset K$ be the maximal torus of diagonal unitary matrices. The flag manifold $K/T$ is a reductive homogeneous space whose de Rham cohomology is canonically computed by the subcomplex of left-$K$-invariant, right-$T$-basic differential forms. We fix the standard Lie-theoretic setup:
\begin{equation}\label{setup}
     K=\SUk\quad T=K\cap B= \mathrm{S}(\mathrm{U}(1)^k)\quad \kk=\suk\quad \ttt=\mathrm{s}((i\RR)^k),
\end{equation}
where $\dim_\CC(\slk)=k^2-1$, 
$\dim_\RR(\kk)=k^2-1$, and $\dim_\RR(\ttt)=k-1$. Consequently $\dim_\RR(\kk/\ttt)=(k^2-1)-(k-1)=k(k-1)$.

The Killing form on $\slk$ and therefore also on $\kk=\suk\subset\slk$ is given by $\killing{\xi_1}{\xi_2}=2k\tr{\xi_1\xi_2}$ and can be used to identify the Lie algebras with their duals.\\
By Chapter 9 Theorem 6 in \cite{onishchik1994topology}, the Chevalley-Eilenberg complex of $\kk$ takes the following shape:
\[(\kk^\vee)^{T}_{\ttt}\xrightarrow{\dif_1}\left(\Lambda^2(\kk^\vee)_{\ttt}\right)^{T}\xrightarrow{\dif_2}\quad\cdots\quad\xrightarrow{\dif_{n-1}}\left(\Lambda^n (\kk^\vee)_{\ttt}\right)^{T}\xrightarrow{\dif_n}\quad\cdots\quad ,\]
with the degree $n$ Chevalley-Eilenberg differential $\dif_n$ given explicitly, for $\omega\in\left(\Lambda^n (\kk^\vee)_{\ttt}\right)^{T}$ and $\xi_1,\dots,\xi_{n+1}\in\kk$, by the formula 
\begin{equation}\label{CEdifdef}
	\dif\omega(\xi_1,\dots,\xi_{n+1})=\sum_{i<j}(-1)^{i+j}\omega([\xi_i,\xi_j],\xi_1,\dots,\hat{\xi_i},\dots,\hat{\xi_j},\dots,\xi_{n+1}) ,
\end{equation}
see Corollary 7.7.3 in \cite{Weibel_1994}.

\begin{remark}[Maurer--Cartan]\label{mc1}
    The differential $\dif_1$ is the dualization on forms of the Lie bracket on vector fields, and the higher degree differentials come from the Maurer--Cartan equation $\dif\theta+\frac{1}{2}[\theta\wedge\theta]=0$. %
\end{remark}

By using the isomorphisms  
\begin{equation}\label{isos}
     (\kk^\vee)^{T}_{\ttt}\cong((\kk/\ttt)^\vee)^{T}\cong((\ttt^\perp)^\vee)^{T} 
\end{equation}
 and putting $\m:=\ttt^\perp$ the complex can also be written as follows:
\begin{equation}\label{invcomp}          (\m^\vee)^{T}\xrightarrow{\dif_1}\left(\Lambda^2\m^\vee\right)^{T}\xrightarrow{\dif_2}\quad\cdots\quad\xrightarrow{\dif_{n-1}}\left(\Lambda^n \m^\vee\right)^{T}\xrightarrow{\dif_n}\quad\cdots\quad . 
\end{equation}

\begin{remark}[$k=2$]
    In the special case
    $k=2$, $K/T\cong\cp{1}$ is a type AIII symmetric space, with the involution of $K$ given by conjugation with the matrix $
    \begin{psmallmatrix}
        i & 0\\
        0 & -i
    \end{psmallmatrix}
    $. In that case, Theorems 6, 7 and the corresponding Corollary in \S9.10 of \cite{onishchik1994topology} imply that  all differentials vanish and that for all $n$ the spaces in the complex of invariant forms are identical to the cohomology spaces: $\prescript{K}{}(\Omega^n(\orb{e}(K)))\cong H^n(\orb{e}(K),\RR)$. \\
    However, for $k\neq2$ we do not have a symmetric space and cannot apply these arguments. 
    For $k=2$, $\alpha_2 \equiv 0$ vanishes for degree reasons ($\Lambda^3 \m^\vee = \{0\}$), confirming that non-trivial transgression is an intrinsically non-symmetric phenomenon occurring for $k \ge 3$.
\end{remark}

\begin{proposition}[Equivalence of Invariant de Rham and Chevalley--Eilenberg Cohomology \cite{Chevalley:1948zz, onishchik1994topology}]\label{prop:ce_derham_equiv}
    For a homogeneous space $G/H$ with connected stabilizer $H$, the cohomology of the complex of invariant differential forms is the relative Chevalley-Eilenberg cohomology:
    \[H^n_\text{inv}(\g/\h)\cong H^n_\text{CE}(\g,\h) ,\]
    where the left hand side is the cohomology computed via the complex of invariant alternating forms, and the right hand side is the above mentioned relative Chevalley-Eilenberg cohomology.
\end{proposition}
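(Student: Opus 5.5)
The plan is to identify both sides of the claimed isomorphism with one and the same algebraic complex, namely $\big((\Lambda^\bullet(\g/\h)^\vee)^H, \dif\big)$, and to check that the two differentials agree.

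\textbf{Step 1: invariant forms are a finite-dimensional algebraic complex.} Evaluation at the base point $o = eH$ gives an isomorphism
\[
{}^G\Omega^n(G/H) \longrightarrow \big(\Lambda^n (\g/\h)^\vee\big)^H .
\]
A $G$-invariant form is determined by its value at $o$ via left translation. That value must be invariant under the isotropy representation of $H$ on $T_o(G/H) \cong \g/\h$. Conversely, an $H$-invariant alternating form on $\g/\h$ extends by left translation to a well-defined smooth form, and well-definedness is exactly $H$-invariance. Equivalently, we pull back along $G \to G/H$ and identify invariant forms on $G/H$ with left-invariant forms on $G$ that are basic for the right $H$-action. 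By Remark~\ref{mc1} such a form is an element of $\Lambda^n \g^\vee$.

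\textbf{Step 2: compare the differentials.} On left-invariant forms on $G$, the de Rham differential is the Chevalley--Eilenberg differential \eqref{CEdifdef}. This follows from Cartan's formula
\[
\dif\omega(X_1,\dots,X_{n+1}) = \sum_i \pm X_i\,\omega(\dots) + \sum_{i<j} \pm \omega([X_i,X_j],\dots),
\]
where the first sum vanishes because $\omega(X_{1},\dots)$ is constant on left-invariant fields. The exterior derivative commutes with pullback, and $\dif$ preserves basic forms because it commutes with $\iota_\xi$-annihilation through $\mathcal{L}_\xi = \dif\iota_\xi + \iota_\xi\dif$. So the basic left-invariant forms form a subcomplex of $(\Lambda^\bullet\g^\vee,\dif_{\mathrm{CE}})$. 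This subcomplex is exactly $\big((\Lambda^\bullet\g^\vee)_{\h}\big)^H$, i.e.\ forms with $\iota_\xi\omega = 0$ and $H$-invariant. That is the relative Chevalley--Eilenberg complex $C^\bullet(\g,\h)$ of \eqref{invcomp}.

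\textbf{Step 3: the role of connectedness.} This is the main subtle point. The relative complex is defined with $\h$-invariance, i.e.\ $\mathcal{L}_\xi\omega = 0$ for $\xi\in\h$. The invariant-form complex uses $H$-invariance under $\Ad$. For connected $H$, $\exp(\h)$ generates $H$, so infinitesimal invariance integrates to $H$-invariance, and the two conditions coincide. In particular, via $\mathcal{L}_\xi = \iota_\xi\dif + \dif\iota_\xi$, horizontality plus $\h$-invariance is precisely the basic condition.

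With the complexes identified degreewise and the differentials matching, their cohomologies agree: $H^n_{\mathrm{inv}} \cong H^n_{\mathrm{CE}}(\g,\h)$. In our application $H = T$ is a torus and hence connected, so the hypothesis is satisfied.
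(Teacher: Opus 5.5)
Your proposal is correct and follows the route the paper itself sketches: the paper gives no independent proof (it cites Chevalley--Eilenberg and Onishchik), but the paragraph right after the statement runs exactly your argument, pulling invariant forms on $K/T$ back to basic left-invariant forms on $K$ via $\pi_K^*$, evaluating at the identity, and identifying the result with $(\Lambda^\bullet\m^\vee)^T$. Your Step~3 additionally makes explicit where connectedness of $H$ enters, namely upgrading $\h$-invariance to $H$-invariance because $\exp(\h)$ generates $H$, which the paper leaves implicit.
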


 A differential form $\omega \in \Omega^\bullet(K)$ descends to a smooth form on the flag manifold $K/T$ if and only if it is basic with respect to the principal $T$-action, meaning it is horizontal (i.e., $\iota_A \omega = 0$ for all fundamental vertical vector fields $A \in \ttt$) and invariant (i.e., $R_t^* \omega = \omega$ for all $t \in T$). More precisely, the pullback along the bundle projection defines an isomorphism of differential graded algebras:
\[ \pi_K^* \colon \Omega^\bullet(K/T) \xrightarrow{\ \sim\ } \Omega^\bullet_\mathrm{bas}(K) . \]

Recall the canonical isomorphism between left-invariant differential forms on $K$ and the exterior algebra given by evaluation at the identity:

$$\mathrm{ev}_1 \colon \prescript{K}{}{\Omega^\bullet(K)} \xrightarrow{\ \sim\ } \Lambda^\bullet\kk^\vee .$$

Under this map, the geometric condition of horizontality ($\iota_A \omega = 0$ for all vertical generators $A \in \ttt$) translates into the algebraic condition that the multilinear form vanishes on $\ttt$. In terms of the Maurer--Cartan splitting $\theta_\mathrm{MC} = \theta_{\ttt} + \theta_{\m}$, this condition is enforced by the soldering form: because $\ker(\theta_{\m})_k = \dif L_k(\ttt)$ (as established in Proposition~\ref{Q2c0} below), any form expressed purely in terms of $\theta_{\m}$ is automatically horizontal, while $\theta_{\ttt}$ detects the vertical components along the fibers.

Restricting $\mathrm{ev}_1$ to basic forms and imposing right $T$-invariance yields the chain of isomorphisms:

$$\prescript{K}{}{\Omega^\bullet(K)_\mathrm{bas}} = \prescript{K}{}{\Omega^\bullet(K)_{\ttt}}^T \xrightarrow[\sim]{\mathrm{ev}_1\mid} (\Lambda^\bullet\kk^\vee)_{\ttt}^T \cong (\Lambda^\bullet\m^\vee)^T ,$$

recovering the relative Chevalley--Eilenberg complex (\ref{invcomp}) that computes the invariant de Rham forms $\Omega^\bullet(K/T)^K$.

In Subsection~\ref{sec:bundle_geometry}, we examine the principal bundle geometry of the canonical fibration $K \to K/T$, decomposing the Maurer--Cartan form into the vertical connection $\theta_{\ttt}$ and the horizontal soldering form $\theta_\m$.

\subsection{Connections, Soldering Forms, and Curvature}{\color{blue}\label{sec:bundle_geometry}}

The relative Chevalley--Eilenberg complex $(\Lambda^\bullet \m^\vee)^T$ acquires its geometric meaning through the differential geometry of the canonical principal $T$-bundle $\pi_K \colon K \to K/T$. We now examine how the reductive splitting $\kk = \ttt \oplus \m$ decomposes the Maurer--Cartan form into the vertical canonical connection $\theta_{\ttt}$ and the horizontal soldering form $\theta_\m$.
	
Because $K = \SUk$ and its maximal torus $T$ are compact real Lie groups, all differential forms on this smooth fibration take values in the real Lie algebras $\kk = \mathrm{Lie}(K)$ and $\ttt = \mathrm{Lie}(T)$, while their complexifications $\g = \kk_\CC = \slk$ and $\h = {\ttt}_\CC$ govern the polynomial invariants and complex cohomology. Decomposing the global Maurer--Cartan form $\theta_\mathrm{MC} \in \Omega^1(K, \kk)$ along the reductive splitting $\kk = {\ttt} \oplus \m$ gives:
\[\theta_\mathrm{MC} = \theta_{\ttt} + \theta_\m \in \Omega^1(K, \ttt \oplus \m) \text{}.\]
As we shall see with Proposition \ref{Q2c0} below, this decomposition separates the geometry of the fibration into a vertical connection $\theta_{\ttt} := \operatorname{pr}_{\ttt}(\theta_\mathrm{MC})$ along the fibers and a horizontal soldering form $\theta_\m := \operatorname{pr}_\m(\theta_\mathrm{MC})$ across the base.

\begin{proposition}\label{Q2c0}
	Let $\pi_K \colon K \to K/T$ be the principal $T$-bundle defined by the canonical projection, and let $\theta_\mathrm{MC} = \theta_{\ttt} + \theta_\m \in \Omega^1(K, \kk)$ be the decomposition of the Maurer--Cartan form along the reductive splitting $\kk = \ttt \oplus \m$. Then:
	\begin{enumerate}[(i)]
		\item $K \to K/T$ is isomorphic to a reduction of the principal frame bundle $\mathrm{L}(K/T) \to K/T$ to the structure group $T$ via the isotropy representation $\lambda = \Ad\vert{}_T \colon T \to \mathrm{GL}(\m)$. The reduction is given explicitly by the bundle embedding
		\[f \colon K \hookrightarrow \mathrm{L}(K/T), \qquad k \longmapsto \big( X \in \m \mapsto \dif\pi_K(\dif L_k(X)) \in \mathrm{T}_{kT}(K/T) \big) \text{};\]
		
		\item $\theta_{\ttt} := \operatorname{pr}_{\ttt}(\theta_\mathrm{MC}) \in \Omega^1(K, \ttt)$ is the canonical connection on $K \to K/T$, with curvature
		\[F_{\ttt} = \dif \theta_{\ttt} = -\frac{1}{2}[\theta_\m, \theta_\m]_{\ttt} \in \Omega^2(K, \ttt) \text{};\]
		
		\item $\theta_\m := \operatorname{pr}_\m(\theta_\mathrm{MC}) \in \Omega^1_\mathrm{hor}(K, \m)^{(T, \Ad(T))}$ is a soldering form on $K \to K/T$; more precisely, it coincides with the pullback of the canonical soldering form $\sigma$ on $\mathrm{L}(K/T)$ along the reduction map $f$:
		\[f^*\sigma = \theta_\m \text{}.\]
	\end{enumerate}
	In particular, we may refer to $\theta_{\ttt}$ as the vertical (connection) component, and $\theta_\m$ as the horizontal (soldering) component of $\theta_\mathrm{MC}$.
\end{proposition}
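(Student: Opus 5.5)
We prove the three parts in the order (i), (ii), (iii), since (iii) will reuse the frame bundle embedding from (i). Throughout we write $R_t$ for right multiplication by $t \in T$. We use two facts: the Maurer--Cartan form is equivariant, $R_t^*\theta_\mathrm{MC} = \Ad(t^{-1})\theta_\mathrm{MC}$, and it satisfies the structure equation $\dif\theta_\mathrm{MC} + \frac12[\theta_\mathrm{MC},\theta_\mathrm{MC}] = 0$ (Remark~\ref{mc1}). Both projections $\operatorname{pr}_{\ttt}$ and $\operatorname{pr}_\m$ commute with $\Ad(T)$. This holds because $\ttt$ is abelian and fixed by $\Ad(T)$, and $\m = \ttt^\perp$ is $\Ad(T)$-stable by invariance of the Killing form.

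\textbf{Part (i).} We check three things in turn.
\begin{enumerate}[(a)]
\item The map $f(k)$ is a linear isomorphism $\m \to \mathrm{T}_{kT}(K/T)$. Indeed, $\dif\pi_K \circ \dif L_k$ has kernel $\dif L_k(\ttt)$, the tangent space to the fiber $kT$, and $\kk = \ttt \oplus \m$.
\item The map $f$ is $T$-equivariant for $\lambda$. Using $\pi_K \circ L_{kt} = \pi_K \circ L_k \circ C_t$ (where $C_t$ is conjugation by $t$) and $\pi_K \circ R_t = \pi_K$, we get
\[ f(kt)(X) = \dif\pi_K\, \dif L_k(\Ad(t)X) = f(k)(\lambda(t)X). \]
This is precisely the right action of $T \hookrightarrow \mathrm{GL}(\m)$ on frames.
\item The map $f$ is injective and covers the identity on $K/T$. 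Injectivity uses that $\lambda$ is faithful on $T$ modulo the center $Z(K)$. The subtle point here is that $Z(K) \subset T$ acts trivially under $\Ad$. So strictly, $f$ factors through $K/Z(K)$, and we will phrase (i) as ``$f$ is a bundle morphism along $\lambda$'', i.e.\ a $\lambda$-reduction in the generalized sense.
\end{enumerate}
We expect this center issue to be the only genuine obstacle. We handle it by interpreting ``reduction'' as a $\lambda$-structure: a morphism of principal bundles over the identity, equivariant along $\lambda$.

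\textbf{Part (ii).} The form $\theta_{\ttt}$ is a connection. First, $\iota_A \theta_{\ttt} = \operatorname{pr}_{\ttt}(A) = A$ for $A \in \ttt$, since the fundamental field of $A$ is the left-invariant field $A$. Second, it is equivariant: $R_t^*\theta_{\ttt} = \Ad(t^{-1})\theta_{\ttt} = \theta_{\ttt}$. Its horizontal distribution is $\dif L_k(\m)$, which is left-invariant; this is the canonical connection. For the curvature, since $T$ is abelian, $F_{\ttt} = \dif\theta_{\ttt}$. We project the structure equation onto $\ttt$ and expand the bracket using $[\ttt,\ttt]=0$ and $[\ttt,\m]\subset\m$:
\[ [\theta_\mathrm{MC},\theta_\mathrm{MC}]_{\ttt} = [\theta_\m,\theta_\m]_{\ttt}. \]
This gives $F_{\ttt} = -\frac12[\theta_\m,\theta_\m]_{\ttt}$, which is horizontal because it involves only $\theta_\m$.

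\textbf{Part (iii).} The form $\theta_\m$ is horizontal, since $\operatorname{pr}_\m(A) = 0$ for $A \in \ttt$. It is of type $\Ad(T)$ by the equivariance above. To identify it with $f^*\sigma$, recall that $\sigma_u(v) = u^{-1}(\dif p(v))$ for a frame $u$ and tangent vector $v$. Take $v = \dif L_k(Y) \in \mathrm{T}_kK$ and write $Y = Y_{\ttt} + Y_\m$. Then $\dif p \circ \dif f = \dif\pi_K$, so
\[ (f^*\sigma)_k(v) = f(k)^{-1}\big(\dif\pi_K\,\dif L_k(Y)\big) = f(k)^{-1}\big(f(k)(Y_\m)\big) = Y_\m = \theta_\m(v). \]
The final ``in particular'' statement then just restates (ii) and (iii).
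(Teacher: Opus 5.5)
Your parts (ii) and (iii) follow the paper's argument. You project the Maurer--Cartan equation onto $\ttt$ using $[\ttt,\ttt]=0$ and $[\ttt,\m]\subset\m$, and you compute $f^*\sigma$ from $p\circ f=\pi_K$ together with $\ker(\dif\pi_K\circ\dif L_k)=\ttt$.

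In (i) you take a more hands-on route than the paper. The paper identifies $\mathrm{L}(K/T)\cong K\times_{\Ad(T)}\mathrm{GL}(\m)$ from $\mathrm{T}(K/T)\cong K\times_{\Ad(T)}\m$ and takes $k\mapsto[k,I]$. You instead check the fiberwise isomorphism and the equivariance $f(kt)=f(k)\circ\lambda(t)$ directly, from $\pi_K\circ L_{kt}=\pi_K\circ L_k\circ C_t$. The associated-bundle route gives smoothness of $f$ for free; you should say a word about it, though it is routine. Your route makes the kernel of $\lambda$ visible.

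Your center observation is correct, and the paper does not address it. We have $\ker\lambda=Z(\SUk)\cong\ZZ/k\subset T$. The paper's map has exactly the same defect, since $[kz,I]=[k,\lambda(z)]=[k,I]$ for $z\in Z(\SUk)$. So the arrow $\hookrightarrow$ and the phrase ``bundle embedding'' in (i) are literally false for $k\ge 2$.

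Statement (i) therefore holds only in your sense of a $\lambda$-structure. Equivalently, it holds as a genuine reduction of $\mathrm{L}(K/T)$ to $\lambda(T)\cong T/Z(K)$, realized by the image $f(K)\cong K/Z(K)$. Nothing in (ii) or (iii) uses injectivity, since $f^*\sigma$ only needs $f$ to be smooth and $\lambda$-equivariant. Your repair therefore costs nothing downstream.
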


\begin{proof}
	Recall that $K/T$ is a reductive homogeneous space in the sense that there is a splitting of $\kk = \mathrm{Lie}(K)$ into $\Ad(T)$-subrepresentations: $\kk = \ttt \oplus \m$. The proposition can now be derived from several standard properties of principal bundles.
	\begin{enumerate}[(i)]
		\item 
		For any reductive homogeneous space $K/T$, the canonical isomorphism $\mathrm{T}(K/T) \cong K \times_{\Ad(T)} \m$ induces an associated bundle isomorphism for the frame bundle $\mathrm{L}(K/T) \cong K \times_{\widehat{\Ad(T)}} \mathrm{GL}(\m)$ (see, e.g., \cite{kobayashi1969}). Embedding $K$ into the associated bundle via $k \mapsto [k, I]$ realizes $K$ as a reduction of $\mathrm{L}(K/T)$ to the structure group $T$ via the isotropy representation $\Ad|_T \colon T \to \mathrm{GL}(\m)$. Evaluating this identification fiberwise yields the embedding $f(k) = \dif\pi_K \circ \dif L_k|_\m \colon \m \xrightarrow{\sim} \mathrm{T}_{kT}(K/T)$.
		
		\item 
		The projection 
		$\theta_{\ttt} := \operatorname{pr}_{\ttt} \circ \theta_{\mathrm{MC}}$ is right-invariant under $T$ because $R_t^* \theta_{\mathrm{MC}} = \Ad(t^{-1}) \circ \theta_{\mathrm{MC}}$, reductivity makes $\operatorname{pr}_{\ttt}$ commute with $\Ad(t^{-1})$, and $\Ad(t^{-1})$ acts trivially on the abelian Lie algebra $\ttt$. To compute the curvature, we apply the splitting $\theta_{\mathrm{MC}} = \theta_{\ttt} + \theta_{\m}$ to the Maurer--Cartan equation $\dif\theta_{\mathrm{MC}} + \frac{1}{2}[\theta_{\mathrm{MC}}, \theta_{\mathrm{MC}}] = 0$ before projecting onto $\ttt$:
		\[\dif\theta_{\m} + \dif\theta_{\ttt} = -\frac{1}{2}[\theta_{\m} + \theta_{\ttt}, \theta_{\m} + \theta_{\ttt}] \text{}.\]
		The left-hand side projects to $\dif\theta_{\ttt}$ because $\dif\theta_{\m}$ takes values strictly in $\m$ (so that $\operatorname{pr}_{\ttt}(\dif\theta_{\m}) = 0$). 
		
		The bracket on the right-hand side becomes
		\[[\theta_{\m}, \theta_{\m}] + [\theta_{\m}, \theta_{\ttt}] + [\theta_{\ttt}, \theta_{\m}] + [\theta_{\ttt}, \theta_{\ttt}] \text{},\]
		where the last term vanishes because $\ttt$ is abelian and the two middle terms live in $\m$ by reductivity ($[\ttt, \m] \subset \m$). Consequently, the Maurer--Cartan equation projects to
		\[\dif\theta_{\ttt} = -\frac{1}{2}[\theta_{\m}, \theta_{\m}]_{\ttt} \text{},\]
		and therefore
		\[F_{\ttt} := \dif\theta_{\ttt} + \frac{1}{2}[\theta_{\ttt}, \theta_{\ttt}] = \dif\theta_{\ttt} = -\frac{1}{2}[\theta_{\m}, \theta_{\m}]_{\ttt} \text{}.\]
		
		\item 
		Horizontality is obvious: the kernel of $\theta_{\m}$ on vertical vectors is by definition the subspace $\ttt$. As above, equivariance follows from reductivity, which implies that $\operatorname{pr}_{\m}$ and $\Ad(T)$ commute. 

		Finally, to see that $\theta_\m$ is the pullback of the canonical soldering form $\sigma \in \Omega^1(\mathrm{L}(K/T), \m)$, let $\xi \in \mathrm{T}_k K$. By definition of the frame embedding $f$, the frame $f(k) \colon \m \xrightarrow{\sim} \mathrm{T}_{kT}(K/T)$ satisfies $f(k)(X) = \dif\pi_K(\dif L_k(X))$. Using $\pi_{\mathrm{L}} \circ f = \pi_K$, the pullback evaluates to:
		\[ (f^*\sigma)_k(\xi) = f(k)^{-1}(\dif\pi_K(\xi)) = \operatorname{pr}_\m(\theta_\mathrm{MC}(\xi)) = \theta_\m(\xi) \text{}, \]
		which confirms that $f^*\sigma = \theta_\m$ and concludes the proof. 
	\end{enumerate}
\end{proof}

To understand the relationship between the exterior derivative $\dif_K$ on the total space and the relative differential $\dif_\m$ on the base, we decompose the left-invariant forms into a horizontal--vertical bigrading:
\[{}^K\Omega^n(K) \cong \bigoplus_{p+q=n} \left( \Lambda^p \m^\vee \otimes \Lambda^q \ttt^\vee \right) =: \bigoplus_{p+q=n} {}^K\Omega^{p,q}(K) \text{}.\]
In this grading, the canonical soldering form is purely horizontal, $\theta_\m \in {}^K\Omega^{1,0}(K, \m)$, while the connection form is vertical, $\theta_{\ttt} \in {}^K\Omega^{0,1}(K, \ttt)$.

Although $\theta_{\ttt}$ is a vertical $1$-form, its exterior derivative is purely horizontal: by the $\ttt$-projection of the Maurer--Cartan equation,
\[\dif_K \theta_{\ttt} = -\frac{1}{2}[\theta_\m, \theta_\m]_{\ttt} = F_{\ttt} \in {}^K\Omega^{2,0}(K, \ttt) \text{}.\]

In the proof of Proposition~\ref{Q2c0}(ii), we decomposed the Maurer--Cartan equation $\dif\theta_\mathrm{MC} + \frac{1}{2}[\theta_\mathrm{MC}, \theta_\mathrm{MC}] = 0$ along the reductive splitting $\kk = \ttt \oplus \m$ onto the vertical $\ttt$-component. Projecting it analogously onto $\m$ yields:
\[\dif_K \theta_\m = -\frac{1}{2}[\theta_\m, \theta_\m]_\m - [\theta_{\ttt}, \theta_\m] \text{}.\]
Here, $-\frac{1}{2}[\theta_\m, \theta_\m]_\m \in {}^K\Omega^{2,0}(K, \m)$ is purely horizontal, whereas $[\theta_{\ttt}, \theta_\m] \in {}^K\Omega^{1,1}(K, \m)$ is of mixed type, encoding the vertical variation of $\theta_\m$ along the $T$-fibers ($\iota_A[\theta_{\ttt}, \theta_\m] = [A, \theta_\m]$ for $A \in \ttt$). 

Passing to the relative complex $(\Lambda^\bullet \m^\vee)^T \cong {}^K\Omega^{\bullet, 0}(K)^T$ projects out all forms with non-zero vertical degree, i.e., $q > 0$. Consequently, the mixed term $[\theta_{\ttt}, \theta_\m]$ vanishes under the horizontal projection $\operatorname{pr}^{(2,0)}$, yielding the restricted structure equation for the soldering form:
\begin{equation}\label{solderred}
	\dif_\m \theta_\m = \operatorname{pr}^{(2,0)}(\dif_K \theta_\m) = -\frac{1}{2}[\theta_\m, \theta_\m]_\m \text{}.
\end{equation}

Equation~\eqref{solderred} shows that the relative differential $\dif_\m$ is determined entirely by the horizontal Lie bracket on $\m$, which governs the edge-fracturing operator on the root lattice in Section~\ref{sec:root}. In Subsection~\ref{sec:trace_ortho_cw}, we establish the trace orthogonality of the reductive splitting, providing the algebraic vanishing mechanism required for the Chern--Weil construction and Borel's isomorphism in Subsection~\ref{sec:borel_cw_descent}.

\subsection{Trace Orthogonality and Invariant Polynomials}\label{sec:trace_ortho_cw}

A crucial algebraic property for our upcoming boundary computations on this complex is the trace orthogonality of the splitting.

\begin{lemma}[Trace Orthogonality of the Reductive Splitting]\label{lem:trace_orthogonality}
	Let $\kk = \mathfrak{su}(k)$ with Cartan subalgebra $\ttt$ and reductive complement $\m = \ttt^\perp$ defined via the Killing form in~\eqref{isos}. Then $\ttt$ and $\m$ are orthogonal with respect to the matrix trace:
	\[ \tr(H \cdot X) = 0 \quad \text{for all } H \in \ttt \text{ and } X \in \m . \]
	Consequently, for any differential forms $\omega \in \Omega^p(K, \ttt)$ and $\eta \in \Omega^q(K, \m)$, their wedge-trace vanishes: $\tr(\omega \wedge \eta) \equiv 0$. In particular:
	\[ \tr(\theta_\m \wedge F_{\ttt}^m) = 0 \quad \text{for all } m \ge 1, \qquad \text{and} \qquad \tr(F_{\ttt} \wedge \dif_\m \theta_\m) = 0 . \]
\end{lemma}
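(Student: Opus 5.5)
The plan is to reduce everything to a pointwise matrix statement and then extend it to forms by multilinearity. First, recall from \eqref{setup} and the discussion before \eqref{isos} that the Killing form on $\kk = \suk$ is $\killing{\xi_1}{\xi_2} = 2k\,\tr(\xi_1\xi_2)$. It is therefore a nonzero scalar multiple of the trace form. Orthogonality of $\m = \ttt^\perp$ to $\ttt$ with respect to $\killing{\cdot}{\cdot}$ is then literally the same condition as $\tr(HX) = 0$ for all $H \in \ttt$ and $X \in \m$, which gives the first assertion at once.

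To handle powers of curvature, I will need a slightly stronger pointwise fact: $\tr(DX) = 0$ for every \emph{diagonal} matrix $D$, not only for traceless skew-Hermitian ones, whenever $X \in \m$. The reason is that products of elements of $\ttt$, such as the coefficients of $F_{\ttt}^m$, are diagonal but in general leave $\ttt$. I will identify $\m$ explicitly. Using the root decomposition of $\slk$ with respect to the diagonal Cartan subalgebra $\h = \ttt_\CC$, the trace-orthogonal complement of $\h$ is $\bigoplus_{\alpha}\g_\alpha$, which is spanned by the elementary matrices $E_{ij}$ with $i \ne j$. Hence $\m$ consists of the off-diagonal matrices in $\suk$. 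For $D$ diagonal and $X$ off-diagonal, $(DX)_{ii} = D_{ii}X_{ii} = 0$, so $\tr(DX) = 0$. This identification, together with the remark about leaving $\ttt$, is the only real obstacle. The rest is bookkeeping.

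For the statement about forms, I will expand in bases. Write $\omega = \sum_a \omega^a H_a$ and $\eta = \sum_b \eta^b X_b$, where the $\omega^a, \eta^b$ are scalar forms, the $H_a$ are diagonal matrices, and the $X_b$ lie in $\m$. Then
\[ \tr(\omega\wedge\eta) = \sum_{a,b} \omega^a\wedge\eta^b\,\tr(H_aX_b), \]
and every coefficient vanishes by the previous step. This proves the claim for $\omega$ with values in the diagonal matrices, which contains the case of $\ttt$-valued $\omega$.

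For the particular cases, $F_{\ttt}^m$ has diagonal matrix coefficients, being a wedge product of the $\ttt$-valued form $F_{\ttt}$ of Proposition~\ref{Q2c0}(ii), while $\theta_\m$ is $\m$-valued. This gives $\tr(\theta_\m\wedge F_{\ttt}^m) = \pm\tr(F_{\ttt}^m\wedge\theta_\m) = 0$, where the sign comes from graded commutativity of the scalar coefficients under the trace. For the last identity, \eqref{solderred} gives $\dif_\m\theta_\m = -\frac12[\theta_\m,\theta_\m]_\m$, which is $\m$-valued. Pairing it with the $\ttt$-valued form $F_{\ttt}$ then yields $\tr(F_{\ttt}\wedge\dif_\m\theta_\m) = 0$.
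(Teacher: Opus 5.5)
Your proposal is correct and follows essentially the same route as the paper: you get the first claim from $\killing{\xi_1}{\xi_2} = 2k\tr(\xi_1\xi_2)$, then identify $\m$ with off-diagonal matrices. This makes any diagonal-valued form, in particular $F_{\ttt}^m$, trace-orthogonal to the $\m$-valued forms $\theta_\m$ and $\dif_\m\theta_\m$. Your explicit remark that the coefficients of $F_{\ttt}^m$ are diagonal but generally not in $\ttt$ is exactly the point the paper handles by arguing with diagonal versus off-diagonal matrices rather than with $\ttt$ itself.
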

\begin{proof}
	By the definition $\m := \ttt^\perp$ in~\eqref{invcomp}  and the relation $B(\xi_1, \xi_2) = 2k\tr(\xi_1\xi_2)$, the trace orthogonality $\tr(H \cdot X) = 0$ holds for all $H \in \ttt$ and $X \in \m$. 
	
	Recall that $\ttt$ consists of diagonal matrices, and so the orthogonal complement $\m$ consists of off-diagonal matrices. Recall further that in coordinates, taking powers of forms like $F_{\ttt}$ corresponds to wedging the form components and multiplying the matrix components. If we think of $\ttt$-valued forms as diagonal matrices whose components are 1-forms, the power corresponds to the diagonal matrix whose entries are the wedge powers of the original form's component 1-forms.
	
	 For any $m \ge 1$, the matrix power $F_{\ttt}^m$ is diagonal, while $\theta_\m$ and $\dif_\m\theta_\m = -\frac{1}{2}[\theta_\m, \theta_\m]_\m$ take values in $\m$ and are off-diagonal. Since the product of a diagonal matrix and an off-diagonal matrix has vanishing diagonal entries, the traces vanish.
\end{proof}

We are working with the principal $T$-bundle $\pi_K \colon K \to K/T$, where $T = \mathrm{S}(\mathrm{U}(1)^k) \cong (S^1)^{k-1}$. Recall from (\ref{setup}) that $\ttt := \mathrm{Lie}(T) = \mathrm{s}((i\RR)^k)$. An element of $\ttt$ is represented by a diagonal matrix with weights summing to zero: $\mathrm{diag}(L_1, \dots, L_k)$ with $\sum L_j = 0$. In particular, $\ttt^\vee$ and its complexification $\h^\vee$ are of (real, respectively complex) dimension $k-1$ and spanned by the weight functionals $L_1, \dots, L_{k-1}$.

Each integral weight functional $\lambda \in \ttt^\vee$ defines a character $\chi_\lambda \colon T \to \mathrm{U}(1)$ by $\chi_\lambda(\exp \xi) = e^{\lambda(\xi)}$.
Consider the associated complex line bundle:
\[ \mathbb{L}_\lambda := K \times_{\chi_\lambda} \CC \longrightarrow K/T , \]
where the representation space $\CC$ can be identified with the root space $\slk_\lambda$ when $\lambda$ is a root.

Equipping $K \to K/T$ with the canonical connection $\theta_{\ttt}$ and its curvature $F_{\ttt} = -\frac{1}{2}[\theta_{\m}, \theta_{\m}]_{\ttt}$ from Proposition \ref{Q2c0}, the induced curvature form on $\mathbb{L}_\lambda$ is $\lambda(F_{\ttt}) = \dif(\lambda(\theta_{\ttt})) \in \Omega^2(K/T, i\RR)$. We therefore obtain:
\begin{equation}\label{c1Llambda}
	c_1(\mathbb{L}_\lambda) = -\frac{1}{2\pi i}[\lambda(F_{\ttt})] \in H^2(K/T; \CC) .
\end{equation}

By standard Chern--Weil theory (see, e.g., \cite{morita2001}), the connection $\theta_{\ttt}$ on the principal $T$-bundle $K \to K/T$ induces the real Chern--Weil homomorphism:
\[ cw_K \colon S^d(\ttt^\vee)^T \longrightarrow H^{2d}_\mathrm{dR}(K/T; \RR), \qquad p \longmapsto [p(F_{\ttt})] . \]
Because the maximal torus $T$ is abelian, the adjoint action $\Ad(T)$ acts trivially on $\ttt = \mathrm{Lie}(T)$, so every polynomial on $\ttt$ is automatically $T$-invariant: $S(\ttt^\vee)^T = S(\ttt^\vee)$. Tensoring with $\CC$ over $\RR$ yields on the domain $S(\ttt^\vee) \otimes_\RR \CC \cong S(\ttt_\CC^\vee) = S(\h^\vee)$, and on the codomain $H^*_\mathrm{dR}(K/T; \RR) \otimes_\RR \CC \cong H^*_\mathrm{dR}(K/T; \CC)$ via the de Rham theorem. Extending $cw_K$ by $\CC$-linearity defines the graded $\CC$-algebra homomorphism 
\[cw_K^\CC \colon S(\h^\vee) \to H^*_\mathrm{dR}(K/T; \CC) ,\]
 mapping polynomials of algebraic degree $d$ to de Rham classes of degree $2d$.

On linear polynomials $\lambda \in S^1(\h^\vee) \cong \h^\vee$, the complexified Chern--Weil homomorphism associated with the connection $\theta_{\ttt}$ evaluates to
\begin{equation}\label{cw_lambda}
	cw_{K}^\CC(\lambda) = \left[ \lambda(F_{\ttt}) \right] = -2\pi i \cdot c_1(\mathbb{L}_\lambda) \in H^2(K/T; \CC) .
\end{equation}
Because the weight functionals $L_1, \dots, L_{k-1}$ generate the polynomial algebra $S(\h^\vee)$ in degree $1$, their images $cw_{K}^\CC(L_j) = -2\pi i \cdot c_1(\mathbb{L}_{L_j})$ generate the image of $cw_{K}^\CC$ in degree $2$. Since $-2\pi i \neq 0$ and the cohomology ring $H^*(K/T; \CC)$ is generated in degree $2$ by the first Chern classes $c_1(\mathbb{L}_{L_j})$, the homomorphism $cw_{K}^\CC$ is surjective.

To determine the ring structure of $H^*(K/T; \CC)$, it therefore remains to identify the kernel of $cw_{K}^\CC$, which brings us to the geometric realization of Borel's isomorphism.

In Subsection~\ref{sec:borel_cw_descent}, we prove that this kernel is precisely the Weyl-invariant ideal $I_W = \langle p_2, \dots, p_k \rangle$, completing the Chern--Weil derivation of Borel's isomorphism.

\subsection{The Chern--Weil Homomorphism and Borel's Isomorphism}\label{sec:borel_cw_descent}

\begin{proposition}\label{oddvanlem}
	The odd cohomology of the flag manifold vanishes:
	\[H^{\mathrm{odd}}_\mathrm{dR}(K/T; \CC) = 0 \text{}.\]
\end{proposition}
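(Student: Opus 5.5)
The plan is to give a topological argument via a CW decomposition with only even-dimensional cells, the Bruhat (Schubert) decomposition. This keeps the proof independent of Borel's isomorphism, which the paper intends to derive afterwards, so there is no circularity.

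First, identify $K/T$ with the complex flag manifold $G/B$, where $G = \mathrm{SL}(k,\CC)$ and $B$ is the upper triangular Borel subgroup. By the Iwasawa decomposition $G = KB$ and $K \cap B = T$ (already built into the setup \eqref{setup}), the inclusion $K \hookrightarrow G$ induces a $K$-equivariant diffeomorphism $K/T \xrightarrow{\sim} G/B$. Injectivity is exactly $K\cap B = T$. Surjectivity is Gram--Schmidt: every basis of $\CC^k$ becomes unitary after right multiplication by an upper triangular matrix.

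Second, invoke the Bruhat decomposition $G = \bigsqcup_{w \in W} BwB$ with $W = S_k$. Each Schubert cell $BwB/B$ is isomorphic to the affine space $\CC^{\ell(w)}$, where $\ell(w)$ is the number of inversions of $w$. Concretely, it is parametrized by the unipotent group $U_w = \prod_{\alpha>0,\, w^{-1}\alpha<0} U_\alpha$ acting on the point $wB$. Together with their closures (the Schubert varieties), these cells give $G/B$ a finite CW structure all of whose cells have real dimension $2\ell(w)$. The step I expect to need the most care is checking that this is genuinely a CW complex, namely that the closure of each cell lies in a union of cells of lower dimension. I would cite the standard statement (Bruhat order, closure relations) rather than reprove it. For $\mathrm{SL}(k)$ one can also argue directly with flags of subspaces in terms of row-echelon forms.

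Third, compute cellular cohomology. Since all cells are even-dimensional, the cellular cochain groups vanish in odd degrees, so every cellular differential is zero. Hence $H^{\mathrm{odd}}(K/T;\ZZ)=0$, the even cohomology is free with rank equal to the number of length-$n$ permutations, and by universal coefficients $H^{\mathrm{odd}}(K/T;\CC)=0$. The de Rham theorem then transfers this to $H^{\mathrm{odd}}_{\mathrm{dR}}(K/T;\CC)$. As a remark, the same count gives $\chi(K/T)=|W|=k!$, which matches Theorem~\ref{thm:intro_koszul}.

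An alternative route, if one prefers to stay within the invariant-forms framework of Proposition~\ref{prop:ce_derham_equiv}, is the following. Take a regular element $H\in\ttt$. The $T$-action on $K/T$ has fixed points exactly $N(T)/T\cong W$, which are finite and isolated. A torus-localization (or Morse--Bott) argument with the height function $kT\mapsto \tr(\Ad_k(H)H_0)$ then gives a perfect Morse function with critical points only of even index. I would use the Bruhat argument as the primary route and keep this one as a backup.
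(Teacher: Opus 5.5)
Your proposal is correct and follows the paper's own argument: identify $K/T\cong G/B$, use the Bruhat decomposition into Schubert cells $BwB/B\cong\CC^{\ell(w)}$ of real dimension $2\ell(w)$, and conclude that the odd cellular cochain groups, and hence the odd cohomology, vanish. The extra details you supply (Iwasawa/Gram--Schmidt for $K/T\cong G/B$, the $U_w$-parametrization of the cells, universal coefficients and the de Rham theorem) spell out what the paper only cites, and your Morse-theoretic backup plays the same role as the paper's alternative via Kostant's harmonic weights.
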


\begin{proof}
	Under the Bruhat decomposition, the complex flag manifold decomposes into an even-dimensional cell complex whose cells (the Schubert cells) have real dimension $2\ell(w)$ for $w \in W$ (see, e.g., \cite{Borel1953SurLC, bott1982}):
	\[K/T \cong G/B = \coprod_{w \in W} BwB/B.\]
	Because there are no odd-dimensional cells, all odd cellular chain groups vanish, so $H^{\mathrm{odd}}_\mathrm{dR}(K/T; \CC) = 0$. Alternatively, this vanishing can be deduced from finite-dimensional Hodge theory on $(\Lambda^\bullet \m_\CC^\vee)^T$ via Kostant's harmonic weights (as established in Subsection~\ref{sec:kostant_hodge}).
\end{proof}

\begin{theorem}[Borel's Isomorphism via Chern--Weil Theory]\label{thm:borel_isomorphism}
	Let $\pi_K \colon K \to K/T$ be the canonical principal $T$-bundle equipped with the canonical connection $\theta_{\ttt} = \operatorname{pr}_{\ttt}(\theta_\mathrm{MC}) \in \Omega^1(K, \ttt)$ and curvature $F_{\ttt} = -\frac{1}{2}[\theta_{\m}, \theta_{\m}]_{\ttt} \in \Omega^2(K, \ttt)$. 
	
	The complexified Chern--Weil homomorphism associated with $\theta_{\ttt}$,
	\begin{align*}
		cw_{K}^\CC \colon S(\h^\vee) &\longrightarrow H^*_\mathrm{dR}(K/T; \CC) \\
		p &\longmapsto [p(F_{\ttt})], \text{}
	\end{align*}
	satisfies the following properties:
	\begin{enumerate}[(i)]
		\item The kernel of $cw_{K}^\CC$ is the Weyl-invariant ideal generated by the positive-degree invariant polynomials:
		\[\ker(cw_{K}^\CC) = I_W := S^+(\h^\vee)^W \cdot S(\h^\vee) = \langle p_2, \dots, p_k \rangle \text{},\]
		where $p_j = \mathrm{res}_\h(f_j)$ are the restricted fundamental trace invariants $f_j(X) = \tr(X^j)$.
		
		\item Under the degree-doubling convention $\deg(p) = 2\deg_\mathrm{poly}(p)$ for $p \in S(\h^\vee)$, $cw_{K}^\CC$ descends to an isomorphism of graded $\CC$-algebras:
		\[\overline{cw}_K^\CC \colon S(\h^\vee)/I_W \xrightarrow{\ \cong\ } H^*_\mathrm{dR}(K/T; \CC) \text{},\]
		recovering Borel's presentation of the cohomology of the flag manifold as the coinvariant algebra.
		
		\item The Chern--Weil map $cw_K^\CC$ is surjective, and the cohomology ring $H^*_\mathrm{dR}(K/T; \CC)$ is generated in degree $2$ by the first Chern classes of the associated line bundles: $c_1(\mathbb{L}_{L_j}) = -\frac{1}{2\pi i}cw_K^\CC(L_j)$ for $j = 1, \dots, k-1$.
	\end{enumerate}
\end{theorem}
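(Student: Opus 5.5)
We prove the three parts in the order (iii), (i), (ii). Part (iii) is largely done in the preceding discussion: $cw_K^\CC$ is a graded algebra homomorphism, and $S(\h^\vee)$ is generated in degree one by $L_1,\dots,L_{k-1}$. Equation~\eqref{cw_lambda} identifies $cw_K^\CC(L_j)$ with $-2\pi i\, c_1(\mathbb{L}_{L_j})$. So surjectivity reduces to the topological input that $H^*(K/T;\CC)$ is generated by these first Chern classes. For this we will use the Bruhat/Schubert cell structure of Proposition~\ref{oddvanlem}: Schubert classes are polynomials in the $c_1(\mathbb{L}_{L_j})$, via the Bernstein--Gelfand--Gelfand divided differences applied to the top class. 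Alternatively, one iterates the Leray--Hirsch theorem along the tower of projective bundles $\mathrm{Fl}_k\to\mathrm{Fl}_{k-1}\to\cdots$, where each fiber $\cp{m}$ is generated by the $c_1$ of a tautological line.

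For (i), we first show $I_W\subseteq\ker cw_K^\CC$. The restriction $p_j(F_\ttt)=\tr(F_\ttt^j)$ is a polynomial in the $L_i(F_\ttt)$, namely $\sum_i L_i(F_\ttt)^j$, i.e.\ the $j$-th power sum of the Chern roots $x_i=c_1(\mathbb{L}_{L_i})$ up to the factor $(-2\pi i)^j$. The line bundles $\mathbb{L}_{L_i}$ are the tautological lines of the flag, so their direct sum is isomorphic to the trivial bundle $K\times_T\CC^k\cong K/T\times\CC^k$. The total Chern class is therefore $\prod_i(1+x_i)=1$. Hence all elementary symmetric polynomials in the $x_i$ vanish, and by Newton's identities so do all power sums $p_j$, $j\ge 2$. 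This gives $[p_j(F_\ttt)]=0$; the explicit primitives $\alpha_j$ of Theorem~\ref{thm:intro_universal_formula} give a constructive alternative. Since $cw_K^\CC$ is a ring map, the whole ideal $I_W$ lies in the kernel. The equality $I_W=\langle p_2,\dots,p_k\rangle$ is Newton's theorem for $S(\h^\vee)^W=\CC[L_1,\dots,L_k]^{S_k}/(\sum L_i)$.

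For the reverse inclusion we use a dimension count, and this is the main obstacle. By the above, $cw_K^\CC$ induces a surjection $S(\h^\vee)/I_W\twoheadrightarrow H^*(K/T;\CC)$. By Chevalley's theorem, or equivalently by regularity of $p_2,\dots,p_k$ together with the Hilbert series $\prod_{j=2}^k(1-t^{2j})/(1-t^2)$, the coinvariant algebra has total dimension $|W|=k!$. On the other side, the Bruhat decomposition of Proposition~\ref{oddvanlem} has exactly one even cell per $w\in W$ and no odd cells. All cellular differentials therefore vanish, and $\dim H^*(K/T;\CC)=k!$. A surjection between finite-dimensional spaces of equal dimension is bijective, so $\ker cw_K^\CC=I_W$.

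For (ii), note that gradings match under the doubling convention $\deg p=2\deg_{\mathrm{poly}}p$, since $cw_K^\CC$ maps $S^d$ into $H^{2d}$. The induced map is then a graded algebra isomorphism. The delicate point is not to circularly use Theorem~\ref{thm:intro_koszul} for the Hilbert series; we will instead cite Chevalley's theorem on reflection groups for regularity and $\dim=|W|$.
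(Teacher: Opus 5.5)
Your proof is correct, but it runs in the opposite direction from the paper's. After the same Step~1 (triviality of $E=K\times_T\CC^k$, Whitney sum formula, Newton's identities), the paper's proof never uses that $H^*(K/T;\CC)$ is generated by the classes $c_1(\mathbb{L}_{L_j})$. Instead it proves \emph{injectivity} of $\overline{cw}_K^\CC$ directly:
\begin{itemize}
\item By Poincar\'e duality of the coinvariant algebra (Kane), every nonzero ideal of $S(\h^\vee)/I_W$ contains the socle spanned by the discriminant $\mathcal{D}=\prod_{\alpha\in\Phi^+}\alpha$. So it suffices to show $\overline{cw}_K^\CC(\mathcal{D})\neq0$.
\item The splitting $\mathrm{T}^{1,0}(K/T)\cong\bigoplus_{\alpha\in\Phi^+}\mathbb{L}_{-\alpha}$ identifies this class with $(2\pi i)^N e(\mathrm{T}(K/T))$.
\item Gauss--Bonnet--Chern and Hopf--Samelson then give $\int_{K/T}\overline{cw}_K^\CC(\mathcal{D})=(2\pi i)^N k!\neq0$.
\end{itemize}
Surjectivity, and with it (iii), is then an output of the count $\dim S(\h^\vee)/I_W=k!=\chi(K/T)$ together with $H^{\mathrm{odd}}=0$. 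You instead import surjectivity (projective bundle formula or BGG) and let linear algebra give injectivity.

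The paper's route keeps the topological input purely numerical. The Chern--Weil map does genuine work there, through a single top-degree computation tying the socle of the coinvariants to the Euler class. Note that the paper's discussion before the theorem asserts generation by the $c_1(\mathbb{L}_{L_j})$ without proof; its proof does not rely on that assertion, whereas yours supplies a justification for it.

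Your route avoids Kane's theorem, the Euler class, Gauss--Bonnet--Chern and Hopf--Samelson, and it counts $\dim H^*=k!$ directly from the Bruhat cells. The price is that the imported input is already most of Borel's theorem. Run to completion, the Leray--Hirsch tower alone yields the full presentation and $\dim H^*=\prod_{m=1}^{k-1}(m+1)=k!$. That leaves Chern--Weil theory only the identification $cw_K^\CC(L_j)=-2\pi i\,c_1(\mathbb{L}_{L_j})$. On the BGG branch you are also implicitly using the same top-degree non-vanishing the paper proves: BGG's pairing formula between characteristic classes and Schubert cycles gives, for the longest Weyl element, that the image of $\mathcal{D}$ evaluates to $|W|$ (up to normalization) on $[K/T]$.

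One minor slip: there is no projective-bundle map $\mathrm{Fl}(\CC^k)\to\mathrm{Fl}(\CC^{k-1})$. The tower you want is $F_j=\{V_1\subset\cdots\subset V_j\subset\CC^k\}$, where $F_j\to F_{j-1}$ is the projectivization of $\CC^k/V_{j-1}$ with fibre $\cp{k-j}$. You should also record that under $K/T\cong F_{k-1}$ the subquotients $V_i/V_{i-1}$ are the $\mathbb{L}_{L_i}$ (or their duals, depending on conventions).
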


\begin{proof}
	We establish statements (i)--(iii) in order:
	\begin{enumerate}[(i)]
		\item To prove the identification of the kernel $\ker(cw_K^\CC) = I_W$, we proceed in three steps: first, we establish the inclusion $I_W \subseteq \ker(cw_K^\CC)$ via the splitting of the trivial bundle associated with the fundamental representation; second, we prove that the induced map on the coinvariant algebra $\overline{cw}_K^\CC \colon S(\h^\vee)/I_W \to H^*_\mathrm{dR}(K/T; \CC)$ is injective by evaluating it on the Weyl discriminant; third, we conclude via dimension counting that $\overline{cw}_K^\CC$ is an isomorphism, which forces $\ker(cw_K^\CC) = I_W$.
		
		\paragraph{Step 1: Inclusion ($I_W \subseteq \ker(cw_K^\CC)$).}
		Consider the standard representation of $K = \SUk$ on $V = \CC^k$, and form the associated complex vector bundle over the flag manifold:
		\[E := K \times_T \CC^k \longrightarrow K/T \text{}.\]
		The map $\Phi \colon K \times_T \CC^k \to (K/T) \times \CC^k$ defined by $[k, v] \mapsto (kT, k \cdot v)$ is a smooth vector bundle isomorphism over $K/T$. Indeed, for any $t \in T \subset K$, we have $[kt, t^{-1}v] \mapsto (ktT, kt(t^{-1}v)) = (kT, kv)$, so $\Phi$ is well-defined, smooth, and fiberwise a linear isomorphism. Because $E$ is trivial, all its positive-degree Chern classes vanish:
		\[c(E) = 1 \in H^*_\mathrm{dR}(K/T; \CC) \text{}.\]
		To see this, view the trivial bundle as the pullback of a vector bundle over a point and use the functoriality of the Chern class construction to commute with the pullback along the point map.
			
		On the other hand, restricting the representation of $K$ on $\CC^k$ to the maximal torus $T = \mathrm{S}(\mathrm{U}(1)^k)$ diagonalizes $V$ into the direct sum of 1-dimensional weight spaces:
		\[\CC^k = \bigoplus_{j=1}^k \CC_{L_j} \text{},\]
		where $L_j \in \ttt^\vee$ again are the coordinate weight functionals. Since the associated bundle construction commutes with direct sums,
		$E$ splits into the Whitney sum of the line bundles $\mathbb{L}_{L_j} \to K/T$:
		\[E \cong \bigoplus_{j=1}^k \mathbb{L}_{L_j} \text{}.\]

		Applying the Whitney sum formula to this decomposition yields:
		\[c(E) = \prod_{j=1}^k c(\mathbb{L}_{L_j}) = \prod_{j=1}^k \big(1 + c_1(\mathbb{L}_{L_j})\big) = 1 + \sum_{m=1}^k e_m\big(c_1(\mathbb{L}_{L_1}), \dots, c_1(\mathbb{L}_{L_k})\big) \text{},\]
		where $e_m$ denotes the $m$-th elementary symmetric polynomial. Because $c(E) = 1$, the characteristic classes corresponding to the elementary symmetric polynomials vanish for all $m = 1, \dots, k$:
		\[e_m\big(c_1(\mathbb{L}_{L_1}), \dots, c_1(\mathbb{L}_{L_k})\big) = 0 \in H^{2m}_\mathrm{dR}(K/T; \CC) \text{}.\]

		By equation (\ref{cw_lambda}), evaluating the Chern--Weil map on the linear generators yields $cw_K^\CC(L_j) = -2\pi i \cdot c_1(\mathbb{L}_{L_j})$. Because $cw_K^\CC \colon (S(\h^\vee), \cdot) \to (H^*_\mathrm{dR}(K/T; \CC), \cup)$ is a homomorphism of graded $\CC$-algebras, evaluating it on the elementary symmetric polynomial $e_m \in S^m(\h^\vee)$ gives:
		\[cw_K^\CC(e_m(L_1, \dots, L_k)) = (-2\pi i)^m e_m\big(c_1(\mathbb{L}_{L_1}), \dots, c_1(\mathbb{L}_{L_k})\big) = 0 \text{},\]
		where the polynomial expression $e_m(c_1(\mathbb{L}_{L_1}), \dots, c_1(\mathbb{L}_{L_k}))$ is evaluated in cohomology using the cup product. By Newton's identities, the power sums $p_m = \sum_{j=1}^k L_j^m = \mathrm{res}_\h(f_m)$ generate the same invariant subring as the elementary symmetric polynomials: $S(\h^\vee)^W = \CC[p_2, \dots, p_k]$. Consequently, $cw_K^\CC(p_m) = 0$ for all $m = 2, \dots, k$, which proves that the entire Weyl-invariant ideal satisfies:
		\[I_W = \langle p_2, \dots, p_k \rangle \subseteq \ker(cw_K^\CC) \text{}.\]
		
		\paragraph{Step 2: Injectivity of $\overline{cw}_K^\CC$.}
		Because $I_W \subseteq \ker(cw_K^\CC)$, the Chern--Weil homomorphism factors through the coinvariant algebra to define a graded algebra homomorphism:
		\[\overline{cw}_K^\CC \colon S(\h^\vee)/I_W \longrightarrow H^*_\mathrm{dR}(K/T; \CC) \text{}.\]
		To prove that $\overline{cw}_K^\CC$ is injective, we analyze the ring structure of the coinvariant algebra $A := S(\h^\vee)/I_W$. By a statement of Chevalley from \cite{cheval} also used in the proof of Proposition~\ref{regseqH} below, $S(\h^\vee)$ is a free module over $S(\h^\vee)^W$ of rank $\vert{}W\vert{} = k!$.

		By Theorem 1.5 of Kane \cite{Kane_1994}, a finite linear group in characteristic zero is generated by pseudoreflections if and only if its ring of coinvariants satisfies Poincaré duality. Since the Weyl group $W \cong S_k$ is a reflection group acting on $\h^\vee$, the coinvariant algebra $A := S(\h^\vee)/I_W$ satisfies Poincaré duality in the sense of equations (1.3) and (1.4) of \cite{Kane_1994}: there exists an integer $N = \vert{}\Phi^+\vert{} = \frac{1}{2}k(k-1)$ such that $A^N \cong \CC$, $A^i = 0$ for all $i > N$, and the multiplication pairing
		\[A^d \otimes A^{N-d} \longrightarrow A^N \cong \CC \text{}\]
		is non-singular for each $0 \le d \le N$. Furthermore, as shown on page 87 of \cite{Kane_1994}, the non-trivial top-degree element is given by the product $\Omega = \prod_s a_s$ over all reflections $s \in W$, where each $a_s \in \h^\vee$ is a linear form defining the corresponding reflection hyperplane. For the Weyl group $W$ acting on $\h^\vee$, these hyperplanes are defined by the positive roots $\alpha \in \Phi^+$, so $\Omega$ is given by the Weyl discriminant:
		\[\mathcal{D} := \prod_{\alpha \in \Phi^+} \alpha \in S^N(\h^\vee) \text{}.\]
		By the non-singularity of the Poincaré duality pairing (condition (1.4) in \cite{Kane_1994}), every non-zero ideal of $A$ contains the 1-dimensional top component $A^N = \CC \cdot \mathcal{D}$. Therefore, the kernel $\ker(\overline{cw}_K^\CC)$ is trivial if and only if it does not contain $\mathcal{D}$:
		\[\ker(\overline{cw}_K^\CC) = \{0\} \iff \overline{cw}_K^\CC(\mathcal{D}) \neq 0 \in H^{2N}_\mathrm{dR}(K/T; \CC) \text{}.\]
		
		We proceed to show that $\overline{cw}_K^\CC(\mathcal{D})\neq0$. Evaluating $\overline{cw}_K^\CC$ on the Weyl discriminant $\mathcal{D} = \prod_{\alpha \in \Phi^+} \alpha$ using equation (\ref{cw_lambda}) yields:
		\begin{equation}\label{cwev}
			 \overline{cw}_K^\CC(\mathcal{D}) = \prod_{\alpha \in \Phi^+} \overline{cw}_K^\CC(\alpha) = \prod_{\alpha \in \Phi^+} \big(-2\pi i \cdot c_1(\mathbb{L}_\alpha)\big) = (-2\pi i)^N \prod_{\alpha \in \Phi^+} c_1(\mathbb{L}_\alpha) \text{},
		\end{equation}
		where the product denotes the iterated cup product.			
		
		By Proposition \ref{Q2c0}(i), the reduction of the frame bundle $\mathrm{L}(K/T)$ to the structure group $T$ yields the canonical bundle isomorphism $\mathrm{T}(K/T) \cong K \times_{\Ad(T)} \m$. Intersecting the Cartan decomposition $\m_\CC \cong \bigoplus_{\alpha \in \Phi} \slk_\alpha$ with the real compact form $\kk = \mathfrak{su}(k)$ decomposes the real complement $\m$ into 2-dimensional $\Ad(T)$-invariant subspaces:
		\[\m = \bigoplus_{\alpha \in \Phi^+} \m_\alpha, \qquad \text{where } \m_\alpha := \m \cap (\slk_\alpha \oplus \slk_{-\alpha}) \text{}.\]
		
		Under the invariant complex structure $J$ on $K/T$, the holomorphic tangent space at the identity coset is given by $\mathrm{T}^{1,0}_{1T}(K/T) = \m^- = \bigoplus_{\alpha \in \Phi^+} \g_{-\alpha}$. Since each negative root space $\g_{-\alpha}$ is a 1-dimensional complex $T$-representation with character $\chi_{-\alpha}$, and the associated bundle functor commutes with direct sums (as noted in Step 1), the holomorphic tangent bundle decomposes into the Whitney sum of line bundles:
		\begin{equation}\label{vbdec}
			\mathrm{T}^{1,0}(K/T) \cong K \times_{\Ad(T)} \left( \bigoplus_{\alpha \in \Phi^+} \g_{-\alpha} \right) \cong \bigoplus_{\alpha \in \Phi^+} (K \times_{\chi_{-\alpha}} \CC) = \bigoplus_{\alpha \in \Phi^+} \mathbb{L}_{-\alpha} \text{}.
		\end{equation}
		The underlying real vector bundle of the holomorphic tangent bundle is the real tangent bundle:
		\[\left(\mathrm{T}^{1,0}(K/T)\right)^\RR=\mathrm{T}(K/T) \text{}.\]
		Using the fact that the top Chern class of a complex vector bundle equals the Euler class of the underlying real vector bundle (see, e.g., \cite{bott1982} 20.10.6), we can deduce:
		\begin{equation}\label{cxrvb}
			c_N(\mathrm{T}^{1,0}(K/T)) = e(\mathrm{T}(K/T)) \text{}.
		\end{equation}
		In combination with the Whitney sum formula, equations (\ref{vbdec}) and (\ref{cxrvb}) allow us to rewrite the Euler class as follows:
		\[e(\mathrm{T}(K/T)) = \prod_{\alpha \in \Phi^+} c_1(\mathbb{L}_{-\alpha}) = \prod_{\alpha \in \Phi^+} \big(-c_1(\mathbb{L}_\alpha)\big) = (-1)^N \prod_{\alpha \in \Phi^+} c_1(\mathbb{L}_\alpha) \text{},\]
		where we used the dual relationship $c_1(\mathbb{L}_{-\alpha}) = -c_1(\mathbb{L}_\alpha)$.
		Multiplying by $(-1)^N$ yields $\prod_{\alpha \in \Phi^+} c_1(\mathbb{L}_\alpha) = (-1)^N e(\mathrm{T}(K/T))$. Combining this with the evaluation on the discriminant from equation (\ref{cwev}), we obtain:
		\[\overline{cw}_K^\CC(\mathcal{D}) =  (2\pi i)^N e(\mathrm{T}(K/T)) \text{}.\]

		By the Gauss--Bonnet--Chern theorem, which we may apply because $K/T$ is compact, oriented (due to its almost-complex structure) and of even dimension $2N$,  evaluating the Euler class on the fundamental cycle $[K/T]$ yields the Euler characteristic:
		\[\int_{K/T} \overline{cw}_K^\CC(\mathcal{D}) = (2\pi i)^N \int_{K/T} e(\mathrm{T}(K/T)) = (2\pi i)^N \chi(K/T) \text{}.\]
		By the classical theorem of Hopf and Samelson in \cite{HopfSamelson1940}, the Euler characteristic of the flag manifold $K/T$ equals the order of its Weyl group:
		\[\chi(K/T) = [N_K(T) : T] = \vert{}W\vert{} = k! > 0 \text{}.\]
	
		The integral then evaluates to:
		\[\int_{K/T} \overline{cw}_K^\CC(\mathcal{D}) = (2\pi i)^N k! \neq 0 \text{}.\]
		Consequently, the cohomology class $\overline{cw}_K^\CC(\mathcal{D})$ cannot be zero, which implies that $\ker(\overline{cw}_K^\CC) = \{0\}$, and hence that $\overline{cw}_K^\CC$ is injective.

		Because the induced map on the quotient $S(\h^\vee)/I_W$ has trivial kernel, we conclude that
		$\ker(cw_K^\CC) = I_W,$
		which establishes statement (i).
		
		\item To establish (ii), we begin by comparing the complex dimensions of its domain and codomain: As recalled from Chevalley in Step 2 above, the coinvariant algebra has dimension $\dim_\CC\big(S(\h^\vee)/I_W\big) = \vert{}W\vert{} = k!$.
		
		On the cohomology side, the Chern--Weil map assigns elements of $S^d(\h^\vee)$ to classes of real degree $2d$, so the image of $\overline{cw}_K^\CC$ is concentrated in even degrees:
		\[\operatorname{Im}(\overline{cw}_K^\CC) \subseteq H^{\mathrm{even}}_\mathrm{dR}(K/T; \CC) \text{}.\]
		Because $\overline{cw}_K^\CC$ is injective and the coinvariant algebra $S(\h^\vee)/I_W$ has complex dimension $\vert{}W\vert{} = k!$, the dimension formula implies:
		\[\dim_\CC H^{\mathrm{even}}_\mathrm{dR}(K/T; \CC) \ge \dim_\CC(\operatorname{Im}(\overline{cw}_K^\CC) ) = \vert{}W\vert{} \text{}.\]
		On the other hand, the Euler characteristic is by definition the alternating sum of the even and odd Betti numbers. Combining this with the value $\chi(K/T) = \vert{}W\vert{}$ established in Step 2 via \cite{HopfSamelson1940} yields:
		\[\vert{}W\vert{}=\chi(K/T) = \dim_\CC H^{\mathrm{even}}_\mathrm{dR}(K/T; \CC) - \dim_\CC H^{\mathrm{odd}}_\mathrm{dR}(K/T; \CC) \text{},\]
		which can be rearranged to:
		\[\dim_\CC H^{\mathrm{odd}}_\mathrm{dR}(K/T; \CC) = \dim_\CC H^{\mathrm{even}}_\mathrm{dR}(K/T; \CC) - \vert{}W\vert{} \text{}.\]
		Consequently, the injection $\overline{cw}_K^\CC$ is surjective onto $H^{\mathrm{even}}_\mathrm{dR}(K/T; \CC)$ (and thus defines an isomorphism onto the full cohomology ring) if and only if the odd cohomology of the flag manifold vanishes, $H^{\mathrm{odd}}_\mathrm{dR}(K/T; \CC) = 0,$ 
		which is Proposition \ref{oddvanlem} above.
		We conclude that $\dim_\CC H^{\mathrm{even}}_\mathrm{dR}(K/T; \CC) = \vert{}W\vert{} = k!$, so $\dim_\CC H^*_\mathrm{dR}(K/T; \CC) = \vert{}W\vert{}$.
		
		This implies that the induced map $\overline{cw}_K^\CC \colon S(\h^\vee)/I_W \to H^*_\mathrm{dR}(K/T; \CC)$ is an isomorphism of vector spaces. Because $\overline{cw}_K^\CC$ is a bijective homomorphism of $\CC$-algebras, its inverse is automatically an algebra homomorphism, making $\overline{cw}_K^\CC$ an isomorphism of $\CC$-algebras. Under the degree-doubling convention assigning degree $2d$ to $S^d(\h^\vee)$, $\overline{cw}_K^\CC$ maps $(S(\h^\vee)/I_W)_{2d}$ onto $H^{2d}_\mathrm{dR}(K/T; \CC)$ and preserves the grading degree-by-degree, establishing that it is an isomorphism of graded $\CC$-algebras. This proves (ii).
		
		\item Because the projection $\operatorname{pr} \colon S(\h^\vee) \twoheadrightarrow S(\h^\vee)/I_W$ is surjective and $\overline{cw}_K^\CC$ is an isomorphism, the composite $cw_K^\CC = \overline{cw}_K^\CC \circ \operatorname{pr}$ is surjective. Finally, because the polynomial algebra $S(\h^\vee)$ is generated in degree $1$ by $\h^\vee$, the isomorphic ring $H^*_\mathrm{dR}(K/T; \CC)$ is generated in degree $2$ by the images of the coordinate weight functionals: the first Chern classes $c_1(\mathbb{L}_{L_j}) = -\frac{1}{2\pi i}cw_K^\CC(L_j)$ for $j = 1, \dots, k-1$. This proves (iii).
	\end{enumerate}
\end{proof}

\begin{remark}[Newton's Identities and the Chern--Weil Map]\label{newton4}
	This geometric setup is closely related to Newton's identities relating power sums to elementary symmetric polynomials. The total Chern class of the splitting bundle $E$ is obtained via the determinant:
	\[c(E) = \left[ \det\left( I - \frac{1}{2\pi i} F_{\ttt} \right) \right] = 1 + c_1(E) + c_2(E) + \dots + c_k(E) \in H^*(K/T; \CC) \text{}.\]
	Here, each component $c_m(E)$ corresponds to the scaled $m$-th elementary symmetric polynomial $\left(-\frac{1}{2\pi i}\right)^m e_m$ evaluated on the curvature $F_{\ttt}$, which we can interpret as a diagonal matrix of real-valued 2-forms. Its eigenvalues are then the 2-forms $L_1(F_{\ttt}), \dots, L_k(F_{\ttt}) \in \Omega^2(K/T, i\RR)$.

	On the other hand, our algebraic generators in the Weil algebra and the coinvariant ideal $I_W$ are the invariant power-sum traces $p_j(X) = \operatorname{tr}(X^j)$. The bridge between these two geometric representations is the determinant-trace identity $\det (I+tf)=\exp\tr\log(I+tf)$ (cf.\ Lemma~\ref{lem:det_trace_id}). Substituting the curvature matrix $f = -\frac{1}{2\pi i}F_{\ttt}$ into this identity yields the generating series for the Chern classes:
	\[\sum_{m=0}^k c_m(E) t^m = \left[ \det\left( I - \frac{t}{2\pi i}F_{\ttt} \right) \right] = \left[ \exp\left( \sum_{j=1}^\infty \frac{(-1)^{j-1}}{j} \left(-\frac{1}{2\pi i}\right)^j \operatorname{tr}(F_{\ttt}^j) t^j \right) \right] \text{}.\]
	The left hand side has scaled elementary symmetric polynomials as coefficients, whereas the right hand side's coefficients are the power-sum polynomials. Expanding this exponential degree-by-degree in powers of $t$ therefore recovers Newton's identities in de Rham cohomology. Under this new viewpoint, we can interpret Newton's identities as converting the curvature traces $[\operatorname{tr}(F_{\ttt}^j)]$ into the Chern classes $c_m(E)$.
	
	This sheds further light on the link between our algebraic invariants and the geometry of $K/T$. The Chern--Weil map in Theorem \ref{thm:borel_isomorphism} is defined via the trace generators $p_j = \operatorname{tr}(X^j)$ and leads to $I_W = \langle p_2, \dots, p_k \rangle$ evaluated on the weight functionals $(L_1, \dots, L_k)$; and Newton's identities imply that this ideal coincides with $\langle e_2, \dots, e_k \rangle$, recalling the vanishing of which we observed upon evaluation on the positive Chern classes of the splitting bundle $E$ in Step~1 of the proof of (i).\\
\end{remark}

\begin{remark}[Topological Vanishing vs. Geometric Primitives]\label{rmk:transgression_geometric}
	While Step~1 of the proof of Theorem~\ref{thm:borel_isomorphism} establishes the ideal inclusion $I_W \subseteq \ker(cw_K^\CC)$ topologically via the Chern classes of the splitting bundle $E$, it does so non-constructively. It does not produce explicit differential-form primitives witnessing the exactness of the characteristic forms $p_j(F_{\ttt}) = \tr(F_{\ttt}^j)$ on the base manifold $K/T$. 
	
	Constructing these canonical odd-degree primitives explicitly on the flag manifold via relative transgression and identifying them with directed closed loops on the root lattice forms the central objective of the subsequent sections. In Section~\ref{sec:weil}, we demonstrate why algebraic projection within the Weil algebra fails to produce basic primitives on $K/T$, motivating the dynamic connection interpolation developed in Section~\ref{sec:uni}.
\end{remark}

\section{The Weil Algebra and the Absolute Primitives}\label{sec:weil}

\subsection{Universal Chern--Simons Forms in \texorpdfstring{$W(\kk)$}{W(k)}}\label{sec:cs}

In Section~\ref{sec:geocan}, we established Borel's isomorphism $H^*(K/T; \CC) \cong S(\h^\vee)/I_W$ topologically via the Chern classes of the splitting bundle, demonstrating that the characteristic forms $p_j(F_{\ttt}) = \tr(F_{\ttt}^j)$ vanish in cohomology. To construct canonical odd-degree differential forms witnessing this exactness directly on the base $K/T$, one must first understand the universal transgression mechanism in the Weil algebra $W(\kk)$ and identify the precise algebraic obstruction that prevents universal Chern--Simons forms from descending under static evaluation.

\begin{definition}[Weil Algebra]\label{def:weil-algebra}
    The \textit{Weil algebra} $W(\kk)$ of the compact Lie algebra $\kk = \mathfrak{su}(k)$ (with complexification $W(\g) \cong W(\kk) \otimes_\RR \CC$ for $\g = \mathfrak{sl}(k, \CC)$) is the graded commutative algebra
    \[W(\g):=S^\bullet(\g^\vee)\otimes\Lambda^\bullet(\g^\vee),\]
    where we assign the degree $1$ to the elements of $\{1\}\otimes\Lambda^\bullet(\g^\vee)$ and the degree $2$ to the elements of $S^\bullet(\g^\vee)\otimes\{1\}$.
\end{definition}

Let us introduce some notation for the generators. Given a form $l\in\g^\vee$, we define:
\[\check{l}:=1\otimes l\in W^1\g;\quad \Bar{l}:=l\otimes1\in W^2\g .\]

We will interpret degree $1$ elements in terms of connections and degree $2$ elements in terms of curvatures. Indeed, in the notation of Proposition 6.10 in \cite{Meinrenken2013}, we have $\check{l}=\theta_W(l)$. This is the definition of the so-called \textit{Weil connection}
\[\theta_W:\g^\vee\rightarrow W^1\g\]
whose curvature is given by 
\[F^{\theta_W}:\g^\vee\rightarrow W^2\g;\:l\mapsto\Bar{l}-1\otimes\dif_\wedge l=:\hat{l} .\]
This curvature variable $\hat{l}$ is computationally more convenient than $\Bar{l}$.

There are several differentials, or, more precisely, super derivations at play here. The differential whose restriction we used previously in our complex of invariant forms and which we called the Chevalley-Eilenberg differential, namely the one given in degree $1$ by the dual of the Lie bracket on $\g$, acts on the exterior subalgebra of $W\g$ and will be denoted $\dif_\wedge$ here. In particular, $\dif_\wedge:\g^\vee\rightarrow\Lambda^2\g^\vee$ is defined by the relation \[\langle d_\wedge l, \xi_1 \wedge \xi_2 \rangle = - \langle l, [\xi_1,\xi_2] \rangle .\] 
The \textit{Koszul differential} of $W\g$ maps odd generators to even generators, it is defined by
\begin{equation}\label{kosz}
    \dif_\text{K}\check{l}:=\hat{l},\quad \dif_\text{K}\hat{l}=0 .
\end{equation}
Consider the insertion operator $\iota_\xi$ for $\xi\in\g$. It lowers the exterior degree by $1$ without affecting the symmetric degree. We call an element of the Weil algebra \textit{horizontal} if it lies in the kernel of $\iota_\xi$ for all $\xi\in\g$.  Exterior elements are not horizontal: 
\[\iota_\xi(\check{l})=1\otimes\iota_\xi(l)=\langle l, \xi \rangle\cdot1 .\] However, symmetric elements are horizontal by definition.
We may therefore interpret (\ref{kosz}) as saying that $\dif_\text{K}$ implements a horizontal shift. 
The generalized Chevalley-Eilenberg differential on $W\g$ is defined by
\[\dif_{\text{CE}}\check{l}:=1\otimes\dif_\wedge l,\quad \dif_{\text{CE}}\hat{l}=\sum_i(\ad_{e_i}^\vee l)\otimes\omega^i ,\]
where $(e_i)_i$ denotes a basis of $\g$ and $(\omega^i)_i$ is its dual basis.
In particular the generalized Chevalley-Eilenberg differential implements the exterior differential on the exterior part and the coadjoint action on the symmetric part. In the latter part of the definition we consider $W\g$ as the Chevalley-Eilenberg complex of $\g$ with coefficients in the $\g$-module $S(\g^\vee)$.\\
Summing up these two super derivations yields the so called \textit{Weil differential}:
\[\dif_\text{W}:=\dif_\text{K}+\dif_\text{CE} .\]
It follows from the above that
\[\dif_\text{W}\check{l}=\Bar{l},\quad \dif_\text{W}\Bar{l}=0,\quad \dif_\text{W}\hat{l}=\dif_\text{CE}\hat{l} .\]
Observe that $\dif_\text{W}^2-\dif_\text{K}^2-\dif_\text{CE}^2=0$ implies that $\dif_\text{K}$ and $\dif_\text{CE}$ anticommute.\\
Another crucial tool is the projection $\pi:W\g\rightarrow\Lambda\g^\vee$ defined as
\[\pi(\check{l})=l,\quad\pi(\Bar{l})=0 ,\] and consequently $\pi(\hat{l})=-\dif_\wedge l$. It can easily be seen that the projection is a chain map w.r.t. $\dif_{\text{CE}}$ and $\dif_\wedge$. Note that it is not a chain map w.r.t. $\dif_W$ and $\dif_\wedge$, since $(\pi \circ \dif_W)(\check{l}) = \pi(\Bar{l}) = 0 \neq \dif_\wedge(l)=(\dif_\wedge\circ\pi)(\check{l})$ in general.

\begin{remark}[Towards Recovery of Maurer--Cartan]\label{mc3}
    Recall from above that the \textit{Weil connection} is the linear map $\theta_W: \g^\vee \to W^1\g$ defined by $\theta_W(l) := \check{l}$.
    We identify this map with the $\g$-valued form
    \[\boldsymbol{\theta}_W := \sum_{a} e_a \otimes \check{l^a}\in \g \otimes W^1\g ,\]
    where $\{e_a\}$ is a basis of $\g$ and $\{l^a\}$ its dual basis.\\
    Analogously, we interpret the curvature $F^{\theta_W}:\g^\vee\rightarrow W^2\g$ given by $F^{\theta_W}(l)=\hat{l}$ as the $\g$-valued curvature \[\F := \sum e_a \otimes \hat{l^a}\in \g \otimes W^2\g .\]
\end{remark}

\begin{lemma}[Weil Structural Equation]\label{weilselem}
    The Weil connection satisfies the following equation:
    \begin{equation}\label{structural_eq}
        \dw \boldsymbol{\theta}_W = \F - \frac{1}{2} [\boldsymbol{\theta}_W, \boldsymbol{\theta}_W] .
    \end{equation}
\end{lemma}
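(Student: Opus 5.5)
Since both sides of \eqref{structural_eq} are $\g$-valued elements of $\g\otimes W^2\g$, I will verify the identity componentwise. I pair with an arbitrary $l\in\g^\vee$, or equivalently read off the coefficient of each basis vector $e_a$. Writing $\boldsymbol{\theta}_W=\sum_a e_a\otimes\check{l^a}$, the left-hand side is $\sum_a e_a\otimes \dw\check{l^a}$. By the defining relations, $\dw\check{l}=\dif_K\check l+\dif_{CE}\check l=\hat l+1\otimes\dif_\wedge l=\Bar l$. So the left-hand side equals $\sum_a e_a\otimes\Bar{l^a}$. On the right-hand side, $\F=\sum_a e_a\otimes\hat{l^a}$ with $\hat{l^a}=\Bar{l^a}-1\otimes\dif_\wedge l^a$. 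The claim therefore reduces to the single identity
\[
\sum_a e_a\otimes(1\otimes\dif_\wedge l^a) \;=\; \tfrac12[\boldsymbol{\theta}_W,\boldsymbol{\theta}_W]
\]
inside $\g\otimes(1\otimes\Lambda^2\g^\vee)$. This is purely a statement about the exterior subalgebra, i.e.\ the Maurer--Cartan equation for $\g$ itself, consistent with Remark~\ref{mc3}.

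To prove the reduced identity, I first expand the bracket of $\g$-valued odd elements using the convention $[e_a\otimes\omega,e_b\otimes\eta]=[e_a,e_b]\otimes\omega\wedge\eta$. This gives
\[
[\boldsymbol{\theta}_W,\boldsymbol{\theta}_W]=\sum_{a,b}[e_a,e_b]\otimes \check{l^a}\check{l^b}=\sum_{a,b,c}c^c_{ab}\,e_c\otimes \check{l^a}\check{l^b},
\]
where $c^c_{ab}$ are the structure constants. Next I evaluate $\dif_\wedge l^c$ on $\xi_1\wedge\xi_2$ via the defining relation $\langle\dif_\wedge l,\xi_1\wedge\xi_2\rangle=-\langle l,[\xi_1,\xi_2]\rangle$, which yields $\dif_\wedge l^c=\mp\tfrac12\sum_{a,b}c^c_{ab}\,l^a\wedge l^b$. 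The factor $\tfrac12$ comes from antisymmetrization.

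The main obstacle is the sign and normalization convention for pairing $\Lambda^2\g^\vee$ with $\Lambda^2\g$. Whether $\langle l^a\wedge l^b,\xi_1\wedge\xi_2\rangle$ is the determinant or half the determinant decides whether the factor is $\tfrac12$ or $1$, and the overall sign must come out so that $\dif_\wedge l^c=-\tfrac12 c^c_{ab}l^al^b$ combines correctly. My plan is to fix the determinant convention, which is the one compatible with \eqref{CEdifdef}. With that convention, $(l^a\wedge l^b)(e_p,e_q)=\delta^a_p\delta^b_q-\delta^a_q\delta^b_p$, so $\sum c^c_{ab}l^al^b(e_p,e_q)=2c^c_{pq}$.

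I will then check the resulting sign against the reduced identity. As a consistency check I will compare with Proposition~\ref{Q2c0}, where $\pi$ sends $\boldsymbol{\theta}_W$ to $\theta_{\mathrm{MC}}$ and $\F$ to $0$, so that applying $\pi$ must reproduce $\dif\theta_{\mathrm{MC}}=-\tfrac12[\theta_{\mathrm{MC}},\theta_{\mathrm{MC}}]$. If the sign disagrees, the discrepancy can only sit in the sign convention relating $\dif_\wedge$ to the de Rham differential on left-invariant forms. I would resolve it by matching with Remark~\ref{mc1}, since the paper defines $\hat l$ precisely so that the structural equation takes this form.
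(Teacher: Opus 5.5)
Your route is the paper's. You use $\dw\check l=\Bar l$ and $\hat l=\Bar l-1\otimes\dif_\wedge l$ to reduce the lemma to the Maurer--Cartan identity $\dce\Th=-\tfrac12[\Th,\Th]$, which the paper settles by pointing to Remark~\ref{mc1}; your structure-constant computation is a more explicit version of that step. However, your reduction has a sign error. Subtracting $\F=\sum_a e_a\otimes(\Bar{l^a}-1\otimes\dif_\wedge l^a)$ from $\dw\Th=\sum_a e_a\otimes\Bar{l^a}$ shows that \eqref{structural_eq} is equivalent to
\[
\sum_a e_a\otimes(1\otimes\dif_\wedge l^a)=-\tfrac12[\Th,\Th],
\]
not $+\tfrac12[\Th,\Th]$. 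The identity you wrote down is false under the paper's conventions, so the step as stated would fail.

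Once the sign is corrected, your own computation finishes the proof with nothing left to adjust. With the determinant pairing, which is the one compatible with \eqref{CEdifdef}, you get $(\dif_\wedge l^c)(e_p,e_q)=-l^c([e_p,e_q])=-c^c_{pq}$. On the other side, $-\tfrac12\sum_{a,b}c^c_{ab}(l^a\wedge l^b)(e_p,e_q)=-\tfrac12(c^c_{pq}-c^c_{qp})=-c^c_{pq}$. Hence $\dif_\wedge l^c=-\tfrac12\sum_{a,b}c^c_{ab}\,l^a\wedge l^b$ and $\sum_c e_c\otimes\dif_\wedge l^c=-\tfrac12[\Th,\Th]$. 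This is exactly the sign you say ``must come out'', and it already contradicts the $+\tfrac12$ in your reduced identity.

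Your fallback of blaming the convention that relates $\dif_\wedge$ to the de Rham differential is not available. The sign of $\dif_\wedge$ is fixed by $\langle\dif_\wedge l,\xi_1\wedge\xi_2\rangle=-\langle l,[\xi_1,\xi_2]\rangle$, so adjusting it to rescue the uncorrected reduction would amount to changing the definitions.

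Your consistency check also misdescribes $\pi$. It kills $\Bar l$, not $\hat l$, so $\pi(\F)=-\dif_\wedge\theta_{\mathrm{MC}}\neq 0$, while $\pi(\dw\Th)=0$; this is Proposition~\ref{mc4}, not Proposition~\ref{Q2c0}. Since $\pi$ is just the identification $1\otimes\Lambda\g^\vee\cong\Lambda\g^\vee$ on the exterior part, that check only restates the corrected reduced identity rather than testing it independently.
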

This is essentially Proposition 6.7 from \cite{Meinrenken2013}. 
\begin{proof}
    From our definitions we have that \[\hat{l}=\dif_\mathrm{K}(\check{l})=\dif_W(\check{l})-\dif_\mathrm{CE}(\check{l}) .\]
    Lifting this to the $\g$-valued forms, i.e. tensoring from the left with $e_a$ and summing over $a$, this becomes
    \begin{equation}\label{strucb}
        \F=\dw \boldsymbol{\theta}_W-\dce\boldsymbol{\theta}_W .
    \end{equation}
    The only thing left to show is that $\dce\boldsymbol{\theta}_W=- \frac{1}{2} [\boldsymbol{\theta}_W, \boldsymbol{\theta}_W]$. But this is exactly the same computation explained in Remark \ref{mc1}.
\end{proof}

By differentiating (\ref{structural_eq}), we also get the \textit{second Bianchi identity} $\dw\F=[\F,\boldsymbol{\theta}_W]$: 
\begin{align*}
    \dw\F&=\dw^2\boldsymbol{\theta}_W+\frac{1}{2}\dw[\boldsymbol{\theta}_W,\boldsymbol{\theta}_W]\\
    &=[\F-\frac{1}{2}\dw[\boldsymbol{\theta}_W,\boldsymbol{\theta}_W],\boldsymbol{\theta}_W]\\
    &=[\F,\boldsymbol{\theta}_W]-\frac{1}{2}[[\boldsymbol{\theta}_W,\boldsymbol{\theta}_W],\boldsymbol{\theta}_W] ,
\end{align*}
where the last summand vanishes due to the graded Jacobi identity.

\begin{proposition}[Recovery of Maurer--Cartan]\label{mc4}
    The image of the Weil connection under the map $\pi$ is the Maurer--Cartan form $\theta_\mathrm{MC} \in \g \otimes \Lambda^1\g^\vee$, and the structural equation projects  to the Maurer--Cartan equation:
    \begin{equation}\label{mceq}
    	\dif_\wedge\theta_\mathrm{MC}+\frac{1}{2}[\theta_\mathrm{MC},\theta_\mathrm{MC}]=0
    \end{equation}
\end{proposition}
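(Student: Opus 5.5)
The plan is to compute $\pi(\boldsymbol{\theta}_W)$ directly from the definitions and then apply $\pi$ to the structural equation of Lemma~\ref{weilselem}. Since $\pi(\check{l}) = l$, applying $\id_\g \otimes \pi$ to $\boldsymbol{\theta}_W = \sum_a e_a \otimes \check{l^a}$ gives
\[
\sum_a e_a \otimes l^a \in \g \otimes \Lambda^1\g^\vee .
\]
This is the tautological element corresponding to $\id_\g$, i.e.\ the left-invariant Maurer--Cartan form evaluated at the identity. Under $\mathrm{ev}_1$ it is exactly $\theta_\mathrm{MC}$. For the curvature, $\pi(\hat{l}) = -\dif_\wedge l$, so
\[
\pi(\F) = -\sum_a e_a \otimes \dif_\wedge l^a = -\dif_\wedge\theta_\mathrm{MC}.
\]

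Next I would apply $\pi$ to equation~\eqref{structural_eq}. The main obstacle is that $\pi$ is not a chain map for $\dw$ and $\dif_\wedge$, as noted just above. So I will not push $\dw$ through $\pi$. Instead I use the splitting $\dw\boldsymbol{\theta}_W = \F + \dce\boldsymbol{\theta}_W$ from~\eqref{strucb}, which gives
\[
\pi(\dw\boldsymbol{\theta}_W) = \pi(\F) + \pi(\dce\boldsymbol{\theta}_W).
\]
The first term is $-\dif_\wedge\theta_\mathrm{MC}$. For the second term I use that $\pi$ is a chain map for $\dce$ and $\dif_\wedge$, so it equals $\dif_\wedge\theta_\mathrm{MC}$. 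The two terms cancel, and the left-hand side is $\pi(\dw\boldsymbol{\theta}_W) = \sum_a e_a\otimes\pi(\bar{l^a}) = 0$, consistent with $\pi(\bar l)=0$.

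To get the Maurer--Cartan equation itself, I take the identity $\dce\boldsymbol{\theta}_W = -\frac{1}{2}[\boldsymbol{\theta}_W,\boldsymbol{\theta}_W]$ established in the proof of Lemma~\ref{weilselem} and apply $\pi$. Because $\pi$ is an algebra homomorphism, it commutes with the bracket on $\g$-valued forms, so $\pi[\boldsymbol{\theta}_W,\boldsymbol{\theta}_W] = [\theta_\mathrm{MC},\theta_\mathrm{MC}]$. Combined with the chain-map property this yields
\[
\dif_\wedge\theta_\mathrm{MC} = -\tfrac{1}{2}[\theta_\mathrm{MC},\theta_\mathrm{MC}],
\]
which is~\eqref{mceq}.

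Equivalently, the projected structural equation reads $0 = -\dif_\wedge\theta_\mathrm{MC} - \frac{1}{2}[\theta_\mathrm{MC},\theta_\mathrm{MC}]$. The point to check carefully is the multiplicativity of $\pi$ on bracket terms together with its sign conventions. I would confirm it by evaluating both sides on $\xi_1\wedge\xi_2$, using $\langle \dif_\wedge l, \xi_1\wedge\xi_2\rangle = -\langle l,[\xi_1,\xi_2]\rangle$ and $[\theta,\theta](\xi_1,\xi_2) = 2[\xi_1,\xi_2]$.
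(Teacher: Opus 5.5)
Your proposal is correct and matches the paper. You compute $\pi(\boldsymbol{\theta}_W)=\theta_\mathrm{MC}$ and $\pi(\F)=-\dif_\wedge\theta_\mathrm{MC}$ exactly as the paper does, and your derivation via \eqref{strucb} together with the chain-map property of $\pi$ for $\dce$ is the alternative the paper records in the remark right after its proof; both end in the same projected identity $0=-\dif_\wedge\theta_\mathrm{MC}-\frac{1}{2}[\theta_\mathrm{MC},\theta_\mathrm{MC}]$. Your planned direct evaluation on $\xi_1\wedge\xi_2$ is a worthwhile addition, because both routes otherwise rest on $\dce\boldsymbol{\theta}_W=-\frac{1}{2}[\boldsymbol{\theta}_W,\boldsymbol{\theta}_W]$, which is itself the Maurer--Cartan identity transported to $1\otimes\Lambda\g^\vee$.
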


\begin{proof}
    The fact that $\pi(\boldsymbol{\theta}_W)=\theta_\mathrm{MC}$ follows immediately from $\pi(\check{l})=l$.
    As an algebra morphism, $\pi$ preserves the wedge product and therefore the brackets in the sense that $\pi([\boldsymbol{\theta}_W, \boldsymbol{\theta}_W]) = [\pi(\boldsymbol{\theta}_W), \pi(\boldsymbol{\theta}_W)]$. Furthermore, since $(\pi \circ \dif_W)(\check{l}) = \pi(\Bar{l}) = 0$, applying $\pi$ to the left hand side of (\ref{structural_eq}) yields $0$. Finally, $\pi(\hat{l})=-\dif_\wedge l$ implies that $\pi(\F)=-\dif_\wedge(\sum_a{e_a}\otimes l^a)=-\dif_\wedge\theta_\mathrm{MC}$. In total, the structural equation (\ref{structural_eq}) is therefore indeed projected to $0=-\dif_\wedge\theta_\mathrm{MC}-\frac{1}{2}[\theta_\mathrm{MC},\theta_\mathrm{MC}]$, the Maurer--Cartan equation up to sign.
\end{proof}
\begin{remark}
    Alternatively, one could prove Proposition \ref{mc4} by applying $\pi$ to equation (\ref{strucb}) and exploiting the chain map property.
\end{remark}

Restricting the Weil algebra and its projection $\pi$ to $\g$-invariants establishes the transgression diagram relating polynomial coadjoint invariants to primitive exterior forms:
Let $S\g^\vee_+$ denote polynomials with vanishing constant term.
Reducing to the invariant parts of the involved algebras allows us to establish the following commutative diagram, see the beginning of 6.13 in \cite{Meinrenken2013}:
\begin{equation}\label{diagram}
    \begin{tikzcd}
    	& {\mathcal{J}\subset {(W\g)}^\g} \\
    	{(S\g^\vee_+)^\g} & {(\Lambda\g^\vee)^\g}
    	\arrow["{\dif_\text{W}\mid_{\mathcal{J}}}"', from=1-2, to=2-1]
    	\arrow["\pi", from=1-2, to=2-2]
    	\arrow["\tau", from=2-1, to=2-2]
    \end{tikzcd} 
\end{equation}

Recall that for any finite-dimensional vector space $V$, the exterior algebra $\Lambda V$ carries a canonical graded super Hopf algebra structure $(\Lambda V, m_0, i_0, \Delta_0, \varepsilon_0, s_0)$ whose coproduct $\Delta_0$ is uniquely determined by declaring the degree-$1$ generators to be primitive: $\Delta_0(v) = v \otimes 1 + 1 \otimes v$ for $v \in V$ (see Section~10.3.1 in \cite{Meinrenken2013}).

By Theorem 10.1 in \cite{Meinrenken2013}, we may also view the invariant exterior algebra $(\Lambda\g^\vee)^\g$ as a graded super Hopf algebra $((\Lambda\g^\vee)^\g,m,i,\Delta,\eps,s)$ with $m,i,\eps$ and $s$ being the restrictions of the corresponding objects $m_0,i_0,\eps_0$ and $s_0$ on $\Lambda\g^\vee$. Only the coproduct $\Delta_0$ must be adapted so that its image lies in $(\Lambda\g^\vee)^\g\otimes(\Lambda\g^\vee)^\g$: Let $\pi \colon \Lambda\g \to (\Lambda\g)^\g$ be the projection along $\g \cdot \Lambda\g$, and let $j \colon (\Lambda\g)^\g \hookrightarrow \Lambda\g$ be the natural inclusion. We define the restricted coproduct $\Delta$ via:
\[\Delta = (\pi \otimes \pi) \circ \Delta_0 \circ j .\] 
This dualizes the multiplication $m=\pi\circ m_0\circ(j\otimes j)$.

\begin{remark}[The Role of the Projection $\pi$]\label{rmk:role_of_pi}
    In the full exterior algebra $\Lambda\g$, the primitive elements for $\Delta_0$ are  the degree $1$ elements (the space $\g$ itself). Higher degree elements, such as our asolute primitives $\eta_j \in \Lambda^{2j-1}\g$, are not primitive under $\Delta_0$; their coproduct contains many mixed-degree terms (e.g., in $\Lambda^{2j-2}\g \otimes \g$). 
    
    However, for a semisimple Lie algebra, there are no invariant 1-forms ($\g^\g = 0$). Consequently, the projection $\pi \colon \Lambda\g \to (\Lambda\g)^\g$  annihilates these mixed-degree cross-terms. When evaluating $\Delta(\eta_j) = (\pi \otimes \pi)(\Delta_0(\eta_j))$, all intermediate tensor products vanish, leaving only $\eta_j \otimes 1 + 1 \otimes \eta_j$. The projection $\pi$ is the precise algebraic mechanism that promotes the higher-degree transgression cocycles into the primitive generators of the invariant Hopf algebra.
\end{remark}

The subset $\mathcal{J}$ in the diagram (\ref{diagram}) is defined as follows (see Definition 6.15 in \cite{Meinrenken2013}):
\begin{definition}[Cochains of Transgression]
    Given an element $p\in(S\g^\vee)^\g$, a cochain of transgression of $p$ is an odd element $C\in(W\g)^\g$ such that $\dw(C)=p$. We denote the set of cochains of transgression by $\mathcal{J}$.
\end{definition}
If we consider $(\Lambda\g^\vee)^\g$ with the trivial differential, the projection $\pi$ restricts to a morphism of graded $\g$-differential algebras.\\
The transgression map is defined according to Proposition 6.17 of \cite{Meinrenken2013}, which asserts the existence of a well-defined linear map $\tau:(S^+\g^\vee)^\g\rightarrow(\Lambda\g^\vee)^\g$ mapping $p\mapsto\tau(p)$, satisfying the following condition:
\[\forall C\in \mathcal{J}\text{ s.t. }p=\dif_W(C):\quad\tau(p)=\pi(C) .\]
It is shown further that if $\deg(p)=r>0$, then $\deg(\tau(p))=2r-1$ and that $\tau$ vanishes on the subspace $((S^+\g^\vee)^\g)^2$. 
For computations it is convenient to write $\tau=\pi\circ h$ for the map $h:W\g\rightarrow W\g$ defined as the composition $(I\otimes1)\circ k$ of the maps
\begin{align*}
    k: W\g&\longrightarrow\CC[t,\dif t]\otimes W\g\\
    \check{l}&\longmapsto t\otimes\check{l},\\
    \Bar{l}&\longmapsto \dif t\otimes\check{l}+t\otimes\Bar{l}
\end{align*}	
(where the assignment $\check{l}\mapsto t\otimes\check{l}$ is interpreted as homotopy $\theta_t=(1-t)\theta_0+t\theta_1$ between $\theta_0=0$ and $\theta_1=\theta_W$),
and
\[I\otimes1:\CC[t,\dif t]\otimes W\g \rightarrow W\g ,\]
where $I$ denotes integration in the variable $t$ over the unit interval. We easily verifiy that $h$ satisfies the homotopy relation
\begin{equation}\label{homotopy}
    \dw h + h\dw=\id
\end{equation}
on generators:
\begin{align*}
    h\dw(\check{l})+\dw h(\check{l})&=h(\Bar{l})+0=\int_0^1\dif t\cdot\check{l}=\check{l};\\
    h\dw(\Bar{l})+\dw h(\Bar{l})&=0+\dw(\check{l})=\Bar{l} .
\end{align*}

\begin{remark}\label{khat}
	From the definition of the map $k$ above, we can deduce its value on the curvature variables $\hat{l}$:
	\begin{align*}
		k(\hat{l})=k(\bar{l}-1\otimes\dif_\wedge l) &= \dif t\otimes\check{l}+t\otimes\bar{l}-t^2\otimes(1\otimes \dif_\wedge l)\\
		&=\dif t\otimes\check{l}+t\otimes\hat{l}+(t-t^2)\otimes(1\otimes \dif_\wedge l) ,
	\end{align*}
	where we used that $1\otimes \dif_\wedge l$ is quadratic in $\check{l}$ in order to extract the factor $t^2$.
\end{remark}

\begin{remark}
    The above decomposition $\tau=\pi\circ h$ uses the so-called standard homotopy operator $h$, but any $\g$-equivariant homotopy works since such homotopies map any $p\in(S^+\g^\vee)^\g$ to a cochain of transgression, see Chapters 5 and 6 of \cite{Meinrenken2013} for further details. In particular one can interpret $h$ as a deformation into $\mathcal{J}\subset(W\g)^\g$ (as is remarked just before Proposition 6.18 of \cite{Meinrenken2013}).
\end{remark}

Recalling (\ref{structural_eq}), we can write the curvature of the intermediate connection $\theta_t=t\theta_W$ as
\begin{equation}\label{ftw}
	\mathbf{F}_{tW}= \dw \boldsymbol{\theta}_{tW} + \frac{1}{2} [\boldsymbol{\theta}_{tW}, \boldsymbol{\theta}_{tW}]=t\dw \boldsymbol{\theta}_W + \frac{1}{2}t^2 [\boldsymbol{\theta}_{W}, \boldsymbol{\theta}_{W}]= t \F - \frac{1}{2}(t - t^2)[\boldsymbol{\theta}_W, \boldsymbol{\theta}_W] ,
\end{equation}
which, using the same logic as in the proof of Proposition \ref{mc4}, projects to
\begin{equation}\label{ftproj}
    \pi(\mathbf{F}_{tW})=t^2\frac{1}{2}[\theta_\mathrm{MC},\theta_\mathrm{MC}]=-t^2\dif_\wedge\theta_\mathrm{MC} ,
\end{equation} 
where we used the Maurer--Cartan equation in the final step.
	
On the full Lie algebra $\g = \slk$, the classical ad-invariant polynomials are given by the trace polynomials:
\[f_j(X) = \tr(X^j) \in S^j(\g^\vee)^G \quad (j = 2, \dots, k) ,\]
where $f_1(X) = \tr(X)$ vanishes on traceless matrices. 

To bridge these global coadjoint invariants with Borel's setting on the Cartan subalgebra $\h$, we use the \textit{Chevalley Restriction Theorem} (see, e.g., Theorem 3.1.38 in \cite{chrissginz}). The canonical subspace inclusion $\h \hookrightarrow \g$ induces the surjective restriction homomorphism of polynomial algebras:
\begin{equation}\label{resdef}
	\mathrm{res}: S(\g^\vee) \twoheadrightarrow S(\h^\vee), \quad f \longmapsto f|_\h .
\end{equation}
The Chevalley Restriction Theorem establishes that this map restricts to a graded algebra isomorphism on the invariant subrings:
\begin{equation}\label{chevres}
	\mathrm{res}\big|_{S(\g^\vee)^G}: S(\g^\vee)^G \xrightarrow{\ \sim\ } S(\h^\vee)^W .
\end{equation}
Under this isomorphism, each global trace invariant $f_j$ restricts to the corresponding power-sum polynomial on $\h$:
\[p_j := \mathrm{res}(f_j) = f_j\vert{}_\h = \sum_{i=1}^k L_i^j \in S^j(\h^\vee)^W \quad (j = 2, \dots, k) .\]

\begin{definition}[Chern--Simons Cochain]
    We define the \textit{universal Chern--Simons cochain} associated to $f_j$ as the transgression cochain $Tf_j := h(f_j(\F)) \in W^{2j-1}\g$.
\end{definition}

\begin{remark}
    The evaluation of $f_j\in S^j(\g^\vee)^\g$ on the curvature $F^{\theta_W}:\g^\vee\rightarrow W^2\g$ is to be understood as follows. By definition, $f_j$ is a $k$-multilinear map $\g \times \dots \times \g \to \CC$, and $\F = \sum e_a \otimes \hat{l^a}$. Due to their even degrees, the generators $\hat{l^a}\in W^2\g$ are central in the Weil algebra (an even number of negative signs is added when permuting the wedge product). Consequently, the coefficients of $\F$ generate a commutative subalgebra to which the polynomial $f_j$ may be extended via
    \[f_j(\F) = \sum_{a_1, \dots, a_j} f_j(e_{a_1}, \dots, e_{a_j}) \, \hat{l^{a_1}} \wedge \dots \wedge \hat{l^{a_k}}\in W^{2j}\g .\]
\end{remark}

\begin{proposition}\label{cs}
    The universal Chern--Simons cochain is a \textit{Chern--Simons form} for the connection $\boldsymbol{\theta}_{W}$ in the Weil algebra, that is:
    \begin{equation}\label{csform}
    	\dw(Tf_j)=f_j(\F)=\tr\F^j .
    \end{equation}
\end{proposition}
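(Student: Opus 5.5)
The plan is to derive $\dw(Tf_j)=f_j(\F)$ directly from the homotopy relation \eqref{homotopy}, $\dw h + h\dw = \id$. Applying it to $f_j(\F)$ gives
\[ f_j(\F) = \dw h(f_j(\F)) + h\dw(f_j(\F)) = \dw(Tf_j) + h\big(\dw f_j(\F)\big), \]
so everything reduces to showing that $f_j(\F)$ is $\dw$-closed. In the $\bar l$-generators this is clear: $f_j(\F)$ is a polynomial in the $\hat{l^a}$, but it coincides with $f_j(\bar{\mathbf F})$ only up to terms involving $\dif_\wedge l$. So I will argue via the second Bianchi identity, which the paper has already derived.

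\textbf{Closedness.} Since the coefficients of $\F$ are even and central, $\dw$ acts on $\tr(\F^j)$ as a derivation. This gives
\[ \dw \tr(\F^j) = \sum_{i=0}^{j-1}\tr\big(\F^i(\dw\F)\F^{j-1-i}\big) = \sum_{i}\tr\big(\F^i[\F,\boldsymbol{\theta}_W]\F^{j-1-i}\big). \]
The right-hand side telescopes to $\tr(\F^j\boldsymbol{\theta}_W) - \tr(\boldsymbol{\theta}_W\F^j)$, which vanishes by graded cyclicity of the trace; the sign is $+$ because $\F^j$ has even degree. Equivalently, the closedness is just the $\g$-invariance of $f_j$ applied to the bracket term.

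\textbf{Main obstacle: the sign conventions.} The step I expect to need the most care is checking that the matrix-valued bracket $[\F,\boldsymbol{\theta}_W]$ is the graded commutator $\F\boldsymbol{\theta}_W-\boldsymbol{\theta}_W\F$ in the matrix realization, and that graded cyclicity holds with the stated sign. Alternatively, I can bypass matrices: write $\dw\hat{l}=\dif_{\mathrm{CE}}\hat{l}$ as the coadjoint action. Then $\dw f_j(\F)=\sum_i \iota$-type terms $(\ad^\vee_{e_i}f_j)(\F)\otimes\omega^i$, and each of these vanishes because $f_j$ is invariant. This second route is sign-free and is the one I would actually write out.

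\textbf{Consequence and explicit formula.} With $\dw f_j(\F)=0$, the identity above yields $\dw(Tf_j)=f_j(\F)=\tr\F^j$, which proves the proposition. As a by-product, I would recover formula \eqref{csform2}. Using Remark~\ref{khat}, apply $k$ to $\hat{l}$ and substitute into $f_j$. The result is $f_j(\dif t\,\boldsymbol{\theta}_W+\mathbf F_{tW})$. Its $\dif t$-linear part is $j\,\dif t\,\tr(\boldsymbol{\theta}_W\mathbf F_{tW}^{j-1})$ by symmetry of $f_j$, and integrating over $[0,1]$ gives \eqref{csform2}.
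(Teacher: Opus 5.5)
Your proposal is correct, and its main line is the paper's proof. You reduce via $\dw h+h\dw=\id$ to closedness of $f_j(\F)$, and then obtain closedness from the Bianchi identity $\dw\F=[\F,\Th]$ together with graded cyclicity of the trace. Your telescoping to $\tr(\F^j\Th)-\tr(\Th\F^j)$ is a tidier version of the paper's two cyclic moves through $j\tr(\F^{j-1}\wedge\dw\F)$. Your by-product derivation of \eqref{csform2} is the computation the paper carries out in the proof of Proposition~\ref{primform}.

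The route you say you would actually write out differs in the key step. You read $\dw\hat{l}=\dce\hat{l}$ as the coadjoint action on the curvature generators. Then $\dw P(\F)=\sum_i (e_i\cdot P)(\F)\,(1\otimes\omega^i)$ up to sign convention, and this vanishes for invariant $P$. This argument needs neither the Bianchi identity nor the matrix realization and sign-laden graded cyclicity of Remark~\ref{cyclermk}. It also shows at once that $P(\F)$ is closed for every $P\in(S\g^\vee)^\g$, not only for traces.

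One point should be made explicit. In $\dce\hat{l}=\sum_i(\ad_{e_i}^\vee l)\otimes\omega^i$, the symmetric factor must be read as the hatted generator. That is, $W\g$ is viewed as the Chevalley--Eilenberg complex with coefficients in the copy of $S(\g^\vee)$ generated by the $\hat l$. Then $\dce$ acts on polynomials in the $\hat l$ as the derivation extending the coadjoint action, and $\dif_\mathrm{K}$ kills them.

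The paper's trace-based route is less general, but it stays inside the matrix formalism that it then reuses for the explicit primitives in Proposition~\ref{primform} and Section~\ref{sec:uni}.
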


\begin{remark}\label{cyclermk}
    Technically, a Lie algebra ${\g}$ possesses only a Lie bracket $[-,-]$, not an associative multiplication. Expressions like $\F^j$ and the expansion of the commutator $[\F, \boldsymbol{\theta}_W] = \F \wedge \boldsymbol{\theta}_W - \boldsymbol{\theta}_W \wedge \F$ constitute a standard abuse of notation. These operations are justified by the fact that the invariant polynomial $f_j(X) = \tr(X^j)$ implicitly evaluates elements of ${\g}$ via a chosen finite-dimensional representation $\rho: {\g} \to \mathfrak{gl}(V)$, which in our case is the adjoint representation of $\mathfrak{sl}(k)$. This representation extends to Lie algebra-valued forms via the map $\rho \otimes \operatorname{id}: \g \otimes W\g \to \mathfrak{gl}(V) \otimes W\g$. The target space $\mathfrak{gl}(V) \otimes W\g$ is an associative differential graded algebra, equipped with a product given by matrix composition on the first tensor factor and the exterior wedge product on the second, but written like the standard exterior product. Working  within this associative algebra, the power $\F^j$ is rigorously defined as the $j$-fold associative product of $\rho(\F)$. Furthermore, because $\rho$ is a Lie algebra homomorphism, the Lie bracket of $\g$-valued forms pushes forward  to the graded commutator, yielding
    \begin{align*}
        \rho([\F, \boldsymbol{\theta}_W]) &= \rho(\F) \wedge \rho(\boldsymbol{\theta}_W) -(-1)^{\abs{\F}\abs{\boldsymbol{\theta}_W}} \rho(\boldsymbol{\theta}_W) \wedge \rho(\F)\\
        &=\rho(\F) \wedge \rho(\boldsymbol{\theta}_W) - \rho(\boldsymbol{\theta}_W) \wedge \rho(\F) .
    \end{align*}
    Finally, the matrix trace $\tr: \mathfrak{gl}(V) \to \CC$ extends to a map $\tr \otimes \operatorname{id}$ that acts exclusively on the first tensor factor. Because the Weil differential acts exclusively on the second, the two operators commute. This extended trace also satisfies the graded cyclic property with respect to the associative wedge product. More precisely, for homogeneous matrix-valued forms $\Phi$ and $\Psi$, one has $\tr(\Phi \wedge \Psi) = (-1)^{\vert{}\Phi\vert{}\vert{}\Psi\vert{}} \tr(\Psi \wedge \Phi)$. In the following, the representation $\rho$ is omitted from the notation, and these associative operations are written directly on the elements of $\g \otimes W\g$.
\end{remark}
\begin{proof}
    Recall the homotopy property (\ref{homotopy}). Evaluating it on $f_j(\F)$ yields
    \[\dw (Tf_j)+h(\dw (f_j(\F)))=f_j(\F) .\]
    It remains to show that the polynomial $f_j(\F)$ is closed.
    
    By the second Bianchi identity, $\dw\F=[\F,\boldsymbol{\theta}_W]$.
    By the graded Leibniz rule, we have
    \[\dw(\F^j) = \sum_{k=0}^{j-1} (-1)^{\vert{}\F\vert{}^k} \F^k \wedge (\dw \F) \wedge \F^{j-1-k}\]
    where $\F^j := \underbrace{\F \wedge \dots \wedge \F}_{j \text{ times}}$ and the sign is positive since the curvature has degree $2$. As explained in Remark \ref{cyclermk}, the trace acts on the first and the Weil differential acts on the second component of $\g\otimes W\g$, so they commute. We therefore write
    \[\dw \tr(\F^j) = \tr\left(\sum_{k=0}^{j-1}  \F^k \wedge (\dw \F) \wedge \F^{j-1-k}\right)=j \tr\left( \F^{j-1} \wedge (\dw \F) \right) ,\]
    where we used the linearity and the invariance of the trace under cyclic permutations in the last step (see Remark \ref{cyclermk}). Plugging in the Bianchi identity and writing the bracket as a commutator, i.e. \[[\F,\boldsymbol{\theta}_W]=\F \wedge \boldsymbol{\theta}_W - (-1)^{\vert{}\F\vert{}\vert{}\boldsymbol{\theta}_W\vert{}} \boldsymbol{\theta}_W \wedge \F=\F \wedge \boldsymbol{\theta}_W - \boldsymbol{\theta}_W \wedge \F ,\]
    we obtain:
    \[\dw f_j(\F) = j \left( \tr(\F^j \wedge \boldsymbol{\theta}_W) - \tr(\F^{j-1} \wedge \boldsymbol{\theta}_W \wedge \F) \right) .\]
    Applying the graded cyclic property of the trace one more time we conclude that
    \[\dw f_j(\F) = j \left[ \tr(\F^j \wedge \boldsymbol{\theta}_W) - \tr(\F^j \wedge \boldsymbol{\theta}_W) \right] = 0 .\]
\end{proof}

Proposition~\ref{cs} establishes that the universal Chern--Simons cochains $Tf_j \in W^{2j-1}(\kk)$ function as universal transgression primitives on the contractible Weil algebra. In Subsection~\ref{sec:absolute_primitives}, we project these cochains to the compact group $K$ via the canonical projection $\pi \colon W(\kk) \to \Lambda^\bullet \kk^\vee$, recovering the absolute primitives $\eta_j = \pi(Tf_j)$.

\subsection{Absolute Primitives on the Compact Lie Group}\label{sec:absolute_primitives}

The canonical projection $\pi \colon W(\kk) \to \Lambda^\bullet \kk^\vee$ maps the universal Chern--Simons cochain $Tf_j$ to the differential form $\eta_j$ on the compact group $K$. By the classical Hopf--Koszul--Samelson theorem, these forms constitute the primitive generators of the invariant cohomology $H^*(K; \CC)$.

\begin{theorem}[Hopf--Koszul--Samelson \cite{Meinrenken2013}]\label{hks}
	The inclusion $P(\g)\hookrightarrow(\Lambda^\bullet\g^\vee)^\g$ extends to an isomorphism of graded super algebras:
	\[\Lambda^\bullet P(\g)\longrightarrow (\Lambda^\bullet\g^\vee)^\g,\]
	where the grading on the exterior power of the space of primitives is determined by the condition that the inclusion $P(\g)\hookrightarrow\Lambda^\bullet P(\g)$ preserves degrees.
\end{theorem}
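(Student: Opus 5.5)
The plan is to follow the standard Hopf--Koszul--Samelson argument through the Hopf algebra structure on $(\Lambda^\bullet\g^\vee)^\g$ recalled just before the statement. By Theorem 10.1 of \cite{Meinrenken2013}, $A := (\Lambda^\bullet\g^\vee)^\g$ is a connected, graded super commutative and cocommutative Hopf algebra of finite type over $\CC$. Let $P(\g)\subset A^+$ be its primitive subspace, meaning the elements with $\Delta(x)=x\otimes 1+1\otimes x$. Since $\g^\g=0$, $P(\g)$ has no degree-one part, as observed in Remark~\ref{rmk:role_of_pi}.

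\textbf{Step 1: the map $\Lambda^\bullet P(\g)\to A$ is well defined.} We first check that every homogeneous primitive element has odd degree. Here I would invoke the classical Hopf structure theorem, in the form of the Milnor--Moore theorem for graded connected commutative and cocommutative Hopf algebras in characteristic zero. It says that $A$ is the free graded commutative algebra on $P(\g)$, and that $P(\g)\to Q(A)=A^+/(A^+)^2$ is an isomorphism. Because $A$ is finite dimensional, any even primitive $x$ would generate a polynomial subalgebra $\CC[x]$, which is infinite dimensional. This contradiction forces every primitive to be odd. Consequently $x\wedge x=0$ for each primitive $x$, and the inclusion extends multiplicatively to a morphism of graded super algebras $\Lambda^\bullet P(\g)\to A$.

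\textbf{Step 2: the map is bijective.} The map is surjective because $A$ is generated as an algebra by lifts of $Q(A)$, and primitives give such lifts. For injectivity, I would show that the algebra map is in fact a morphism of Hopf algebras, taking primitive generators to primitives. I would then prove by induction on the number of generators that a monomial basis $x_{i_1}\wedge\cdots\wedge x_{i_r}$ maps to linearly independent elements. The tool is the coproduct: a hypothetical minimal relation is applied to $\Delta$, and its component in $A\otimes P(\g)$ yields a shorter relation. This is the standard Milnor--Moore/Borel argument.

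An alternative route to injectivity is a dimension count. Using $\dim A = \dim H^*(K;\CC)=2^{\mathrm{rank}}$ and the fact that the $\eta_j=\pi(Tf_j)$, $j=2,\dots,k$, are $k-1$ linearly independent primitives, surjectivity forces equality.

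\textbf{Main obstacle.} I expect Step 1 to be the hardest part: establishing the structure theorem's freeness claim, and in particular controlling the signs in the graded-commutative coproduct computation. Much of this has to be taken from \cite{Meinrenken2013} rather than re-derived. I would first try to cite the Milnor--Moore theorem there and only verify the finiteness argument that rules out even primitives.
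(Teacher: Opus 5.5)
The paper gives no proof of this statement. It quotes the theorem from Meinrenken, together with the Hopf algebra structure on $A=(\Lambda^\bullet\g^\vee)^\g$. Your main route is a correct proof. For a connected, graded-commutative, cocommutative Hopf algebra over a field of characteristic zero, Milnor--Moore gives $A\cong U(P(A))$. Graded commutativity makes $P(A)$ abelian, so $A$ is free graded-commutative on $P(A)$. Finite-dimensionality then excludes even primitives, and the statement follows.

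Two remarks on how the argument is organised:
\begin{itemize}
\item The form of Milnor--Moore you cite already asserts bijectivity. Step 2 therefore re-derives part of the cited theorem rather than adding a step.
\item The delicate input is not sign bookkeeping. It is that $A$ is a Hopf algebra at all, in particular that $\Delta=(\pi\otimes\pi)\circ\Delta_0\circ j$ is multiplicative. This is not formal, because $\pi$ is not an algebra map on $\Lambda^\bullet\g^\vee$: for $\mathfrak{sl}_2$, the wedge $\Lambda^1\times\Lambda^2\to\Lambda^3$ pairs elements of $\ker\pi$ to a nonzero invariant. You import this fact from Meinrenken, just as the paper does.
\end{itemize}

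If you want to lean less on Milnor--Moore, two parts can be done directly:
\begin{itemize}
\item \emph{No even primitives.} Suppose $x\neq 0$ is an even primitive with $x^n=0$ and $n\ge 2$ minimal. Then $\Delta(x^n)=\sum_i\binom{n}{i}x^i\otimes x^{n-i}$ contains the nonzero component $n\,x^{n-1}\otimes x$, a contradiction.
\item \emph{Injectivity.} The kernel of the Hopf map $\Lambda^\bullet P(\g)\to A$ is a graded Hopf ideal. Its lowest-degree nonzero element is primitive in $\Lambda^\bullet P(\g)$, hence lies in $P(\g)$, where the map is the inclusion.
\end{itemize}
Only surjectivity, i.e.\ that primitives generate $A$, genuinely needs the structure theorem.

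Your alternative dimension count does not work as written. Linear independence of $\eta_2,\dots,\eta_k$ gives $\dim P(\g)\ge k-1$, hence $\dim\Lambda^\bullet P(\g)\ge 2^{k-1}=\dim A$. That is the same inequality surjectivity already gives. To upgrade surjectivity to bijectivity you need the reverse bound $\dim P(\g)\le k-1$, i.e.\ that the $\eta_j$ span $P(\g)$. That is the content of the Transgression Theorem quoted right after in the paper. You would also have to check that neither that theorem nor the value $\dim H^*(K;\CC)=2^{k-1}$ is itself derived from the statement you are proving. For the latter, one independent route is induction along $\mathrm{SU}(k-1)\to\mathrm{SU}(k)\to S^{2k-1}$.
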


\begin{theorem}[Transgression Theorem \cite{Meinrenken2013}]\label{transth}
	If $\g$ is a complex reductive Lie algebra, the transgression map $\tau:{(S^\bullet\g^\vee_+)^\g} \rightarrow{(\Lambda^\bullet\g^\vee)^\g}$ induces an isomorphism
	\[S^\bullet(\g^\vee)_+^\g/(S^\bullet(\g^\vee)_+)^2\cong P(\g)\]
	between the space of indecomposable elements of $(S^\bullet\g^\vee)^\g$ and the space of primitive elements of $(\Lambda^\bullet\g^\vee)^\g$.
\end{theorem}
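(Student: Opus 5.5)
The plan is to prove the statement in three stages: \emph{well-definedness}, then \emph{landing in primitives}, then \emph{bijectivity on indecomposables}. For well-definedness I take as given what was quoted before the statement, namely Proposition 6.17 of \cite{Meinrenken2013}. It says that $\tau=\pi\circ h$ is independent of the chosen cochain of transgression, raises degree from $r$ to $2r-1$, and vanishes on $((S^+\g^\vee)^\g)^2$. So $\tau$ descends to a linear map
\[\bar\tau\colon S^\bullet(\g^\vee)^\g_+/\big((S^\bullet\g^\vee)^\g_+\big)^2\longrightarrow(\Lambda\g^\vee)^\g.\]
If it is not fully available, the vanishing on decomposables follows quickly. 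For $p=p'p''$ with $p'=\dw C'$, the element $C'p''$ is a cochain of transgression for $p$. Then $\pi(C'p'')=\pi(C')\pi(p'')=0$, because $\pi$ kills the symmetric generators $\bar l$.

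Next I show that the image lies in $P(\g)$. By Chevalley restriction \eqref{chevres}, the indecomposables are spanned by the classes of the $f_j=\tr(X^j)$, $2\le j\le k$. So it suffices to treat $\eta_j=\tau(f_j)$. Using \eqref{csform2} and \eqref{ftproj}, I compute $\pi(Tf_j)$ explicitly as
\[\eta_j=c_j\tr(\theta_\mathrm{MC}^{2j-1}),\qquad c_j=j\int_0^1(-t^2)^{j-1}\,\dif t\cdot(\pm1).\]
The Beta-type integral and the sign are routine; $c_j\neq0$ is what matters. Primitivity I would then check geometrically. The coproduct on invariant forms corresponds to pullback along the multiplication $\mu\colon K\times K\to K$, and $\mu^*\theta_\mathrm{MC}=\Ad(k_2^{-1})\mathrm{pr}_1^*\theta_\mathrm{MC}+\mathrm{pr}_2^*\theta_\mathrm{MC}$. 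Expanding $\tr\big((\mu^*\theta)^{2j-1}\big)$ gives $\mathrm{pr}_1^*\eta_j+\mathrm{pr}_2^*\eta_j$ plus mixed terms. The classical Chern--Simons/Cartan computation shows these mixed terms are exact. Applying the projection $\pi\otimes\pi$ onto bi-invariants, exact invariant forms vanish because the differential on $(\Lambda\g^\vee)^\g$ is zero. Hence $\Delta\eta_j=\eta_j\otimes1+1\otimes\eta_j$. This is consistent with Remark~\ref{rmk:role_of_pi}.

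The main obstacle is bijectivity. My approach is to compare dimensions and then prove injectivity. By Chevalley, $(S\g^\vee)^\g$ is a polynomial algebra on $l=\operatorname{rank}\g$ generators of degrees $m_i+1$. By Theorem~\ref{hks} together with Hopf's theorem on the Poincaré polynomial, $P(\g)$ has dimension $l$ with one generator in each degree $2m_i+1$. The two sides therefore match degree by degree, and injectivity of $\bar\tau$ suffices. For injectivity I would use acyclicity of $W(\g)$, which persists on invariants since $\g$ is reductive: $H(W(\g)^\g)=\CC$. I then build the Cartan model $A=(S\g^\vee)^\g\otimes\Lambda P(\g)$ with differential sending each primitive $x$ to a polynomial $p$ satisfying $\tau(p)=x$. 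There is a $\g$-dga map $A\to W(\g)^\g$, obtained by sending $x\mapsto C_x$, and a Zeeman-type spectral sequence comparison (filtration by symmetric degree) shows it is a quasi-isomorphism. Since $A$ must then be acyclic, the images $p$ of a basis of $P(\g)$ must form a minimal generating set of the invariant polynomials; otherwise a surviving cohomology class appears in $A$. This forces $\bar\tau$ to be an isomorphism. Setting up the comparison map and verifying that the $E_2$ pages agree is where I expect the real work to lie. As a fallback, in our case $\g=\slk$, I would check directly that $\eta_2,\dots,\eta_k$ are nonzero, via $c_j\neq0$ and nonvanishing of $\tr(\theta^{2j-1})$ evaluated on $\mathfrak{su}(j)\subset\suk$. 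Since each sits in the single degree $2j-1$ of $P(\g)$, injectivity follows degree by degree.
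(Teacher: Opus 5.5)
The paper gives no proof of this statement; it is quoted from \cite{Meinrenken2013}. Your argument therefore has to stand on its own, and its decisive step is circular. To put a differential on $A=(S\g^\vee)^\g\otimes\Lambda P(\g)$, you choose for each primitive $x$ a polynomial $p$ with $\tau(p)=x$. That presupposes that $\bar\tau$ maps onto $P(\g)$, which is half of the theorem. Your other input, that $P(\g)$ has one generator in each degree $2m_i+1$, is normally deduced from the transgression theorem itself, so it cannot be fed in here. If both were granted, the spectral sequence would be superfluous anyway, since a surjection between spaces of equal finite dimension is already bijective.

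The comparison has to be run from the polynomial side:
\begin{enumerate}
\item Take homogeneous generators $p_1,\dots,p_l$ of $(S\g^\vee)^\g$ and invariant cochains $C_i$ with $\dw C_i=p_i$.
\item Let $A=(S\g^\vee)^\g\otimes\Lambda(\xi_1,\dots,\xi_l)$ with $d\xi_i=p_i$. Its cohomology is $\CC$, because it is the Koszul complex of the polynomial variables.
\item The dga map $\xi_i\mapsto C_i$ into $W(\g)^\g$, whose cohomology is also $\CC$, respects the filtrations by degree in the curvature generators $\hat l$.
\item On $E_2$ this map is the identity on the base $(S\g^\vee)^\g$. Reductivity gives $H\big((S\g^\vee\otimes\Lambda\g^\vee)^\g,\dce\big)=(S\g^\vee)^\g\otimes(\Lambda\g^\vee)^\g$. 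On the fiber $(\Lambda\g^\vee)^\g$ the map sends $\xi_i$ to the exterior component of $C_i$, which equals $\tau(p_i)$ up to a nonzero constant depending only on $\deg p_i$.
\item Zeeman's comparison theorem (isomorphism on base and abutment implies isomorphism on fiber) then shows that $\tau(p_1),\dots,\tau(p_l)$ freely generate $(\Lambda\g^\vee)^\g$.
\item Once these are known to be primitive, Theorem~\ref{hks} forces $\dim P(\g)=l$ and makes them a basis. Hence $\bar\tau$ is an isomorphism, with no prior knowledge of the degrees of $P(\g)$.
\end{enumerate}

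Your $\slk$ fallback does close the argument for the case the paper needs, modulo primitivity. Each nonzero graded piece of the indecomposables is the line $\CC[f_j]$, so $\eta_j\neq0$ gives injectivity. Bijectivity then follows from $\dim P(\slk)=k-1$. You may take this from Theorem~\ref{hks} together with the transgression-free computation $H^*(\SU{k})=\Lambda(x_3,\dots,x_{2k-1})$.

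The fallback does not reach the reductive case as stated, and neither does your second stage. The classes of $\tr(X^j)$ span the indecomposables for $\slk$ but not in general; there you need traces in arbitrary representations. Two further points affect that stage:
\begin{itemize}
\item The real content, exactness of the mixed terms of $\mu^*\tr(\theta_\mathrm{MC}^{2j-1})$, is asserted rather than shown.
\item Those mixed terms are not left-invariant on $K\times K$, so what removes them is averaging over both $K\times K$-actions, not $\pi\otimes\pi$.
\end{itemize}

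Finally, the reason you give in the first stage is wrong. The factor $p''$ sits in $W(\g)$ as a polynomial in the curvature generators $\hat l$, and $\pi(\hat l)=-\dif_\wedge l\neq0$. The conclusion $\pi(p'')=0$ survives for a different reason. We have $p''=\dw C''$, and $\pi\circ\dw=0$, because $\pi$ kills the $\dw$-stable ideal generated by the $\bar l$, which contains every $\dw\check l=\bar l$.
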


\begin{example}\label{etaf}
	If $(S^\bullet\g^\vee)^\g \cong \CC[f_{d_1}, \dots, f_{d_r}]$, then the elements $\eta_{d_i} = \tau(f_{d_i})$ are algebraically independent primitive cocycles.
\end{example}

\begin{proposition}[Computation of Primitive Forms]\label{primform}
    The absolute primitive $\eta_j:=\tau(f_j)=\pi(Tf_j)$ of the fundamental invariant $f_j(\F)=\tr(\F^j)$ is given by the trace of the Maurer--Cartan form:
    \[\eta_j=\frac{j}{2j-1}\tr(\theta_\mathrm{MC}^{2j-1})\in(\Lambda^{2j-1}\kk^\vee)^K.\]
\end{proposition}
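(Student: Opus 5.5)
Since $\tau=\pi\circ h$, I will compute $\eta_j=\pi(Tf_j)$ by first writing $Tf_j=h(\tr\F^j)$ as the homotopy integral \eqref{csform2}, and then applying the algebra morphism $\pi$ under the integral sign. The integral expression comes from the map $k$ of the standard homotopy. The generators transform as $\check l\mapsto t\otimes\check l$ and, by Remark~\ref{khat}, $\hat l\mapsto \dif t\otimes\check l + t\otimes\hat l+(t-t^2)\otimes(1\otimes\dif_\wedge l)$. At the level of $\g$-valued elements this reads
\[
k(\F)=\dif t\,\boldsymbol{\theta}_W+\mathbf{F}_{tW},
\]
with $\mathbf{F}_{tW}$ as in \eqref{ftw}. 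The operator $I$ retains only the terms linear in $\dif t$. Expanding $\tr\big((\dif t\,\boldsymbol{\theta}_W+\mathbf{F}_{tW})^j\big)$ and keeping exactly one $\dif t$ gives $j$ words, each containing a single $\boldsymbol{\theta}_W$ among $j-1$ factors $\mathbf{F}_{tW}$. Because every $\mathbf{F}_{tW}$ has even degree, graded cyclicity of the trace (Remark~\ref{cyclermk}) identifies all $j$ words with one another. Moving $\dif t$ to the front likewise costs no sign, since it only passes through even-degree factors. This yields $Tf_j=j\int_0^1\tr(\boldsymbol{\theta}_W\wedge\mathbf{F}_{tW}^{j-1})\,\dif t$.

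Next I apply $\pi$. It is an algebra morphism commuting with the trace and with integration in $t$, and it satisfies $\pi(\boldsymbol{\theta}_W)=\theta_\mathrm{MC}$ by Proposition~\ref{mc4}. By \eqref{ftproj}, $\pi(\mathbf{F}_{tW})=\tfrac{t^2}{2}[\theta_\mathrm{MC},\theta_\mathrm{MC}]$. In the associative picture of Remark~\ref{cyclermk}, $[\theta_\mathrm{MC},\theta_\mathrm{MC}]=2\theta_\mathrm{MC}^2$, so $\pi(\mathbf{F}_{tW})=t^2\theta_\mathrm{MC}^2$. The $t$-dependence is therefore a scalar and factors out:
\[
\eta_j=j\int_0^1 t^{2(j-1)}\,\dif t\;\tr\big(\theta_\mathrm{MC}\wedge\theta_\mathrm{MC}^{2j-2}\big)=\frac{j}{2j-1}\tr(\theta_\mathrm{MC}^{2j-1}).
\]
Invariance under $\Ad_K$ follows from the cyclic invariance of the trace. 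It also follows from $\pi$ mapping $\g$-invariants to invariants.

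The main obstacle is sign and convention bookkeeping, not the integral itself. Two points need care. The first is the identity $[\theta,\theta]=2\theta\wedge\theta$ for an odd $\g$-valued form in the matrix representation. The second is the sign of $\pi(\hat l)=-\dif_\wedge l$, which must be matched against the Maurer--Cartan equation \eqref{mceq}; an error there would flip the sign of $t^2$. I would verify both against \eqref{ftproj} directly. I would also check explicitly that the cross term $\tfrac12(t-t^2)[\boldsymbol{\theta}_W,\boldsymbol{\theta}_W]$ and the term $t\F$ combine under $\pi$ to $t^2\theta_\mathrm{MC}^2$. Concretely, $\pi(t\F)=-t\,\dif_\wedge\theta_\mathrm{MC}=t\,\theta_\mathrm{MC}^2$ and $\pi\big(-\tfrac12(t-t^2)[\boldsymbol{\theta}_W,\boldsymbol{\theta}_W]\big)=-(t-t^2)\theta_\mathrm{MC}^2$, and these sum to $t^2\theta_\mathrm{MC}^2$. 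This check confirms the exponent $2j-2$ and hence the Beta-type factor $\tfrac{1}{2j-1}$.
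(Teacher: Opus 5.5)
Your proposal is correct and follows the paper's proof essentially step for step: you write $\tau=\pi\circ h$, use $k(\F)=\dif t\,\boldsymbol{\theta}_W+\mathbf{F}_{tW}$ and graded cyclicity of the trace to isolate $j\int_0^1\tr(\boldsymbol{\theta}_W\wedge\mathbf{F}_{tW}^{j-1})\,\dif t$, and then project with $\pi(\mathbf{F}_{tW})=t^2\theta_\mathrm{MC}^2$. Your integrand $t^{2(j-1)}$ is the correct one; the paper's displayed $\int_0^1 t^{2j-1}\,\dif t$ is a typo (it would give $\frac{1}{2j}$), although the value $\frac{1}{2j-1}$ the paper then states agrees with yours.
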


\begin{proof}
    The proof is simply an application of the decomposition $\tau=\pi\circ h$ for the standard homotopy operator introduced above.
    Lifting the computation from Remark \ref{khat} to $\g\otimes W\g$, we obtain
    \[k(\F)=\dif t\otimes\boldsymbol{\theta}_W+t\otimes\F-\frac{1}{2}(t-t^2)[\boldsymbol{\theta}_W,\boldsymbol{\theta}_W]=\dif t\otimes\boldsymbol{\theta}_W+1\otimes\mathbf{F}_{tW},\]
    where we used that $\dce(\check{l})=1\otimes\dif_\wedge l$ and that $\dce\boldsymbol{\theta}_W=-\frac{1}{2}[\boldsymbol{\theta}_W,\boldsymbol{\theta}_W]$, a fact we showed in the proof of the Weil structural equation, as well as equation (\ref{ftw}).
    Now $k$ can be pulled into the polynomial $f_j(\F)=\tr(\F^j)$ since it's an algebra morphism, in particular $k(f_j(\F)) = \tr(k(\F)^j) = \tr\big((\mathrm{d}t \otimes \boldsymbol{\theta}_W + 1 \otimes \mathbf{F}_{tW})^j\big)$. Recall that by definition, $h(f_j(\F))=(I\otimes1)\circ k(f_j(\F))$. The integral $I$ over the unit interval essentially serves to isolate the term linear in $\dif t$ from $k(f_j(\F))$:
    \[h(f_j(\F)) = j \int_0^1 \mathrm{d}t \tr(\boldsymbol{\theta}_W \wedge \mathbf{F}_{tW}^{j-1}) ,\]
    and applying the projection to this yields
    \begin{align*}
        \eta_j=\tau(f_j(\F))=\pi\circ h(f_j(\F))&=j \int_0^1 \left(\theta_\mathrm{MC}\wedge(t^2\theta^2_\mathrm{MC})^{j-1}\right)\dif t\\
        &=j\left(\int_0^1 t^{2j-1} \dif t\right)\tr\left(\theta_\mathrm{MC}^{2j-1}\right) ,
    \end{align*}
    since $\pi(\mathbf{F}_{tW})=-t^2\dif_\wedge\theta_\mathrm{MC}$ by equation (\ref{ftproj}), $\pi(\boldsymbol{\theta}_W)=\theta_\mathrm{MC}$ by the proof of Proposition \ref{mc4} and where we changed the order of the projection and the integral since they act respectively on the two different factors of $\CC[t,\dif t]\otimes W\g$.\\
    The integral evaluates to $1/(2j-1)$. This concludes the proof.
\end{proof}

\begin{corollary}[Cocycle Property of Absolute Primitives]\label{primformcor}
    The absolute primitive $\eta_j$ is a cocycle in the Chevalley-Eilenberg complex of $\g$, that is,
    \begin{equation*}
        \dif_\wedge \eta_j = \dif_\wedge\left(\frac{j}{2j-1} \tr\left(\theta_\mathrm{MC}^{2j-1}\right)\right) = 0 .
    \end{equation*}
\end{corollary}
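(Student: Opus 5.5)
We must prove $\dif_\wedge \eta_j = 0$, where $\eta_j=\frac{j}{2j-1}\tr(\theta_\mathrm{MC}^{2j-1})$. Two routes are available. The cleanest is structural: by Proposition~\ref{cs}, $\dw(Tf_j)=f_j(\F)$, and $\pi$ is a chain map from $(W\g,\dce)$ to $(\Lambda\g^\vee,\dif_\wedge)$. However, $\pi$ is not a chain map for $\dw$, so I would not rely on that. Instead I would give the direct computation, using the Maurer--Cartan equation \eqref{mceq} in the associative form $\dif_\wedge\theta_\mathrm{MC}=-\theta_\mathrm{MC}^2$, together with the graded cyclicity of the trace from Remark~\ref{cyclermk}.

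\textbf{Step 1.} Write $\theta=\theta_\mathrm{MC}$. Rewrite $\frac12[\theta,\theta]=\theta\wedge\theta$ inside the associative algebra $\mathfrak{gl}(V)\otimes\Lambda\g^\vee$. This is legitimate because, for odd forms, the graded commutator gives $[\theta,\theta]=2\theta^2$. Hence $\dif_\wedge\theta=-\theta^2$.

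\textbf{Step 2.} Apply the graded Leibniz rule to the odd power:
\[\dif_\wedge(\theta^{2j-1})=\sum_{i=0}^{2j-2}(-1)^i\,\theta^i\wedge(\dif_\wedge\theta)\wedge\theta^{2j-2-i}=-\sum_{i=0}^{2j-2}(-1)^i\,\theta^{2j}.\]
The sign $(-1)^i$ comes from moving the odd derivation past $i$ odd factors. The alternating sum has $2j-1$ terms and equals $1$, so $\dif_\wedge(\theta^{2j-1})=-\theta^{2j}$ as a matrix-valued form. Since $\tr$ commutes with $\dif_\wedge$ (Remark~\ref{cyclermk}), it follows that $\dif_\wedge\tr(\theta^{2j-1})=-\tr(\theta^{2j})$.

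\textbf{Step 3.} Show that $\tr(\theta^{2j})=0$. Split $\theta^{2j}=\theta\wedge\theta^{2j-1}$ and apply graded cyclicity with degrees $1$ and $2j-1$:
\[\tr(\theta\wedge\theta^{2j-1})=(-1)^{1\cdot(2j-1)}\tr(\theta^{2j-1}\wedge\theta)=-\tr(\theta^{2j}).\]
Hence $\tr(\theta^{2j})=0$, and multiplying by $\frac{j}{2j-1}$ gives $\dif_\wedge\eta_j=0$.

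The main obstacle is sign bookkeeping: confirming the Leibniz signs and the graded-cyclicity sign in the mixed matrix/exterior algebra. In particular, one must check that $[\theta,\theta]=2\theta^2$ with the paper's bracket convention for $\g$-valued 1-forms. I would verify this first on the case $j=1$ (where $\tr\theta$ is trivially closed on $\slk$) and on $j=2$ in components.
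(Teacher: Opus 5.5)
Your proposal is correct and follows the paper's own proof essentially step for step. Both arguments use the associative Maurer--Cartan equation $\dif_\wedge\theta_\mathrm{MC}=-\theta_\mathrm{MC}^2$, then the graded Leibniz expansion whose alternating sum of $2j-1$ terms collapses to $-\theta_\mathrm{MC}^{2j}$, and finally graded cyclicity of the trace to conclude $\tr(\theta_\mathrm{MC}^{2j})=0$.
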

\begin{proof}
    Since the trace acts only on the Lie algebra component, we may pull the differential into it:
    \begin{equation}\label{pullineq}
        \dif_\wedge \eta_j = \frac{j}{2j-1} \tr\big(\dif_\wedge (\theta_\mathrm{MC}^{2j-1})\big)
    \end{equation}
    Combining the definition of the Lie bracket of $1$-forms with the Maurer--Cartan equation (\ref{mceq}) yields $\dif_\wedge\theta_\mathrm{MC}=-\theta_\mathrm{MC}^2$.
    By the graded Leibniz rule, applying the differential $\dif_\wedge$ to $\theta_\mathrm{MC}^{2j-1}$ and plugging in the above leads to:
    \begin{align*} 
		\mathrm{d}_\wedge(\theta_\mathrm{MC}^{2j-1}) &= \sum_{i=0}^{2j-2} (-1)^i \theta_\mathrm{MC}^i \wedge (\mathrm{d}_\wedge \theta_\mathrm{MC}) \wedge \theta_\mathrm{MC}^{2j-2-i}\\
		& = \sum_{i=0}^{2j-2} (-1)^{i+1} \theta_\mathrm{MC}^{2j}.
	\end{align*}
    The sum contains $2j-1$ identical summands with alternating signs, leaving only $\dif_\wedge(\theta_\mathrm{MC}^{2j-1}) = -\theta_\mathrm{MC}^{2j}$.
    Plugging this into equation (\ref{pullineq}), we arrive at:
    \[\dif_\wedge \eta_j = -\frac{j}{2j-1} \tr(\theta_\mathrm{MC}^{2j}) .\]
    But this vanishes because of the graded cyclic property of the trace:
    \[\tr(\theta_\mathrm{MC}^{2j}) = \tr(\theta_\mathrm{MC} \wedge \theta_\mathrm{MC}^{2j-1}) = (-1)^{1 \cdot (2j-1)} \tr(\theta_\mathrm{MC}^{2j-1} \wedge \theta_\mathrm{MC}) = -\tr(\theta_\mathrm{MC}^{2j}) .\]
    This concludes the proof.
\end{proof}

While the universal Chern--Simons cochain satisfies the non-trivial transgression relation $\dw(Tf_j) = f_j(\F)$ in $W(\kk)$, its projection $\eta_j$ to the compact group $K$ is strictly closed ($\dif_\wedge \eta_j = 0$). Thus, projection to $K$ eliminates the curvature target completely, causing a transgression defect that precludes $\eta_j$ from serving as a relative primitive for the flag manifold $K/T$. In Subsection~\ref{sec:static_failure}, we examine why this defect cannot be circumvented through naive algebraic evaluation on the reductive splitting.

\subsection{Failure of Static Algebraic Evaluation}\label{sec:static_failure}

Step~1 of Theorem~\ref{thm:borel_isomorphism} abstractly establishes the vanishing of the characteristic classes $cw_K^\CC(f_j) = [f_j(F_{\ttt})] = 0 \in H^{2j}_\mathrm{dR}(K/T; \CC)$, but it does not provide explicit primitives realizing the exactness on the base. The absolute primitives $\eta_j$ cannot serve this purpose: they live on the total space $K$ and are not horizontal with respect to the $T$-fibration. Indeed, contracting $\eta_j$ with a vertical generator $A \in \ttt$ yields $\iota_A \eta_j = j\tr(A \theta_\mathrm{MC}^{2j-2}) \not\equiv 0$ (for instance, taking $A = h_\alpha$, we have $(\iota_{h_\alpha} \eta_2)(e_\alpha, e_{-\alpha}) = 2\tr(h_\alpha^2) \neq 0$). In particular, $\eta_j$ is not basic, precluding it from descending to $K/T$.

\begin{remark}[Obstruction to Purely Algebraic Evaluation]\label{rmk:psi_w_failure}
	Because the universal connection $\Th$ and curvature $\F$ in $W(\kk)$ mirror the algebraic relations of the canonical connection $\theta_\ttt$ and soldering form $\theta_\m$ under the reductive splitting $\kk = \ttt \oplus \m$, one might ask whether the relative transgression equation $\dif_\m \alpha_j = -p_j(F_\ttt)$ (Theorem~\ref{thm:geometric_transgression}) can be obtained via an algebra homomorphism from the universal Chern--Simons formula \eqref{csform}.

	More precisely, the relative primitives on $K/T$ might be constructed by evaluating the universal Chern--Simons cochain $Tf_j \in W(\kk)$ on the reductive splitting via a naive algebra homomorphism $\psi \colon W(\kk) \to (\Lambda^\bullet \m^\vee)^T \otimes \kk$ defined on generators by $\boldsymbol{\theta}_W \mapsto \theta_\m$ and $\F \mapsto -F_{\ttt}$.
	
	However, this cannot succeed because $\psi$ fails to be a chain map, i.e., $\dif_\m \circ \psi \neq \psi \circ \dif_W$. Indeed, applying $\psi$ to the universal Bianchi identity $\dw \F = [\F, \boldsymbol{\theta}_W]$ (Lemma~\ref{weilselem}) yields:
	\[\psi(\dif_W \F) = [-F_{\ttt}, \theta_\m] \neq 0, \qquad \text{whereas} \qquad \dif_\m(\psi(\F)) = -\dif_\m F_{\ttt} = 0 ,\]
	where we used that the Cartan subalgebra acts non-trivially on the root spaces. 
	Consequently, $\dif_\m(\psi(Tf_j)) \neq \psi(\dif_W Tf_j) = f_j(-F_{\ttt})$, preventing the transfer of the transgression relation via naive evaluation. We circumnavigated this algebraic obstruction via the interpolating connection family $\theta_s$.\\
\end{remark}

Because both static projection ($\pi$) and naive algebraic evaluation ($\psi$) fail to produce basic primitives on $K/T$, we instead interpolate dynamically in the affine space of connections on $K \to K/T$, leading to the 1-parameter connection family $\theta_s$ introduced in Section~\ref{sec:uni}.

\section{Relative Transgression Primitives}\label{sec:uni}

\subsection{The Interpolating Connection Family}

Having established in Section~\ref{sec:weil} that static evaluation in the Weil algebra fails to produce basic primitives on $K/T$, we now construct the canonical odd-degree relative transgression primitives $\alpha_j \in (\Lambda^{2j-1}\m^\vee)^T$ directly on the flag manifold by interpolating dynamically between the canonical connection and the flat Maurer--Cartan form.

Consider therefore the
1-parameter family of $\kk$-valued 1-forms on $K$ interpolating between the connection $\theta_0 := \theta_{\ttt} \in \Omega^1(K, \ttt)$ and the flat Maurer--Cartan form $\theta_1 := \theta_\mathrm{MC} = \theta_{\ttt} + \theta_{\m} \in \Omega^1(K, \kk)$:
\[\theta_s := \theta_{\ttt} + s\theta_{\m}, \quad s \in [0, 1] .\]

Note that $\theta_s$ is not $\ttt$-valued for $s > 0$, so it cannot define a connection $1$-form on our principal $T$-bundle $K \to K/T$. However, consider again the extended principal $K$-bundle $P_K := K \times_T K \to K/T$ (with canonical reduction $f \colon K \hookrightarrow P_K, k \mapsto [k, e]$), whose associated vector bundle via the fundamental representation is the trivial splitting bundle $E = K \times_T \CC^k \cong (K/T) \times \CC^k$ encountered in Step~1 of Theorem~\ref{thm:borel_isomorphism}.

By standard principal bundle theory \cite{kobayashi1969}, the 
canonical connection $\theta_0 = \theta_{\ttt} \in \Omega^1(K, \ttt)$ induces a unique connection $1$-form $\tilde{\theta}_0 \in \Omega^1(P_K, \kk)$ satisfying $f^* \tilde{\theta}_0 = \theta_{\ttt}$. Because the space of connections on $P_K$ is an affine space modeled on the space of horizontal and equivariant $1$-forms $\mathcal{A}(P_K) \cong \Omega^1_\mathrm{hor}(K, \kk)^{(T, \Ad\vert{}_T)}$, adding  $s\theta_\m$ defines a connection $\tilde{\theta}_s$ on $P_K$ for each $s \in [0, 1]$, whose restriction to the subbundle $K$ is $f^* \tilde{\theta}_s = \theta_s$. Indeed,  $s\theta_\m$ is the horizontal projection of the equivariant Maurer--Cartan form under a reductive splitting. 
In a slight abuse of terminology, we may therefore speak of the curvature $F_s := \dif_K \theta_s + \frac{1}{2}[\theta_s, \theta_s]$ of $\theta_s$. It satisfies the following property.

\begin{proposition}[Curvature of the Interpolating Family]\label{prop:interpolating_curvature}
	Consider the $1$-parameter family of $\kk$-valued $1$-forms on $K$:
	\[\theta_s := \theta_{\ttt} + s\theta_\m, \quad s \in [0, 1],\]
	interpolating between the canonical connection $\theta_0 = \theta_{\ttt}$ and the flat Maurer--Cartan form $\theta_1 = \theta_\mathrm{MC}$. 
	The curvature $F_s = \dif_K \theta_s + \frac{1}{2}[\theta_s, \theta_s]$ of the interpolating connection $\theta_s$ on $K$ contains only horizontal components and evaluates to:
	\[F_s = (1 - s^2)F_{\ttt} + \frac{1}{2}(s^2 - s)[\theta_\m, \theta_\m]_\m .\]
	In particular, $\iota_A F_s = 0$ for all vertical generators $A \in \ttt$.
\end{proposition}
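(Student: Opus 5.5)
The computation is a direct expansion of the definition $F_s = \dif_K\theta_s + \frac{1}{2}[\theta_s,\theta_s]$. I would feed in the two projected Maurer--Cartan equations already derived after Proposition~\ref{Q2c0}:
\[
\dif_K\theta_{\ttt} = F_{\ttt} = -\tfrac12[\theta_\m,\theta_\m]_{\ttt},
\qquad
\dif_K\theta_\m = -\tfrac12[\theta_\m,\theta_\m]_\m - [\theta_{\ttt},\theta_\m].
\]
By linearity, $\dif_K\theta_s = F_{\ttt} - \frac{s}{2}[\theta_\m,\theta_\m]_\m - s[\theta_{\ttt},\theta_\m]$.

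Next I would expand the bracket term using graded bilinearity and symmetry of the bracket of $1$-forms:
\[
\tfrac12[\theta_s,\theta_s] = \tfrac12[\theta_{\ttt},\theta_{\ttt}] + s[\theta_{\ttt},\theta_\m] + \tfrac{s^2}{2}[\theta_\m,\theta_\m].
\]
The first term vanishes because $\ttt$ is abelian. The last term splits along $\kk=\ttt\oplus\m$ as
\[
\tfrac{s^2}{2}[\theta_\m,\theta_\m]_{\ttt} + \tfrac{s^2}{2}[\theta_\m,\theta_\m]_\m = -s^2F_{\ttt} + \tfrac{s^2}{2}[\theta_\m,\theta_\m]_\m .
\]

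Adding the two expansions, the mixed terms $\mp s[\theta_{\ttt},\theta_\m]$ cancel exactly. This cancellation is the key point: the $(1,1)$-component, which is the only potentially vertical part, drops out. What remains is
\[
F_s = (1-s^2)F_{\ttt} + \tfrac12(s^2-s)[\theta_\m,\theta_\m]_\m ,
\]
which is the claimed formula.

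For the horizontality statement, I would note that every surviving term is a bracket or projection of $\theta_\m\wedge\theta_\m$. Since $\iota_A\theta_\m = \operatorname{pr}_\m(A) = 0$ for $A\in\ttt$ by Proposition~\ref{Q2c0}(iii), the derivation property of $\iota_A$ gives $\iota_A F_s = 0$.

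The only delicate step is keeping the sign conventions consistent: both $[\theta_{\ttt},\theta_\m]=[\theta_\m,\theta_{\ttt}]$ for $\kk$-valued $1$-forms and the factor $\frac12$ must be tracked so that the mixed terms truly cancel. I would check this against the $s=1$ case, where $F_1=0$ recovers the Maurer--Cartan equation, and the $s=0$ case, where $F_0=F_{\ttt}$.
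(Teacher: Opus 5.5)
Your proposal is correct and follows essentially the same route as the paper. Both substitute the $\ttt$- and $\m$-projections of the Maurer--Cartan equation, use that $\ttt$ is abelian, and split $[\theta_\m,\theta_\m]$ along $\kk=\ttt\oplus\m$. The only differences are presentational: the paper groups $\dif\theta_\m+[\theta_{\ttt},\theta_\m]$ into a single projected equation, whereas you display the cancellation of the mixed terms explicitly. Your derivation argument via $\iota_A\theta_\m=0$ spells out the horizontality claim that the paper states in one line.
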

\begin{proof}
	Expanding $F_s$ and using the fact that $\ttt$ is abelian ($[\theta_{\ttt}, \theta_{\ttt}] = 0$, as noted in Proposition~\ref{Q2c0}(ii)) gives:
	\[F_s = \dif\theta_{\ttt} + s\big(\dif\theta_{\m} + [\theta_{\ttt}, \theta_{\m}]\big) + \frac{1}{2}s^2[\theta_{\m}, \theta_{\m}] .\]
	By the proof of Proposition~\ref{Q2c0}(ii), decomposing the Maurer--Cartan equation $\dif\theta_\mathrm{MC} + \frac{1}{2}[\theta_\mathrm{MC}, \theta_\mathrm{MC}] = 0$ along the reductive splitting $\kk = \ttt \oplus \m$ yields the projection:
	\[\dif\theta_{\ttt} = F_{\ttt} = -\frac{1}{2}[\theta_{\m}, \theta_{\m}]_{\ttt},\]
	and, as mentioned above, projecting to $\m$ instead of $\ttt$ similarly yields:
	\[\qquad \dif\theta_{\m} + [\theta_{\ttt}, \theta_{\m}] = -\frac{1}{2}[\theta_{\m}, \theta_{\m}]_{\m} .\]
	Note that $[\theta_{\ttt}, \theta_{\m}]\in\m$ by the reductivity of the splitting.
	Substituting these relations and decomposing $[\theta_{\m}, \theta_{\m}] = -2F_{\ttt} + [\theta_{\m}, \theta_{\m}]_{\m}$ eliminates the connection form $\theta_{\ttt}$ entirely:
	\begin{align*}
		F_s &= F_{\ttt} - \frac{1}{2}s[\theta_{\m}, \theta_{\m}]_{\m} + \frac{1}{2}s^2\big(-2F_{\ttt} + [\theta_{\m}, \theta_{\m}]_{\m}\big)\\
		&= (1 - s^2)F_{\ttt} + \frac{1}{2}(s^2 - s)[\theta_{\m}, \theta_{\m}]_{\m}.
	\end{align*}
	Consequently, $F_s$ contains only horizontal components, so $\iota_A F_s = 0$ for all vertical generators $A \in \ttt$.
\end{proof}

\begin{remark}[Factorization of the Interpolating Curvature]\label{rmk:curvature_factorization}
	The interpolating curvature $F_s$ admits a particularly concise linear factorization. Recalling that $[\theta_\m, \theta_\m] = 2\theta_\m^2 = -2F_{\ttt} + [\theta_\m, \theta_\m]_\m$, projecting to $\m$ yields the algebraic identity:
	\begin{equation}\label{eq:bracket_m_factored}
		[\theta_\m, \theta_\m]_\m = 2\theta_\m^2 + 2F_{\ttt} .
	\end{equation}
	Substituting \eqref{eq:bracket_m_factored} into the curvature formula of Proposition~\ref{prop:interpolating_curvature} yields:
	\begin{align*}
		F_s &= (1 - s^2)F_{\ttt} + (s^2 - s)(\theta_\m^2 + F_{\ttt})\\
		&= (1 - s^2 + s^2 - s)F_{\ttt} - s(1 - s)\theta_\m^2 \\
		&= (1 - s)(F_{\ttt} - s\theta_\m^2) .
	\end{align*}
	This factorization cleanly isolates the scalar parameter $(1-s)$ from the differential form factor $(F_{\ttt} - s\theta_\m^2)$, which enables the Beta-integral evaluation in Theorem~\ref{thm:general_primitive}.
\end{remark}

\subsection{The Geometric Transgression Theorem}

\begin{theorem}[Geometric Transgression on the Flag Manifold]\label{thm:geometric_transgression}
	For each $j \ge 1$, the differential form
	\[ \alpha_j := j \int_0^1 \tr(\theta_\m \wedge F_s^{j-1}) \, \dif s \in \Omega^{2j-1}(K)\]
	is basic with respect to the principal fibration $\pi_K \colon K \to K/T$. It descends to a global $(2j-1)$-form on the flag manifold $K/T$ satisfying the transgression equation:
	\[ \dif \alpha_j = -p_j(F_{\ttt}) .\]
	Here, $\dif$ denotes the de Rham differential on $\Omega^\bullet(K/T)$, which is canonically identified with the relative Chevalley--Eilenberg differential $\dif_\m$ under the DGA isomorphism $\pi_K^* \colon \Omega^*(K/T) \xrightarrow{\sim}\Omega^*_{\mathrm{bas}}(K)\cong (\Lambda^\bullet \m^\vee)^T$. 
\end{theorem}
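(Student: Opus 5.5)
The approach is the standard Chern--Simons variation formula, applied to the family $\theta_s$, combined with the horizontality of $F_s$ from Proposition~\ref{prop:interpolating_curvature}. There are three steps: show that $\alpha_j$ is basic, derive the transgression identity on $K$, and evaluate the boundary terms.

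\textbf{Basicness.} By Remark~\ref{rmk:curvature_factorization}, $F_s=(1-s)(F_{\ttt}-s\theta_\m^2)$, so the integrand $\tr(\theta_\m\wedge F_s^{j-1})$ is a polynomial in $\theta_\m$ and $F_{\ttt}=-\frac12[\theta_\m,\theta_\m]_{\ttt}$ only. Both are horizontal: $\iota_A\theta_\m=0$ for $A\in\ttt$ by Proposition~\ref{Q2c0}(iii), and $\iota_AF_s=0$ by Proposition~\ref{prop:interpolating_curvature}. Hence $\iota_A\alpha_j=0$. For right $T$-invariance, use $R_t^*\theta_\m=\Ad(t^{-1})\theta_\m$ and $R_t^*F_{\ttt}=F_{\ttt}$. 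Then every word transforms by conjugation by $t^{-1}$ in the fundamental representation, and $\tr$ is conjugation invariant. So $\alpha_j$ is basic, and under $\mathrm{ev}_1$ it lies in $(\Lambda^{2j-1}\m^\vee)^T$.

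\textbf{Transgression identity.} I will prove on $K$ the variation formula
\[\frac{\dif}{\dif s}\tr(F_s^j)=j\,\dif\,\tr(\dot\theta_s\wedge F_s^{j-1}),\qquad \dot\theta_s=\theta_\m.\]
This follows from $\dot F_s=\dif\dot\theta_s+[\theta_s,\dot\theta_s]=D_s\theta_\m$, the Bianchi identity $D_sF_s=\dif F_s+[\theta_s,F_s]=0$, and cyclicity of the graded trace, exactly as in the proof of Proposition~\ref{cs}. The key point is that all forms live in the associative algebra $\mathfrak{gl}(k)\otimes\Omega(K)$, so the graded-commutator manipulations of Remark~\ref{cyclermk} apply verbatim. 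In particular, $\tr(D_s\Phi)=\dif\tr\Phi$ holds for $\mathfrak{gl}$-valued forms, because $\tr[\theta_s,\Phi]=0$. Bianchi holds because $F_s$ is the curvature of the genuine connection $\tilde\theta_s$ on $P_K$ pulled back by $f$; alternatively, differentiate the defining formula $F_s=\dif\theta_s+\frac12[\theta_s,\theta_s]$ directly on $K$ using the Jacobi identity. Integrating over $s\in[0,1]$ gives
\[\dif\alpha_j=\tr(F_1^j)-\tr(F_0^j).\]

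\textbf{Boundary terms and descent.} We have $F_1=0$, since $\theta_1=\theta_{\mathrm{MC}}$ is flat (also visible from the factor $(1-s)$), and $F_0=F_{\ttt}$. Therefore $\dif\alpha_j=-\tr(F_{\ttt}^j)=-p_j(F_{\ttt})$ on $K$. Since $\pi_K^*$ is an injective DGA isomorphism onto basic forms and both sides are basic, the identity descends to $K/T$. On left-invariant forms, $\dif$ restricted to $\Omega_{\mathrm{bas}}$ corresponds to $\dif_\m$ under $\mathrm{ev}_1$. Here $\dif_\m$ is the CE differential restricted to $(\Lambda\m^\vee)^T$, and on basic elements the $(1,1)$ components cancel automatically, consistent with \eqref{solderred}.

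\textbf{Main obstacle.} The delicate point is justifying Bianchi and the variation formula for $\theta_s$, which is not $\ttt$-valued on $K$. The resolution is to work with $\kk$-valued forms, or $\mathfrak{gl}(k)$-valued forms, on $K$ itself, where Bianchi is a purely formal consequence of the definition of $F_s$. I will check explicitly that no step uses the $T$-structure; it is needed only for basicness.
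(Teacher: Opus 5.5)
Your proposal is correct and follows the paper's proof essentially step for step: the variation identity $\frac{\dif}{\dif s}\tr(F_s^j)=j\,\dif\tr(\theta_\m\wedge F_s^{j-1})$ obtained from $\dot F_s=\dif^{\theta_s}\theta_\m$, the Bianchi identity and graded cyclicity of the trace, then integration over $s$ with $F_1=0$ and $F_0=F_{\ttt}$, and finally basicness from $\iota_A\theta_\m=\iota_AF_s=0$ together with $\Ad(T)$-equivariance and conjugation invariance of the trace. Your justification of Bianchi directly from $F_s=\dif\theta_s+\theta_s^2$ on $K$ is slightly more self-contained than the paper's appeal to the extended bundle $P_K$, but it is not a different route.
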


\begin{proof}
	Consider the $1$-parameter family of connections $\theta_s = \theta_{\ttt} + s\theta_\m$ on $K$ with curvature $F_s = \dif_K \theta_s + \frac{1}{2}[\theta_s, \theta_s]$. Differentiating the curvature with respect to $s$ yields:
	\[ \frac{\dif}{\dif s} F_s = \dif_K\left(\frac{\dif \theta_s}{\dif s}\right) + \left[\theta_s, \frac{\dif \theta_s}{\dif s}\right] = \dif_K \theta_\m + [\theta_s, \theta_\m] =: \dif^{\theta_s}\theta_\m , \]
	where $\dif^{\theta_s}$ denotes the exterior covariant derivative associated with $\theta_s$. 
	
	Differentiating the characteristic form $\tr(F_s^j)$ with respect to $s$ and applying the cyclic invariance of the trace (cf.\ Remark~\ref{cyclermk}) (evaluated here on wedges of even degree forms) gives:
	\[ \frac{\dif}{\dif s}\tr(F_s^j) = j\tr\left(\frac{\dif F_s}{\dif s} \wedge F_s^{j-1}\right) = j\tr\left(\dif^{\theta_s}\theta_\m \wedge F_s^{j-1}\right) . \]
	On the other hand, the second Bianchi identity $\dif^{\theta_s} F_s = \dif_K F_s + [\theta_s, F_s] = 0$ implies $\dif^{\theta_s}(F_s^{j-1}) = 0$. By the graded Leibniz rule and the cyclic property of the trace ($\tr([\theta_s, \omega]) = 0$, recall e.g. Remark \ref{cyclermk}), the ordinary exterior derivative $\dif_K$ on the total space $K$ satisfies:
	\begin{align*}
		\dif_K \tr(\theta_\m \wedge F_s^{j-1}) &= \tr(\dif^{\theta_s}\theta_\m \wedge F_s^{j-1}) - \tr(\theta_\m \wedge \dif^{\theta_s}(F_s^{j-1})) - \tr\big([\theta_s, \theta_\m \wedge F_s^{j-1}]\big)\\
		&= \tr(\dif^{\theta_s}\theta_\m \wedge F_s^{j-1}) .
	\end{align*}
	Equating both expressions yields the following identity on $K$:
	\[ \frac{\dif}{\dif s}\tr(F_s^j) = j \, \dif_K \tr(\theta_\m \wedge F_s^{j-1}) . \]
	
	Integrating both sides with respect to $s \in [0, 1]$ and applying the Fundamental Theorem of Calculus:
	\[ \tr(F_1^j) - \tr(F_0^j) = \dif_K\left( j \int_0^1 \tr(\theta_\m \wedge F_s^{j-1}) \, \dif s \right) = \dif_K \alpha_j . \]
	At $s = 1$, the connection $\theta_1 = \theta_\mathrm{MC}$ is flat by the Maurer--Cartan equation, so $F_1 = 0$ and $\tr(F_1^j) = 0$.
	At $s = 0$, the connection restricts to the Cartan connection $\theta_0=\theta_{\ttt}$ with curvature $F_0=F_{\ttt} \in \Omega^2(K/T, \ttt)$. Because $F_{\ttt}$ takes values in $\ttt$, the evaluation of the trace polynomial coincides with the restricted Borel polynomial: $f_j(F_{\ttt}) = (f_j\vert{}_{\ttt})(F_{\ttt}) = p_j(F_{\ttt})$.
	We may therefore write:
	\[\dif_K \alpha_j = -p_j(F_{\ttt}) .\]
	
	To verify that $\alpha_j$ descends to $K/T$, we check that it is basic under the right action of $T$:
	\begin{enumerate}[(i)]
		\item {Horizontality:} For every vertical generator $A \in \ttt$, we have $\iota_A \theta_\m = 0$ by definition of the projection $\theta_\m = \operatorname{pr}_\m(\theta_\mathrm{MC})$, and $\iota_A F_s = 0$ by Proposition~\ref{prop:interpolating_curvature}. Since the contraction operator is a graded derivation commuting with integration over $s$, $\iota_A \alpha_j = 0$.
		\item {$T$-Invariance:} For each $t \in T$, the Maurer--Cartan form satisfies $R_t^* \theta_\mathrm{MC} = \Ad(t^{-1}) \circ \theta_\mathrm{MC}$. Since $\kk = \ttt \oplus \m$ is reductive, the projection commutes with $\Ad(t^{-1})$, giving $R_t^* \theta_\m = \Ad(t^{-1})\theta_\m$ and $R_t^* F_s = \Ad(t^{-1})F_s$. The $\Ad$-invariance of the trace ensures that $R_t^* \tr(\theta_\m \wedge F_s^{j-1}) = \tr(\theta_\m \wedge F_s^{j-1})$, so $R_t^* \alpha_j = \alpha_j$.
	\end{enumerate}
	Therefore, $\alpha_j \in \Omega^{2j-1}_\mathrm{bas}(K) \cong \Omega^{2j-1}(K/T)$, and the boundary equation $\dif \alpha_j = -p_j(F_{\ttt})$ holds on the flag manifold.
\end{proof}

Note that the form $\alpha_j \in \Omega^{2j-1}(K)$ is left-$K$-invariant. Indeed, this follows from the left-invariance of $\theta_\m=\mathrm{pr}_\m\circ\theta_\mathrm{MC}$.
Under the DGA isomorphisms
\[ \big({}^K\Omega^*_\mathrm{bas}(K), \dif_K\big) \xrightarrow[\pi_K^*]{\sim} \big({}^K\Omega^*(K/T), \dif\big) \cong \big((\Lambda^\bullet \m^\vee)^T, \dif_\m\big),\]
the transgression equation becomes:
\[ \dif_\m \alpha_j = -p_j(F_{\ttt}) \in (\Lambda^{2j}\m^\vee)^T . \]

While Theorem~\ref{thm:geometric_transgression} establishes the existence and descent of the relative primitives via an integral formula, evaluating the power $F_s^{j-1}$ requires managing the non-commutativity of the curvature $F_{\ttt}$ and the soldering power $\theta_\m^2$. In Subsection~\ref{sec:universal_closed_formula}, we expand this power via non-commutative symmetric word sums and evaluate the resulting parameter integrals using Euler's Beta integral, yielding a universal closed-form formula for all degrees $j \ge 1$. 

\subsection{Proof of the Universal Closed Formula}\label{sec:universal_closed_formula}

The simplification $F_s = (1 - s)(F_{\ttt} - s\theta_\m^2)$ established in Remark~\ref{rmk:curvature_factorization} allows us to deduce a closed-form expression of the relative transgression primitive $\alpha_j$.

\begin{definition}[Non-Commutative Symmetric Word Sums]
	Let $\mathcal{A}$ be an associative algebra (here, the algebra of matrix-valued differential forms $\Omega^*(K, \mathfrak{gl}_k)$). For any two elements $A, B \in \mathcal{A}$ and non-negative integers $n \ge k \ge 0$, we define the \textit{complete non-commutative word sum} $S_{n, k}(A, B) \in \mathcal{A}$ as the sum over all $\binom{n}{k}$ words of length $n$ containing exactly $k$ factors of $B$ and $n - k$ factors of $A$:
	\[S_{n, k}(A, B) := \sum_{\substack{w \in \{A, B\}^n \\ \vert{}w\vert{}_B = k}} w .\]
	By convention, $S_{0, 0}(A, B) := {1}$.
\end{definition}
In particular, for an arbitrary scalar parameter $s \in \CC$, the $n$-th power of binomial $(A - sB)$ expands as follows:
\[(A - sB)^n = \sum_{k=0}^n (-s)^k S_{n, k}(A, B) .\]

\begin{theorem}[Closed Formula for Transgression Primitives]\label{thm:general_primitive}
	For every integer $j \ge 1$, the relative
 	transgression primitive $\alpha_j \in (\Lambda^{2j-1}\m^\vee)^T \cong {}^K\Omega^{2j-1}(K/T)$, satisfying 
 	$\dif_\m \alpha_j = -p_j(F_{\ttt}),$ 
 	admits the following, closed-form expansion:
	\[\alpha_j = \sum_{k=1}^{j-1} (-1)^k\inv{\binom{j+k}{k}} \tr\big(\theta_\m \wedge S_{j-1, k}(F_{\ttt}, \theta_\m^2)\big) .\]
\end{theorem}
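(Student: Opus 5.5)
The plan is to start from the integral formula of Theorem~\ref{thm:geometric_transgression}, which already gives the basic form $\alpha_j = j\int_0^1 \tr(\theta_\m \wedge F_s^{j-1})\,\dif s$ together with $\dif_\m\alpha_j = -p_j(F_{\ttt})$. The transgression property is therefore inherited, and only the closed-form expansion needs proof.

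First I substitute the factorization $F_s = (1-s)(F_{\ttt} - s\theta_\m^2)$ from Remark~\ref{rmk:curvature_factorization}. Since $(1-s)$ is a scalar, it commutes with everything and gives
\[ F_s^{j-1} = (1-s)^{j-1}(F_{\ttt} - s\theta_\m^2)^{j-1} = (1-s)^{j-1}\sum_{k=0}^{j-1}(-s)^k S_{j-1,k}(F_{\ttt},\theta_\m^2), \]
by the binomial expansion of word sums stated after the definition. Here $A = F_{\ttt}$ and $B = \theta_\m^2$ are both even-degree forms, so no Koszul signs arise when reordering the scalars $s$. The word sums themselves keep the non-commutative order, which is why $S_{j-1,k}$ is used rather than a commutative binomial coefficient. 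Inserting this into $\alpha_j$ and exchanging the finite sum with the integral and the trace gives
\[ \alpha_j = \sum_{k=0}^{j-1} (-1)^k \Big( j\int_0^1 s^k(1-s)^{j-1}\,\dif s \Big)\, \tr\big(\theta_\m\wedge S_{j-1,k}(F_{\ttt},\theta_\m^2)\big). \]

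Next I evaluate the coefficient with Euler's Beta integral:
\[ j\,B(k+1,j) = j\cdot\frac{k!\,(j-1)!}{(j+k)!} = \frac{j!\,k!}{(j+k)!} = \binom{j+k}{k}^{-1}. \]

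Finally I remove the $k=0$ term. That term is $\tr(\theta_\m \wedge F_{\ttt}^{j-1})$, which vanishes for $j\ge 2$ by Lemma~\ref{lem:trace_orthogonality}: $F_{\ttt}^{j-1}$ is diagonal and $\theta_\m$ is off-diagonal. For $j=1$ the $k=0$ term is $\tr(\theta_\m)$, which vanishes for the same reason, and this matches the empty sum on the right-hand side. The sum therefore runs over $k=1,\dots,j-1$, as claimed.

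I expect the main obstacle to be bookkeeping rather than anything conceptual. Specifically, I need to check that the product structure used in the factorization (the associative matrix-form product of Remark~\ref{cyclermk}, with $\theta_\m^2 = \tfrac12[\theta_\m,\theta_\m]$) is the same product used in $S_{n,k}$ and in $F_s^{j-1}$. I also need to check that all manipulations take place on horizontal forms, so that they remain valid after passing to $(\Lambda^\bullet\m^\vee)^T$. Both points reduce to the identity \eqref{eq:bracket_m_factored} and to the horizontality established in Proposition~\ref{prop:interpolating_curvature}.
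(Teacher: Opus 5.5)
Your proposal is correct and follows the paper's proof essentially step for step. Both arguments substitute $F_s=(1-s)(F_{\ttt}-s\theta_\m^2)$ into the integral of Theorem~\ref{thm:geometric_transgression}, expand the power via the word sums $S_{j-1,k}$, evaluate $j\,\mathrm{B}(k+1,j)=\binom{j+k}{k}^{-1}$, and discard the $k=0$ term by trace orthogonality. Your explicit handling of $j=1$, where the $k=0$ term is $\tr(\theta_\m)$ and so falls outside the $m\ge 1$ range of Lemma~\ref{lem:trace_orthogonality} as stated, is a small but welcome addition.
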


\begin{proof}
	By Theorem~\ref{thm:geometric_transgression}, the relative transgression primitive is given by the following integral:
	\[\alpha_j = j \int_0^1 \tr(\theta_\m \wedge F_s^{j-1}) \, \dif s .\]
	Recalling from Remark~\ref{rmk:curvature_factorization} that substituting $[\theta_\m, \theta_\m]_\m = 2\theta_\m^2 + 2F_{\ttt}$ into the interpolating curvature $F_s$ from Proposition~\ref{prop:interpolating_curvature} leads to the simplified expression
	$F_s=(1 - s)(F_{\ttt} - s\theta_\m^2)$.

	To compute $\alpha_j$, we next need to address the $(j - 1)$-th power:
	\[F_s^{j-1} = (1 - s)^{j-1} (F_{\ttt} - s\theta_\m^2)^{j-1} .\]
	The first term is a scalar; the second requires further attention. 	
	Because $F_{\ttt} \in \Omega^2(K, \ttt)$ and $\theta_\m^2 \in \Omega^2(K, \mathfrak{gl}_k)$ do not commute under matrix multiplication, we expand the power using the non-commutative symmetric word sums $S_{j-1, k}(F_{\ttt}, \theta_\m^2)$:
	\[(F_{\ttt} - s\theta_\m^2)^{j-1} = \sum_{k=0}^{j-1} (-s)^k S_{j-1, k}(F_{\ttt}, \theta_\m^2) .\]
	
	Next, we wedge on the left with the soldering form $\theta_\m$, apply the trace and exploit its linearity to obtain:
	\[\tr(\theta_\m \wedge F_s^{j-1}) = (1 - s)^{j-1} \sum_{k=0}^{j-1} (-s)^k \tr\big(\theta_\m \wedge S_{j-1, k}(F_{\ttt}, \theta_\m^2)\big) .\]
	
	Finally, it remains to integrate in $s$ over the unit interval and multiply by $j$. We have conveniently separated the integration variable from the differential forms:
	\[\alpha_j = \sum_{k=0}^{j-1} \left( j (-1)^k \int_0^1 s^k (1 - s)^{j-1} \, \dif s \right) \tr\big(\theta_\m \wedge S_{j-1, k}(F_{\ttt}, \theta_\m^2)\big) .\]
	Note that the integral is Euler's Beta integral $\mathrm{B}(k + 1, j)$. Its value is well known:
	\[\int_0^1 s^k (1 - s)^{j-1} \, \dif s = \frac{\Gamma(k + 1)\Gamma(j)}{\Gamma(k + 1 + j)} = \frac{k!(j - 1)!}{(j + k)!} .\]
	Multiplying by $j$ and the sign $(-1)^k$ yields:
	\[j (-1)^k \frac{k!(j - 1)!}{(j + k)!} = (-1)^k \frac{j! k!}{(j + k)!} = \frac{(-1)^k}{\binom{j+k}{k}} = (-1)^k\inv{\binom{j+k}{k}} .\]

	For $k = 0$, the word sum consists of the single term $S_{j-1, 0}(F_{\ttt}, \theta_\m^2) = F_{\ttt}^{j-1}$. The corresponding term in the primitive is:
	\[\tr\big(\theta_\m \wedge S_{j-1, 0}(F_{\ttt}, \theta_\m^2)\big) = \tr(\theta_\m \wedge F_{\ttt}^{j-1}) .\]
	Hence by the Trace Orthogonality Lemma~\ref{lem:trace_orthogonality}, the $k = 0$ summand vanishes.
	
	The summation index therefore runs from $k = 1$ to $j - 1$:
	\[\alpha_j = \sum_{k=1}^{j-1} (-1)^k\inv{\binom{j+k}{k}} \tr\big(\theta_\m \wedge S_{j-1, k}(F_{\ttt}, \theta_\m^2)\big) .\]
	Because $\theta_\m$ and $F_{\ttt}$ are left-$K$-invariant and basic with respect to the right action of $T$, every constituent word is again basic and invariant. Hence $\alpha_j \in (\Lambda^{2j-1}\m^\vee)^T \cong {}^K\Omega^{2j-1}(K/T)$.
\end{proof}

Theorem~\ref{thm:general_primitive} provides the master formula for all degrees $j \ge 1$. To understand the explicit structure of these forms and observe how curvature terms enter with increasing degree, we now evaluate this formula explicitly for the low-degree cases $j = 2$ (the second primitive) and $j = 3$ (the third primitive) in Subsection~\ref{sec:low_degree_primitives}, before addressing the non-commutative threshold at $j = 4$ in Subsection~\ref{sec:fourth}.

\subsection{Explicit Low-Degree Primitives}\label{sec:low_degree_primitives}

We now evaluate the universal transgression formula of Theorem~\ref{thm:general_primitive} explicitly in low degrees. For $j=2$, the second primitive $\alpha_2$ contains zero curvature factors and is constructed purely from the soldering form. For $j=3$, the third primitive $\alpha_3$ is linear in the curvature $F_{\ttt}$, where cyclic trace permutations still commute trivially past forms of total even degree.

Our goal in this section is to explicitly compute the relative transgression primitive $\alpha_j := j \int_0^1 \tr(\theta_\m \wedge F_s^{j-1}) \, \dif s$. By Proposition \ref{prop:interpolating_curvature} and Theorem \ref{thm:geometric_transgression}, This reduces to integrating the trace of 
\[\theta_\m \wedge F_s^{j-1} = \theta_\m \wedge \left( (1-s^2)F_{\ttt} + \frac{1}{2}(s^2-s)[\theta_\m, \theta_\m]_{\m} \right)^{j-1}\in\quad \Omega^{2j-1}(K/T)\]
over the unit interval before multiplying by $j$, where the Trace Orthogonality Lemma~\ref{lem:trace_orthogonality} will help eliminate some of the terms. 
Since the trace invariants on $\mathfrak{sl}(k)$ begin at degree 2 ($f_1(X) = \tr(X) \equiv 0$), the base case is $j = 2$.

\begin{corollary}[The Second Primitive]\label{cor:j2_primitive}
	For $j = 2$, the transgression primitive evaluates explicitly to:
	\[\alpha_2 = -\frac{1}{3}\tr(\theta_\m^3) \in (\Lambda^3 \m^\vee)^T \cong {}^K\Omega^3(K/T) ,\]
	and satisfies the boundary equation:
	\[\dif_\m \left( \frac{1}{3}\tr(\theta_\m^3) \right) = \tr(F_{\ttt}^2) .\]
\end{corollary}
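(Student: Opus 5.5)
The plan is to specialize Theorem~\ref{thm:general_primitive} to $j=2$ and then read off the boundary equation from Theorem~\ref{thm:geometric_transgression}.

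\textbf{Step 1: evaluate the formula.} For $j=2$ the sum in Theorem~\ref{thm:general_primitive} has the single index $k=1$. The word sum is
\[
S_{1,1}(F_{\ttt},\theta_\m^2)=\theta_\m^2 ,
\]
and the coefficient is $(-1)^1\binom{3}{1}^{-1}=-\tfrac13$. This gives $\alpha_2=-\tfrac13\tr(\theta_\m\wedge\theta_\m^2)=-\tfrac13\tr(\theta_\m^3)$.

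As an independent check, and to avoid relying only on the factorization of Remark~\ref{rmk:curvature_factorization}, I would also compute the integral directly. Write
\[
\alpha_2=2\int_0^1\tr(\theta_\m\wedge F_s)\,\dif s ,\qquad F_s=(1-s)F_{\ttt}-s(1-s)\theta_\m^2 .
\]
The $F_{\ttt}$-term drops out by Lemma~\ref{lem:trace_orthogonality}, since $\tr(\theta_\m\wedge F_{\ttt})=0$. What remains is
\[
-2\int_0^1 s(1-s)\,\dif s\cdot\tr(\theta_\m^3)=-2\cdot\tfrac16\,\tr(\theta_\m^3)=-\tfrac13\tr(\theta_\m^3),
\]
which agrees. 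Membership in $(\Lambda^3\m^\vee)^T$ then follows from the basic-ness and left invariance already established in Theorem~\ref{thm:geometric_transgression}.

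\textbf{Step 2: the boundary equation.} I would take $\dif_\m\alpha_2=-p_2(F_{\ttt})=-\tr(F_{\ttt}^2)$ from Theorem~\ref{thm:geometric_transgression} and multiply by $-1$, which gives the displayed identity $\dif_\m\bigl(\tfrac13\tr(\theta_\m^3)\bigr)=\tr(F_{\ttt}^2)$.

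For a direct verification inside $(\Lambda^\bullet\m^\vee)^T$, I would use the restricted structure equation \eqref{solderred} together with \eqref{eq:bracket_m_factored}. These give $\dif_\m\theta_\m=-(\theta_\m^2+F_{\ttt})$. By the graded Leibniz rule and graded cyclicity,
\[
\dif_\m\tr(\theta_\m^3)=3\tr(\dif_\m\theta_\m\wedge\theta_\m^2)=-3\tr(\theta_\m^4)-3\tr(F_{\ttt}\wedge\theta_\m^2).
\]
The term $\tr(\theta_\m^4)$ vanishes by the odd-cyclicity argument of Corollary~\ref{primformcor}. Next I split $\theta_\m^2$ into its diagonal part $-F_{\ttt}$ and its off-diagonal part $\tfrac12[\theta_\m,\theta_\m]_\m$. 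By Lemma~\ref{lem:trace_orthogonality} this gives $\tr(F_{\ttt}\wedge\theta_\m^2)=-\tr(F_{\ttt}^2)$. Hence $\dif_\m\tr(\theta_\m^3)=3\tr(F_{\ttt}^2)$, as required.

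\textbf{Main obstacle.} The delicate point is the sign bookkeeping in the direct check. One must confirm that replacing $\dif_K$ by $\dif_\m$ legitimately kills the mixed $[\theta_{\ttt},\theta_\m]$ term; this holds because $\tr(\theta_{\ttt}\wedge\cdot)$ is projected out in the horizontal part $(\cdot)^{(p,0)}$. One must also check that the cyclic sign for moving a $2$-form past $1$-forms is $+1$. If these signs become unwieldy, I would simply rely on Theorem~\ref{thm:geometric_transgression}, which already guarantees the equation.
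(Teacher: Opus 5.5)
Your proposal is correct and follows the paper's argument essentially step for step: you specialize Theorem~\ref{thm:general_primitive} to $k=1$, cross-check by evaluating $2\int_0^1\tr(\theta_\m\wedge F_s)\,\dif s$ directly using trace orthogonality, and verify $\dif_\m\tr(\theta_\m^3)=3\tr(F_{\ttt}^2)$ via $\dif_\m\theta_\m=-(\theta_\m^2+F_{\ttt})$ with Leibniz, cyclicity and Lemma~\ref{lem:trace_orthogonality}, all exactly as the paper does. The only cosmetic difference is that your ``independent'' check plugs in the factored curvature, so it still relies on Remark~\ref{rmk:curvature_factorization}; the paper instead uses the unfactored form from Proposition~\ref{prop:interpolating_curvature} and re-derives \eqref{eq:bracket_m_factored} within the computation.
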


\begin{proof}
	Specializing the closed formula of Theorem~\ref{thm:general_primitive} to $j = 2$, the sum contains only the single term $k = 1$:
	\[\alpha_2 = (-1)^1 \inv{\binom{3}{1}} \tr\big(\theta_\m \wedge S_{1, 1}(F_{\ttt}, \theta_\m^2)\big) = -\frac{1}{3}\tr(\theta_\m \wedge \theta_\m^2) = -\frac{1}{3}\tr(\theta_\m^3) .\]
	Alternatively, we may derive this directly from the transgression integral of Theorem~\ref{thm:geometric_transgression}:
	By Theorem~\ref{thm:geometric_transgression}, the second primitive is given by:
	\[\alpha_2 = 2 \int_0^1 \tr(\theta_\m \wedge F_s) \, \dif s .\]
	Substituting the interpolating curvature $F_s = (1 - s^2)F_{\ttt} + \frac{1}{2}(s^2 - s)[\theta_\m, \theta_\m]_\m$, we get the following:
	\[\tr(\theta_\m \wedge F_s) = (1 - s^2)\tr(\theta_\m \wedge F_{\ttt}) + \frac{1}{2}(s^2 - s)\tr\big(\theta_\m \wedge [\theta_\m, \theta_\m]_\m\big) .\]
	By Lemma~\ref{lem:trace_orthogonality}, the term $\tr(\theta_\m \wedge F_{\ttt})$ vanishes. 

	To express the remaining summand in terms of $\theta_\m^3$, note that $[\theta_\m, \theta_\m] = 2\theta_\m^2$.
	This follows immediately from fact that the left hand side is the graded commutator of  differential forms evaluated on the same 1-form twice.
	Projecting this bracket along $\kk = \ttt \oplus \m$ and using $[\theta_\m, \theta_\m]_{\ttt} = -2F_{\ttt}$ leads to:
	\[2\theta_\m^2 = -2F_{\ttt} + [\theta_\m, \theta_\m]_\m \implies [\theta_\m, \theta_\m]_\m = 2\theta_\m^2 + 2F_{\ttt} .\]
	Wedging on the left with $\theta_\m$ and taking the trace:
	\[\tr\big(\theta_\m \wedge [\theta_\m, \theta_\m]_\m\big) = 2\tr(\theta_\m^3) + 2\tr(\theta_\m \wedge F_{\ttt}) = 2\tr(\theta_\m^3) ,\]
	where the second term vanishes again by Lemma~\ref{lem:trace_orthogonality}. Evaluating the integral is elementary:
	\[\int_0^1 (s^2 - s) \, \dif s = \left[ \frac{s^3}{3} - \frac{s^2}{2} \right]_0^1 = -\frac{1}{6} .\]
	Combining these factors, we obtain the desired form of the primitive:
	\[\alpha_2 = 2 \cdot \frac{1}{2}\left(-\frac{1}{6}\right) \cdot 2\tr(\theta_\m^3) = -\frac{1}{3}\tr(\theta_\m^3) .\]
\end{proof}

Although the boundary relation $\dif_\m \alpha_2 = -p_2(F_{\ttt}) = -\tr(F_{\ttt}^2)$ is guaranteed by Theorem~\ref{thm:geometric_transgression}, we also verify it directly to illustrate the algebraic behavior of the relative differential $\dif_\m$ prior to the root-lattice analysis in Section~\ref{sec:root}. Recall that the relative differential satisfies $\dif_\m \theta_\m = -\frac{1}{2}[\theta_\m, \theta_\m]_\m = R - \theta_\m^2$, where $R := -F_{\ttt} \in \ttt$. 

Applying the graded Leibniz rule and the cyclic property of the trace:
\begin{align*}
	\dif_\m \tr(\theta_\m^3) &= \tr\big((\dif_\m \theta_\m)\theta_\m^2 - \theta_\m(\dif_\m \theta_\m)\theta_\m + \theta_\m^2(\dif_\m \theta_\m)\big) \\
	&= \tr\big((\dif_\m \theta_\m)\theta_\m^2\big) - (-1)^{1 \cdot (2+1)}\tr\big((\dif_\m \theta_\m)\theta_\m^2\big) + (-1)^{2 \cdot 2}\tr\big((\dif_\m \theta_\m)\theta_\m^2\big) \\
	&= 3\tr\big((\dif_\m \theta_\m)\wedge \theta_\m^2\big) .
\end{align*}
Substituting $\dif_\m \theta_\m = R - \theta_\m^2$ leads to:
\[\dif_\m\left(-\frac{1}{3}\tr(\theta_\m^3)\right) = -\tr\big((R - \theta_\m^2)\theta_\m^2\big) = -\tr(R \wedge \theta_\m^2) + \tr(\theta_\m^4) .\]
Because $\theta_\m$ is a $1$-form, we have $\tr(\theta_\m^4) = (-1)^{1 \cdot 3}\tr(\theta_\m^4) = -\tr(\theta_\m^4) = 0$ by graded commutativity. Substituting $\theta_\m^2 = R - \dif_\m \theta_\m$ into the remaining term:
\[-\tr(R \wedge \theta_\m^2) = -\tr\big(R \wedge (R - \dif_\m \theta_\m)\big) = -\tr(R^2) + \tr(R \wedge \dif_\m \theta_\m) .\]
Since $R = -F_{\ttt} \in \ttt$ and $\dif_\m \theta_\m \in \m$, Lemma~\ref{lem:trace_orthogonality} implies $\tr(R \wedge \dif_\m \theta_\m) = 0$. 
Finally, plugging $R^2 = (-F_{\ttt})^2 = F_{\ttt}^2$ back in leads to the proven result:
\[\dif_\m\left(-\frac{1}{3}\tr(\theta_\m^3)\right) = -\tr(F_{\ttt}^2) \iff \dif_\m\left(\frac{1}{3}\tr(\theta_\m^3)\right) = \tr(F_{\ttt}^2) .\]

\begin{corollary}[The Third Primitive]\label{cor:j3_primitive}
	For $j = 3$, the transgression primitive on the flag manifold $K/T$ evaluates to the invariant $5$-form:
	\[\alpha_3 = -\frac{1}{2}\tr(\theta_\m^3 \wedge F_{\ttt}) + \frac{1}{10}\tr(\theta_\m^5) \in (\Lambda^5 \m^\vee)^T \cong {}^K\Omega^5(K/T) ,\]
	and satisfies the transgression boundary equation:
	\[\dif_\m \alpha_3 = -p_3(F_{\ttt}) = -\tr(F_{\ttt}^3) .\]
\end{corollary}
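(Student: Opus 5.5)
The plan is to obtain $\alpha_3$ by specializing the closed formula of Theorem~\ref{thm:general_primitive} to $j=3$, and to take the boundary equation from Theorem~\ref{thm:geometric_transgression}. No new integrals are required: the Beta-integral coefficients $(-1)^k\binom{j+k}{k}^{-1}$ were already computed in the proof of Theorem~\ref{thm:general_primitive}. For $j=3$ the sum runs over $k=1,2$, with coefficients $-\binom{4}{1}^{-1}=-\tfrac14$ and $+\binom{5}{2}^{-1}=\tfrac{1}{10}$.

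First I will write out the word sums. The sum $S_{2,1}(F_{\ttt},\theta_\m^2)=F_{\ttt}\theta_\m^2+\theta_\m^2F_{\ttt}$ consists of the two words of length two containing one factor of $\theta_\m^2$. The sum $S_{2,2}(F_{\ttt},\theta_\m^2)=\theta_\m^4$ consists of a single word. The $k=2$ contribution is therefore immediately $\tfrac{1}{10}\tr(\theta_\m^5)$. The $k=1$ contribution is
\[
-\tfrac14\big(\tr(\theta_\m\wedge F_{\ttt}\wedge\theta_\m^2)+\tr(\theta_\m^3\wedge F_{\ttt})\big).
\]

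The only step with content is identifying the two traces in the $k=1$ term. I will use the graded cyclic property of the trace from Remark~\ref{cyclermk}. The factor $\theta_\m^2$ is a matrix-valued form of even degree $2$, so moving it cyclically from the right end to the left end of $\tr(\theta_\m\wedge F_{\ttt}\wedge\theta_\m^2)$ introduces the sign $(-1)^{2\cdot 3}=+1$. This gives $\tr(\theta_\m^3\wedge F_{\ttt})$. The two summands are therefore equal, and the $k=1$ term becomes $-\tfrac12\tr(\theta_\m^3\wedge F_{\ttt})$.

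The point to check is that no non-commutativity obstruction arises here, unlike at $j=4$. The cyclic move only relocates an even-degree block as a whole, so no commutator $[F_{\ttt},\theta_\m^2]$ appears; with a single curvature factor there is only one necklace. Adding the two contributions yields the stated formula for $\alpha_3$.

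Finally, $T$-invariance, horizontality, and the identity $\dif_\m\alpha_3=-\tr(F_{\ttt}^3)=-p_3(F_{\ttt})$ follow directly from Theorem~\ref{thm:geometric_transgression} with $j=3$, transported to $(\Lambda^5\m^\vee)^T$ via the DGA isomorphism noted after that theorem. As an optional consistency check, analogous to the direct verification for $\alpha_2$, I would apply $\dif_\m$ termwise. This uses $\dif_\m\theta_\m=R-\theta_\m^2$ with $R=-F_{\ttt}$ and $\dif_\m F_{\ttt}=0$, together with Lemma~\ref{lem:trace_orthogonality} to discard mixed $\ttt$–$\m$ traces. This bookkeeping is the only laborious part, but it is not needed for the proof.
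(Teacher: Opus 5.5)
Your proposal is correct and follows the paper's own proof. Like the paper, you specialize Theorem~\ref{thm:general_primitive} to $j=3$, merge the two $k=1$ words by cyclically moving the even-degree block $\theta_\m^2$, and take the boundary equation from Theorem~\ref{thm:geometric_transgression}; the paper's additional direct integration of $F_s^2=(1-s)^2(F_{\ttt}-s\theta_\m^2)^2$ only re-derives the coefficients $-\tfrac12$ and $\tfrac1{10}$ as a cross-check.
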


\begin{proof}
	Specializing Theorem~\ref{thm:general_primitive} to $j = 3$ gives the formula directly without recalculating Beta integrals:
	\begin{align*}
		\alpha_3 &= (-1)^1 \inv{\binom{4}{1}} \tr\big(\theta_\m \wedge S_{2, 1}(F_{\ttt}, \theta_\m^2)\big) + (-1)^2 \inv{\binom{5}{2}} \tr\big(\theta_\m \wedge S_{2, 2}(F_{\ttt}, \theta_\m^2)\big) \\
		&= -\frac{1}{4} \tr\big(\theta_\m \wedge (F_{\ttt}\theta_\m^2 + \theta_\m^2 F_{\ttt})\big) + \frac{1}{10}\tr(\theta_\m \wedge \theta_\m^4) \\
		&= -\frac{1}{4} \cdot 2\tr(\theta_\m^3 \wedge F_{\ttt}) + \frac{1}{10}\tr(\theta_\m^5) = -\frac{1}{2}\tr(\theta_\m^3 \wedge F_{\ttt}) + \frac{1}{10}\tr(\theta_\m^5) .
	\end{align*}
	Alternatively, evaluating the integral directly from Theorem~\ref{thm:geometric_transgression}:
	By Theorem~\ref{thm:geometric_transgression}, the third primitive is given by:
	\[\alpha_3 = 3 \int_0^1 \tr(\theta_\m \wedge F_s^2) \, \dif s .\]
	By Remark~\ref{rmk:curvature_factorization}, the interpolating curvature factors as $F_s = (1 - s)(F_{\ttt} - s\theta_\m^2)$.
	Squaring this expression yields:
	\[F_s^2 = (1 - s)^2 \big( F_{\ttt}^2 - s(F_{\ttt} \wedge \theta_\m^2 + \theta_\m^2 \wedge F_{\ttt}) + s^2 \theta_\m^4 \big) .\]
	Wedging on the left with $\theta_\m$ and taking the matrix trace:
	\begin{itemize}
		\item By Lemma~\ref{lem:trace_orthogonality}, $\tr(\theta_\m \wedge F_{\ttt}^2) = 0$ due to the orthogonality of $\m$ and $\ttt$.
		\item By the graded cyclic permutation of the trace,
		\[\tr(\theta_\m \wedge F_{\ttt} \wedge \theta_\m^2) = (-1)^{3 \cdot 2} \tr(\theta_\m^2 \wedge \theta_\m \wedge F_{\ttt}) = \tr(\theta_\m^3 \wedge F_{\ttt}) ,\]
		and $\tr(\theta_\m \wedge \theta_\m^2 \wedge F_{\ttt}) = \tr(\theta_\m^3 \wedge F_{\ttt})$. Thus the cross-term simplifies to $-2s\tr(\theta_\m^3 \wedge F_{\ttt})$. Integrating over $s \in [0, 1]$:
		\[3 \int_0^1 -2s(1 - s)^2 \, \dif s = -6 \left( \frac{1}{2} - \frac{2}{3} + \frac{1}{4} \right) = -6 \cdot \frac{1}{12} = -\frac{1}{2} .\]
		\item The pure soldering term $\theta_\m \wedge \theta_\m^4 = \theta_\m^5$ yields the scalar integral:
		\[3 \int_0^1 s^2(1 - s)^2 \, \dif s = 3 \left( \frac{1}{3} - \frac{1}{2} + \frac{1}{5} \right) = 3 \cdot \frac{1}{30} = \frac{1}{10} .\]
	\end{itemize}
	Summing these contributions yields $\alpha_3 = -\frac{1}{2}\tr(\theta_\m^3 \wedge F_{\ttt}) + \frac{1}{10}\tr(\theta_\m^5)$, which satisfies $\dif_\m \alpha_3 = -\tr(F_{\ttt}^3)$ by Theorem~\ref{thm:geometric_transgression}.
\end{proof}

The explicit primitives in low degrees demonstrate a clear progression in curvature dependence: for $j=2$, the second primitive $\alpha_2 = -\frac{1}{3}\tr(\theta_\m^3)$ is purely unperturbed by curvature; for $j=3$, the third primitive $\alpha_3$ contains a single curvature factor $F_{\ttt}$, where cyclic trace permutations still commute trivially past forms of total even degree. The non-commutative threshold occurs at $j=4$, where quadratic curvature $F_{\ttt}^2$ generates words such as $F_{\ttt} \theta_\m^2 F_{\ttt}$ that fail to commute under the trace, leading to the necklace splitting analyzed in Subsection~\ref{sec:fourth}. 

\subsection{The Fourth Primitive and Necklace Splitting}\label{sec:fourth}

We now analyze the structural transition that takes place at degree $j = 4$. The presence of quadratic curvature $F_\ttt^2$ coupled with $[F_\ttt, \theta_\m^2] \neq 0$ shatters the cyclic symmetry of the word sums, causing words of identical letter content to split into independent necklaces.

\begin{corollary}[The Fourth Primitive]\label{cor:j4_primitive}
	For $j = 4$, the geometric transgression primitive evaluates to the invariant $7$-form:
	\begin{equation*}
		\alpha_4 = -\frac{2}{5}\tr(\theta_\m^3 \wedge F_{\ttt}^2) - \frac{1}{5}\tr(\theta_\m \wedge F_{\ttt} \wedge \theta_\m^2 \wedge F_{\ttt}) 
		+ \frac{1}{5}\tr(\theta_\m^5 \wedge F_{\ttt}) - \frac{1}{35}\tr(\theta_\m^7),
	\end{equation*}
	living in $(\Lambda^7 \m^\vee)^T \cong {}^K\Omega^7(K/T)$ and satisfying the transgression boundary equation:
	\[\dif_\m \alpha_4 = -p_4(F_{\ttt}) = -\tr(F_{\ttt}^4) .\]
\end{corollary}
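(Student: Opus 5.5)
The plan is to specialize the closed formula of Theorem~\ref{thm:general_primitive} to $j=4$ and then reduce each trace by cyclic moves. The boundary equation $\dif_\m \alpha_4 = -\tr(F_{\ttt}^4)$ needs no separate verification: it is inherited directly from Theorem~\ref{thm:geometric_transgression}, since the closed formula is only a rewriting of the same integral. For $j=4$ the sum runs over $k=1,2,3$. The Beta coefficients are
\[
(-1)^k\binom{4+k}{k}^{-1} = -\tfrac{1}{5},\ +\tfrac{1}{15},\ -\tfrac{1}{35}.
\]
So we must compute $\tr\big(\theta_\m \wedge S_{3,k}(F_{\ttt}, \theta_\m^2)\big)$ for these three values of $k$.

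The case $k=3$ is immediate: $S_{3,3} = \theta_\m^6$, which gives the term $-\tfrac{1}{35}\tr(\theta_\m^7)$.

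For $k=2$, write $S_{3,2} = \theta_\m^4 F_{\ttt} + \theta_\m^2 F_{\ttt} \theta_\m^2 + F_{\ttt}\theta_\m^4$. After wedging with $\theta_\m$ on the left:
\begin{itemize}
\item The first word is already $\tr(\theta_\m^5 \wedge F_{\ttt})$.
\item In the second, $\tr(\theta_\m^3 F_{\ttt} \theta_\m^2)$, we cycle the trailing even-degree block $\theta_\m^2$ to the front.
\item In the third, $\tr(\theta_\m F_{\ttt} \theta_\m^4)$, we cycle $\theta_\m^4$ to the front.
\end{itemize}
By the graded cyclic property in Remark~\ref{cyclermk}, moving an even-degree block produces sign $+1$. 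Hence all three words equal $\tr(\theta_\m^5 \wedge F_{\ttt})$, and the coefficient is $3 \cdot \tfrac{1}{15} = \tfrac{1}{5}$.

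For $k=1$, write $S_{3,1} = \theta_\m^2 F_{\ttt}^2 + F_{\ttt}\theta_\m^2 F_{\ttt} + F_{\ttt}^2\theta_\m^2$. The first word gives $\tr(\theta_\m^3 F_{\ttt}^2)$. The third word gives $\tr(\theta_\m F_{\ttt}^2 \theta_\m^2)$, which becomes $\tr(\theta_\m^3 F_{\ttt}^2)$ after cycling the even block $\theta_\m^2$ to the front. The middle word gives $\tr(\theta_\m F_{\ttt}\theta_\m^2 F_{\ttt})$. The coefficient $-\tfrac{1}{5}$ then produces $-\tfrac{2}{5}\tr(\theta_\m^3 \wedge F_{\ttt}^2) - \tfrac{1}{5}\tr(\theta_\m \wedge F_{\ttt} \wedge \theta_\m^2 \wedge F_{\ttt})$. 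Summing the three contributions gives the stated formula. Membership in $(\Lambda^7\m^\vee)^T$ is also already part of Theorem~\ref{thm:general_primitive}.

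The main point of care is the middle word in the $k=1$ sum. We must not try to merge it with $\tr(\theta_\m^3 F_{\ttt}^2)$. The only cyclic moves available that keep the leading $\theta_\m$ intact move even-degree blocks. Such moves can only rotate the word $\theta_\m\,|\,F_{\ttt}\,|\,\theta_\m^2\,|\,F_{\ttt}$, and no rotation brings the two $F_{\ttt}$ factors together unless $F_{\ttt}$ commutes with $\theta_\m^2$, which fails by the statement. For the corollary itself it suffices to leave this term as a separate summand; no claim of linear independence is needed for the formula.

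I would also double-check that no odd-degree block is ever moved in the reductions above, since such a move would introduce a sign. I would further check that the $k=0$ term $\tr(\theta_\m F_{\ttt}^3)$ is correctly dropped by Lemma~\ref{lem:trace_orthogonality}.
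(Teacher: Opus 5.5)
Your proposal is correct and follows the paper's proof step for step: specialize Theorem~\ref{thm:general_primitive} to $j=4$, read off the coefficients $-\tfrac15,\ \tfrac1{15},\ -\tfrac1{35}$, merge the cyclically equivalent words (two of the three for $k=1$, all three for $k=2$), keep $\tr(\theta_\m\wedge F_{\ttt}\wedge\theta_\m^2\wedge F_{\ttt})$ as a separate summand, and inherit the boundary equation from the transgression theorem. One harmless correction: the total degree $7$ is odd, so cyclically moving \emph{any} block of degree $d$ costs $(-1)^{d(7-d)}=+1$; odd-degree moves would therefore not introduce a sign either (the paper uses exactly this observation), and your planned check on that point is unnecessary.
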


\begin{proof}
	We specialize the closed formula of Theorem~\ref{thm:general_primitive} to $j = 4$:
	\[\alpha_4 = \sum_{k=1}^3 (-1)^k\inv{\binom{4+k}{k}} \tr\big(\theta_\m \wedge S_{3, k}(F_{\ttt}, \theta_\m^2)\big) .\]
	The scalar coefficients evaluate to:
	\[(-1)^1\inv{\binom{5}{1}} = -\frac{1}{5}, \qquad (-1)^2\inv{\binom{6}{2}} = \frac{1}{15}, \qquad (-1)^3\inv{\binom{7}{3}} = -\frac{1}{35} .\]
	
	Because the total differential form degree $2j - 1 = 7$ is odd, moving an arbitrary block of wedged-together forms of total degree $d$ from back to front under the trace incurs the positive sign $(-1)^{d(7 - d)} = +1$, since exactly one of $d$ or $7 - d$ is even. Writing $A := F_{\ttt}$ and $B := \theta_\m^2$, we classify the words in $S_{3, k}(A, B)$, where $k$ is the number of $B$'s, under cyclic trace permutations:
	\begin{enumerate}[(i)]
		\item Case $k = 1$:
		The sum $S_{3, 1}(A, B)$ contains $\binom{3}{1} = 3$ words:
		\[S_{3, 1}(A, B) = BAA + ABA + AAB .\]
		Wedging on the left with $\theta_\m$:
		\begin{itemize}
			\item $\theta_\m \wedge BAA = \theta_\m^3 \wedge F_{\ttt}^2$.
			\item $\theta_\m \wedge AAB = \theta_\m \wedge F_{\ttt}^2 \wedge \theta_\m^2$. Permuting the $2$-form $\theta_\m^2$ to the front under the trace yields:
			\[\tr(\theta_\m \wedge F_{\ttt}^2 \wedge \theta_\m^2) = (-1)^{2 \cdot 5} \tr(\theta_\m^2 \wedge \theta_\m \wedge F_{\ttt}^2) = \tr(\theta_\m^3 \wedge F_{\ttt}^2) .\]
			\item $\theta_\m \wedge ABA = \theta_\m \wedge F_{\ttt} \wedge \theta_\m^2 \wedge F_{\ttt}$. Because $F_{\ttt}$ and $\theta_\m^2$ do not commute, this word is an irreducible alternating necklace.
		\end{itemize}

		While the cyclic words $BAA$ and $AAB$ merge under the trace to produce $2\tr(\theta_\m^3 \wedge F_{\ttt}^2)$, the alternating word $\theta_\m \wedge ABA$ cannot be reordered into $\theta_\m^3 \wedge F_{\ttt}^2$ without commuting $F_{\ttt}$ past $\theta_\m^2$. Because $[F_{\ttt}, \theta_\m^2] \neq 0$, it forms an independent trace invariant, giving rise to necklace splitting.

		The $k = 1$ contribution evaluates to:
		\[-\frac{1}{5}\Big(2\tr(\theta_\m^3 \wedge F_{\ttt}^2) + \tr(\theta_\m \wedge F_{\ttt} \wedge \theta_\m^2 \wedge F_{\ttt})\Big) = -\frac{2}{5}\tr(\theta_\m^3 \wedge F_{\ttt}^2) - \frac{1}{5}\tr(\theta_\m \wedge F_{\ttt} \wedge \theta_\m^2 \wedge F_{\ttt}) .\]
		
		\item Case $k = 2$:
		The sum $S_{3, 2}(A, B)$ contains $\binom{3}{2} = 3$ words:
		\[S_{3, 2}(A, B) = BBA + BAB + ABB .\]
		Wedging with $\theta_\m$:
		\begin{itemize}
			\item $\theta_\m \wedge BBA = \theta_\m^5 \wedge F_{\ttt}$.
			\item $\theta_\m \wedge BAB = \theta_\m^3 \wedge F_{\ttt} \wedge \theta_\m^2$. Permuting $\theta_\m^2$ past $\theta_\m^3 \wedge F_{\ttt}$ yields $\tr(\theta_\m^5 \wedge F_{\ttt})$.
			\item $\theta_\m \wedge ABB = \theta_\m \wedge F_{\ttt} \wedge \theta_\m^4$. Permuting $\theta_\m^4$ past $\theta_\m \wedge F_{\ttt}$ yields $\tr(\theta_\m^5 \wedge F_{\ttt})$.
		\end{itemize}
		All three words are cyclically equivalent under the trace. Multiplying by the scalar factor yields the contribution:
		\[3 \cdot \frac{1}{15} \tr(\theta_\m^5 \wedge F_{\ttt}) = \frac{1}{5}\tr(\theta_\m^5 \wedge F_{\ttt}) .\]
		
		\item Case $k = 3$: 
		The unique word in $S_{3, 3}(A, B)$ is $BBB = \theta_\m^6$. Wedging with $\theta_\m$ gives $\theta_\m^7$, which contributes:
		\[-\frac{1}{35}\tr(\theta_\m^7) .\]
	\end{enumerate}
	Summing the three components yields the stated expression for $\alpha_4$. The boundary equation $\dif_\m \alpha_4 = -\tr(F_{\ttt}^4)$ follows from Theorem~\ref{thm:general_primitive}.
\end{proof}

The leading term $-\frac{1}{35}\tr(\theta_\m^7)$ is proportional to the bare projection of the absolute primitive $\eta_4\vert{}_\m$, while the three lower-degree terms provide the non-commutative curvature perturbation required to satisfy the transgression equation $\dif_\m \alpha_4 = -p_4(F_{\ttt})$. This serves as the basis of a more detailed structural analysis in Subsection~\ref{sec:perturbed_mc}.

\subsection{Primitives as Curvature-Perturbed Maurer--Cartan Forms}\label{sec:perturbed_mc}

For $k\geq j>3$, the primitives correspond up to scaling to a perturbed version of the absolute primitive $\eta_j = \frac{j}{2j-1} \tr(\theta_\mathrm{MC}^{2j-1})$ from Proposition~\ref{primform}. 

The explicit forms we derived for the absolute primitives $\eta_j\in\Lambda^{2j-1}\g_\CC^\vee$ and $\alpha_j\in(\Lambda^{2j-1}\m^\vee)^T$ explain the non-commutative threshold at $j=4$ (and the multiplicity barrier at $k=4$) in terms of a perturbation by the canonical curvature $F_{\ttt}$. From Proposition~\ref{primform}, we have:
\[\eta_j=\frac{j}{2j-1}\tr(\theta_\mathrm{MC}^{2j-1}) ,\]
and from Theorem~\ref{thm:general_primitive}, we have:
\[\alpha_j = \sum_{k=1}^{j-1} (-1)^k\inv{\binom{j+k}{k}} \tr\big(\theta_\m \wedge S_{j-1, k}(F_{\ttt}, \theta_\m^2)\big) .\]
Isolating the final summand, this becomes:
\[\alpha_j = (-1)^{j-1}\inv{\binom{2j-1}{j-1}} \tr(\theta_\m^{2j-1}) + \sum_{k=1}^{j-2} \frac{(-1)^k}{\binom{j+k}{k}} \tr\big(\theta_\m \wedge S_{j-1, k}(F_{\ttt}, \theta_\m^2)\big) .\]
The leading term is proportional to the bare projection $\eta_j\vert{}_\m$, while the lower-order summands ($k < j - 1$) provide the necessary perturbation for the transgression equation $\dif \alpha_j = -p_j(F_{\ttt})$ to be fulfilled. We thereby obtain the desired Chern class, which the bare projection $\eta_j$ could not accomplish. In particular, $\dif_\m(\eta_j\vert{}_\m)$ contained no curvature factors, highlighting the necessity of the perturbation. As we saw in the previous section, the case $k=3$ is the last step at which the perturbation is trivial (recall that $\alpha_3=0$ in this case). 

More precisely, on the flag manifold of $\mathrm{SU}(3)$ ($k=3$), the second primitive $\alpha_2 = -\frac{1}{3}\tr(\theta_\m^3)$ contains no curvature factor, while $\alpha_3 \equiv 0$ vanishes because there are no non-zero $T$-invariant $5$-forms on $\mathrm{SU}(3)/T$, i.e., $(\Lambda^5 \m^\vee)^T = \{0\}$. Thus, non-vanishing curvature perturbations first manifest in non-trivial primitives for $\mathrm{SU}(k)$ with $k \ge 4$.

Beyond the flat $k=3$ case, the perturbative terms no longer vanish and curvature plays a role.

This completes the realization of the relative transgression primitives $\alpha_j$ as non-trivial curvature perturbations of the projected Maurer--Cartan forms. While static projection to $K$ yields closed absolute cocycles ($\dif_\wedge \eta_j = 0$), dynamic interpolation along $\theta_s$ restores the curvature target $\dif_\m \alpha_j = -p_j(F_\ttt)$ on the base $K/T$.

\section{Geometric Visualization on the \texorpdfstring{$A_2$}{A2} Root Lattice}\label{sec:root}

Having established the algebraic existence, descent, and closed Beta-integral formula for the relative transgression primitives $\alpha_j$ in Section~\ref{sec:uni}, we now ground the abstract differential geometry in the geometry of the root lattice. For the rank-two system $A_2$ ($\mathfrak{sl}_3(\CC)$), the relative Chevalley--Eilenberg complex $(\Lambda^\bullet \m^\vee)^T$ admits an intriguing pictorial dictionary on the planar hexagonal root lattice:
\begin{enumerate}[(1)]
	\item $T$-invariant cochains correspond bijectively to closed polygonal directed loops.
	\item The relative differential $\dif_\m$ acts physically as an edge-fracturing operator shattering root vectors across Weyl parallelograms.
	\item The second primitive $\alpha_2 = \Omega_- - \Omega_+$ manifests as the oriented difference of  triangular loops whose boundary under $\dif_\m$ is a sum of parallelograms, witnessing the exactness of $c_2(E)$.
\end{enumerate}

\subsection{Root Forms, Weights, and the Loop Dictionary}

\begin{proposition}[Primitive Forms as Root Lattice Loops]\label{prop:closedloops}
	When restricting the primitive forms $\eta_j$ to the orthogonal complement $\m$ of the Cartan subalgebra $\h$ (so that $\g = \h \oplus \m$), the non-vanishing terms correspond to (sums of) closed directed loops of length $2j-1$ in the $A_{k-1}$ root lattice.
\end{proposition}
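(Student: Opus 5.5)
Restricting $\eta_j$ to $\m$ leaves only the pure soldering-form term: by Proposition~\ref{primform}, $\eta_j = \frac{j}{2j-1}\tr(\theta_\mathrm{MC}^{2j-1})$, and restricting to $\m$ means evaluating only on arguments from $\m$. That kills every occurrence of $\theta_{\ttt}$, so $\eta_j|_\m = \frac{j}{2j-1}\tr(\theta_\m^{2j-1})$. The plan is to expand this trace in a Chevalley basis adapted to $\g = \h \oplus \bigoplus_{\alpha\in\Phi}\g_\alpha$. We write $\theta_\m = \sum_{\alpha\in\Phi} E_\alpha\otimes\omega^\alpha$, where $E_\alpha$ are the root vectors (for $A_{k-1}$, the elementary matrices $E_{ab}$ with $\alpha = L_a - L_b$) and $\omega^\alpha\in\m^\vee$ is the dual basis. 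Multilinearity then gives
\[
\tr(\theta_\m^{2j-1}) = \sum_{\alpha_1,\dots,\alpha_{2j-1}\in\Phi} \tr(E_{\alpha_1}E_{\alpha_2}\cdots E_{\alpha_{2j-1}})\,\omega^{\alpha_1}\wedge\cdots\wedge\omega^{\alpha_{2j-1}} .
\]

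The key step is to determine when a coefficient $\tr(E_{\alpha_1}\cdots E_{\alpha_{2j-1}})$ can be nonzero, and there are two complementary ways to see it. The first is weight bookkeeping. Each $E_\alpha$ has $\ad\h$-weight $\alpha$, so the product has weight $\sum_i\alpha_i$, and its trace vanishes unless that weight is zero. Equivalently, $\omega^{\alpha_1}\wedge\cdots\wedge\omega^{\alpha_{2j-1}}$ is $T$-invariant exactly when $\sum_i\alpha_i = 0$, and the total form must be $T$-invariant because $\eta_j$ is $\Ad$-invariant. The second is the explicit matrix description in type $A$. Here $E_{a_1b_1}E_{a_2b_2}\cdots$ is nonzero only if $b_i = a_{i+1}$, and its trace is nonzero only if in addition $b_{2j-1} = a_1$. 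In that case the trace equals $1$, and the sequence of indices $a_1\to a_2\to\cdots\to a_{2j-1}\to a_1$ is a closed walk of length $2j-1$.

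The remaining step is to translate this into the lattice picture. Each root $\alpha_i = L_{a_i}-L_{a_{i+1}}$ is read as a directed edge vector in the $A_{k-1}$ root lattice. The partial sums $\alpha_1+\cdots+\alpha_m$ are lattice points, and closing the walk gives $\sum_i\alpha_i = 0$, so the edges form a closed directed polygonal loop of length $2j-1$ based at the origin. Each nonzero term of $\tr(\theta_\m^{2j-1})$ is therefore attached to such a loop, namely the closed index cycle $a_1\to\cdots\to a_{2j-1}\to a_1$. Grouping the terms by their underlying loop (up to cyclic relabeling, which the graded cyclicity of the trace in Remark~\ref{cyclermk} preserves with sign $+1$ in odd degree), $\eta_j|_\m$ becomes a sum of loop-forms with scalar multiplicities.

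I expect the main obstacle to be making the dictionary precise rather than proving any vanishing. Distinct index cycles can trace the same lattice polygon, because the map from cycles in $\{1,\dots,k\}$ to root walks need not be injective once the walk revisits vertices. Also, repeated roots among the $\alpha_i$ make the wedge vanish, so some closed walks contribute zero. I would handle this by stating the claim as ``the nonvanishing terms are indexed by closed walks whose edges are distinct roots'', and by noting that a given loop may appear with a multiplicity that is possibly zero after cancellation. That is exactly the weaker ``(sums of)'' formulation in the statement, and it suffices for what the proposition asserts.
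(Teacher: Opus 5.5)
Your proposal is correct and follows the paper's proof essentially step by step. Like the paper, you restrict to $\m$ so that only $\tr(\theta_\m^{2j-1})$ survives, expand in the root basis $e_\alpha = E_{ab}$, let antisymmetry kill repeated roots, and use $E_{ab}E_{cd} = \delta_{bc}E_{ad}$ to show that the trace coefficient is $1$ exactly when the indices close up into a directed cycle whose roots sum to zero; your extra weight argument is a valid but redundant second route.

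One small correction to your closing caveat: the map from index cycles to root sequences is in fact injective, since each root $L_a - L_b$ determines the ordered pair $(a,b)$. What fails to be injective is the passage from closed walks to wedge monomials, because different orderings of the same set of distinct roots give the same monomial up to sign.
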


\begin{proof}
	For notational convenience, we work with the normalized primitive form:
	\[\tilde{\eta}_j := \frac{2j-1}{j} \eta_j = \tr(\theta_\mathrm{MC}^{2j-1}) .\]
	Let us evaluate the restriction of $\tilde{\eta}_j$ to $\m_\CC$. In the root space decomposition  $\m_\CC \cong \bigoplus_{\alpha \in \Phi} \slk_\alpha$, the restricted Maurer--Cartan form expands as
	\[\theta_\mathrm{MC}\vert{}_{\m_\CC} = \theta_\m = \sum_{\alpha \in \Phi} e_\alpha \otimes \omega^\alpha ,\]
	where $e_\alpha$ spans the root space $\slk_\alpha$ and $\omega^\alpha \in \m_\CC^\vee$ is the dual basis element satisfying $\omega^\alpha(e_\beta) = \delta^\alpha_\beta$. Substituting this into the trace yields:
	\begin{align}
		\tilde{\eta}_j|_{\m_\CC} &= \tr\left( \left( \sum_{\alpha \in \Phi} e_\alpha \otimes \omega^\alpha \right)^{2j-1} \right) \nonumber \\
		&= \sum_{\alpha_1, \dots, \alpha_{2j-1} \in \Phi} \tr(e_{\alpha_1} e_{\alpha_2} \cdots e_{\alpha_{2j-1}}) \, \omega^{\alpha_1} \wedge \omega^{\alpha_2} \wedge \dots \wedge \omega^{\alpha_{2j-1}} . \label{eq:primitive_expansion}
	\end{align}
	To determine which summands survive, we evaluate the algebraic and exterior factors:
	\begin{enumerate}[(1)]
		\item {Exterior Factor:} Because each $\omega^{\alpha_i}$ is a 1-form, the anti-commutativity of the wedge product implies that any term containing a repeated root $\alpha_i = \alpha_k$ (for $i \neq k$) vanishes, so non-vanishing terms must consist of $2j-1$ mutually distinct roots.
		
		\item {Matrix Factor:} In the standard defining representation $\slk \subset \mathfrak{gl}(k)$, a root $\alpha \in \Phi$ has the form $\alpha = L_a - L_b$ for distinct indices $a, b \in \{1, \dots, k\}$, with corresponding root vector given by the elementary matrix unit $e_\alpha = E_{ab}$. For a sequence of roots $\alpha_i = L_{a_i} - L_{b_i}$, the matrix product expands via unit multiplication $E_{ab} E_{cd} = \delta_{bc} E_{ad}$:
		\[e_{\alpha_1} e_{\alpha_2} \cdots e_{\alpha_{2j-1}} = E_{a_1 b_1} E_{a_2 b_2} \cdots E_{a_{2j-1} b_{2j-1}} = \left( \prod_{i=1}^{2j-2} \delta_{b_i, a_{i+1}} \right) E_{a_1 b_{2j-1}} .\]
		Taking the matrix trace yields:
		\[\tr(e_{\alpha_1} e_{\alpha_2} \cdots e_{\alpha_{2j-1}}) = \left( \prod_{i=1}^{2j-2} \delta_{b_i, a_{i+1}} \right) \tr(E_{a_1 b_{2j-1}}) = \left( \prod_{i=1}^{2j-2} \delta_{b_i, a_{i+1}} \right) \delta_{b_{2j-1}, a_1} .\]
	\end{enumerate}
	Consequently, the trace is non-zero (and equal to $1$) if and only if the index sequence closes into a directed loop on the vertex set $\{1, \dots, k\}$:
	\[b_1 = a_2, \quad b_2 = a_3, \quad \dots, \quad b_{2j-2} = a_{2j-1}, \quad b_{2j-1} = a_1 .\]
	The roots comprising this sequence take the form:
	\[\alpha_1 = L_{a_1} - L_{a_2}, \quad \alpha_2 = L_{a_2} - L_{a_3}, \quad \dots, \quad \alpha_{2j-1} = L_{a_{2j-1}} - L_{a_1} .\]
	Summing these roots gives the zero weight constraint:
	\[\sum_{i=1}^{2j-1} \alpha_i = (L_{a_1} - L_{a_2}) + (L_{a_2} - L_{a_3}) + \dots + (L_{a_{2j-1}} - L_{a_1}) = 0 .\]
	Geometrically, concatenating the root vectors $\alpha_1, \dots, \alpha_{2j-1}$ head-to-tail in the $A_{k-1}$ root lattice forms a closed polygonal path, or loop, of length $2j-1$ returning to the origin.
\end{proof}

In summary, restricting the absolute primitive $\tau(f_j) = \eta_j$ to the reductive complement $\m_\CC$ yields a superposition of closed directed root cycles:
\[\eta_j\vert{}_{\m_\CC} \in (\Lambda^{2j-1}\m_\CC^\vee)^T .\]
For instance, when $j = 2$, this restriction produces the primitive 3-form $\eta_2\vert{}_{\m_\CC} \in (\Lambda^3 \m_\CC^\vee)^T$, which manifests geometrically as the isotropic sum of oriented triangles Subsection~\ref{sec:coord_alpha2}.

\begin{remark}[Summary: Paths and Loops in the Root Lattice]\label{loops0}
	We now synthesize the geometric dictionary connecting the algebraic cochain complex $C^\bullet = (\Lambda^\bullet \m^\vee)^T$ directly with the Euclidean geometry of the root lattice $\Lambda_R \subset \h^*_\mathbb{R}$:
	The algebraic nature of this complex admits a direct, visual translation into the geometry of the root lattice $\Lambda_R \subset \h^*_\mathbb{R}$.
	
	\begin{enumerate}[(i)]
		\item \textit{Root Forms as Lattice Paths:} Each root $\alpha \in \Phi$ is a vector in the Euclidean space $\h^*_\mathbb{R}$. We associate the dual root form $\omega^\alpha \in \m^\vee$ with this vector. A generic basic $n$-form given by the wedge product $\omega^{\alpha_1} \wedge \dots \wedge \omega^{\alpha_n}$ geometrically corresponds to a sequence of root vectors. When arranged head-to-tail, they trace out a directed path in the root lattice starting from the origin: $0 \to \alpha_1 \to \alpha_1+\alpha_2 \to \dots \to \sum \alpha_i$.
		
		\item\textit{Invariance as Closed Loops:} Our complex restricts to the $T$-invariant subspace. A wedge product of forms has weight zero (is $T$-invariant) if and only if the sum of its roots vanishes: $\sum \alpha_i = 0$. Geometrically, this requires the head-to-tail sequence of root vectors to return to the origin. Therefore, the $T$-invariant cochain space $C^n$ is generated by the \textit{closed polygonal loops} of length $n$ drawn on the root lattice.
		
		\item\textit{The Edge-Fracturing Differential:} The Chevalley-Eilenberg differential $\dif_\m$ is governed by the dual of the Lie bracket. For a basis 1-form, it is given by:
		\[\dif_\m (\omega^\gamma) = -\frac{1}{2} \sum_{\alpha+\beta=\gamma} N_{\alpha, \beta} \, \omega^\alpha \wedge \omega^\beta ,\]
		where $N_{\alpha,\beta}$ are the structure constants. Geometrically, $\dif_\m$ takes a single root vector $\gamma$ (a straight line segment in the lattice) and ``fractures'' it, replacing it with the linear combination of all valid two-step paths $(\alpha, \beta)$ that traverse the parallelogram formed by $\alpha$ and $\beta$. Extending this via the graded Leibniz rule, the differential systematically shatters the edges of our root polygons into finer configurations.
		
		\item\textit{Weyl Equivariance and Representation Theory:} The Weyl group $W = S_k$ acts on the root lattice $\Lambda_R$ by isometries (specifically, by permuting the basis weights $L_i \mapsto L_{\sigma(i)}$). Geometrically, this corresponds to reflections and rotations of the lattice across the root hyperplanes. Because the Lie bracket is $W$-equivariant, the edge-fracturing operator $\dif_\m$ commutes with these symmetries: $\dif_\m(\sigma \cdot \omega) = \sigma \cdot \dif_\m(\omega)$. Consequently, if we take a closed root polygon and reflect it across a hyperplane, its boundary fractures in the mirrored configuration. 
		
		This visualizes the block-diagonalization of the complex into irreducible $S_k$-representations: decomposing the cochain spaces into Specht modules is geometrically equivalent to forming highly symmetric or anti-symmetric superpositions of root polygons.
	\end{enumerate}
	
	This intrinsic geometric model provides the coordinate realizations of the topological invariants of $G/B$. 
	
	While computing the ranks of the matrices $\dif_\m$ becomes combinatorially explosive for large rank $k$, the root-lattice framing provides a concrete geometric realization of the abstract algebraic primitives constructed in Section~\ref{sec:uni}, as we now establish explicitly for $A_2$.
\end{remark}

\subsection{The Canonical Connection and Curvature in Coordinates}\label{sec:canonical_curvature_coord}

We now specialize the general differential-geometric framework to the rank-two root system $A_2$, corresponding to the flag manifold $K/T = \mathrm{SU}(3)/\mathrm{S}(\mathrm{U}(1)^3)$.

We extend differential forms to complex coefficients and work in the complexified relative Chevalley--Eilenberg complex $(\Lambda^\bullet_\CC \m_\CC^\vee)^T$ with Cartan subalgebra $\h = \ttt_\CC$.

Recall the standard Chevalley basis conventions for $\mathfrak{sl}_3(\CC)$: the simple roots are $\alpha_1, \alpha_2$, the highest root is $\alpha_3 := \alpha_1 + \alpha_2$, and the coroots $h_i = [e_{\alpha_i}, e_{-\alpha_i}] \in \h$ span the Cartan subalgebra. By the reductive Maurer--Cartan, the canonical curvature $F_{\ttt} = -\frac{1}{2}[\theta_\m, \theta_\m]_{\ttt}$ restricts to opposite root pairs $[e_\alpha, e_{-\alpha}] = h_\alpha$. Extending scalars to $\h$ and expressing the result in terms of the basic $2$-folds $\nu_\alpha := \omega^\alpha \wedge \omega^{-\alpha} \in C^2$, the curvature $F_{\ttt} \in \Omega^2(K/T; \CC) \otimes \h$ evaluates to:
\[F_{\ttt} = -\sum_{\alpha \in \Phi^+} h_\alpha \otimes \nu_\alpha = -(h_1 \nu_{\alpha_1} + h_2 \nu_{\alpha_2} + h_3 \nu_{\alpha_3}) .\]

Expressed as a diagonal matrix of $2$-forms on the flag manifold:
\[F_{\ttt} = \begin{pmatrix} -(\nu_{\alpha_1} + \nu_{\alpha_3}) & 0 & 0 \\ 0 & \nu_{\alpha_1} - \nu_{\alpha_2} & 0 \\ 0 & 0 & \nu_{\alpha_2} + \nu_{\alpha_3} \end{pmatrix} .\]
We begin by evaluating the invariant polynomials $p_j(F_{\ttt}) = \tr(F_{\ttt}^j)$.
Because $\operatorname{tr}(h_\alpha) = 0$, the first trace vanishes, i.e., $f_1(F_{\ttt}) = \tr(F_{\ttt}) = 0$. 

Write $\alpha_1:=\alpha_{12}:=L_1-L_2,$ $\alpha_2:=\alpha_{23}:=L_2-L_3$. Then $\Phi^+=\{\alpha_1,\alpha_2,(\alpha_1+\alpha_2)\}.$ Consider the following illustration for clarity. The simple roots are marked in white, the hyperplane of positive roots is marked in dark gray. 
\begin{center}
	\begin{tikzpicture}[scale=2.5] 
		\tikzset{grid/.style={black}}
		\begin{rootSystem}{A}
			\roots
			\positiveRootHyperplane
			\simpleroots
			
			\node[above, font=\small] at \Root{1}{0} {$\alpha_2$};
			\node[right, font=\small] at \Root{0}{1} {$\alpha_1$};
			\node[above right, font=\small] at \Root{1}{1} {$\alpha_1 + \alpha_2$};

			
			\foreach \x/\y in {1/0,-1/1, 0/-1}{\wt{\x}{\y}}
			\node[above, font=\small] at \weight{1}{0} {\small\(L_1\)};
			\node[above, font=\small] at \weight{-1}{1} {\small\(L_2\)};
			\node[above, font=\small] at \weight{0}{-1} {\small\(L_3\)};
			
		\end{rootSystem}
	\end{tikzpicture}
\end{center}

For $j = 2$, we compute the characteristic $4$-form $\tr(F_{\ttt}^2)$. Since $\omega^\alpha \wedge \omega^\alpha = 0$, the squares of the $2$-folds vanish: $\nu_{\alpha_k}^2 = (\omega^{\alpha_k} \wedge \omega^{-\alpha_k})^2 = 0$. Squaring the diagonal entries yields:
\begin{align*}
	(F_{\ttt})_{11}^2 &= (\nu_{\alpha_1} + \nu_{\alpha_3})^2 = 2\nu_{\alpha_1} \wedge \nu_{\alpha_3} , \\
	(F_{\ttt})_{22}^2 &= (\nu_{\alpha_1} - \nu_{\alpha_2})^2 = -2\nu_{\alpha_1} \wedge \nu_{\alpha_2} , \\
	(F_{\ttt})_{33}^2 &= (\nu_{\alpha_2} + \nu_{\alpha_3})^2 = 2\nu_{\alpha_2} \wedge \nu_{\alpha_3} .
\end{align*}
Summing these up yields the trace in coordinates:
\begin{equation}\label{eq:coord_trace_F2}
	p_2(F_{\ttt}) = \tr(F_{\ttt}^2) = 2\big(\nu_{\alpha_1} \wedge \nu_{\alpha_3} - \nu_{\alpha_1} \wedge \nu_{\alpha_2} + \nu_{\alpha_2} \wedge \nu_{\alpha_3}\big) \in (\Lambda^4 \m^\vee)^T .
\end{equation}
    
This computation connects directly with the loop model of Remark~\ref{loops0}. On the planar hexagonal root lattice of the $A_2$-system, the differential-geometric objects acquire explicit visual interpretations:
\begin{itemize}
	\item The Soldering Form $\theta_\m$ is the superposition of all directed radial unit steps from the center. Each basis $1$-form $\omega^\alpha \in \m^\vee$ in the coordinate description $\theta_\m = \sum_{\alpha \in \Phi} e_\alpha \otimes \omega^\alpha$ represents a single directed segment traversing the root vector $\alpha$.
	
	\item
	The connection $1$-form $\theta_{\ttt} \in \Omega^1(K, \ttt)$ on the principal $T$-bundle $\pi \colon K \to K/T$ takes values in the real abelian Lie algebra $\ttt = \operatorname{Lie}(T)$. Recall how under the adjoint action, each $\xi \in \ttt$ acts on the one-dimensional root space $\g_\alpha \subset \m_\CC$ by scalar multiplication with $\alpha(\xi) \in i\RR$, the differential of the character 
	\[\chi_\alpha \colon T \to \mathrm{U}(1), \qquad \chi_\alpha(\exp \xi) = e^{\alpha(\xi)} ,\]
	defining the standard $\mathrm{U}(1)$-action on $\g_\alpha$. Under the canonical decomposition of the complexified tangent bundle into associated complex line bundles under the canonical decomposition into complex line bundles established in Theorem~\ref{thm:borel_isomorphism}(i) (equation~\eqref{vbdec}):
	\[\mathrm{T}^{1,0}(K/T) \cong K \times_T \m^- \cong \bigoplus_{\alpha \in \Phi^+} \mathbb{L}_{-\alpha}, \qquad \mathbb{L}_{-\alpha} := K \times_{\chi_{-\alpha}} \CC .\]
	The connection $\theta_{\ttt}$ induces a covariant derivative on each line bundle $\mathbb{L}_{-\alpha}$. The connection $1$-form on the associated principal $\mathrm{U}(1)$-bundle is given by evaluating the root $\alpha \in \h^\vee$ on the Cartan connection,
	\[\alpha(\theta_{\ttt}) \in \Omega^1(K, i\RR) ,\]
	with curvature $2$-form given by $\alpha(F_{\ttt}) \in \Omega^2(K/T, i\RR)$.
	
	\item The Curvature $F_{\ttt} = -\sum_{\alpha \in \Phi^+} h_\alpha \otimes \nu_\alpha$  is the superposition of the length-$2$ degenerate cycles from the center to a root and back. Each basic $2$-fold $\nu_\alpha = \omega^\alpha \wedge \omega^{-\alpha}$ represents such a cycle $0 \to \alpha \to 0$ traversing a root and returning along its negative. 
	Evaluating the curvature on these horizontal generators yields:
	\[F_{\ttt}(e_\alpha, e_{-\alpha}) = -h_\alpha \in \ttt .\]	
	By the Ambrose--Singer theorem, the holonomy Lie algebra $\mathfrak{hol}_p(\theta_{\ttt})$ at $p\in K$ is generated by the values of $(F_{\ttt})_q$  on horizontal vectors $\xi_1, \xi_2 \in \mathcal{H}:= \ker((\theta_{\ttt})_q)  = (L_q)_* \m$ for $q\in H(p)$, where $H(p)$ is the set of points in $K$ that can be linked to $p$ via a horizontal path. 
		
	Because $[e_\alpha, e_{-\alpha}] = h_\alpha$, the horizontal distribution satisfies the bracket-generating condition $\m + [\m, \m] = \kk$. By the Chow--Rashevsky theorem, this implies that $H(p) = K$.
	Consequently, the holonomy Lie algebra at $p\in K$ is generated by the pushforwards of all the horizontal generators, namely $(L_q)_*(h_\alpha$). In particular $\theta_{\ttt}$ has maximal holonomy: $\mathfrak{hol}_p(\theta_{\ttt})\cong\ttt$ for all $p\in K$, and integrating yields $\mathrm{Hol}(\theta_{\ttt}) = T$.
		
	In other words, the coordinate expression $F_{\ttt}= -\sum_{\alpha \in \Phi^+} h_\alpha \otimes \nu_\alpha$ links the holonomy given by the coroot $-h_\alpha \in \ttt$ to the $2$-step cycle $0 \to \alpha \to 0$ in the root lattice.
\end{itemize}

\subsection{The Second Primitive as  Triangle Loops}\label{sec:coord_alpha2}

For $j=2$, we expand $\tr(\theta_\m^3) \in \Lambda^3 \m^\vee$ in the root basis:
\[\tr(\theta_\m^3) = \sum_{\alpha, \beta, \gamma \in \Phi} \tr(e_\alpha e_\beta e_\gamma) \, \omega^\alpha \wedge \omega^\beta \wedge \omega^\gamma .\]
By Proposition~\ref{prop:closedloops} (specialized to $j = 2$), the matrix trace $\tr(e_\alpha e_\beta e_\gamma)$ vanishes unless the roots form a closed directed loop, enforcing the zero-weight condition:
\[\tr(e_\alpha e_\beta e_\gamma) \neq 0 \implies \alpha + \beta + \gamma = 0 ,\]
which recovers our initial character-theoretic description of the invariant forms.

On the $A_2$ root lattice, three non-zero roots sum to zero if and only if they form a closed, directed triangular loop. There are exactly two such fundamental triangles, corresponding to the two orientations:
\begin{itemize}
	\item {Counter-clockwise cycle:} The roots are $\{\alpha_1, \alpha_2, -\alpha_3\}$. Evaluating the matrix products, we get:
	\[e_{\alpha_1} e_{\alpha_2} e_{-\alpha_3} = E_{12} E_{23} E_{31} = E_{11} \implies \tr(e_{\alpha_1} e_{\alpha_2} e_{-\alpha_3}) = 1 .\]
	The cyclic permutations $(e_{\alpha_2}, e_{-\alpha_3}, e_{\alpha_1})$ and $(e_{-\alpha_3}, e_{\alpha_1}, e_{\alpha_2})$ yield $E_{22}$ and $E_{33}$, each with trace $1$. Permuting $1$-forms cyclically preserves the wedge product, so the three cyclic permutations combine to yield $3\Omega_+$, where $\Omega_+ := \omega^{\alpha_1} \wedge \omega^{\alpha_2} \wedge \omega^{-\alpha_3}$.
	All non-cyclic permutations vanish.
	
	\item {Clockwise cycle:} The opposite roots are  $\{-\alpha_1, -\alpha_2, \alpha_3\}$. Here:
	\[e_{-\alpha_2} e_{-\alpha_1} e_{\alpha_3} = E_{32} E_{21} E_{13} = E_{33} \implies \tr(e_{-\alpha_2} e_{-\alpha_1} e_{\alpha_3}) = 1 .\]
	The cyclic permutations yield $E_{22}$ and $E_{11}$, each with trace $1$. In terms of $\Omega_- := \omega^{-\alpha_1} \wedge \omega^{-\alpha_2} \wedge \omega^{\alpha_3}$, the cyclic sum equates to $-3\Omega_-$.
\end{itemize}
Summing these two cycles yields the complete coordinate formula for $\tr(\theta_\m^3)$:
\begin{equation*}
	\tr(\theta_\m^3) = 3(\Omega_+ - \Omega_-) = 3\big(\omega^{\alpha_1} \wedge \omega^{\alpha_2} \wedge \omega^{-\alpha_3} - \omega^{-\alpha_1} \wedge \omega^{-\alpha_2} \wedge \omega^{\alpha_3}\big) .
\end{equation*}
By Corollary~\ref{cor:j2_primitive}, the second transgression primitive is $\alpha_2 = -\frac{1}{3}\tr(\theta_\m^3)$. Hence the factor of $3$ neatly cancels:
\begin{equation*}
	\alpha_2 = \Omega_- - \Omega_+ = \omega^{-\alpha_1} \wedge \omega^{-\alpha_2} \wedge \omega^{\alpha_3} - \omega^{\alpha_1} \wedge \omega^{\alpha_2} \wedge \omega^{-\alpha_3} \in (\Lambda^3 \m^\vee)^T .
\end{equation*}
In words, the second primitive $\alpha_2$ is the difference between the clockwise and counter-clockwise oriented triangle forms on the root lattice.

These oriented triangles are represented by the generators $\Omega_-$ and $\Omega_+$ of the space $C^3$ of $T$-invariant 3-forms. Under the spatial reflection and rotation symmetries of the $A_2$ root hexagon (the Weyl group $S_3$), the space of invariant 3-forms decomposes into symmetric and antisymmetric lines: $C^3 = \CC(\Omega_- - \Omega_+) \oplus \CC(\Omega_- + \Omega_+) \cong \mathbf{1} \oplus \mathbf{1}'$ (which we formalize representation-theoretically in Subsection~\ref{sec:master}). The primitive $\alpha_2 = \Omega_- - \Omega_+$ spans the one-dimensional subspace carrying the trivial representation $\mathbf{1}$. As we shall see, the relative differential $\dif_2$ is an $S_3$-module homomorphism between $C^2 \cong \mathbf{1}' \oplus \mathbf{2}$ and $C^3 \cong \mathbf{1} \oplus \mathbf{1}'$, mapping $\mathbf{1}'$ to $\mathbf{1}'$. 
Having identified the second primitive $\alpha_2$ as the oriented difference of triangle loops $\Omega_- - \Omega_+$, we now evaluate the action of the relative differential $\dif_\m$ on these triangles. In Subsection~\ref{sec:edge_fracturing}, we demonstrate how the edge-fracturing operator shatters them into the parallelogram configurations of $p_2(F_{\ttt})$, visually witnessing the transgression relation $\dif_\m \alpha_2 = -p_2(F_{\ttt})$ in coordinates.

\subsection{The Relative Differential as an Edge-Fracturing Operator}\label{sec:edge_fracturing}

Let us directly verify the transgression equation $\dif_\m \alpha_2 = -\tr(F_{\ttt}^2)$. From the structure constants of $\mathfrak{sl}_3(\CC)$ associated with the chosen basis of $\mathfrak{sl}_3(\CC)$, the relative differential $\dif_\m \omega^\gamma = -\frac{1}{2}\sum_{\alpha+\beta=\gamma} N_{\alpha, \beta} \, \omega^\alpha \wedge \omega^\beta$ acts on root $1$-forms by:
\begin{alignat*}{3}
	\dif_\m \omega^{\alpha_1} &= \omega^{-\alpha_2} \wedge \omega^{\alpha_3}, &\qquad \dif_\m \omega^{\alpha_2} &= -\omega^{-\alpha_1} \wedge \omega^{\alpha_3}, &\qquad \dif_\m \omega^{\alpha_3} &= -\omega^{\alpha_1} \wedge \omega^{\alpha_2} , \\
	\dif_\m \omega^{-\alpha_1} &= -\omega^{\alpha_2} \wedge \omega^{-\alpha_3}, &\qquad \dif_\m \omega^{-\alpha_2} &= \omega^{\alpha_1} \wedge \omega^{-\alpha_3}, &\qquad \dif_\m \omega^{-\alpha_3} &= \omega^{-\alpha_1} \wedge \omega^{-\alpha_2} .
\end{alignat*}
Applying the graded Leibniz rule to $\Omega_+ = \omega^{\alpha_1} \wedge \omega^{\alpha_2} \wedge \omega^{-\alpha_3}$:
\begin{align*}
	\dif_\m \Omega_+ &= (\dif_\m \omega^{\alpha_1}) \wedge \omega^{\alpha_2} \wedge \omega^{-\alpha_3} - \omega^{\alpha_1} \wedge (\dif_\m \omega^{\alpha_2}) \wedge \omega^{-\alpha_3} + \omega^{\alpha_1} \wedge \omega^{\alpha_2} \wedge (\dif_\m \omega^{-\alpha_3}) \\
	&= (\omega^{-\alpha_2} \wedge \omega^{\alpha_3}) \wedge \omega^{\alpha_2} \wedge \omega^{-\alpha_3} + \omega^{\alpha_1} \wedge (\omega^{-\alpha_1} \wedge \omega^{\alpha_3}) \wedge \omega^{-\alpha_3} \\
	&\quad + \: \omega^{\alpha_1} \wedge \omega^{\alpha_2} \wedge (\omega^{-\alpha_1} \wedge \omega^{-\alpha_2}) \\
	&= \nu_{\alpha_2} \wedge \nu_{\alpha_3} + \nu_{\alpha_1} \wedge \nu_{\alpha_3} - \nu_{\alpha_1} \wedge \nu_{\alpha_2} .
\end{align*}

Comparing this with Equation~\eqref{eq:coord_trace_F2} shows that $\dif_\m \Omega_+ = \frac{1}{2}\tr(F_{\ttt}^2)$. An analogous computation for the clockwise triangle then yields $\dif_\m \Omega_- = -\frac{1}{2}\tr(F_{\ttt}^2)$. Differentiating the full primitive $\alpha_2 = \Omega_- - \Omega_+$ reproduces the transgression boundary equation, as expected:
\[ \dif_\m \alpha_2 = \dif_\m \Omega_- - \dif_\m \Omega_+ = -\frac{1}{2}\tr(F_{\ttt}^2) - \frac{1}{2}\tr(F_{\ttt}^2) = -\tr(F_{\ttt}^2) = -p_2(F_{\ttt}) . \]

Under the relative differential $\dif_\m$, the triangle loops are fractured along their edges: $\dif_\m(\Omega_\pm)$ breaks each directed triangle into the three parallelogram configurations $\nu_{\alpha_1} \wedge \nu_{\alpha_3}$, $\nu_{\alpha_1} \wedge \nu_{\alpha_2}$, and $\nu_{\alpha_2} \wedge \nu_{\alpha_3}$.
This allows us to visually interpret the vanishing of the second Chern class guaranteed by Borel's theorem: $0 = c_2(E) = \left(\frac{1}{2\pi i}\right)^2 [p_2(F_{\ttt})] \in H^4(K/T;\CC)$ means that the superposition of parallelogram cycles $p_2(F_{\ttt})$ results from the Chevalley--Eilenberg differential breaking the edges of the triangle superpositions in $C^3$.

\begin{remark}[Triviality of the Third Primitive on $A_2$]\label{rmk:a2_cubic_triviality}
	In contrast to the non-trivial second primitive $\alpha_2$, the third primitive $\alpha_3$ vanishes on the flag manifold $K/T = \mathrm{SU}(3)/T$:
	\[ \alpha_3 \equiv 0 \in (\Lambda^5 \m^\vee)^T = C^5 . \]
	This vanishing can be understood through three complementary viewpoints:
	\begin{enumerate}[(i)]
		\item \textit{Cochain Duality:} By the upcoming Proposition~\ref{signtensor}, the cochain spaces satisfy $C^{N-n} \cong C^n \otimes \mathbf{1}'$ for $N = \dim_\RR(K/T) = 6$. Since there are no non-zero $T$-invariant $1$-forms on the flag manifold ($C^1 = (\m^\vee)^T = \{0\}$), duality forces
		\[ C^5 \cong C^1 \otimes \mathbf{1}' = \{0\} . \]
		\item \textit{Weight Cancellation on the Root Lattice:} As established in Remark~\ref{loops0}, a monomial $\omega^{\beta_1} \wedge \dots \wedge \omega^{\beta_5}$ is $T$-invariant if and only if its roots sum to zero. The root system $\Phi$ of $A_2$ contains exactly six roots whose total sum vanishes: $\sum_{\alpha \in \Phi} \alpha = 0$. Any non-zero $5$-form must choose five distinct roots, thus omitting exactly one root $\gamma \in \Phi$. Its net weight is therefore $0 - \gamma = -\gamma \neq 0$, preventing the closure of any $5$-step polygonal loop on the planar hexagonal lattice.
		\item \textit{Vanishing of the Target Form:} Evaluating the restricted invariant polynomial $p_3(F_{\ttt}) = \tr(F_{\ttt}^3)$ directly on the diagonal curvature matrix from Subsection~\ref{sec:canonical_curvature_coord} yields
		\[ (F_{\ttt})_{11}^3 = -(\nu_{\alpha_1} + \nu_{\alpha_3})^3 = -2(\nu_{\alpha_1} \wedge \nu_{\alpha_3}) \wedge (\nu_{\alpha_1} + \nu_{\alpha_3}) = 0 , \]
		since $\nu_\alpha^2 = 0$. Analogously, $(F_{\ttt})_{22}^3 = (F_{\ttt})_{33}^3 = 0$. Hence $p_3(F_{\ttt}) \equiv 0 \in C^6$, and the relative transgression equation $\dif_\m \alpha_3 = -p_3(F_{\ttt})$ is satisfied as the identity $0 = 0$.
	\end{enumerate}
\end{remark}

After this concrete coordinate visualization on the $A_2$ root lattice, we turn in Section~\ref{sec:homo} to the global homological problem: resolving the coinvariant algebra and deriving the Poincar\'e polynomials and Euler characteristics for arbitrary rank $k$ without encountering matrix-rank ambiguities.

\section{Recovery of Poincar\'e Polynomials}\label{sec:homo}

In the preceding sections, we developed the explicit differential geometry of the canonical fibration $K \to K/T$, constructed the relative transgression primitives $\alpha_j$, and provided their visual interpretation on the $A_2$ root lattice. In this final section, we recover the Poincar\'e polynomials and the coinvariant algebra presentation of $H^*(K/T; \CC)$ without relying on undetermined matrix ranks. We proceed along two complementary paths:
\begin{enumerate}[(1)]
	\item An additive, representation-theoretic framework utilizing symmetric functions and the Frobenius characteristic map to systematically compute the isotypic decompositions of the cochain spaces $C^n$ into Specht modules. Here, the $k=4$ multiplicity barrier appears.
	\item A multiplicative Koszul DGA resolution $K_\bullet(p_2, \dots, p_k)$ that algebraically mirrors the Weil algebra transgression and directly proves the acyclicity and Hilbert series of the flag manifold, circumventing the $k=4$ multiplicity barrier.
\end{enumerate}

\subsection{Weyl Equivariance}\label{sec:weyl_equivariance}

The relative Chevalley--Eilenberg complex $C^\bullet = (\Lambda^\bullet \m^\vee)^T$ carries an action of the Weyl group $W = N_K(T)/T \cong S_k$. To exploit this symmetry, we first establish that this action commutes with the relative differential $\dif_\m$.%

\begin{proposition}\label{diffcomm}
	The action of the Weyl group $W=S_k$ commutes with the relative Chevalley--Eilenberg differential $\dif_\m$.
\end{proposition}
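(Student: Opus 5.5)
The plan is to realize the Weyl action concretely through the normalizer $N_K(T)$ and reduce the claim to the fact that $\Ad(n)$ is a Lie algebra automorphism of $\kk$ preserving the splitting $\kk=\ttt\oplus\m$. For $w=nT\in W$ with $n\in N_K(T)$, $\Ad(n)$ normalizes $\ttt$. Since $\m=\ttt^\perp$ with respect to the $\Ad$-invariant Killing form, $\Ad(n)$ also preserves $\m$. We then define $w\cdot\omega:=\Ad(n^{-1})^*\omega$ on $\Lambda^\bullet\m^\vee$, i.e.
\[(w\cdot\omega)(\xi_1,\dots,\xi_p)=\omega(\Ad(n^{-1})\xi_1,\dots,\Ad(n^{-1})\xi_p).\]
Two routine checks come first. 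It maps $(\Lambda^\bullet\m^\vee)^T$ to itself, because for $t\in T$ we have $n^{-1}tn\in T$, so $\Ad(t)$-invariance is transported. The action depends only on $nT$, because $T$ acts trivially on $T$-invariant forms. In the $A_{k-1}$ setting, $n$ can be taken to be a signed permutation matrix, and the action is the permutation $L_i\mapsto L_{\sigma(i)}$ of root forms $\omega^\alpha$ up to signs, matching Remark~\ref{loops0}(iv).

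The key step uses formula \eqref{CEdifdef} for $\dif_\m$, with brackets taken in $\kk$ and the arguments $\xi_i$ restricted to $\m$. The relative complex is identified with $(\Lambda^\bullet\kk^\vee)_\ttt^T$, and on that subcomplex only $\m$-arguments matter. Setting $\phi:=\Ad(n^{-1})$, we have $\phi[\xi_i,\xi_j]=[\phi\xi_i,\phi\xi_j]$. Substituting this into \eqref{CEdifdef} term by term gives
\[\dif_\m(\phi^*\omega)(\xi_1,\dots,\xi_{p+1})=(\dif_\m\omega)(\phi\xi_1,\dots,\phi\xi_{p+1}),\]
that is, $\dif_\m\circ w=w\circ\dif_\m$.

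The main obstacle is justifying that $\dif_\m$ really is computed by \eqref{CEdifdef} with the full $\kk$-bracket, equivalently by the $\ttt$-horizontal part of the ambient Chevalley--Eilenberg differential. The equivalent geometric route is the structure equation \eqref{solderred}, which uses $[\cdot,\cdot]_\m$. There one needs $\phi$ to commute with $\operatorname{pr}_\m$ and $\operatorname{pr}_\ttt$, which holds because $\phi$ preserves both summands. I would handle this by noting that $\phi$ preserves the bigrading ${}^K\Omega^{p,q}(K)$, so it commutes with the projection $\operatorname{pr}^{(\bullet,0)}$. Since $\phi^*$ commutes with the full differential $\dif_\wedge$ on $\Lambda^\bullet\kk^\vee$ (by naturality of Chevalley--Eilenberg under automorphisms), it then commutes with $\dif_\m=\operatorname{pr}^{(\bullet,0)}\circ\dif_\wedge$ restricted to $T$-basic forms.

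A geometric alternative is also available. Right multiplication $R_n$ on $K$ descends to a diffeomorphism of $K/T$ commuting with the left $K$-action, and pullback by a diffeomorphism commutes with $\dif$. Via $\mathrm{ev}_1$ this pullback is exactly $\Ad(n^{-1})^*$, which I would cite as a consistency check.
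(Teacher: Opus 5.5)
Your proposal is correct and follows essentially the same route as the paper: you lift $w$ to $n\in N_K(T)$, use that $\Ad(n^{-1})$ is a Lie algebra automorphism preserving $\ttt$ and hence $\m=\ttt^\perp$, and pass the bracket through the Chevalley--Eilenberg formula. The only difference is cosmetic: you verify $\dif_\m\circ w=w\circ\dif_\m$ directly in every degree via \eqref{CEdifdef}, and you explicitly dispose of the $\ttt$-component of the bracket, whereas the paper checks the identity on $1$-forms and extends it by the graded Leibniz rule.
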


\begin{proof}
	We must first describe the action of $W=N_K(T)/T$. Let $w\in W$ and let $\tilde{w}\in N_K(T)$ be a lift to the normalizer of the maximal torus $T$. The lift acts on $\g$ via $\Ad_{\tilde{w}}$, a Lie algebra automorphism. By definition of the normalizer of $T$, the action of  $\Ad_{\tilde{w}}$ preserves the Cartan subalgebra $\ttt=\mathrm{Lie}(T)$ (and its complexification $\h$). Since $\m$ is the orthogonal complement of $\ttt$ under the ($\Ad$-invariant) Killing form, it also preserves $\m$. The action therefore descends to $\m$, i.e. it commutes with the projection onto the orthogonal complement, and can be extended naturally via pullbacks to an action on $\Lambda^\bullet\m^\vee$. For example, for $\mu\in\Lambda^1\m^\vee$ and $\xi\in\m$:
	\[(\tilde{w}\cdot\mu)(\xi)=\mu(\Ad_{\tilde{w}^{-1}}(\xi)) .\]
	Clearly, this action of $N_K(T)$ on $\Lambda^\bullet\m^\vee$ descends to an action of $W=N_K(T)/T$ on $\left(\Lambda^\bullet\m^\vee\right)^T$. For $\mu \in \left(\Lambda^1\m^\vee\right)^T$:
	\[(w\cdot\mu)(\xi):=\mu(\Ad_{\tilde{w}^{-1}}(\xi)) .\]
	
	To show the commutativity with the differential, we begin by working with the lift on $N_K(T)$ and add $T$-invariance at the end. Since the differential is fully determined by its definition on $1$-forms (as the dual of the Lie bracket) and the graded Leibniz rule (which only involves signs and brackets), it is sufficient to show commutativity for $\mu\in\Lambda^1\m^\vee$:
	\begin{align*}
		\dif_\m(\tilde{w}\cdot\mu)(\xi_1,\xi_2)&=-(\tilde{w}\cdot\mu)([\xi_1,\xi_2]) \\
		&=-\mu([\Ad_{\tilde{w}^{-1}}(\xi_1),\Ad_{\tilde{w}^{-1}}(\xi_2)]) \\
		&=\dif_\m\mu(\Ad_{\tilde{w}^{-1}}(\xi_1),\Ad_{\tilde{w}^{-1}}(\xi_2)) \\
		&=\tilde{w}\cdot(\dif_\m\mu)(\xi_1,\xi_2) .
	\end{align*}
	By definition of the action of $W=N_K(T)/T$, this behavior descends to the complex of invariant forms.
\end{proof}

\begin{corollary}\label{cor:weyl_block_diagonalization}
	The complexes of cochains $C^\bullet$, of coboundaries $B^\bullet$, of cocycles $Z^\bullet$ and the cohomology $H^\bullet=Z^\bullet/B^\bullet$ decompose into subcomplexes corresponding to the decomposition of $W$ into irreducible subrepresentations. In particular, the differential block-diagonalizes according to this decomposition.
\end{corollary}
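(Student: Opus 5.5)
Since Proposition~\ref{diffcomm} shows that each $w \in W$ acts on $C^\bullet$ by a cochain automorphism, the plan is to combine this equivariance with complete reducibility of $\CC[W]$-modules (here $W \cong S_k$ is finite and we work over $\CC$).

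\textbf{Step 1: the cochain spaces are $W$-modules.} First I would check that the action of $W$ on $C^n = (\Lambda^n\m^\vee)^T$ is well defined. A lift $\tilde w \in N_K(T)$ normalizes $T$. Hence $\Ad_{\tilde w}$ carries $T$-invariant forms to $T$-invariant forms. Replacing $\tilde w$ by $\tilde w t$ changes nothing on $T$-invariants. So each $C^n$ is a finite-dimensional $\CC[W]$-module.

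\textbf{Step 2: $Z^\bullet$, $B^\bullet$ and $H^\bullet$ are $W$-modules.} Proposition~\ref{diffcomm} gives $\dif_\m(w\cdot\omega) = w\cdot \dif_\m\omega$. It follows that $Z^n = \ker \dif_n$ and $B^{n+1} = \operatorname{im} \dif_n$ are $W$-submodules. Consequently $H^n = Z^n/B^n$ inherits a $W$-action.

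\textbf{Step 3: isotypic decomposition.} For each irreducible character $\chi$ of $W$, I would use the central idempotent
\[
e_\chi = \frac{\chi(1)}{|W|}\sum_{w\in W}\overline{\chi(w)}\,w .
\]
Maschke's theorem gives $C^n = \bigoplus_\chi e_\chi C^n$. Because $\dif_\m$ commutes with every $w$, it commutes with every $e_\chi$. Therefore $\dif_\m(e_\chi C^n) \subseteq e_\chi C^{n+1}$, and each $(e_\chi C^\bullet, \dif_\m)$ is a subcomplex. The identity $\dif_\m = \sum_\chi e_\chi \dif_\m e_\chi$ is exactly the block-diagonal form of the differential.

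\textbf{Step 4: passing to cocycles, coboundaries and cohomology.} Since the $e_\chi$ are idempotents commuting with $\dif_\m$, they give
\[
Z^n = \bigoplus_\chi e_\chi Z^n, \qquad B^n = \bigoplus_\chi e_\chi B^n, \qquad e_\chi Z^n = Z^n \cap e_\chi C^n .
\]
From this, $H^n = \bigoplus_\chi e_\chi Z^n / e_\chi B^n$, and each summand is the cohomology of the $\chi$-isotypic subcomplex.

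There is no real obstacle here. The only point needing care is Step 1, namely that the $N_K(T)$-action descends to $W$ on $T$-invariants; this is immediate because $T$ acts trivially on $T$-invariants. Finer splitting into individual irreducible copies, rather than isotypic blocks, is not claimed, and that distinction is what produces the multiplicity issue mentioned in the paper.
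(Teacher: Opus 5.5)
Your proof is correct and takes the same route the paper intends: the corollary is stated without proof as an immediate consequence of Proposition~\ref{diffcomm}, and your central idempotents $e_\chi$ together with Maschke's theorem are the standard way to turn $W$-equivariance of $\dif_\m$ into the isotypic splitting of $C^\bullet$, $Z^\bullet$, $B^\bullet$ and $H^\bullet$. Reading ``subcomplexes'' as isotypic blocks also matches how the paper then uses $C^\bullet_\lambda$ and $H^\bullet_\lambda$ in Remark~\ref{skrep}.
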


\begin{remark}[Representation Theory of Symmetric Groups]\label{skrep}
	The representation theory of symmetric groups is very well known. A good reference is Chapter 5 of \cite{etingof2011}. The classification theorem for the irreducible representations of $S_k$ states that the irreps of $S_k$ are given  by the so-called \textit{Specht modules} $V_\lambda$, where $\lambda\vdash k$ is a partition of $k$. In particular, writing $C^n = (\Lambda^n \m^\vee)^T$
	we get subcomplexes $C^\bullet_\lambda, H^\bullet_\lambda$, and so on. 
	Under this decomposition, each cochain space decomposes into Specht modules: $C^n \cong \bigoplus_{\lambda \vdash k} m_\lambda(n) V_\lambda$.
	
	In the case of $G=W=S_k$, we denote these multiplicities by $m_\lambda$ for the partitions $\lambda\vdash k$. Summing over the conjugacy classes of $S_k$, the formula for the multiplicities of the irreps in $C^n$ becomes:
	\begin{equation}\label{charproj}
		m_\lambda(n) := m_{V_\lambda}= \frac{1}{|W|} \sum_{c} |c| \chi_{C^n}(c) \overline{\chi_\lambda(c)}=\frac{1}{k!}\sum_{\nu \vdash k} |c_\nu| \chi_{C^n}(\nu) \chi_\lambda(\nu) ,
	\end{equation}
	where we omitted the complex conjugation since all the involved characters are real.
\end{remark}

To compute the cochain characters $\chi_{C^n}(\nu)$ and their multiplicities $m_\lambda(n)$ across all degrees $n$ simultaneously, we package them into generating functions via the classical Frobenius characteristic map, leading directly to the master generating function in Subsection~\ref{sec:master}.

\subsection{Symmetric Functions and the Master Generating Function}\label{sec:master}

We begin by establishing the module-theoretic duality between the relative cochain spaces of complementary degrees.

\begin{proposition}\label{signtensor}
	For every $0\leq n\leq N=k(k-1)$, there is an isomorphism of $S_k$-modules:
	\begin{equation*}
		C^{N-n}\cong C^n\otimes\mathbf{1'}.
	\end{equation*}
\end{proposition}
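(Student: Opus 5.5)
The plan is to realize the isomorphism by Hodge-star type duality coming from the top exterior power. Here $\m$ has real dimension $N=k(k-1)$, so $\Lambda^N\m^\vee$ is a line. The wedge pairing
\[\Lambda^n\m^\vee\otimes\Lambda^{N-n}\m^\vee\longrightarrow\Lambda^N\m^\vee\]
is non-degenerate and equivariant for $N_K(T)$, which acts through $\Ad$ on $\m$. The argument then has three steps.

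\emph{Step 1: the top form is invariant and carries the sign character.} In the complexified root basis, $\Lambda^N\m_\CC^\vee$ is spanned by $\mathrm{vol}=\bigwedge_{\alpha\in\Phi}\omega^\alpha$ (in a fixed order). Its $T$-weight is $\sum_{\alpha\in\Phi}\alpha=0$, so it is $T$-invariant. This is the same weight cancellation used in Remark~\ref{rmk:a2_cubic_triviality}(ii). For $w\in W$ with a lift $\tilde w$, $\Ad_{\tilde w}$ maps each root space $\g_\alpha$ to $\g_{w\alpha}$. So $w\cdot\mathrm{vol}$ is, up to nonzero scalars from the choice of root vectors, the same wedge with the factors permuted according to the permutation $\alpha\mapsto w\alpha$ of $\Phi$.

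The task is to compute the resulting scalar, $\det(\Ad_{\tilde w}|_\m)$. I would organize $\Phi$ into the pairs $\{\alpha,-\alpha\}$. Each plane $\m_\alpha$ is a real oriented $2$-plane, because $\omega^\alpha\wedge\omega^{-\alpha}$ is, up to a positive multiple of $i$, the real area form. Then $\Ad_{\tilde w}$ permutes these planes, possibly reversing orientations. For a simple reflection $s_\beta$, the only positive root sent to a negative root is $\beta$ itself. So exactly one plane, $\m_\beta$, has its orientation flipped, and the remaining pairs are permuted as $2$-blocks, which contribute sign $+1$. Hence $\det = -1 = \sgn(s_\beta)$, and therefore $w\cdot\mathrm{vol}=\sgn(w)\,\mathrm{vol}$, that is, $\Lambda^N\m^\vee\cong\mathbf{1}'$.

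\emph{Step 2: pass to $T$-invariants.} The wedge pairing restricts to a pairing $C^n\otimes C^{N-n}\to C^N=\CC\,\mathrm{vol}$. To see it is still non-degenerate, decompose $\Lambda^n\m^\vee$ into $T$-weight spaces. The pairing couples weight $\mu$ with weight $-\mu$ only, and it is perfect on each such coupling. In particular it is perfect between the two weight-zero pieces $C^n$ and $C^{N-n}$. This yields an isomorphism $C^{N-n}\cong (C^n)^\vee\otimes\Lambda^N\m^\vee$, and it is $W$-equivariant because the pairing is $N_K(T)$-equivariant and $T$ acts trivially on all three spaces.

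\emph{Step 3: remove the dual.} Representations of $S_k$ are self-dual, since their characters are real, as already used in \eqref{charproj}. Combining this with Step 1 gives $C^{N-n}\cong C^n\otimes\mathbf{1}'$.

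The main obstacle is Step 1, namely the sign bookkeeping: showing that $\Ad_{\tilde w}$ acts on the top form by exactly $\sgn(w)$, independently of the chosen lift and normalization of root vectors. I would first do the computation for simple reflections, using the fact that $s_\beta$ permutes $\Phi^+\setminus\{\beta\}$. Independence of the lift holds because $T$ acts trivially on $\mathrm{vol}$. The general case then follows because the action is a character of $W$ and the simple reflections generate $W$. As a consistency check, for $k=3$ the result reproduces $C^5\cong C^1\otimes\mathbf{1}'=0$ and $C^3\cong\mathbf{1}\oplus\mathbf{1}'$, which is closed under tensoring with $\mathbf{1}'$.
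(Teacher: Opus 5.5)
Your proof is correct, and its skeleton matches the paper's. The wedge pairing into the line $C^N$ gives $C^{N-n}\cong (C^n)^\vee\otimes C^N$, and self-duality of $S_k$-modules finishes. Your Step 1, however, takes a genuinely different route.

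The paper never examines individual root planes. It uses three facts:
\begin{itemize}
\item $\det(\Ad_{\tilde w}|_\g)=1$, since $\tilde w$ lies in the connected group $K$;
\item $\Ad_{\tilde w}$ is block-diagonal for $\g=\h\oplus\m$, so $\det(\Ad_{\tilde w}|_\m)=\det(\Ad_{\tilde w}|_\h)^{-1}$;
\item $\det(w|_\h)=\sgn(w)$, read off from $\CC^k=\h\oplus\CC(1,\dots,1)$, or from $\det s_\alpha=-1$ for a general Weyl group.
\end{itemize}
That argument does not depend on how root vectors are normalized and needs no orientation bookkeeping.

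Your argument stays inside $\m$ and proves the sharper identity $\det(\Ad_{\tilde w}|_\m)=(-1)^{\#\{\alpha>0\,:\,w\alpha<0\}}=(-1)^{\ell(w)}$. In words, the sign counts the root planes $\m_\alpha$ whose orientation $w$ reverses. This fits the root-lattice picture of Section~\ref{sec:root}.

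The cost is two details you flag but leave implicit:
\begin{itemize}
\item The orientation of $\m_\alpha$ defined by $-i\,\omega^\alpha\wedge\omega^{-\alpha}$ is preserved when $w\alpha>0$ only because $\Ad_{\tilde w}^*\omega^{w\alpha}=c\,\omega^\alpha$ forces $\Ad_{\tilde w}^*\omega^{-w\alpha}=\bar c\,\omega^{-\alpha}$. This holds because $\Ad_{\tilde w}$ commutes with the conjugation $X\mapsto -X^*$, which sends $e_\alpha=E_{ab}$ to $-e_{-\alpha}$.
\item The scalar is exactly $\pm1$ because $\Ad_{\tilde w}$ is orthogonal for the Killing form.
\end{itemize}

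Your Step 2 also supplies something the paper only asserts. You show the wedge pairing stays non-degenerate after restricting to $T$-invariants: weight $\mu$ pairs only with weight $-\mu$, so the weight-zero pieces $C^n$ and $C^{N-n}$ pair perfectly. The paper simply ``recalls'' the non-degenerate pairing $P$ on $C^n\times C^{N-n}$ without justification.
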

\begin{proof}
	Recall the non-degenerate pairing 
	\[P:C^n\times C^{N-n}\longrightarrow \CC,\]
	induced by the exterior product given by $\omega\wedge\nu=P(\omega,\nu)\Omega$ for $N=k(k-1)$ and $\Omega=\bigwedge_{\alpha\in\Phi}\omega^\alpha\in C^N$. We now consider the associated non-degenerate pairing 
	\[P_\wedge:C^n\times C^{N-n}\longrightarrow C^N\]
	given simply by the wedge product. We note that $P_\wedge$ is $W$-invariant, since the Weyl group acts via pullback:
	\[\sigma\cdot(\alpha\wedge\beta)=(\sigma\cdot\alpha)\wedge(\sigma\cdot\beta),\]
	where $\sigma\in S_k=W$. 
	The latter pairing yields a map given by:
	\begin{align*}
		C^{N-n}&\longrightarrow\Hom_\CC(C^n,C^N)\\
		\beta&\longmapsto(\alpha\mapsto\alpha\wedge\beta).
	\end{align*}
	The non-degeneracy of the pairing implies the injectivity of this map between finite dimensional vector spaces. The map is thus an isomorphism. Together with the canonical identification $\Hom_\CC(C^n,C^N)\cong(C^n)^\vee\otimes C^N$, we conclude that
	\begin{equation}\label{dualeq}
		C^{N-n}\cong (C^n)^\vee\otimes C^N.
	\end{equation}
	Now let us take closer look at the top cochain space $C^N=\langle\Omega\rangle_\CC$. An element $\sigma$ of the Weyl group acts on $\Omega$ by permuting the $\omega^\alpha$:
	\begin{equation}\label{sigom}
		\sigma\cdot\Omega=\bigwedge_{\alpha\in\Phi}\omega^{\sigma(\alpha)}=\det(\sigma\mid_{\m^\vee})\Omega,
	\end{equation}
	where we used the definition of the determinant in the last step. The determinant of a permutation is $\pm1$, in particular $\det(\sigma\mid_{\m^\vee})=(\det(\sigma\mid_{\m^\vee}))^{-1}=\det(\sigma\mid_{\m})$. 
	Note that we cannot simply conclude that $\sigma\cdot\Omega=\sgn(\sigma)\Omega$, since the Weyl group $S_k$ acts on $k(k-1)$ roots. However, the equality holds regardless. Indeed, $\sigma\mid_\g$ block-diagonalizes according to the decomposition $\g=\h\oplus\m$ and therefore $\det(\sigma\mid_\g)=\det(\sigma\mid_\h)\cdot \det(\sigma\mid_\m)$. As we saw in the proof of Proposition \ref{diffcomm}, the Weyl group $W=N(T)/T$ acts on $\g$ via the adjoint representation and therefore $\det(\sigma\mid_\g)=1$. Consequently, $\det(\sigma\mid_\m)=(\det(\sigma\mid_\h))^{-1}$.
	
	Let us compute $\det(\sigma\mid_\h)$. Recall that in our setting, $\h$ consists of traceless diagonal matrices of size $k$. These form a $(k-1)$-dimensional subspace of $\CC^k$ and the action of $S_k$ on $\h$ via adjunction corresponds to the standard action of $S_k$ on $\CC^k$. More precisely, $\h$ corresponds to a hypersurface with orthogonal complement under the standard scalar product given by the span of $(1,\dots,1)\in\CC^k$. This span is clearly invariant under the standard action of $S_k$, so the determinant of the restriction of any permutation is equal to $1$. On $\CC^k$, the determinant of a permutation is clearly given by $\sgn(\sigma)$. As above it follows by block-diagonalization that the determinant on $\h$ is given by $\sgn(\sigma)$, and therefore $\det(\sigma\mid_\m)=(\sgn(\sigma))^{-1}=\sgn(\sigma).$
		
	For Weyl groups other than $S_k$, one can employ a similar argument to show that $\det(\sigma\mid_\h)=\sgn(\sigma)$: On $\h$, the Weyl group acts via reflections $s_\alpha$ with $\det(s_\alpha)=-1$. For a Weyl element $w= s_{i_1} \dots s_{i_{l(w)}}\in W$ of length $l(w)$, we get $\det(w\mid_\h)=\det(s_{i_1}) \cdots\det(s_{i_{l(w)}})=(-1)^{l(w)}=:\sgn(w)$.
	
	Together with equation (\ref{sigom}), this implies that $C^N=\mathbf{1'}$, i.e. the top cochain space is isomorphic as a $W$-module to the sign representation. Plugging this into equation (\ref{dualeq}), we obtain 
	\[C^{N-n}\cong (C^n)^\vee\otimes \mathbf{1'}.\]
	Finally, because every representation of the symmetric group $S_k$ over $\CC$ is self-dual ($(C^n)^\vee \cong C^n$), this yields the stated $S_k$-module isomorphism $C^{N-n} \cong C^n \otimes \mathbf{1'}$.
\end{proof}

\begin{corollary}[Poincaré Duality for Multiplicities]\label{pdm}
	Let $\lambda \vdash k$ be a partition and $\lambda'$ its conjugate partition (i.e. the partition corresponding to the transposed Young diagram). For every degree $0 \leq n \leq N$, the multiplicities satisfy:
	\[m_\lambda(N-n) = m_{\lambda'}(n).\]
\end{corollary}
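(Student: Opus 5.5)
The plan is to derive the corollary from Proposition~\ref{signtensor} together with the standard fact that tensoring a Specht module with the sign representation transposes its partition:
\[
V_\lambda \otimes \mathbf{1'} \cong V_{\lambda'} .
\]

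First, I would record this fact. The cleanest justification is through characters. We have $\chi_{V_\lambda\otimes\mathbf{1'}}(\sigma)=\sgn(\sigma)\chi_\lambda(\sigma)$, and this equals $\chi_{\lambda'}(\sigma)$. One can see this equality in two ways. Via the Frobenius characteristic map, multiplication by $\sgn$ corresponds to the involution $\omega$ on symmetric functions, which sends $s_\lambda\mapsto s_{\lambda'}$. Alternatively, one can use the Young symmetrizer description in \cite{etingof2011}, where tensoring with the sign representation exchanges the row and column symmetrizers, i.e.\ transposes the Young diagram. I would simply cite this, since it is classical.

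Second, I would use the decomposition $C^n\cong\bigoplus_{\mu\vdash k} m_\mu(n)\,V_\mu$ from Remark~\ref{skrep} and tensor it with $\mathbf{1'}$. Tensoring distributes over direct sums, so
\[
C^n\otimes\mathbf{1'}\cong\bigoplus_{\mu\vdash k} m_\mu(n)\,V_{\mu'} .
\]
Proposition~\ref{signtensor} identifies the left-hand side with $C^{N-n}$. Since $\mu\mapsto\mu'$ is an involution on partitions of $k$, the multiplicity of $V_\lambda$ on the right-hand side is $m_{\lambda'}(n)$. By uniqueness of isotypic decompositions, this gives $m_\lambda(N-n)=m_{\lambda'}(n)$.

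As an alternative check, I could work directly from formula \eqref{charproj}. Proposition~\ref{signtensor} gives $\chi_{C^{N-n}}(\nu)=\sgn(\nu)\chi_{C^n}(\nu)$. Substituting $\sgn\cdot\chi_\lambda=\chi_{\lambda'}$ into the projection formula yields the same identity.

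The only nontrivial input is the sign-twist identity $V_\lambda\otimes\mathbf{1'}\cong V_{\lambda'}$, and I expect that to be the main obstacle. Rather than reprove it, I would cite it from Chapter~5 of \cite{etingof2011}, or equivalently from $\omega(s_\lambda)=s_{\lambda'}$.
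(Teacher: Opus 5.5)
Your proposal is correct and is essentially the paper's own proof. You tensor the isotypic decomposition of $C^n$ with $\mathbf{1'}$, invoke $V_\lambda\otimes\mathbf{1'}\cong V_{\lambda'}$ (which the paper cites from Exercise~4.3 of Fulton--Harris), reindex via the involution $\lambda\mapsto\lambda'$, and compare isotypic components with $C^{N-n}$ using Proposition~\ref{signtensor}.
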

\begin{proof}
	Decomposing the cochain spaces into irreducible $S_k$-representations (Specht modules), we have $C^n \cong \bigoplus_{\lambda \vdash k} m_\lambda(n) V_\lambda$. Tensoring this decomposition with the sign representation $\mathbf{1'}$ yields:
	\[C^n \otimes \mathbf{1'} \cong \bigoplus_{\lambda \vdash k} m_\lambda(n) (V_\lambda \otimes \mathbf{1'}).\]
	By standard representation theory of the symmetric group (see, e.g., Exercise~4.3 in \cite{Fulton1991RepresentationTA}), tensoring a Specht module with the sign representation dualizes the partition: $V_\lambda \otimes \mathbf{1'} \cong V_{\lambda'}$. 
	Reindexing the direct sum via the involution $\lambda \mapsto \lambda'$ on the set of partitions of $k$, we obtain:
	\[C^n \otimes \mathbf{1'} \cong \bigoplus_{\lambda \vdash k} m_\lambda(n) V_{\lambda'} = \bigoplus_{\lambda \vdash k} m_{\lambda'}(n) V_\lambda.\]
	Applying the isomorphism $C^{N-n} \cong C^n \otimes \mathbf{1'}$ from Proposition \ref{signtensor}, we equate the dimensions of the respective $\lambda$-isotypic components to conclude that $m_\lambda(N-n) = m_{\lambda'}(n)$.
\end{proof}

Previously, we employed a geometric approach to the Chevalley-Eilenberg complex: We began by trying to isolate the invariant cochains before applying the Chevalley-Eilenberg differential. However, as indicated by the proof of Proposition \ref{diffcomm}, it can be fruitful to employ a more algebraic approach by first working with all the cochains and only applying the constraint of $T$-invariance at the end. We thereby trade the geometric loop picture for more efficient computations, in particular of the characters $\chi_n$, which by definition are given by the traces of the elements $\sigma\in W$ acting on $C^n$. Consider the full space of cochains $V^n:=\Lambda^n\m^\vee$. Then $C^n=(V^n)^T$. 
The root space decomposition $\m_\CC \cong \bigoplus_{\alpha \in \Phi} \mathfrak{g}_\alpha$ naturally induces a weight space decomposition on $V^n$.

The weight of a wedge product in $V^n$ is given by the sum of the weights of its constituent forms from $\m^\vee$. Let us denote this decomposition by
\[V^n=V^n_0\oplus V^n_{\mu_1}\oplus  V^n_{\mu_2}\oplus\dots.\] 
Now the action of $W=S_k$ maps weight spaces into weight spaces: $\sigma(V^n_{\mu})=V^n_{\sigma(\mu)}$. In a suitable basis, $\sigma$ is block-diagonalized w.r.t. the weight space decomposition of $V^n$. Only the blocks with non-trivial diagonal entries, namely with $\sigma(\mu)=\mu$, contribute to the trace. Let us keep track of these computations by introducing a grading by formal variables: Let $h:=\exp(\xi)\in T$ for $\xi\in\h$ be a formal element of the maximal torus and let $z_i:=e^{L_i(\xi)}$. Then for $\mu=\sum_ia_iL_i\in \h$, the torus element $h$ acts on $V^n_\mu$ via multiplication by the monomial $z^\mu:=\prod_i z_i^{a_i}$. For example, $\mu=\alpha_1=L_1-L_2$ corresponds to the monomial $z_1z_2^{-1}$, and a form $\omega^{-\alpha} \wedge \omega^{-\beta}$ corresponds to $\mu = -\alpha - \beta$. We get the following trace identity:
\begin{equation}\label{trtr}
	\tr(\sigma\circ h(z)\mid_{V^n})=\sum_{\mu=\sigma(\mu)}z^\mu\tr(\sigma\mid_{V^n_\mu}).
\end{equation}
In order to extract the invariant part $C^n=V^n_0$, we isolate the corresponding term in the direct sum decomposition. This corresponds to taking the constant term $\mathrm{CT}_z$ in $z$:
\begin{equation}\label{charct}
	\chi_n(\sigma)=\tr(\sigma\mid_{V^n_0})=\mathrm{CT}_z \left[\tr(\sigma\circ h(z)\mid_{V^n})\right].
\end{equation}
The traces $\tr(\sigma\circ h\mid_{V^n})$ can be assembled elegantly in terms of their generating function $T_\sigma(t,z)$:
\begin{equation}\label{genfc}
	T_\sigma(t,z):=\sum_{n=0}^{k(k-1)}\tr(\sigma\circ h(z)\mid_{V^n})t^n=\sum_{n=0}^{k(k-1)}\tr(\sigma\circ h(z)\mid_{\Lambda^n\m^\vee})t^n.
\end{equation}
To rewrite this further, we use the following basic lemma.
\begin{lemma}\label{lem:det_trace_id}
	Let $V$ be a complex vector space of dimension $N$ and let $f\in\mathrm{End}_\CC(V)$. Then the following identity holds:
	\[\sum_{n=0}^N\tr(\Lambda^nf)t^n=\det(I+tf),\]
	where $I$ is the identity endomorphism on $V$. 
\end{lemma}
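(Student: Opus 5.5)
The plan is to diagonalize $f$, or more robustly to reduce to the upper-triangular case, and then compare both sides coefficientwise. Since both sides are polynomials in $t$ whose coefficients depend polynomially on the entries of $f$, and since both are invariant under conjugation $f\mapsto gfg^{-1}$, it suffices to treat one representative per conjugacy class. Over $\CC$ every endomorphism is triangularizable, so the first step is to fix a basis $v_1,\dots,v_N$ of $V$ in which $f$ is upper triangular with diagonal entries (eigenvalues) $\lambda_1,\dots,\lambda_N$. The conjugation invariance of the left-hand side follows from functoriality: $\Lambda^n(gfg^{-1})=\Lambda^n g\circ\Lambda^n f\circ(\Lambda^n g)^{-1}$, so each trace is unchanged.

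Next, I would compute $\tr(\Lambda^n f)$ in the induced basis $v_{i_1}\wedge\dots\wedge v_{i_n}$ with $i_1<\dots<i_n$, ordered lexicographically. Expanding
\[f v_{i_1}\wedge\dots\wedge f v_{i_n},\]
each $fv_{i}$ lies in $\lambda_i v_i+\mathrm{span}(v_l: l<i)$. Any term that replaces some $v_{i_r}$ by a $v_l$ with $l<i_r$ either vanishes, if an index repeats, or yields a basis element strictly earlier in the order after sorting. Hence $\Lambda^n f$ is again triangular, with diagonal entry $\lambda_{i_1}\cdots\lambda_{i_n}$. Summing over the diagonal gives $\tr(\Lambda^n f)=e_n(\lambda_1,\dots,\lambda_N)$, the $n$-th elementary symmetric polynomial. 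Verifying this triangularity cleanly, in particular the claim that the index multiset is pushed strictly downward, is the only step needing care, and I expect it to be the main (albeit mild) obstacle. An alternative that avoids it is to first prove the identity for diagonalizable $f$, where $\Lambda^n f$ is visibly diagonal, and then extend to all $f$ by density of diagonalizable matrices and continuity of both sides.

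Finally, since $I+tf$ is upper triangular with diagonal entries $1+t\lambda_i$,
\[\det(I+tf)=\prod_{i=1}^N(1+t\lambda_i)=\sum_{n=0}^N e_n(\lambda_1,\dots,\lambda_N)\,t^n,\]
where the last equality is the defining generating function of the elementary symmetric polynomials. Comparing coefficients with the previous step yields the claimed identity, with the $n=0$ term matching since $\Lambda^0 f=\id_\CC$ has trace $1$.
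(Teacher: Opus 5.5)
Your proof is correct. Its main route differs from the paper's, although your fallback is exactly the paper's proof. The paper assumes $f$ diagonalizable by density, leaving implicit that both sides are continuous (indeed polynomial) in the entries of $f$. It then reads off $\tr(\Lambda^n f)=e_n(\lambda_1,\dots,\lambda_N)$ from the diagonal form of $\Lambda^n f$ in the induced eigenbasis and compares with $\prod_i(1+t\lambda_i)$.

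Your triangularization route handles every $f$ directly, at the cost of the triangularity check you flagged, and that check goes through. Suppose $l_r\le i_r$ for all $r$ and the $l_r$ are distinct. Then the sorted indices $j_1<\dots<j_n$ satisfy $j_s\le i_s$ for every $s$, because the $s$ values $l_1,\dots,l_s$ are all $\le i_s$. The inequality is strict somewhere, since $\sum_r l_r<\sum_r i_r$. Hence $J$ precedes $I$ lexicographically. The reordering sign only affects off-diagonal entries.

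Your version gives a purely algebraic proof with no limiting argument, valid over any field after passing to an algebraic closure. The paper's version buys brevity, since $\Lambda^n f$ is visibly diagonal in an eigenbasis.

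You can also drop the conjugation-invariance discussion. Traces and determinants are basis-independent, so you may simply compute in a triangularizing basis.
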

\begin{proof}
	Since diagonalizable endomorphisms are dense in $\mathrm{End}_\CC(V)$, we assume w.l.o.g. that $f$ is diagonalizable with eigenvalues $\lambda_1,\dots,\lambda_N$. Then $\tr(\Lambda^n f)=e_n(\lambda_1,\dots,\lambda_N)$. After observing that 
	\[\det(I + tf) = \prod_{i=1}^N (1 + t\lambda_i) = \sum_{n=0}^N e_n(\lambda_1,\dots,\lambda_N) t^n,\]
	we can conclude the proof.
\end{proof}

Applying Lemma~\ref{lem:det_trace_id} to equation (\ref{genfc}) yields:
\[T_\sigma(t,z)=\det(I+t(\sigma\circ h(z))\mid_{\m^\vee}).\]
Combining this with equation (\ref{charct}), we obtain an expression for the generating function of the characters $\chi_n(\sigma)$:
\begin{equation}\label{chargen}
	X_\sigma(t)=\sum_{n=0}^{k(k-1)} \chi_n(\sigma) t^n = \mathrm{CT}_z \left[ \det\left(I + t(\sigma\circ h(z))\mid_{\m^\vee}\right) \right],
\end{equation}
where we used the linearity of $\mathrm{CT}_z$. Because $\chi_n$ is a class function, $X_\sigma(t)$ depends only on the cycle type $\rho = \rho(\sigma) \vdash k$ of the permutation; we shall denote it unambiguously by $X_\rho(t)$.

Let us now further develop equation (\ref{charproj}) for the multiplicities $m_\lambda$ of the irreps, which presently still relies on explicit character tables for  $S_k$ to plug in the $\chi_\lambda$. We want instead to translate all of this into the language of polynomials. 
Let us denote the generating function of the multiplicities as follows:
\[M_\lambda(t) := \sum_{n=0}^{k(k-1)} m_\lambda(n) t^n.\]
We can rephrase Corollary \ref{pdm} in terms of this generating function.
\begin{corollary}[Poincaré Duality for Multiplicity Generating Functions]\label{pdm2}
	The multiplicity generating functions satisfy the following equation:
	\[M_\lambda(t) = t^N M_{\lambda'}(t^{-1}).\]
\end{corollary}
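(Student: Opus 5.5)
The identity follows by multiplying the coefficientwise relation of Corollary~\ref{pdm} by powers of $t$ and reindexing the resulting finite sum. No further representation theory is needed.

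First, recall that $C^n=(\Lambda^n\m^\vee)^T$ vanishes outside $0\le n\le N$, with $N=k(k-1)=\dim_\RR\m$. Hence $M_\lambda(t)=\sum_{n=0}^{N}m_\lambda(n)t^n$ is a polynomial of degree at most $N$. This makes $t^N M_{\lambda'}(t^{-1})$ a genuine polynomial rather than only a Laurent polynomial, and the two sides can be compared coefficientwise.

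Next, expand the right-hand side:
\[
t^N M_{\lambda'}(t^{-1})=\sum_{n=0}^{N} m_{\lambda'}(n)\,t^{N-n}.
\]
Substitute $n'=N-n$, which is a bijection of $\{0,\dots,N\}$ onto itself. This gives
\[
t^N M_{\lambda'}(t^{-1})=\sum_{n'=0}^{N} m_{\lambda'}(N-n')\,t^{n'}.
\]
Now apply Corollary~\ref{pdm} with $\lambda'$ in place of $\lambda$, using that conjugation of partitions is an involution, $(\lambda')'=\lambda$. This yields $m_{\lambda'}(N-n')=m_{\lambda}(n')$ for every $0\le n'\le N$. The sum is therefore exactly $M_\lambda(t)$.

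There is no real obstacle here. The only points that need care are keeping the summation range symmetric and invoking Corollary~\ref{pdm} for $\lambda'$ rather than $\lambda$. Both are covered by the involutivity of $\lambda\mapsto\lambda'$ and the fact that the cochain spaces are supported in degrees $[0,N]$.
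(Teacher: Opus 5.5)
Your proof is correct and is essentially the paper's argument: the coefficientwise identity of Corollary~\ref{pdm}, followed by the reindexing $n \mapsto N-n$ over the symmetric range $\{0,\dots,N\}$. The only cosmetic difference is that you start from $t^N M_{\lambda'}(t^{-1})$ and apply Corollary~\ref{pdm} to $\lambda'$ using $(\lambda')'=\lambda$. The paper instead starts from $M_\lambda(t)$ and uses the relation directly in the form $m_\lambda(n)=m_{\lambda'}(N-n)$, so it never needs the involution.
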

\begin{proof}
	To derive the identity for the generating functions, we substitute the relation from Corollary \ref{pdm} into the definition of $M_\lambda(t)$:
	\[M_\lambda(t) = \sum_{n=0}^N m_\lambda(n) t^n = \sum_{n=0}^N m_{\lambda'}(N-n) t^n.\]
	Substituting the index $j = N - n$ gives $n = N - j$. Since the summation runs over the same bounds $0 \leq j \leq N$, we factor out $t^N$ to obtain:
	\[M_\lambda(t) = \sum_{j=0}^N m_{\lambda'}(j) t^{N-j} = t^N \sum_{j=0}^N m_{\lambda'}(j) (t^{-1})^j = t^N M_{\lambda'}(t^{-1}).\]
	This completes the proof.
\end{proof}
Plugging equation (\ref{charproj}) into the definition of $M_\lambda(t)$ yields:
\begin{align*}
	M_\lambda(t)&= \sum_{n=0}^{k(k-1)}\left(\frac{1}{k!}\sum_{\nu \vdash k} |c_\nu| \chi_n(\nu) \chi_\lambda(\nu)\right)t^n\\
	&=\frac{1}{k!} \sum_{\nu \vdash k} |c_\nu| \chi_\lambda(\nu) \left( \sum_{n=0}^{k(k-1)} \chi_n(\nu) t^n \right)\\
	&=\frac{1}{k!} \sum_{\nu \vdash k} |c_\nu| X_\nu(t) \chi_\lambda(\nu).
\end{align*}
Equation (\ref{chargen}) gives us a nice expression for $X_\nu$. The corresponding tool for the $\chi_\lambda$ is the Frobenius characteristic map, an isometric isomorphism between the ring of characters of $S_k$ and the ring of symmetric functions (a proof of this property can be found under (7.3) in \cite{macdo}).

\begin{definition}[Frobenius Characteristic Map and Master Generating Function]\label{def:frob_master}
	Let $\Lambda_{\mathrm{sym}} = \bigoplus_{k \ge 0} \Lambda_{\mathrm{sym}}^k$ denote the graded ring of symmetric functions over $\CC$. The \textit{Frobenius characteristic map} 
	\[\operatorname{ch} \colon R(S_k) \otimes_\ZZ \CC \xrightarrow{\sim} \Lambda_{\mathrm{sym}}^k\]
	is the canonical isometric isomorphism assigning to each class function $f$ the symmetric function
	\begin{equation}\label{chardef}
		\operatorname{ch}(f) := \frac{1}{k!}\sum_{\sigma\in S_k}f(\sigma)p_{\rho(\sigma)}(x) = \sum_{\rho \vdash k} z_\rho^{-1} f(\rho) p_\rho(x) ,
	\end{equation}
	where $p_\rho(x) = p_{\rho_1} p_{\rho_2} \cdots$ is the power-sum symmetric function, and $z_\rho = \prod_{i \ge 1} i^{m_i} m_i!$ is the order of the centralizer of a permutation of cycle type $\rho = (1^{m_1} 2^{m_2} \dots)$. Under $\operatorname{ch}$, the irreducible character $\chi_\lambda$ of the Specht module $V_\lambda$ maps to the Schur function: $\operatorname{ch}(\chi_\lambda) = s_\lambda(x)$. In particular, the transition between power sums and Schur functions is governed by the character table:
	\begin{equation}\label{powerschur}
		p_\rho(x) = \sum_{\lambda \vdash k} \chi_\lambda(\rho) s_\lambda(x) .
	\end{equation}
	Extending scalars to the polynomial ring $\Lambda_{\mathrm{sym}}[t]$, we define the \textit{master generating function} of the relative cochain complex $C^\bullet$ as:
	\begin{equation}\label{masterfunc}
		\mathcal{F}(t, x) := \sum_{\rho \vdash k} z_\rho^{-1} X_\rho(t) p_\rho(x) \in \Lambda_{\mathrm{sym}}^k[t] .
	\end{equation}
\end{definition}

\begin{proposition}\label{frobprop}
	The master generating function expands into the Schur basis via the multiplicity generating functions:
	\[\mathcal{F}(t, x) = \sum_{\lambda \vdash k} M_\lambda(t) s_\lambda(x).\]
\end{proposition}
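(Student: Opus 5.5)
The plan is a direct change of basis from power sums to Schur functions inside the definition \eqref{masterfunc}, with the transition governed by the character table. The coefficients then have to be regrouped so that they match the expression for $M_\lambda(t)$ derived just before Definition~\ref{def:frob_master}. All steps are finite linear manipulations in $\Lambda^k_{\mathrm{sym}}[t]$, because $X_\rho(t)$ is a polynomial in $t$ and the sums over partitions of $k$ are finite.

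Concretely, I would first substitute the expansion \eqref{powerschur}, $p_\rho(x)=\sum_{\lambda\vdash k}\chi_\lambda(\rho)s_\lambda(x)$, into $\mathcal{F}(t,x)=\sum_{\rho\vdash k}z_\rho^{-1}X_\rho(t)p_\rho(x)$. Interchanging the two finite sums gives
\[
\mathcal{F}(t,x)=\sum_{\lambda\vdash k}\Big(\sum_{\rho\vdash k}z_\rho^{-1}X_\rho(t)\chi_\lambda(\rho)\Big)s_\lambda(x).
\]
Next I would use the standard identity $z_\rho^{-1}=|c_\rho|/k!$, where $|c_\rho|$ is the size of the conjugacy class of cycle type $\rho$. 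This follows from the orbit–stabilizer theorem, since $z_\rho$ is the order of the centralizer. With it, the inner coefficient becomes $\frac{1}{k!}\sum_{\rho\vdash k}|c_\rho|X_\rho(t)\chi_\lambda(\rho)$.

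That expression is exactly the formula $M_\lambda(t)=\frac{1}{k!}\sum_{\nu\vdash k}|c_\nu|X_\nu(t)\chi_\lambda(\nu)$ obtained above by inserting the multiplicity formula \eqref{charproj} into the definition of $M_\lambda(t)$. Justifying this identification requires three facts:
\begin{itemize}
\item $X_\nu(t)=\sum_n\chi_n(\nu)t^n$ by \eqref{chargen}, and $\chi_n$ is a class function, so $X_\nu$ is well defined on cycle types.
\item The characters of $S_k$ are real, so the complex conjugate in \eqref{charproj} can be dropped, as already noted there.
\item In \eqref{charproj}, the character $\chi_{C^n}$ is the same as $\chi_n=\tr(\sigma|_{V^n_0})$ of \eqref{charct}, because $C^n=V^n_0$.
\end{itemize}
Comparing coefficients of $s_\lambda$ then gives $\mathcal{F}(t,x)=\sum_\lambda M_\lambda(t)s_\lambda(x)$. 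Linear independence of the Schur functions is not needed, since we are only rewriting $\mathcal{F}$, not reading off coefficients.

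There is no real obstacle here. The only point needing care is bookkeeping: checking that $z_\rho^{-1}$ and $|c_\rho|/k!$ agree, and that the definition \eqref{chardef} of $\operatorname{ch}$ is consistent with $\operatorname{ch}(\chi_\lambda)=s_\lambda$. The latter is taken as given from \cite{macdo}. Alternatively, one can phrase the whole argument as $\mathcal{F}(t,x)=\sum_n t^n\operatorname{ch}(\chi_n)$ and use the linearity of $\operatorname{ch}$ together with $\chi_n=\sum_\lambda m_\lambda(n)\chi_\lambda$; I would mention this as a one-line check.
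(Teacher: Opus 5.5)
Your argument is correct, but your main route differs from the paper's. The paper computes $\sum_n t^n\operatorname{ch}(\chi_n)$ in two ways. By the definition \eqref{chardef} this sum is $\mathcal{F}(t,x)$. By linearity of $\operatorname{ch}$ applied to $\chi_n=\sum_\lambda m_\lambda(n)\chi_\lambda$, together with $\operatorname{ch}(\chi_\lambda)=s_\lambda$, it is $\sum_\lambda M_\lambda(t)s_\lambda(x)$. That is exactly the ``one-line check'' you mention at the end.

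Your primary argument instead substitutes \eqref{powerschur} and matches the resulting coefficient against the closed form $M_\lambda(t)=\frac{1}{k!}\sum_\nu |c_\nu|X_\nu(t)\chi_\lambda(\nu)$ obtained from \eqref{charproj}.

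The paper's route needs less. It uses only complete reducibility of $C^n$ and the defining property of $\operatorname{ch}$, and it makes transparent that $\mathcal{F}$ is the graded Frobenius characteristic of $C^\bullet$. It never appeals to the orthogonality relations. Your route uses them twice: once through the multiplicity formula \eqref{charproj}, and once implicitly through \eqref{powerschur}, which is derived from $\operatorname{ch}(\chi_\lambda)=s_\lambda$ via column orthogonality.

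On the other hand, your route effectively reverses the logical order of Proposition~\ref{frobprop} and Corollary~\ref{schurcor}. You establish the coefficient formula $M_\lambda(t)=\sum_\rho z_\rho^{-1}\chi_\lambda(\rho)X_\rho(t)$ first, directly from \eqref{charproj} and $z_\rho^{-1}=|c_\rho|/k!$. The Corollary then no longer needs the linear independence of the Schur functions that the paper invokes in its proof.
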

\begin{proof}
	By equation (\ref{chardef}), given the character $\chi_n \in R^k$, we obtain:
	\[\mathrm{ch}(\chi_n) = \sum_{\rho \vdash k} z_\rho^{-1} \chi_n(\rho) p_\rho(x).\]
	Multiplying by $t^n$ and summing over all degrees $0 \le n \le k(k-1)$ gives:
	\[\sum_{n=0}^{k(k-1)} \mathrm{ch}(\chi_n) t^n = \sum_{\rho \vdash k} z_\rho^{-1} \left( \sum_{n=0}^{k(k-1)} \chi_n(\rho) t^n \right) p_\rho(x) = \sum_{\rho \vdash k} z_\rho^{-1} X_\rho(t) p_\rho(x) = \mathcal{F}(t, x).\]
	In turn, decomposing $C^n \cong \bigoplus_{\lambda \vdash k} m_\lambda(n) V_\lambda$ gives $\chi_n = \sum_\lambda m_\lambda(n) \chi_\lambda$. Applying $\mathrm{ch}$ and summing over $n$ yields:
	\[\sum_{n=0}^{k(k-1)} \mathrm{ch}(\chi_n) t^n = \sum_{\lambda \vdash k} \left( \sum_{n=0}^{k(k-1)} m_\lambda(n) t^n \right) s_\lambda(x) = \sum_{\lambda \vdash k} M_\lambda(t) s_\lambda(x).\]
	Equating both evaluations of $\sum_n \mathrm{ch}(\chi_n) t^n$ concludes the proof.
\end{proof}

\begin{corollary}\label{schurcor}
	The multiplicity generating function transforms into the character generating function as follows:
	\[M_\lambda(t) = \sum_{\rho \vdash k} z_\rho^{-1} \chi_\lambda(\rho) X_\rho(t).\]
\end{corollary}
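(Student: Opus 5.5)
We will compare two expansions of the master generating function $\mathcal{F}(t,x)$, using the orthogonality of the character table of $S_k$ (in its symmetric-function form, the isometry property of $\operatorname{ch}$). The plan is to expand $\mathcal{F}$ in the Schur basis twice and compare coefficients.

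\emph{First expansion.} We start from the definition \eqref{masterfunc}, $\mathcal{F}(t,x) = \sum_{\rho\vdash k} z_\rho^{-1} X_\rho(t)\, p_\rho(x)$. We substitute the power-sum/Schur transition \eqref{powerschur}, $p_\rho = \sum_{\lambda\vdash k}\chi_\lambda(\rho)\, s_\lambda$. This is a finite sum with coefficients in $\CC[t]$, so the two summations may be interchanged. The result is
\[
\mathcal{F}(t,x) = \sum_{\lambda\vdash k}\Big(\sum_{\rho\vdash k} z_\rho^{-1}\chi_\lambda(\rho) X_\rho(t)\Big) s_\lambda(x).
\]

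\emph{Second expansion and comparison.} Proposition~\ref{frobprop} gives $\mathcal{F}(t,x)=\sum_\lambda M_\lambda(t)s_\lambda(x)$. The Schur functions $\{s_\lambda\}_{\lambda\vdash k}$ form a $\CC$-basis of $\Lambda^k_{\mathrm{sym}}$, since $\operatorname{ch}$ is an isomorphism sending the irreducible characters, which form a basis of class functions, to them. Hence they also form a free $\CC[t]$-basis of $\Lambda^k_{\mathrm{sym}}[t]$. Comparing the coefficients of $s_\lambda$ in the two expansions yields the claim.

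\emph{Alternative route and main check.} One could instead argue directly from the multiplicity formula \eqref{charproj}. There we replace $|c_\nu|/k!$ by $z_\nu^{-1}$, using the orbit–stabilizer relation $|c_\nu| = k!/z_\nu$, and then sum over $n$ with weights $t^n$; this is essentially the computation displayed just before Definition~\ref{def:frob_master}. I would mention this as a consistency check. The only genuinely delicate point is the justification that $\{s_\lambda\}$ is a basis over $\CC[t]$, which is why I prefer the coefficient comparison to rely explicitly on $\operatorname{ch}$ being an isomorphism. I do not expect any real obstacle beyond keeping the roles of $\rho$ (cycle type) and $\lambda$ (irrep) straight. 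I also note that the real-valuedness of $S_k$ characters, remarked after \eqref{charproj}, makes complex conjugation irrelevant.
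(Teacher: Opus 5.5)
Your proposal is correct and is the paper's own proof: substitute the transition \eqref{powerschur} into the definition \eqref{masterfunc}, interchange the finite sums, and compare Schur coefficients with Proposition~\ref{frobprop}, using that the $s_\lambda$ form a basis. Your alternative route is also valid and more direct: with $|c_\nu| = k!/z_\nu$, the computation displayed just before Definition~\ref{def:frob_master} already is the claimed identity, a step the paper does not spell out.
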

\begin{proof}
	 Substituting equation (\ref{powerschur}) into the definition of the master generating function yields:
	\[\mathcal{F}(t, x) = \sum_{\rho \vdash k} z_\rho^{-1} X_\rho(t) \left( \sum_{\lambda \vdash k} \chi_\lambda(\rho) s_\lambda(x) \right).\]
	Interchanging the order of summation, we obtain:
	\[\mathcal{F}(t, x) = \sum_{\lambda \vdash k} \left( \sum_{\rho \vdash k} z_\rho^{-1} \chi_\lambda(\rho) X_\rho(t) \right) s_\lambda(x).\]
	By Proposition \ref{frobprop}, we also have $\mathcal{F}(t, x) = \sum_{\lambda \vdash k} M_\lambda(t) s_\lambda(x)$. Because the Schur functions $s_\lambda(x)$ form a basis of the ring of symmetric functions, their coefficients must be equal, concluding the proof.
\end{proof}

The transformation in Corollary~\ref{schurcor} can be executed directly via the character table matrix $C_k = (\chi_\lambda(\rho))$ and the diagonal matrix $D = \operatorname{diag}(z_\rho^{-1})$ via $M(t) = C_k D X(t)$. Alternatively, if character tables are not precomputed, one can invert the transition via the Jacobi--Trudi identity $s_\lambda = \det(h_{\lambda_i - i + j})$ and Newton's identities expressing $h_m$ in terms of power sums.

\begin{example}[Betti Numbers for $k=3$ via the Master Generating Function]\label{k3decomp2}
	For $k=3$, the symmetric group $S_3$ has three conjugacy classes corresponding to the partitions $\rho \in \{(1,1,1), (2,1), (3)\}$. The corresponding centralizer factors are $z_{(1,1,1)}^{-1} = \frac{1}{6}$, $z_{(2,1)}^{-1} = \frac{1}{2}$, and $z_{(3)}^{-1} = \frac{1}{3}$.
	
	The three irreducible representations of $S_3$ correspond to the Schur functions indexed by these partitions: the trivial representation $\mathbf{1}$ corresponds to $s_{(3)}$, the sign representation $\mathbf{1'}$ corresponds to $s_{(1,1,1)}$, and the standard representation $\mathbf{2}$ corresponds to $s_{(2,1)}$.
	
	We compute the generating functions $X_\rho(t)$ directly via equation (\ref{chargen}). Ordering the basis of $\m^\vee$ as $(e_1, \dots, e_6) = (\omega_{\alpha_1}, \omega_{-\alpha_1}, \omega_{\alpha_2}, \omega_{\alpha_3}, \omega_{-\alpha_2}, \omega_{-\alpha_3})$, the matrix $I + t(\sigma \circ h(z))\mid_{{\m}^\vee}$ evaluates for each cycle type to:
	\begin{align*}
		\rho(e) = (1,1,1): &\quad \operatorname{diag}\left( 1 + t \frac{z_1}{z_2}, 1 + t \frac{z_2}{z_1}, 1 + t \frac{z_2}{z_3}, 1 + t \frac{z_3}{z_2}, 1 + t \frac{z_1}{z_3}, 1 + t \frac{z_3}{z_1} \right), \\
		\rho((12)) = (2,1): &\quad \begin{pmatrix} 1 & t \frac{z_2}{z_1} & 0 & 0 & 0 & 0 \\ t \frac{z_1}{z_2} & 1 & 0 & 0 & 0 & 0 \\ 0 & 0 & 1 & t \frac{z_1}{z_3} & 0 & 0 \\ 0 & 0 & t \frac{z_2}{z_3} & 1 & 0 & 0 \\ 0 & 0 & 0 & 0 & 1 & t \frac{z_3}{z_1} \\ 0 & 0 & 0 & 0 & t \frac{z_3}{z_2} & 1 \end{pmatrix}, \\
		\rho((123)) = (3): &\quad \begin{pmatrix} 1 & 0 & 0 & 0 & 0 & t \frac{z_3}{z_1} \\ 0 & 1 & 0 & t \frac{z_1}{z_3} & 0 & 0 \\ t \frac{z_1}{z_2} & 0 & 1 & 0 & 0 & 0 \\ 0 & 0 & 0 & 1 & t \frac{z_3}{z_2} & 0 \\ 0 & t \frac{z_2}{z_1} & 0 & 0 & 1 & 0 \\ 0 & 0 & t \frac{z_2}{z_3} & 0 & 0 & 1 \end{pmatrix}.
	\end{align*}
	Computing the constant terms with respect to $z = (z_1, z_2, z_3)$ of their determinants yields:
	\begin{align*}
		X_{(1,1,1)}(t) &= 1 + 3t^2 + 2t^3 + 3t^4 + t^6,\\
		X_{(2,1)}(t) &= 1 - t^2 + t^4 - t^6,\\
		X_{(3)}(t) &= 1 + 2t^3 + t^6.
	\end{align*}
	We now construct the master generating function $\mathcal{F}(t,x)$ using equation (\ref{masterfunc}):
	\begin{align*}
		\mathcal{F}(t, x) &= \frac{1}{6} X_{(1,1,1)}(t) p_{(1,1,1)} + \frac{1}{2} X_{(2,1)}(t) p_{(2,1)} + \frac{1}{3} X_{(3)}(t) p_{(3)} \\[1ex]
		&= \left(\frac{1}{6} p_{(1,1,1)} + \frac{1}{2} p_{(2,1)} + \frac{1}{3} p_{(3)}\right) + \left(\frac{1}{2} p_{(1,1,1)} - \frac{1}{2} p_{(2,1)}\right) t^2 \\
		&\quad + \left(\frac{1}{3} p_{(1,1,1)} + \frac{2}{3} p_{(3)}\right) t^3 + \left(\frac{1}{2} p_{(1,1,1)} + \frac{1}{2} p_{(2,1)}\right) t^4 \\
		&\quad + \left(\frac{1}{6} p_{(1,1,1)} - \frac{1}{2} p_{(2,1)} + \frac{1}{3} p_{(3)}\right) t^6.
	\end{align*}
	Substituting the basis transitions $p_\rho = \sum \chi_\lambda(\rho) s_\lambda$, the coefficients resolve into positive integer combinations of the Schur polynomials:
	\[\mathcal{F}(t, x) = s_{(3)} + (s_{(1,1,1)} + s_{(2,1)}) t^2 + (s_{(3)} + s_{(1,1,1)}) t^3 + (s_{(3)} + s_{(2,1)}) t^4 + s_{(1,1,1)} t^6 .\]
	By Proposition \ref{frobprop}, reading off the coefficients yields the multiplicity polynomials:
	\begin{align*}
		M_{(3)}(t) &= 1 + t^3 + t^4, \\
		M_{(1,1,1)}(t) &= t^2 + t^3 + t^6, \\
		M_{(2,1)}(t) &= t^2 + t^4.
	\end{align*}
	Notice how $M_{(1,1,1)}(t) = t^6 M_{(3)}(t^{-1})$ and $M_{(2,1)}(t) = t^6 M_{(2,1)}(t^{-1})$, confirming the Poincaré duality of Corollary~\ref{pdm2}. The coefficients of $t^n$ recover the cochain decompositions:
	\begin{alignat*}{4}
		C^0 &\cong \mathbf{1}, &\qquad C^2 &\cong \mathbf{1'} \oplus \mathbf{2}, &\qquad C^4 &\cong \mathbf{1} \oplus \mathbf{2}, &\qquad C^6 &\cong \mathbf{1'}, \\
		C^1 &\cong \mathbf{0}, & C^3 &\cong \mathbf{1} \oplus \mathbf{1'}, & C^5 &\cong \mathbf{0}. & &
	\end{alignat*}
	Because each non-zero multiplicity is exactly $1$, Schur's Lemma implies that the differentials between isotypic components are either zero or isomorphisms. A non-zero boundary $\dif_2(\mathbf{1'}) = \mathbf{1'}$ and $\dif_3(\mathbf{1}) = \mathbf{1}$ immediately forces $H^2 \cong \mathbf{2}, H^4 \cong \mathbf{2}$, and $H^0 \cong \mathbf{1}, H^6 \cong \mathbf{1'}$, fully recovering the Betti numbers $(1, 0, 2, 0, 2, 0, 1)$ without matrix rank calculations.
\end{example}

However, as the rank increases, this purely additive representation-theoretic framework encounters the aforementioned barrier. 

\begin{example}[The Multiplicity Barrier for $k=4$]\label{k4decomp}
	For $k=4$, the symmetric group $S_4$ has five conjugacy classes, corresponding to the partitions of $4$. The irreducible representations are: $\mathbf{1}$ $((4))$, $\mathbf{1'}$ $((1,1,1,1))$, $\mathbf{2}$ $((2,2))$, $\mathbf{3}$ $((3,1))$, and $\mathbf{3'}$ $((2,1,1))$. 
	
	The character table is given as follows:
	\begin{table}[h]
		\centering
		\renewcommand{\arraystretch}{1.2}
		\begin{tabular}{c|ccccc}
			\hline
			Size & $1$ & $6$ & $8$ & $6$ & $3$ \\
			$S_4$ & $e$ & $(12)$ & $(123)$ & $(1234)$ & $(12)(34)$ \\
			\hline
			$\mathbf{1}$ & $1$ & $1$ & $1$ & $1$ & $1$ \\
			$\mathbf{1'}$ & $1$ & $-1$ & $1$ & $-1$ & $1$ \\
			$\mathbf{2}$ & $2$ & $0$ & $-1$ & $0$ & $2$ \\
			$\mathbf{3}$ & $3$ & $1$ & $0$ & $-1$ & $-1$ \\
			$\mathbf{3'}$ & $3$ & $-1$ & $0$ & $1$ & $-1$ \\
			\hline
		\end{tabular}
		\label{tab:s4_characters}
	\end{table}
	\FloatBarrier
	Applying the constant term extraction algorithm on $\m^\vee$ for $\mathfrak{sl}_4(\CC)$ yields the Specht module decomposition of $C^\bullet$:
	\begin{align*}
		C^{0}&\cong\mathbf{1} ,\\
		C^{1}&\cong\mathbf{0} ,\\
		C^{2}&\cong\mathbf{3}\oplus\mathbf{3'} ,\\
		C^{3}&\cong\mathbf{1}\oplus\mathbf{3}\oplus\mathbf{3'}\oplus\mathbf{1'} ,\\
		C^{4}&\cong\mathbf{1}\oplus3\mathbf{3}\oplus3\mathbf{2}\oplus\mathbf{3'}\oplus2\mathbf{1'} ,\\
		C^{5}&\cong\mathbf{1}\oplus3\mathbf{3}\oplus2\mathbf{2}\oplus3\mathbf{3'}\oplus\mathbf{1'} ,\\
		C^{6}&\cong\mathbf{1}\oplus5\mathbf{3}\oplus5\mathbf{3'}\oplus\mathbf{1'} ,\\
		C^{7}&\cong\mathbf{1}\oplus3\mathbf{3}\oplus2\mathbf{2}\oplus3\mathbf{3'}\oplus\mathbf{1'} ,\\
		C^{8}&\cong2\mathbf{1}\oplus\mathbf{3}\oplus3\mathbf{2}\oplus3\mathbf{3'}\oplus\mathbf{1'} ,\\
		C^{9}&\cong\mathbf{1}\oplus\mathbf{3}\oplus\mathbf{3'}\oplus\mathbf{1'} ,\\
		C^{10}&\cong\mathbf{3}\oplus\mathbf{3'} ,\\
		C^{11}&\cong\mathbf{0} ,\\
		C^{12}&\cong\mathbf{1'} .
	\end{align*}
	Consider for example the restriction of the Chevalley-Eilenberg differential $\dif_5: C^5 \to C^6$ to the $\mathbf{3}$-isotypic component. By Schur's Lemma, any $S_4$-equivariant map between these spaces is governed by a matrix of complex scalars:
	\[\mathrm{Hom}^{S_4}\left( V_{\mathbf{3}}^{\oplus 3}, V_{\mathbf{3}}^{\oplus 5} \right) \cong \CC^{5 \times 3}.\] 
	Because the multiplicities exceed $1$, Schur's Lemma no longer determines whether the map has maximal rank ($3$) or drops rank. The master generating function $\mathcal{F}(t,x)$ operates entirely within the additive category of $S_4$-modules: it computes dimensions and traces, but it cannot determine the rank of this $5 \times 3$ block.
\end{example}

The obstruction encountered in Example \ref{k4decomp} exposes the intrinsic limits of the character-theoretic approach. By relying on multiplicity generating functions, we implicitly applied a forgetful functor: we reduced the Chevalley-Eilenberg complex $C^\bullet$ from a Differential Graded Algebra (DGA) to a sequence of graded $W$-modules.
This allowed us to achieve an isotypic decomposition of the cochain spaces and block-diagonalize the differentials. However, it does not capture the behavior within the isotypic blocks, which is indispensable for determining Betti numbers. 

The differential $\dif_\m$ on $C^\bullet = (\Lambda^\bullet {\m}^\vee)^T$ is not a collection of independent linear maps, but a graded derivation induced in degree zero by the dual of the Lie bracket and defined on higher degrees via the graded Leibniz rule:
\[\dif_\m(\eta \wedge \beta) = (\dif_\m\eta) \wedge \beta - (-1)^{\vert{}\eta\vert{}} \eta \wedge (\dif_\m\beta).\] 
For instance, if $\alpha$ is an exact boundary ($\alpha = \dif_\m\eta$) and $\beta$ is a closed form ($\dif_\m\beta = 0$), their wedge product is automatically an exact boundary in a higher degree: $\alpha \wedge \beta = \dif_\m(\eta \wedge \beta)$. Hence the matrix entries in our diagonal blocks are not arbitrary degrees of freedom; they are constrained by the ideal generated by these wedge products. 

The additive formalism fails because it ignores this multiplicative structure. To capture these relations and resolve the cohomology without undetermined matrix ranks, we must abandon the category of modules and work with the full differential graded algebra. In Subsection~\ref{sec:koszul_dga}, we introduce the Koszul DGA $K_\bullet(p_2, \dots, p_k)$, which algebraically mirrors the transgression relations and recovers the Poincaré polynomial of the flag manifold. 

\subsection{The Koszul Differential Graded Algebra}\label{sec:koszul_dga}

While the master generating function $\mathcal{F}(t, x)$ operates within the additive category of $S_k$-modules, computing the cohomology ring $H^*(K/T; \CC)$ requires capturing the multiplicative relations among boundaries. We achieve this by introducing the Koszul differential graded algebra $K_\bullet(p_2, \dots, p_k)$, which algebraically models the restriction of the Weil algebra $W(\kk)$ to the Cartan subalgebra $\h$ and establishes the resolution of the coinvariant algebra.

We now introduce an algebraic framework that models the differential on invariant forms by setting up a Koszul complex.
This clarifies the precise algebraic roles played by the invariant polynomials and the primitive exterior generators in the computation of the cohomology ring. Our standard reference for Koszul complexes is Chapter 4.5 of \cite{Weibel_1994}.

\begin{definition}[The Koszul DGA]\label{def:koszul}
	Let $\h$ be the Cartan subalgebra of $\g = \mathfrak{sl}(k, \CC)$, with coordinate weight functionals $L_1, \dots, L_k \in \h^\vee$ satisfying $\sum_{i=1}^k L_i = 0$, so that the symmetric algebra is
	\[S(\h^\vee) \cong \CC[L_1, \dots, L_k] / \langle \sum_{i=1}^k L_i \rangle .\]
	Let $p_j := f_j\vert{}_\h = \sum_{i=1}^k L_i^j \in S^j(\h^\vee)^W$ ($j = 2, \dots, k$) be the $r = k-1$ restricted fundamental Weyl-invariant polynomials, and let $V_{\mathrm{odd}} = \bigoplus_{j=2}^k \CC \eta_j \cong \CC^{k-1}$ be a graded vector space spanned by formal odd symbols $\eta_j$.
	
	The \textit{Koszul differential graded algebra} modeling the flag manifold is defined as the tensor product:
	\begin{equation}\label{koszuldecomp}
		K_\bullet(p_2, \dots, p_k) := S(\h^\vee) \otimes_\CC \Lambda^\bullet_\CC(\eta_2, \dots, \eta_k) ,
	\end{equation}
	equipped with the differential $\dif_\kappa$ of Koszul degree $-1$ uniquely determined by:
	\begin{align}
		\dif_\kappa(p \otimes 1) &= 0 \quad \text{for all } p \in S(\h^\vee) = K_0 , \label{deg0def} \\
		\dif_\kappa(1 \otimes \eta_j) &= p_j \otimes 1 \in S^j(\h^\vee) \otimes 1 \subset K_0 \quad (j = 2, \dots, k) , \label{deg1def}
	\end{align}
	and extended to all higher exterior products via the graded Leibniz rule:
	\begin{equation}\label{eq:koszul_leibniz}
		\dif_\kappa(a \wedge b) = \dif_\kappa(a) \wedge b + (-1)^{|a|} a \wedge \dif_\kappa(b) .
	\end{equation}
\end{definition}

Under this identification, the polynomial ring $S(\h^\vee)$ is concentrated in Koszul degree $0$, while each generator $\eta_j \in \Lambda^1(V_{\mathrm{odd}})$ carries Koszul degree $1$ and topological degree $2j-1$. The polynomials $p_j \in S^j(\h^\vee)$ carry Koszul degree $0$ and topological degree $2j$, reflecting the doubling of degrees in the Borel isomorphism. 
The exterior generators $\eta_j$ act as algebraic surrogates for the universal Chern--Simons cochains $Tf_j \in W^{2j-1}(\kk)$ and their projections $\eta_j = \pi(Tf_j)$, enforcing the transgression relation $\dif_\kappa(1 \otimes \eta_j) = p_j \otimes 1$ directly on the symmetric algebra $S(\h^\vee)$, algebraically mirroring the geometric transgression $\dif_\m \alpha_j = -p_j(F_\ttt)$ on the base. 

Applying the graded Leibniz rule \eqref{eq:koszul_leibniz} iteratively to a general basis element $1 \otimes (\eta_{i_1} \wedge \dots \wedge \eta_{i_p}) \in K_p$ yields:
\[\dif_\kappa(1 \otimes \eta_{i_1} \wedge \dots \wedge \eta_{i_p}) = \sum_{j=1}^p (-1)^{j+1} p_{i_j} \otimes (\eta_{i_1} \wedge \dots \wedge \widehat{\eta_{i_j}} \wedge \dots \wedge \eta_{i_p}) ,\]
which matches the classical definition of the Koszul boundary operator (cf.\ Chapter 4.5 in \cite{Weibel_1994}). 

\begin{remark}[Restriction of the Weil Algebra]\label{weilres}
	The decomposition \eqref{koszuldecomp} and the differential rules \eqref{deg0def} and \eqref{deg1def} directly mirror the structure of the Weil algebra $W(\kk) = S(\kk^\vee) \otimes \Lambda(\kk^\vee)$ introduced in Definition~\ref{def:weil-algebra}. 
	
	Recall from Section~\ref{sec:weil} that the total Weil differential is the sum $\dw = \dif_\text{K} + \dce$. When restricting the underlying Lie algebra to the abelian Cartan subalgebra $\h$, the bracket vanishes ($[\h, \h] = 0$), which implies $\dif_\wedge = 0$ on $\Lambda(\h^\vee)$. Consequently, the curvature variable simplifies to $\hat{l} = \bar{l} = l \otimes 1$, the Chevalley--Eilenberg differential vanishes ($\dce = 0$), and the total Weil differential on $W(\h)$ reduces to the Koszul differential $\dif_\text{K}$.
	In addition:
	\begin{itemize}
		\item Equation \eqref{deg0def} corresponds to the horizontal closure condition $\dif_\text{K}\hat{l} = 0$ in \eqref{kosz}, reflecting that all symmetric polynomials in $S(\h^\vee) \otimes 1$ are $\dif_\kappa$-closed.
		\item Equation \eqref{deg1def} is the polynomial analogue of the horizontal shift $\dif_\text{K}\tilde{l} = \hat{l}$ in \eqref{kosz}, mapping each primitive exterior generator $\eta_j \in \Lambda^1(V_{\mathrm{odd}})$ to its transgressive partner, the invariant polynomial $p_j \in S^j(\h^\vee)$.
	\end{itemize}
	Thus, $K_\bullet(p_2, \dots, p_k)$ can be viewed as the minimal DGA submodel of the Weil algebra generated by the primitive forms and their corresponding transgression targets. In other words, it isolates the geometric structure we are interested in from the general Weil algebra setup.
\end{remark}

Having defined the  Koszul DGA, the key question is whether it provides a resolution of the coinvariant algebra. In Subsection~\ref{sec:regular_sequences}, we prove that the restricted invariants $p_2, \dots, p_k$ form a regular sequence in $S(\h^\vee)$, ensuring that the Koszul homology is concentrated entirely in degree zero.

\subsection{Regular Sequences and Homological Acyclicity}\label{sec:regular_sequences}

To determine the homology of the  Koszul differential graded algebra $K_\bullet(p_2, \dots, p_k)$ introduced in Definition~\ref{def:koszul}, we must verify that its defining invariant generators possess no hidden algebraic dependencies. The acyclicity of a Koszul complex in positive degrees is governed by the regularity of its generating sequence.

\begin{proposition}[Regular Sequence in the Cartan Subalgebra]\label{regseqH}
	The restricted fundamental invariants $p_2, \dots, p_k$ form a \textit{regular sequence} in the symmetric algebra $S(\h^\vee)$. That is, the Weyl ideal $I_W = \langle p_2, \dots, p_k \rangle$ is a proper ideal, and for each $j \in \{2, \dots, k\}$, the image of $p_j$ is not a zero-divisor in the quotient ring $S(\h^\vee) / \langle p_2, \dots, p_{j-1} \rangle$ (with the convention $\langle p_2, \dots, p_1 \rangle := (0)$). 
\end{proposition}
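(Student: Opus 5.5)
The plan is to reduce regularity to a dimension statement about the zero locus and then use that $S(\h^\vee)$ is a graded Cohen--Macaulay ring. Identify $S(\h^\vee) \cong \CC[L_1,\dots,L_{k-1}]$ by eliminating $L_k = -\sum_{i<k} L_i$. This is a polynomial ring in $r = k-1$ variables, graded with every $L_i$ in degree $1$. The $p_j$ are homogeneous of positive degree, so $I_W \subseteq \mathfrak{m}_+ := S^+(\h^\vee)$. In particular $I_W$ is proper, and every localization question can be asked at the irrelevant ideal.

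The key step is to show $\dim S(\h^\vee)/I_W = 0$, i.e.\ that $V(I_W) = \{0\}$ in $\h$. I will argue this in one of two ways. The cleanest is to invoke Chevalley's theorem, already used in Step 2 of Theorem~\ref{thm:borel_isomorphism}: $S(\h^\vee)$ is a free module of rank $|W| = k!$ over $S(\h^\vee)^W = \CC[p_2,\dots,p_k]$. Hence $S(\h^\vee)/I_W$ is finite-dimensional, of dimension $k!$, and so has Krull dimension $0$. As an independent elementary check, I will take $x \in \h$ with $p_j(x) = 0$ for $j=2,\dots,k$ and also $p_1(x) = \sum x_i = 0$. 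Newton's identities convert these to $e_m(x_1,\dots,x_k)=0$ for $1 \le m \le k$. Then $\prod_i (T - x_i) = T^k$, which forces $x = 0$.

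Next comes the standard commutative algebra. In a Noetherian graded Cohen--Macaulay ring such as a polynomial ring, a sequence of $r$ homogeneous elements of positive degree with $\operatorname{ht}\langle f_1,\dots,f_r\rangle = r$ is a regular sequence, and in any order. This holds because in the graded local setting, a system of parameters in a Cohen--Macaulay ring is regular. Height $r = \dim S(\h^\vee)$ follows from the previous step. To stay self-contained I can instead run induction on $j$. Suppose $p_2,\dots,p_{j-1}$ is regular. Then $A_{j-1} := S/\langle p_2,\dots,p_{j-1}\rangle$ is Cohen--Macaulay of dimension $r-(j-2)$, so all its associated primes are minimal of that dimension. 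If $p_j$ were a zero divisor, it would lie in such a prime $P$. Then $V(P)$, of dimension $r-j+2$, would sit inside $V(p_2,\dots,p_j)$, and cutting with the remaining $k-j$ hypersurfaces would leave a zero locus of dimension $\ge 1$ (Krull's principal ideal theorem). That contradicts $V(I_W)=\{0\}$.

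An alternative route that avoids Cohen--Macaulay language uses the Hilbert series. Chevalley's freeness gives $\mathrm{Hilb}(S/I_W) = \prod_{j=2}^k (1-t^j)/(1-t)$. The Koszul complex has Euler characteristic series $\prod_j (1-t^j)/(1-t)^r$, and this matches exactly if and only if the higher Koszul homology vanishes, which is equivalent to regularity. I expect the main obstacle to be exactly this passage from \enquote{finite-dimensional quotient} to \enquote{non-zero-divisor at each step}. I will take the Cohen--Macaulay/system-of-parameters theorem as the cited black box (e.g.\ Matsumura, Thm.~17.4) and present the Hilbert-series comparison as a consistency check.
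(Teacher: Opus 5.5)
Your proof is correct, but it takes a different route from the paper's.

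\textbf{The paper's route.} The paper does not use dimension theory. It takes both of Chevalley's theorems as input: $S(\h^\vee)^W=\CC[p_2,\dots,p_k]$, so the $p_j$ are variables and hence trivially regular in the invariant ring; and $S(\h^\vee)$ is free of rank $k!$ over $S(\h^\vee)^W$. It then notes that a regular sequence stays regular on any free module. Modulo $\langle p_2,\dots,p_{j-1}\rangle$, the ring $S(\h^\vee)$ becomes $\bigoplus_m \bigl(S(\h^\vee)^W/\langle p_2,\dots,p_{j-1}\rangle\bigr)A_m$, and multiplication by $p_j$ acts summand by summand.

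\textbf{Your route.} You need much less invariant theory: only the nullcone statement $V(I_W)=\{0\}$, which your Newton-identity check settles in two lines. The price is the Cohen--Macaulay black box. Either you cite that homogeneous systems of parameters in a polynomial ring are regular sequences, or, in your inductive version, you use unmixedness of Cohen--Macaulay quotients together with Krull's height theorem.

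\textbf{What each buys.} Your argument is independent of Chevalley's freeness theorem; via graded Nakayama it even recovers that freeness. It also applies verbatim to any homogeneous polynomials whose only common zero is the origin. The paper's argument is more elementary on the commutative-algebra side, but rests entirely on Chevalley. If you instead use your first option (finite-dimensionality of $S(\h^\vee)/I_W$ obtained from Chevalley's freeness), the Cohen--Macaulay step becomes redundant, since freeness already gives regularity directly, as in the paper.

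\textbf{A caveat on the Hilbert-series check.} The implication ``Euler characteristics match $\Rightarrow$ higher Koszul homology vanishes'' is not formal, because an alternating sum can vanish with nonzero terms. It needs Serre's theorem: $\chi_1=\sum_{i\ge 1}(-1)^{i-1}\dim H_i\ge 0$, with equality if and only if the sequence is regular. So that route is an independent proof only if you cite this result.
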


\begin{proof}
	First, we examine the structure of the invariant subring $S(\h^\vee)^W$. An element $H \in \h$ is represented by a traceless diagonal matrix with coordinate functionals $L_1, \dots, L_k \in \h^\vee$ satisfying $\sum_{i=1}^k L_i = 0$. The Weyl group $W \cong S_k$ acts on $\h$ by permuting these coordinate functionals.
	
	The restricted invariants $p_j = \sum_{i=1}^k L_i^j \in S^j(\h^\vee)$ for $j = 2, \dots, k$ are the power-sum symmetric polynomials. On the ambient polynomial ring $\CC[L_1, \dots, L_k]$, the symmetric polynomials are freely generated by the elementary symmetric polynomials. Via the previously encountered Newton's identities, they are equivalently generated by the power sums $p_1, \dots, p_k$. Restricting to $\h = \ker(p_1)$ sets the linear relation $p_1 = \sum_{i=1}^k L_i = 0$. In accordance with Theorem (A) of Chevalley \cite{cheval}, which establishes that the invariant ring of any finite reflection group is a free polynomial algebra generated by $\dim \h = k-1$ generators, we obtain:
	\[S(\h^\vee)^W = \CC[p_2, \dots, p_k] .\]
	Because $p_2, \dots, p_k$ serve as algebraically independent coordinate variables in this polynomial ring, they trivially form a regular sequence within $S(\h^\vee)^W$.

	Next, we extend this regularity from the invariant subring to the full symmetric algebra $S(\h^\vee)$. In proving Theorem (B) (and Lemma 3) of \cite{cheval}, Chevalley established that if $A_1, \dots, A_N$ (where $N = |W| = k!$) are homogeneous polynomials whose residue classes form a basis of the quotient space $S(\h^\vee) / I_W$, then every $P \in S(\h^\vee)$ can be uniquely expressed as:
	\begin{equation}\label{chevfree}
		P = \sum_{j=1}^N c_j A_j \quad \text{with } c_j \in S(\h^\vee)^W ,
	\end{equation}
	and that the representation of $W$ on $S(\h^\vee) / I_W$ is equivalent to the regular representation. As formulated in Example 1 of \cite{kostant1963lie}, this unique linear expansion means that $S(\h^\vee)$ is a free module over its invariant subring $S(\h^\vee)^W$ of rank $|W|$.

	Because $S(\h^\vee) \cong \bigoplus_{m=1}^N S(\h^\vee)^W A_m$ is a direct sum of free rank-one modules over $S(\h^\vee)^W$, multiplication by $p_j$ acts diagonally on each direct summand:
	\[ \big(S(\h^\vee) / \langle p_2, \dots, p_{j-1} \rangle S(\h^\vee)\big) \cong \bigoplus_{m=1}^N \big(S(\h^\vee)^W / \langle p_2, \dots, p_{j-1} \rangle\big) A_m . \]
	Because $p_j$ is not a zero-divisor on the base quotient ring $S(\h^\vee)^W / \langle p_2, \dots, p_{j-1} \rangle$, it is not a zero-divisor on the direct sum.
	
	Because a sequence that is regular on a commutative base ring remains regular on any free module over that ring, the restricted fundamental invariants $p_2, \dots, p_k$ form a regular sequence in the total space $S(\h^\vee)$.
\end{proof}

\begin{corollary}[Acyclicity of the Koszul Complex over ${\h}$]\label{acycH}
	Because the restricted fundamental invariants $p_2, \dots, p_k$ form a regular sequence in $S({\h}^\vee)$, the associated Koszul complex $K_\bullet(p_2, \dots, p_k)$ is an acyclic resolution of the coinvariant algebra (see, e.g., Corollary 4.5.4 in \cite{Weibel_1994}). Specifically, the homology is trivial in all positive Koszul degrees:
	\[H_i(K_\bullet(p_2, \dots, p_k)) = 0 \quad \text{for } i \ge 1 ,\]
	and the degree-zero homology yields the coinvariant ring:
	\[H_0(K_\bullet(p_2, \dots, p_k)) = S({\h}^\vee) / \langle p_2, \dots, p_k \rangle = S({\h}^\vee) / I_W .\]
\end{corollary}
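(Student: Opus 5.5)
The corollary follows from Proposition~\ref{regseqH} together with the standard fact that the Koszul complex on a regular sequence is acyclic. I will still spell out the induction, so that the gradings and the Leibniz-rule differential of Definition~\ref{def:koszul} visibly match the classical Koszul complex.

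First, I identify $K_\bullet(p_2,\dots,p_k)$ with the iterated tensor product of the length-one Koszul complexes $K(p_j)\colon 0\to S(\h^\vee)\,\eta_j\xrightarrow{\cdot p_j} S(\h^\vee)\to 0$, taken over $R:=S(\h^\vee)$. The explicit boundary formula displayed after Definition~\ref{def:koszul} is exactly the differential of this tensor product. Only the Koszul degree matters here; the topological degree $2j-1$ of $\eta_j$ is just an additional grading that $\dif_\kappa$ preserves.

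Next, I argue by induction on $m$, writing $K^{(m)}:=K_\bullet(p_2,\dots,p_m)$ and $I_m=\langle p_2,\dots,p_m\rangle$. There is a short exact sequence of complexes $0\to K^{(m-1)}\to K^{(m)}\to K^{(m-1)}[-1]\to 0$, obtained by splitting off the summand containing $\eta_m$. Its long exact homology sequence has connecting map equal to multiplication by $\pm p_m$. By the inductive hypothesis, $H_i(K^{(m-1)})=0$ for $i\ge1$ and $H_0(K^{(m-1)})=R/I_{m-1}$. The long exact sequence then gives $H_i(K^{(m)})=0$ for $i\ge2$, and the exact segment
\[0\to H_1(K^{(m)})\to R/I_{m-1}\xrightarrow{\ \cdot p_m\ } R/I_{m-1}\to H_0(K^{(m)})\to 0 .\]
The middle map is injective precisely because $p_m$ is a non-zero-divisor on $R/I_{m-1}$, which is the regularity established in Proposition~\ref{regseqH}. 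Hence $H_1(K^{(m)})=0$ and $H_0(K^{(m)})=R/I_m$. The base case $m=2$ is just $p_2\neq0$ in the domain $R$.

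At $m=k$ this yields $H_0=S(\h^\vee)/I_W$, using $I_W=\langle p_2,\dots,p_k\rangle$ from Theorem~\ref{thm:borel_isomorphism}(i). Since each $K_i\cong R^{\binom{k-1}{i}}$ is free of finite rank, $K_\bullet$ is a finite free resolution.

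The only nontrivial input is the regularity itself, and Proposition~\ref{regseqH} supplies it. Beyond that, the step needing care is the sign bookkeeping that shows the connecting homomorphism is $\pm p_m$; this is routine. Alternatively, I can simply cite Corollary~4.5.4 of \cite{Weibel_1994} once the identification in the first step has been made.
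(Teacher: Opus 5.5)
Your proposal is correct and follows essentially the paper's route: the paper gives no separate argument, only Proposition~\ref{regseqH} combined with the standard acyclicity of the Koszul complex on a regular sequence (Corollary~4.5.4 in \cite{Weibel_1994}). Your tensor-product identification and the long exact sequence whose connecting map is multiplication by $\pm p_m$ are the standard proof of that cited result written out, so nothing is missing.
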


\begin{remark}[Finite Free Resolution of the Coinvariant Algebra]\label{rmk:finite_free_res}
	Geometrically, Corollary~\ref{acycH} establishes that the augmented complex
	\begin{equation}\label{freeres}
		0 \longrightarrow K_{k-1} \xrightarrow{\ \dif_\kappa\ } K_{k-2} \xrightarrow{\ \dif_\kappa\ } \dots \xrightarrow{\ \dif_\kappa\ } K_1 \xrightarrow{\ \dif_\kappa\ } K_0 \xrightarrow{\ \pi\ } S(\h^\vee)/I_W \longrightarrow 0
	\end{equation}
	forms a finite free resolution of the coinvariant algebra $S(\h^\vee)/I_W$ as an $S(\h^\vee)$-module, where each module $K_p = S(\h^\vee) \otimes_\CC \Lambda^p_\CC(V_{\mathrm{odd}})$ has free rank $\binom{k-1}{p}$. Under Borel's degree-doubling isomorphism $S(\h^\vee)/I_W \cong H^*(K/T; \CC)$, this complex provides an exact algebraic surrogate for the relative Chevalley--Eilenberg complex $(\Lambda^\bullet \m^\vee)^T$.
\end{remark}

Having established that $K_\bullet(p_2, \dots, p_k)$ provides an acyclic resolution of the coinvariant algebra, we now exploit its graded free structure. In Subsection~\ref{sec:poincare_euler}, we apply the Euler--Poincar\'e principle to its homogeneous components, directly recovering the classical product formula for the Poincar\'e polynomial and Euler characteristic of the flag manifold.

\subsection{Poincar\'e Polynomials and Euler Characteristics}\label{sec:poincare_euler}

Having established the acyclicity of the  Koszul differential graded algebra $K_\bullet(p_2, \dots, p_k)$ in Subsection~\ref{sec:regular_sequences}, we now exploit its finite free resolution to compute the cohomology of the flag manifold. By applying the Euler--Poincar\'e principle to the homogeneous components of the resolution, we obtain a direct, purely algebraic derivation of the Poincar\'e polynomial and Euler characteristic of $K/T$, bypassing combinatorial matrix-rank computations entirely and demonstrating how the degrees $d_j = j$ of the fundamental invariants completely dictate the topology of the flag manifold.

Evaluating the total dimension of the relative cohomology $H^*(K/T)$ corresponds to computing its Euler characteristic, since all non-zero Betti numbers occur in even degrees. An advantage of the Koszul setup above is that we can directly read off the Poincar\'e polynomial $P_{K/T}(t) = \sum_{m \ge 0} \dim H^m(K/T; \CC) t^m$.
	
Recall Borel's presentation of the cohomology ring from \cite{Borel1953SurLC}: 
\[H^*(K/T, \CC) \cong S(\h^\vee) / I_W = S(\h^\vee) / \langle p_2, \dots, p_k \rangle ,\] 
where $p_j = \mathrm{res}(f_j) \in S^j(\h^\vee)^W$ are the restricted fundamental invariants. This establishes that the topology of the flag manifold is encoded in the coinvariant algebra. With the help of the free resolution developed above, we can readily read off the Poincaré polynomial of the flag manifold.

Recall that the Hilbert series of a non-negatively graded commutative ring $R = \bigoplus_{n \ge 0} R_n$ is defined by the formal power series
\[\operatorname{HS}_R(q) = \sum_{n=0}^\infty \dim(R_n) q^n .\]
We compute the Hilbert series of the coinvariant ring $R = S(\h^\vee)/I_W$ using its defining regular sequence. For $\g = \mathfrak{sl}(k, \CC)$, the Cartan subalgebra $\h$ has dimension $r = k-1$. In the standard polynomial grading with $\deg(L_i) = 1$, the Hilbert series of the free polynomial ring $S(\h^\vee)$ in $k-1$ variables is:
\[\operatorname{HS}_{S(\h^\vee)}(q) = \frac{1}{(1-q)^{k-1}} ,\]
where each variable contributes a geometric series $1 + q + q^2 + \dots = \frac{1}{1-q}$, multiplied together due to the algebraic independence of the generators.

Topologically, Borel's degree doubling ($\deg_{\mathrm{dR}} = 2\deg_{\mathrm{poly}}$) assigns degree $2$ to each polynomial generator. The topological Hilbert series of the unconstrained polynomial algebra is therefore obtained via the substitution $q = t^2$:
\[\operatorname{HS}_{S(\h^\vee)}(t^2) = \frac{1}{(1-t^2)^{k-1}} .\]
This is the primary ingredient of the Hilbert series of the quotient ring; we now take into account the ideal $I_W$ generated by the restricted fundamental invariants.
	
\begin{proposition}[Poincar\'e Polynomial of the Flag Manifold]\label{prop:poincare_flag}
	The Poincar\'e polynomial of the flag manifold $K/T \cong G/B$ is given by:
	\begin{equation*}
		P_{K/T}(t) = \frac{\prod_{j=2}^k (1 - t^{2j})}{(1-t^2)^{k-1}} = \prod_{j=2}^k \frac{1 - t^{2j}}{1-t^2} = \prod_{j=2}^k \left( 1 + t^2 + t^4 + \dots + t^{2(j-1)} \right) .
	\end{equation*}
\end{proposition}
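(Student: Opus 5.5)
The plan is to compute the Hilbert series of the coinvariant ring $R = S(\h^\vee)/I_W$ from the finite free resolution \eqref{freeres} of Remark~\ref{rmk:finite_free_res}. Borel's isomorphism (Theorem~\ref{thm:borel_isomorphism}) then converts it into $P_{K/T}(t)$ via the substitution $q = t^2$. The essential point is that Corollary~\ref{acycH} makes the augmented complex exact. Because $\dif_\kappa$ preserves the internal polynomial grading, the complex is exact in each internal degree separately, provided we give $1\otimes\eta_j$ internal polynomial degree $j$, the degree of its image $p_j$. Each homogeneous strand is a finite exact sequence of finite-dimensional vector spaces, so its alternating sum of dimensions vanishes by the Euler--Poincaré principle. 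Assembling these identities over all internal degrees gives
\[
\operatorname{HS}_R(q) = \sum_{p=0}^{k-1} (-1)^p \operatorname{HS}_{K_p}(q).
\]

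Next I compute each $\operatorname{HS}_{K_p}(q)$. The module $K_p$ is free over $S(\h^\vee)$ with basis $\eta_{i_1}\wedge\dots\wedge\eta_{i_p}$, and each basis element carries internal degree $i_1+\dots+i_p$. Hence $\operatorname{HS}_{K_p}(q) = (1-q)^{-(k-1)} e_p(q^2,\dots,q^k)$. Summing with signs gives the generating identity $\sum_p (-1)^p e_p(x_2,\dots,x_k) = \prod_{j=2}^k (1-x_j)$, and therefore
\[
\operatorname{HS}_R(q) = \frac{\prod_{j=2}^k (1-q^j)}{(1-q)^{k-1}} = \prod_{j=2}^k \frac{1-q^j}{1-q} = \prod_{j=2}^k (1+q+\dots+q^{j-1}).
\]
This is a polynomial, which is consistent with $R$ being finite-dimensional.

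Finally, Theorem~\ref{thm:borel_isomorphism}(ii) identifies $R_d$ with $H^{2d}_{\mathrm{dR}}(K/T;\CC)$, and Proposition~\ref{oddvanlem} gives $H^{\mathrm{odd}} = 0$. Together these show $P_{K/T}(t) = \operatorname{HS}_R(t^2)$, which yields all three stated forms.

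The step I expect to be the main obstacle is the bookkeeping of the grading. The Koszul complex must be made homogeneous by assigning the shift $q^j$ to $\eta_j$, which differs from the topological degree $2j-1$ recorded in Definition~\ref{def:koszul}. I also need to justify that Corollary~\ref{acycH}, stated for ungraded homology, implies exactness of each graded strand. This holds because $\dif_\kappa$ is homogeneous of internal degree $0$, so the homology splits as a direct sum over internal degrees. I will state this splitting explicitly before invoking the Euler--Poincaré principle, and note that each strand is finite, since $S^d(\h^\vee)$ is finite-dimensional and there are only finitely many Koszul degrees.
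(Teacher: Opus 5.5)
Your proposal is correct and follows essentially the same route as the paper: apply the Euler--Poincaré principle degree by degree to the acyclic Koszul resolution with $\eta_j$ placed in internal degree $j$, collapse the alternating sum to $(1-q)^{-(k-1)}\prod_{j=2}^k(1-q^j)$, and pass to $P_{K/T}(t)$ via Borel's degree-doubling substitution $q=t^2$. You also argue explicitly that exactness splits over internal degrees because $\dif_\kappa$ is homogeneous of internal degree zero, which the paper leaves implicit.
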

Here we invoke the classical diffeomorphism $K/T \cong G/B$, where $G = \mathrm{SL}(k, \CC)$ is a complex semisimple Lie group with Borel subgroup $B$ (the subgroup of upper-triangular matrices), and $K = \mathrm{SU}(k)$ is a maximal compact subgroup with maximal torus $T = K \cap B$ (diagonal unitary matrices).
\begin{proof}
	To compute the Hilbert series of the quotient $S(\h^\vee)/I_W$, we exploit the fact that the restricted fundamental invariants $p_2, \dots, p_k$ form a regular sequence in $S(\h^\vee)$ (Proposition~\ref{regseqH}). As established in Corollary~\ref{acycH} and Remark~\ref{rmk:finite_free_res}, the restricted Koszul complex $K_\bullet(p_2, \dots, p_k)$ is acyclic in positive degrees ($H_{p > 0}(K_\bullet(p)) = 0$), providing a finite minimal free resolution of the coinvariant algebra:
	\begin{equation}\label{freeres_aug}
		0 \longrightarrow K_{k-1} \xrightarrow{\ \dif_\kappa\ } \dots \xrightarrow{\ \dif_\kappa\ } K_1 \xrightarrow{\ \dif_\kappa\ } K_0 \xrightarrow{\ \pi\ } S(\h^\vee)/I_W \longrightarrow 0 , 
	\end{equation}
	where $K_0 = S(\h^\vee)$ and the higher modules are given by $K_p = S(\h^\vee) \otimes_\CC \Lambda^p_\CC(\eta_2, \dots, \eta_k)$ as in \eqref{koszuldecomp}. Minimality follows from the definition of the Koszul differential as mapping into the maximal ideal $S_+(\h^\vee)$.

	Applying the Euler--Poincar\'e principle to a bounded exact sequence of graded vector spaces implies that the alternating sum of dimensions vanishes degree by degree. 
	While each exterior generator $\eta_j$ carries Koszul homological degree $1$, the Koszul differential $\dif_\kappa(1 \otimes \eta_j) = p_j \otimes 1$ must be a homogeneous map of algebraic degree zero. Consequently, each $\eta_j$ is assigned the polynomial degree of its target invariant polynomial, namely $\deg_{\mathrm{poly}}(\eta_j) = \deg(p_j) = j$. With this grading in place, applying the Euler--Poincar\'e principle degree-by-degree to the homogeneous components of the free resolution \eqref{freeres} expresses the Hilbert series of the coinvariant algebra as an alternating sum:
	\begin{equation}\label{hilbs}
		\operatorname{HS}_{S(\h^\vee)/I_W}(q) = \sum_{l=0}^{k-1} (-1)^l \operatorname{HS}_{K_l}(q) .
	\end{equation}
	
	By definition, the module $K_l = S(\h^\vee) \otimes_\CC \Lambda^l_\CC(\eta_2, \dots, \eta_k)$ is a direct sum of shifted copies of the free ring $S(\h^\vee)$. Its Hilbert series is the series of the free ring multiplied by the sum of the degree shifts of all possible $l$-fold wedge products $\eta_{i_1} \wedge \dots \wedge \eta_{i_l}$:
	\[\operatorname{HS}_{K_l}(q) = \operatorname{HS}_{S(\h^\vee)}(q) \sum_{2 \le i_1 < \dots < i_l \le k} q^{\deg(p_{i_1}) + \dots + \deg(p_{i_l})} .\]
	
	Substituting this into the Euler--Poincar\'e alternating sum \eqref{hilbs} allows us to factor out $\operatorname{HS}_{S(\h^\vee)}(q)$, leaving a signed sum of all possible degree combinations:
	\[\operatorname{HS}_{S(\h^\vee)/I_W}(q) = \operatorname{HS}_{S(\h^\vee)}(q) \sum_{l=0}^{k-1} (-1)^l \sum_{2 \le i_1 < \dots < i_l \le k} q^{\deg(p_{i_1}) + \dots + \deg(p_{i_l})} .\]
	Recognizing this alternating sum over subsets as the expanded form of a product of binomials, the formula collapses to:
	\[\operatorname{HS}_{S(\h^\vee)/I_W}(q) = \operatorname{HS}_{S(\h^\vee)}(q) \prod_{j=2}^k \left(1 - q^{\deg(p_j)}\right) .\]
	
	The restricted fundamental invariants $p_j = \mathrm{res}(f_j)$ have polynomial degrees $d_j = j$. Substituting the Hilbert series of the free ring $\operatorname{HS}_{S(\h^\vee)}(q) = (1-q)^{-(k-1)}$ into the product yields the algebraic Hilbert series of the coinvariant algebra:
	\[\operatorname{HS}_{S(\h^\vee)/I_W}(q) = \frac{\prod_{j=2}^k (1 - q^j)}{(1-q)^{k-1}} = \prod_{j=2}^k \frac{1 - q^j}{1-q} .\]
	
	Converting to the topological grading via Borel's degree-doubling substitution $q = t^2$, we obtain the Poincar\'e polynomial by grouping the $k-1$ numerator and denominator terms for each index $j$:
	\begin{equation*}
		P_{K/T}(t) = \operatorname{HS}_{S(\h^\vee)/I_W}(t^2) = \frac{\prod_{j=2}^k (1 - t^{2j})}{(1-t^2)^{k-1}} = \prod_{j=2}^k \frac{1 - t^{2j}}{1-t^2} .
	\end{equation*}
	To elaborate on the final equality, we apply the standard algebraic factorization for the difference of powers, $1 - x^d = (1-x)(1 + x + x^2 + \dots + x^{d-1})$ for $x = t^2$ and $d = j$:
	\begin{equation*}
		P_{K/T}(t) = \prod_{j=2}^k \left( 1 + t^2 + t^4 + \dots + t^{2(j-1)} \right) .
	\end{equation*}
	Note that the degrees $d_j = j$ correspond directly to the exponents of the Weyl group $W = S_k$, which are given by $m_j = j-1$ for $j = 2, \dots, k$.
	
	This formula confirms that all the cohomological information of the flag manifold is encoded by the degrees of the fundamental invariants.
\end{proof}

\begin{corollary}[Euler Characteristic of the Flag Manifold]\label{cor:euler_flag}
	Evaluating the Poincar\'e polynomial at $t=1$ avoids any singularity due to the expanded sum and immediately yields the Euler characteristic of the flag manifold. For $G = \mathrm{SL}(k, \CC)$, the Weyl group is the symmetric group $S_k$, and we directly obtain:
	\[\chi(K/T) = P_{K/T}(1) = \prod_{j=2}^k j = k! = |S_k| .\]
	This recovers the classical result that the Euler characteristic is the order of the Weyl group.
\end{corollary}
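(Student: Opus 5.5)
Since $P_{K/T}(t)$ is already known in product form by Proposition~\ref{prop:poincare_flag}, the plan is simply to evaluate that polynomial at $t=1$ and then identify the result with $\chi(K/T)$. There are two ingredients: an arithmetic one (the value of the product at $t=1$) and a topological one (why $P_{K/T}(1)$ is the Euler characteristic rather than just the total Betti number).

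For the arithmetic part, I will avoid substituting $t=1$ into the rational form $\prod_j (1-t^{2j})/(1-t^2)$, because each factor is $0/0$ there. Instead I will use the expanded polynomial form from the end of the proof of Proposition~\ref{prop:poincare_flag}. Each factor is $1 + t^2 + \dots + t^{2(j-1)}$, which has exactly $j$ monomials and therefore equals $j$ at $t=1$. Since a product of polynomials evaluates factorwise, $P_{K/T}(1) = \prod_{j=2}^k j = k!$.

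For the topological part, note that $\chi(K/T) = \sum_m (-1)^m b_m = P_{K/T}(-1)$, whereas $P_{K/T}(1) = \sum_m b_m$. These agree precisely when all odd Betti numbers vanish. I can cite Proposition~\ref{oddvanlem} for this, or read it off directly: $P_{K/T}$ contains only even powers of $t$, so $P_{K/T}(-1) = P_{K/T}(1)$. I will use the second observation, because it keeps the corollary self-contained within the Koszul computation.

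Finally, I will identify $k!$ with $|S_k| = |W|$, using $W \cong S_k$ from Section~\ref{sec:weyl_equivariance}. As a consistency check, this matches the Hopf--Samelson value quoted in the proof of Theorem~\ref{thm:borel_isomorphism} and Chevalley's dimension count $\dim S(\h^\vee)/I_W = |W|$.

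The only delicate point is not computational but logical. The derivation should stay non-circular: I will rely only on Proposition~\ref{prop:poincare_flag}, whose inputs are Corollary~\ref{acycH} and Borel's presentation, and not on the Hopf--Samelson theorem. That theorem then serves purely as a cross-check.
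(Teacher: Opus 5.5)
Your proposal is correct and is essentially the paper's argument: you evaluate the expanded product $\prod_{j=2}^k(1+t^2+\dots+t^{2(j-1)})$ factorwise at $t=1$, which avoids the $0/0$ of the rational form, and you identify $P_{K/T}(1)$ with $\chi(K/T)$ because only even powers of $t$, i.e.\ only even Betti numbers, occur. The one thing to correct is your non-circularity claim: within this paper, Borel's presentation is proved in Theorem~\ref{thm:borel_isomorphism} by invoking the Hopf--Samelson value $\chi(K/T)=|W|$ (in particular in the dimension count of part (ii)), so Hopf--Samelson is an independent cross-check only if you take Borel's presentation as an external input whose proof does not itself rest on that theorem.
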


In Borel's classical description \cite{Borel1953SurLC} of the coinvariant algebra, the generators of the Weyl-invariant ideal $I_W$ are the elementary symmetric polynomials $e_2, \dots, e_k$. As established in Remark~\ref{newton4} and Subsection~\ref{sec:master}, these generators can be swapped for the power-sum invariants $p_2, \dots, p_k$ via Newton's identities, yielding $I_W = \langle e_2, \dots, e_k \rangle = \langle p_2, \dots, p_k \rangle \subset S(\h^\vee)$.

While the minimal Koszul resolution algebraically establishes the coinvariant presentation and derives the Poincar\'e polynomial from the degrees of the fundamental invariants, the individual Betti numbers can also be recovered through harmonic analysis. In Subsection~\ref{sec:kostant_hodge}, we also examine Kostant's classical 1961 Lie-algebraic solution, which uses finite-dimensional Hodge theory on the nilpotent radical to index the cohomology classes directly by the lengths of Weyl group elements.

\subsection{Kostant's Highest Weight Solution}\label{sec:kostant_hodge}

While the minimal Koszul resolution algebraically establishes the coinvariant presentation and derives the Poincar\'e polynomial from the degrees of the fundamental invariants, the individual Betti numbers can also be recovered through representation theory and harmonic analysis. In 1961, Kostant \cite{Kostant61} determined the Lie algebra cohomology of the nilpotent radical $\m^+$ using finite-dimensional Hodge theory. By endowing the cochain complex $C^\bullet(\m^+, V)$ with the positive-definite Hermitian inner product induced by the Killing form and the Cartan involution, the computation of cohomology reduces to identifying the harmonic space $\mathcal{H}^*(\m^+, V) = \ker(L)$, where $L = \dif \delta + \delta \dif$ is the algebraic Laplacian.

More precisely, let ${\g}$ be a complex semisimple Lie algebra with root decomposition ${\g}={\m}^+\oplus{\h}\oplus{\m}^-$, and let $V$ be an irreducible, finite-dimensional ${\g}$-module with highest weight $\lambda$. Kostant's theorem asserts that the Lie algebra cohomology $H^{k}({\m}^+,V)$ splits (as an ${\h}$-module) into a direct sum of one-dimensional modules of multiplicity one. Furthermore, the corresponding weights are the elements of the form $w(\lambda+\sigma)-\sigma$, where $w\in W$ is any Weyl group element of length $k$ and $\sigma = \frac{1}{2}\sum_{\alpha\in\Phi^+}\alpha$ denotes the half-sum of the positive roots. 

Evaluating Kostant's general theorem (Theorem~5.14 and Corollary~5.15 in \cite{Kostant61}) for the trivial representation $V = \CC$ with highest weight $\lambda = 0$ yields the following formulation:

\begin{theorem}[Kostant's Theorem on Lie Algebra Cohomology]\label{kolialco} 
	Let ${\g}$ be a complex semisimple Lie algebra with root decomposition ${\g} = {\m}^+ \oplus {\h} \oplus {\m}^-$, and let $V$ be an irreducible, finite-dimensional ${\g}$-module with highest weight $\lambda$. Then the Lie algebra cohomology $H^k({\m}^+, V)$ decomposes into a direct sum of one-dimensional weight spaces (as ${\h}$-modules) of multiplicity one. Furthermore, the weights occurring in the decomposition $H^k({\m}^+, V)$ are the elements of the form $w(\lambda + \sigma) - \sigma$, where $w\in W$ is a Weyl group element of length $k$, and $\sigma=\frac{1}{2}\sum_{\alpha\in\Phi^+}\alpha$ denotes the half-sum of the positive roots.
\end{theorem}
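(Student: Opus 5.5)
The plan is Kostant's finite-dimensional Hodge-theoretic argument. First I put a positive-definite Hermitian inner product on the cochain complex $C^\bullet(\m^+, V) = \Lambda^\bullet (\m^+)^\vee \otimes V$. It is built from the form $\{x,y\} := -\killing{x}{\theta y}$, where $\theta$ is the Cartan involution fixing the compact form $\kk$, combined with a $\kk$-invariant Hermitian product on $V$. Let $\delta$ be the adjoint of the Chevalley--Eilenberg differential $\dif$ with respect to this product. Finite-dimensional linear algebra gives $\ker L = \ker \dif \cap \ker \delta$ for the Laplacian $L = \dif\delta + \delta\dif$, and $C^\bullet = \ker L \oplus \operatorname{im}\dif \oplus \operatorname{im}\delta$. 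Hence $H^k(\m^+, V) \cong \ker(L|_{C^k})$ as $\h$-modules, since $\h$ acts on $C^\bullet$ and commutes with $\dif$, $\delta$ and $L$. It therefore suffices to compute the $\h$-weights of $\ker L$.

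The key step is Kostant's formula for $L$ in terms of Casimir operators:
\[ L = \tfrac12\big(\mathrm{Cas}_\g|_V - \mathrm{Cas}_{\h}^{\sigma}\big), \]
where on an $\h$-weight vector of weight $\mu$ the second term acts by $\|\mu+\sigma\|^2 - \|\sigma\|^2$ and the first acts by the scalar $\|\lambda+\sigma\|^2-\|\sigma\|^2$. I would prove this in two stages. I would first write $\dif = \sum_{\alpha>0} \omega^\alpha \wedge \rho(e_\alpha) + \tfrac12\sum \omega^\alpha\wedge \ad^\vee(e_\alpha)$ and $\delta$ as the corresponding contraction with $e_{-\alpha}$. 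I would then expand $\dif\delta+\delta\dif$ and collect terms, using the normalization $\{e_\alpha,e_\beta\}=\delta_{\alpha\beta}$ and $[e_\alpha,e_{-\alpha}]=h_\alpha$. This bookkeeping is the main obstacle. The cross terms coming from the adjoint part must assemble into the $2\sigma$-shift, and I expect this to require identifying $\Lambda^\bullet(\m^+)^\vee$ with $\Lambda^\bullet \m^-$ as a module over the Borel algebra. My first attempt would be to verify the identity on decomposable cochains $\omega^{\beta_1}\wedge\cdots\wedge\omega^{\beta_k}\otimes v$ and to track the $\h$-component of each bracket separately.

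With the formula in hand, $\ker L$ is spanned by weight vectors of weight $\mu$ with $\|\mu+\sigma\| = \|\lambda+\sigma\|$. The weights of $C^k$ have the form $\nu - \langle\Phi\rangle$, where $\nu$ is a weight of $V$ and $\langle\Phi\rangle$ is a sum of $k$ distinct positive roots. A standard convexity argument then applies: $\lambda+\sigma$ is strictly dominant, and all weights $\nu+\sigma$ lie in the convex hull of $W(\lambda+\sigma)$ after the $\sigma$-shift. This forces $\mu + \sigma = w(\lambda+\sigma)$ for some $w\in W$.

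Conversely, $w(\lambda+\sigma)-\sigma = w\lambda - \langle \Phi_w\rangle$, where $\Phi_w = \Phi^+\cap w\Phi^-$ has exactly $\ell(w)$ elements. Hence the cochain $\omega^{\Phi_w}\otimes v_{w\lambda}$, built from an extremal weight vector $v_{w\lambda}$, has this weight and lies in degree $\ell(w)$. A multiplicity count shows that the weight space of weight $w(\lambda+\sigma)-\sigma$ in $C^\bullet$ is one-dimensional. This holds because the decomposition $w\lambda - \langle\Phi'\rangle$ is unique, by strict dominance, and the extremal weight space of $V$ is one-dimensional. It follows that this vector is harmonic, lies in degree $\ell(w)$, and accounts for the whole weight space. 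This gives the multiplicity-one decomposition of $H^k(\m^+,V)$ indexed by $w$ with $\ell(w)=k$.
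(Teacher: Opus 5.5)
Your proposal is correct and is essentially the argument the paper relies on. The paper gives no independent proof; it cites Kostant's Theorem~5.14 and Corollary~5.15 and sketches exactly your reduction, via the Hermitian inner product built from the Killing form and Cartan involution, to computing the $\mathfrak{h}$-weights of the harmonic space $\ker L$. The two steps you flag or assert without full detail are Kostant's identity $2L = \|\lambda+\sigma\|^2-\|\mu+\sigma\|^2$ on the $\mu$-weight space, and the uniqueness of the subset $\Phi_w\subseteq\Phi^+$ with sum $\sigma-w\sigma$ (which does follow from the regularity of $w\sigma$); these are precisely the lemmas of Kostant's paper that the citation absorbs.
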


As an immediate consequence of his Corollary 5.15, Kostant recovers a result first proven by Bott in \cite{Bott57}: Because the cohomology decomposes into one-dimensional weight spaces indexed by Weyl group elements of length $k$, the total dimension of $H^k({\m}^+, V)$ is simply equal to the number of elements in the Weyl group of length $k$, and furthermore, this dimension is constant and independent of the highest weight $\lambda$.

\begin{example}[Kostant's Theorem for $\mathfrak{sl}(k)$ and $k=3$]
	We apply Theorem \ref{kolialco} to our case, namely ${\g} = \slk$ and the trivial representation $V = \CC$ (meaning the highest weight is $\lambda = 0$). 
	
	The weights of the harmonic cocycles governing the cohomology are  $w(\sigma) - \sigma$ for $w \in S_k$. Recall that the positive roots are $\Phi^+ = \{L_i - L_j \mid 1 \le i < j \le k\}$, making the half-sum of positive roots:
	\[ \sigma = \frac{1}{2}\sum_{i<j} (L_i - L_j) = \sum_{i=1}^k \left(\frac{k+1}{2} - i\right) L_i. \]
	The formula used in the second step can be verified by induction, noticing that $L_{i+1}$ has one fewer positive appearance and one more negative appearance than $L_i$.
	For $k=3$, the Weyl group $S_3$ has $3!=6$ elements, and $\sigma = L_1 - L_3 = \alpha_1 + \alpha_2$. The lengths $n(w)$ of the Weyl elements then dictate the degrees of the non-vanishing relative Chevalley-Eilenberg cohomology groups $H^n({\m}, \CC)$:
	\begin{itemize}
		\item {Length 0:} $w = e$. The weight is $e(\sigma) - \sigma = 0$. This corresponds to the constants and generates the $1$-dimensional $H^0$.
		\item {Length 1:} $w = (12) = s_1$ and $w = (23) = s_2$. The respective weights evaluate to $s_1(\sigma) - \sigma = -\alpha_1$ and $s_2(\sigma) - \sigma = -\alpha_2$. These correspond  to the dual forms $\omega_{-\alpha_1}$ and $\omega_{-\alpha_2}$, yielding the $2$-dimensional $H^1(\m^+, \CC)$
		\item {Length 2:} $w = (123) = s_1 s_2$ and $w = (132) = s_2 s_1$. The respective weights evaluate to $-2\alpha_1 - \alpha_2$ and $-\alpha_1 - 2\alpha_2$. In $\Lambda^2 (\m^+)^\vee$, these weights single out  the wedge products $\omega_{-\alpha_1} \wedge \omega_{-(\alpha_1+\alpha_2)}$ and $\omega_{-\alpha_2} \wedge \omega_{-(\alpha_1+\alpha_2)}$. This yields the $2$-dimensional $H^2(\m^+, \CC)$.
		\item {Length 3:} $w = (13) = s_1 s_2 s_1$. The weight is $-2\alpha_1 - 2\alpha_2 = -2(L_1 - L_3)$. This singles out the top-degree volume form $\omega_{-\alpha_1} \wedge \omega_{-\alpha_2} \wedge \omega_{-(\alpha_1+\alpha_2)}$, yielding the $1$-dimensional $H^3(\m^+, \CC)$.
	\end{itemize}
	Taking into account that the topological degree in the flag manifold $K/T$ doubles the Lie algebra cohomology degree (as established in Theorem~\ref{thm:borel_isomorphism}(ii)), the sequence of dimensions $(1, 2, 2, 1)$  matches our established Betti numbers $(b_0, b_2, b_4, b_6)$ from Example~\ref{k3decomp2}. 
\end{example}

\subsection{Comparison with Hodge-Theoretic and Equivariant Frameworks}
Kostant's highest-weight solution demonstrates that each cohomology class admits a unique harmonic representative supported on the nilpotent radical. We compare this analytic framework with the algebraic DGA model and equivariant GKM theory.

\begin{remark}[The Limits of Hodge Theory]\label{rmk:limits_hodge}
	Kostant's approach elegantly bypasses the matrix rank problem to compute the Betti numbers. However, the method is fundamentally analytic: The space of harmonic forms does not constitute a subring, as the wedge product of two harmonic cocycles is generally not harmonic. Consequently, the Hodge-theoretic approach is blind to the multiplicative cup product structure of the cohomology. This method therefore does not help us understand the structure of the cohomology ring discovered by Borel, namely the isomorphism to the coinvariant ring. Nor does it provide a mechanism to explicitly construct the exact polynomial boundaries that would explain which loops in our root lattice do not contribute to the cohomology. This highlights the fundamental role of the DGA framework developed in Subsections~\ref{sec:koszul_dga}--\ref{sec:poincare_euler}: while Hodge theory computes the Betti numbers as vector-space dimensions, the Koszul DGA internalizes the ring structure of the coinvariant algebra.
\end{remark}

\begin{remark}[Contrast with GKM Theory]\label{rmk:gkm_contrast}
	While the cohomology of the flag manifold $K/T$ admits a well-known combinatorial description via Goresky-Kottwitz-MacPherson (GKM) theory, that approach is (at least a priori) different from the loop model presented here. GKM theory computes equivariant cohomology by evaluating polynomials on the 1-skeleton of the $T$-action, leading to a graph whose vertices are the $|W|=k!$ elements of the Weyl group. As shown in \cite{crooks2016}, the $1$-skeleton of the flag manifold under the torus action corresponds to the GKM graph, whose vertices are indexed by $W$.
	In contrast, our model operates directly on the cochain level of the relative Chevalley-Eilenberg complex, where the combinatorics take place within the $k(k-1)$ edges of the root lattice. The utility of our root-loop description lies not merely in computing the Betti numbers, but in providing explicit, coordinate differential-form primitives witnessing the vanishing of the characteristic classes on the base.
\end{remark}

This concludes our construction of explicit transgression primitives, bridging the differential geometry of the canonical fibration $K \to K/T$, the combinatorics of directed root-lattice loops, and the coinvariant algebra of the flag manifold.

\end{document}